\newtheorem{thm}{Theorem}
\newtheorem{lem}[thm]{Lemma}
\newtheorem{cor}[thm]{Corollary}
\theoremstyle{definition}
\newtheorem{remark}{Remark}
\newcommand\extrafootertext[1]{
    \bgroup
    \renewcommand\thefootnote{\fnsymbol{footnote}}
    \renewcommand\thempfootnote{\fnsymbol{mpfootnote}}
    \footnotetext[0]{#1}
    \egroup
}
\xpatchcmd{\proof}{\itshape}{\normalfont\proofnameformat}{}{}
\newcommand{\proofnameformat}{}
\begin{document}

\renewcommand{\proofnameformat}{\bfseries}

\begin{center}
{\Large\textbf{Smoothing inequalities for transport metrics in compact spaces}}

\vspace{10mm}

\textbf{Bence Borda and Jean-Claude Cuenin}

\let\thefootnote\relax\footnotetext{\emph{Key words.} Wasserstein metric, optimal transport, harmonic analysis, Fourier transform, Riemannian manifold}

\let\thefootnote\relax\footnotetext{2020 \emph{Mathematics Subject Classification.} Primary: 43A77, 43A85, 49Q22. Secondary: 60B15.}
\end{center}

\vspace{10mm}

\begin{abstract}
We prove general upper estimates for the distance between two Borel probability measures in Wasserstein metric in terms of the Fourier transforms of the measures. We work in compact manifolds including the torus, the Euclidean unit sphere, compact Lie groups and compact homogeneous spaces, and treat the Wasserstein metric $W_p$ in the full range $1 \le p \le \infty$ for the first time. The proofs are based on a comparison between the Wasserstein metric and a dual Sobolev norm, Riesz transform estimates and Hausdorff--Young inequalities on compact manifolds. As an application, we show that spherical designs are optimally close to the uniform measure on the sphere in Wasserstein metric.
\end{abstract}

\tableofcontents

\section{Introduction}

The idea of estimating the distance between two probability measures in terms of their Fourier transforms goes back to Berry \cite{BE} and Esseen \cite{ES}. The celebrated Berry--Esseen inequality gives an upper bound for the distance between two probability measures in the Kolmogorov metric in terms of the characteristic functions, that is, the Fourier transforms of the measures. A large number of similar results have been established for various probability metrics both on the real line and in higher dimensions, and have become indispensible tools in probability theory. As their proofs typically involve a smoothing procedure with a suitable kernel, they are known as smoothing inequalities. We refer to Bobkov \cite{BOB} for a survey.

In the present paper, we work in compact spaces including the torus, the sphere, compact Lie groups, compact homogeneous spaces and compact Riemannian manifolds, and use the Wasserstein metric $W_p$ as probability metric. Our goal is to prove upper bounds for $W_p (\mu, \nu)$ in terms of the Fourier transforms of the probability measures $\mu$ and $\nu$.

The Wasserstein metric was introduced in the framework of optimal transport, and has found numerous applications in analysis, geometry and probability theory. The quadratic metric $W_2$ is closely related to logarithmic Sobolev inequalities, Poincar\'e inequalities and relative entropy \cite{OV}, whereas the linear metric $W_1$ has recent applications in computer vision and machine learning \cite{ACB}. We refer to Villani \cite{VI} for a comprehensive account of optimal transport and the Wasserstein metric.

Let us give a short survey of previous results in the area. Graham \cite{GR} proved smoothing inequalities for $W_p$, $2 \le p \le \infty$ on the unit interval $[0,1]$ and the 1-dimensional torus $\mathbb{R} / \mathbb{Z}$ by relying on the explicit solution of the optimal transport problem in 1-dimensional spaces. Bobkov and Ledoux \cite{BL1,BL2} worked with $W_p$, $0 < p \le 1$ on the unit cube $[0,1]^d$ and the torus $\mathbb{R}^d / \mathbb{Z}^d$. Their results were later generalized by the first author \cite{BOR1} to compact Lie groups. Grabner and Tichy \cite{GT} proved a smoothing inequality for functions with a prescribed modulus of continuity on the unit sphere $\mathbb{S}^d$, which could equivalently be formulated in terms of $W_p$, $0<p\le 1$. Brown and Steinerberger \cite{BS} and the first author \cite{BOR2} established a smoothing inequality for $W_2$ on compact Riemannian manifolds. In particular, no smoothing inqualities have been known for $W_p$, $p>2$ in compact spaces of dimension $d \ge 2$.

In this paper, we prove smoothing inequalities for $W_p$ in the full range $1 \le p \le \infty$ in compact spaces of all dimensions $d \ge 1$ for the first time. We treat $W_p$, $1 \le p < \infty$ on the torus, the sphere, compact Lie groups, compact homogeneous spaces and general compact Riemannian manifolds. We also treat $W_{\infty}$ on the torus and the sphere $\mathbb{S}^d$, $d \ge 3$. All our results are fully explicit, although we did not make a serious effort to optimize the value of the constants.

The aforementioned papers explored several applications of smoothing inequalities to optimal matchings, random walks on compact groups, quantization of measure, irregularities of distribution and numerical integration. The new results in the present paper have immediate applications to all these problems, and potentially many more. However, we restrict our attention to the inequalities themselves for now, and present only one application as an illustration: in Section \ref{sphericaldesignsection} we show that spherical designs are optimal in Wasserstein metric.

The general framework of estimating the Wasserstein metric is presented in Section \ref{approachsection}. The framework follows the approach of Ledoux \cite{LE1} based on Kantorovich duality, the Hopf--Lax semigroup and Hamilton--Jacobi equations. This approach has its roots in the PDE techniques developed for the optimal matching problem by Ambrosio, Stra and Trevisan \cite{AST}. An important technical ingredient in the novel case $W_p$, $p>2$ is a Hausdorff--Young inequality for the gradient $\nabla f$ of a smooth function $f$, stated in various spaces in Lemmas \ref{torushausdorffyounglemma}, \ref{liegrouphausdorffyounglemma}, \ref{homogeneoushausdorffyounglemma} and \ref{manifoldhausdorffyounglemma}.

The general framework reduces the problem of estimating the Wasserstein metric to constructing suitable smoothing kernels: each choice of a kernel leads to a different smoothing inequality. In Section \ref{proofsection}, we work out the details for several kernels including powers of the Fej\'er kernel on the torus, constructions based on Poisson summation on the torus and compact Lie groups, constructions based on spectral theory on the sphere, and the heat kernel on general compact manifolds.

Before we present the main results in Section \ref{resultssection}, let us fix some notation. Throughout the paper, $M$ denotes a compact, connected, smooth Riemannian manifold of dimension $1 \le d < \infty$ without boundary. Let $\rho$ denote the geodesic metric, $\mathrm{diam}(M)=\sup \{ \rho (x,y) : x, y \in M \}$ the diameter and $\mathrm{Volume}$ the Riemannian volume measure. Let $\mathcal{P}(M)$ denote the set of Borel probability measures on $M$, and let $\mathrm{Vol}=\mathrm{Volume}/\mathrm{Volume}(M)\in \mathcal{P}(M)$ be the normalized Riemannian volume measure. Let $\mathrm{C}(M)$, $\mathrm{C}^{\infty}(M)$ and $\mathrm{Lip}(M)$ denote the set of continuous, smooth and Lipschitz functions $f: M \to \mathbb{R}$, respectively, and let
\[ \| f \|_{\mathrm{Lip}} = \sup_{\substack{x,y \in M \\ x \neq y}} \frac{|f(x)-f(y)|}{\rho (x,y)} . \]
The $L^p (M)$ norm of $f:M \to \mathbb{C}$ is denoted by $\| f \|_p=(\int_M |f|^p \, \mathrm{d}\mathrm{Vol})^{1/p}$, $1 \le p < \infty$. We use the same notation for vector-valued functions, e.g.\ $\| \nabla f \|_p = (\int_M |\nabla f|^p \, \mathrm{d}\mathrm{Vol})^{1/p}$.

The Laplace--Beltrami operator on $M$ is denoted by $\Delta$. As detailed in Sections \ref{torussection}--\ref{manifoldsection}, we use different notations for the spectrum and the Fourier transform depending on the space $M$.

The Wasserstein metric of order $0<p<\infty$ is defined as
\[ W_p (\mu, \nu) = \inf_{\vartheta \in \textrm{Coupling}(\mu, \nu)} \left( \int_{M \times M} \rho (x,y)^p \, \mathrm{d} \vartheta (x,y) \right)^{\min \{ 1/p, 1 \}}, \quad \mu, \nu \in \mathcal{P}(M), \]
where $\textrm{Coupling}(\mu, \nu)$ is the set of all Borel probability measures on $M \times M$ with marginals $\mu$ and $\nu$. The Wasserstein metric of order $p=\infty$ is defined similarly, with the $L^p(\vartheta)$ norm replaced by the $L^{\infty}(\vartheta)$ norm. Recall that $W_p$ is a metric on $\mathcal{P}(M)$ for any $0<p\le \infty$, and it metrizes weak convergence for $p \neq \infty$. An application of H\"older's inequality immediately shows that $W_p \le W_q$ for all $1 \le p \le q \le \infty$. We also have $\lim_{p \to \infty} W_p (\mu, \nu) = W_{\infty} (\mu, \nu)$ for any $\mu, \nu \in \mathcal{P}(M)$, see \cite{GS}. In the case $p=1$, the Kantorovich duality formula \cite[p.\ 107]{VI} states the useful dual formulation
\begin{equation}\label{kantorovichduality}
W_1 (\mu, \nu) = \sup_{\substack{f \in \mathrm{Lip}(M) \\ \| f \|_{\mathrm{Lip}} \le 1}} \left| \int_M f \, \mathrm{d} \mu - \int_M f \, \mathrm{d}\nu \right| .
\end{equation}

Let $|x|=(\sum_{j=1}^d x_j^2)^{1/2}$ and $\langle x,y \rangle=\sum_{j=1}^d x_j y_j$ be the standard Euclidean norm and Euclidean scalar product on $\mathbb{R}^d$. The indicator function of a set or relation $S$ is denoted by $\mathds{1}_S$. The gamma function is denoted by $\Gamma(x)$.

\section{Main results}\label{resultssection}

In Section \ref{torussection}, we present our results in the simplest setting of the flat torus. In Section \ref{liegroupsection}, these are generalized to compact, connected Lie groups in terms of representation theory. Section \ref{homogeneoussection} deals with compact homogeneous spaces. A compact Riemannian manifold $M$ without boundary is called homogeneous if the Riemannian isometry group (the group of diffeomorphisms from $M$ to itself that preserve the Riemannian metric) acts transitively on $M$. Equivalently, a compact homogeneous space can be defined as a quotient $M=G/K$ with a compact Lie group $G$ and a closed subgroup $K$. All compact homogeneous spaces have positive semidefinite Ricci curvature \cite{SA}. Classical examples include Euclidean unit spheres, projective spaces and Grassmannians. An application to spherical designs is presented in Section \ref{sphericaldesignsection}. Finally, Section \ref{manifoldsection} discusses the most general setting of compact Riemannian manifolds.

\subsection{Smoothing inequalities on the torus}\label{torussection}

Consider the $d$-dimensional torus $M=\mathbb{R}^d / \mathbb{Z}^d$ with the metric $\rho (x,y) = \| x-y \|_{\mathbb{R}^d / \mathbb{Z}^d}$, where $\| x \|_{\mathbb{R}^d / \mathbb{Z}^d} = \min_{n \in \mathbb{Z}^d} |x-n|$ is the Euclidean distance to the nearest lattice point. Let $\mathrm{Vol}$ denote the Lebesgue measure on $\mathbb{R}^d / \mathbb{Z}^d$ after its standard identification with the unit cube $[0,1]^d$. Note that $\rho$ is the geodesic metric and $\mathrm{Vol}$ is the normalized Riemannian volume measure on the torus equipped with the flat Riemannian metric. In addition, $\mathrm{Vol}$ is also the normalized Haar measure on the torus as a compact group. The Fourier transform of $f \in L^1 (\mathbb{R}^d / \mathbb{Z}^d)$ resp.\ $\mu \in \mathcal{P}(\mathbb{R}^d / \mathbb{Z}^d)$ is defined as $\widehat{f}(k)=\int_{\mathbb{R}^d / \mathbb{Z}^d} f(x) e^{-2 \pi i \langle k,x \rangle} \, \mathrm{d}\mathrm{Vol}(x)$ resp.\ $\widehat{\mu}(k)=\int_{\mathbb{R}^d / \mathbb{Z}^d} e^{- 2 \pi i \langle k,x \rangle} \, \mathrm{d} \mu(x)$, $k \in \mathbb{Z}^d$.

Bobkov and Ledoux \cite{BL1,BL2} proved sharp smoothing inequalities on the torus for $W_p$, $0<p \le 1$. We now present a smoothing inequality for $W_p$, $1 \le p < \infty$ on the torus in two slightly different formulations. Theorems \ref{torusjacksonkerneltheorem} and \ref{torusheatkerneltheorem} with $p=1$ reduce to the results of Bobkov and Ledoux, whereas Theorem \ref{torusheatkerneltheorem} with $p=2$ is implicit in \cite{BOR2} and \cite{BS}. All other cases, including $p>2$ are new.
\begin{thm}\label{torusjacksonkerneltheorem} Let $\mu, \nu \in \mathcal{P}(\mathbb{R}^d / \mathbb{Z}^d)$, and assume there exists a constant $c \ge 0$ such that $\mu \ge c \mathrm{Vol}$. Let $1 \le p < \infty$ and $1<q \le \infty$ be such that $1/p+1/q=1$. Assume either $p=1$ or $c>0$. For any integer $H>H_0$, we have
\[ W_p(\mu, \nu) \le \frac{C_1}{H-H_0} + C_2 \Bigg( \sum_{\substack{k \in [-H,H]^d \\ k \neq 0}} \frac{|\widehat{\mu}(k) - \widehat{\nu}(k)|^{q_0}}{(2 \pi |k|)^{q_0}} \Bigg)^{1/q_0} , \]
where $q_0=\min \{ q, 2 \}$,
\[ \begin{split} H_0 &= \left\{ \begin{array}{ll} 1 & \textrm{if } 1 \le p \le 2, \\ (p+2)/2 & \textrm{if } p>2, \end{array} \right. \\ C_1 &= \left\{ \begin{array}{ll} 2^{1/2}(1+(1-c)^{1/p})d^{1/2} & \textrm{if } 1 \le p \le 2, \\ 6^{-1/2} (1+(1-c)^{1/p}) (p+4) d^{1/2} & \textrm{if } p>2, \end{array} \right. \\ C_2 &=\left\{ \begin{array}{ll} 1 & \textrm{if } p=1, \\ p c^{-1/q} & \textrm{if } p>1. \end{array} \right. \end{split} \]
\end{thm}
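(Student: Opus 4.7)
The plan is to follow the general framework of Section~\ref{approachsection}: convolve both measures with a non-negative kernel $J_H$ of bandwidth $H$ on $\mathbb{R}^d/\mathbb{Z}^d$, bound the smoothing error by a moment estimate on $J_H$, and bound the smoothed Wasserstein distance by a dual Sobolev norm evaluated via Fourier inversion. A convenient kernel is a coordinatewise tensor product of normalized even powers of the univariate Fej\'er kernel, with the power chosen to exceed $p$ so that the $p$-th moment $\bigl(\int_{\mathbb{R}^d/\mathbb{Z}^d}\rho(0,x)^p J_H(x)\,\mathrm{d}\mathrm{Vol}(x)\bigr)^{1/p}$ is $\lesssim d^{1/2}/(H-H_0)$; the minimal admissible power jumps as $p$ crosses $2$, explaining the dichotomy in $H_0$ and $C_1$ in the statement. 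Setting $\mu_H := \mu*J_H$ and $\nu_H := \nu*J_H$, the triangle inequality reads
\[ W_p(\mu,\nu) \le W_p(\mu,\mu_H) + W_p(\mu_H,\nu_H) + W_p(\nu,\nu_H). \]

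Since $J_H\ge 0$ and $\int J_H\,\mathrm{d}\mathrm{Vol}=1$, the representation $\mu_H = \int(T_x\mu)\,J_H(x)\,\mathrm{d}\mathrm{Vol}(x)$ as a convex mixture of translates $T_x\mu$ (with $W_p(\mu,T_x\mu)\le \rho(0,x)$), combined with joint convexity of $W_p^p$ under mixtures, gives $W_p(\mu,\mu_H)\le\bigl(\int \rho(0,x)^p J_H(x)\,\mathrm{d}\mathrm{Vol}(x)\bigr)^{1/p}$. The factor $(1-c)^{1/p}$ appearing in $C_1$ arises from the decomposition $\mu = c\,\mathrm{Vol}+(1-c)\tilde\mu$ combined with $\mathrm{Vol}*J_H=\mathrm{Vol}$: under convolution the $c\,\mathrm{Vol}$ piece is fixed and only $(1-c)\tilde\mu$ is displaced. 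The same estimate with $c=0$ controls $W_p(\nu,\nu_H)$.

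For the middle term, let $g_H := (-\Delta)^{-1}(\nu_H-\mu_H)$, so that $\widehat{g_H}(k) = \widehat{J_H}(k)(\widehat\nu(k)-\widehat\mu(k))/(2\pi|k|)^2$ is supported on $[-H,H]^d\setminus\{0\}$ and $|\widehat{\nabla g_H}(k)|\le |\widehat\nu(k)-\widehat\mu(k)|/(2\pi|k|)$ (using $\|\widehat{J_H}\|_\infty=1$). The framework of Section~\ref{approachsection} yields a dual Sobolev estimate $W_p(\mu_H,\nu_H)\le C_2\|\nabla g_H\|_q$: for $p=1$ this is Kantorovich duality together with integration by parts against $g_H$, with $C_2=1$; for $p>1$ this is Ledoux's Hopf--Lax/Hamilton--Jacobi argument with $C_2 = pc^{-1/q}$, relying on $\mu_H\ge c\,\mathrm{Vol}$ inherited from $\mu\ge c\,\mathrm{Vol}$ and $J_H\ge 0$. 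The task thus reduces to estimating $\|\nabla g_H\|_q$ by the $\ell^{q_0}$-norm of $\widehat{\nabla g_H}$.

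For $q_0=2$, i.e., the range $1\le p\le 2$, this follows from Plancherel, supplemented by Cauchy--Schwarz on the probability space $(\mathbb{R}^d/\mathbb{Z}^d,\mathrm{Vol})$ for $p=1$ and by the comparison $W_p\le W_2$ (or a direct interpolation between the $p=1$ and $p=2$ endpoints) in the intermediate range $1<p<2$. For $q_0=q<2$, i.e., the novel range $p>2$, the classical Hausdorff--Young inequality $\|f\|_q\le\|\widehat f\|_{q'}$ only holds for $q\ge 2$ and therefore points in the wrong direction; this is the principal technical obstacle. The key input here is the Hausdorff--Young inequality for gradients, Lemma~\ref{torushausdorffyounglemma}, which asserts that for mean-zero smooth $h$ on $\mathbb{R}^d/\mathbb{Z}^d$ and $1<q\le 2$,
\[ \|\nabla h\|_q \;\le\; \Bigl(\sum_{k\ne 0}|\widehat{\nabla h}(k)|^q\Bigr)^{1/q}, \]
and which leans on $L^q$-boundedness of the Riesz transform. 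Applying this to $h=g_H$ and combining with the previous estimates completes the proof.
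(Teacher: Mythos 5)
Your overall architecture is the same as the paper's: smooth with a normalized power of the Fej\'er kernel of bandwidth comparable to $H$ (the paper takes the $2N$-th power of the Dirichlet kernel with $N=\lceil (p+2)/2\rceil$, which is why the moment condition $2N>p+1$ produces the jump in $H_0$ and $C_1$ at $p=2$), control the two outer terms by the dispersion rate via Lemma \ref{smoothinglemma}, and control the middle term by Lemma \ref{ledouxlemma} with $\delta=0$ plus a Fourier-analytic bound on the dual Sobolev norm. However, the step you yourself single out as the principal technical obstacle is garbled. Duality gives $\|\mu_H-\nu_H\|_{\dot H_{-1}^p}=\sup_{\|\nabla f\|_q\le 1}|\int\langle\nabla f,\nabla g_H\rangle\,\mathrm{d}\mathrm{Vol}|\le\|\nabla g_H\|_p$ by H\"older with \emph{conjugate} exponents, not $\|\nabla g_H\|_q$; the bound $W_p(\mu_H,\nu_H)\le C_2\|\nabla g_H\|_q$ you assert does not follow from the framework. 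What is then needed for $p>2$ is the \emph{dual} Hausdorff--Young inequality $\|\nabla g_H\|_p\le\bigl(\sum_k|\widehat{\nabla g_H}(k)|^q\bigr)^{1/q}$ for $p\ge 2$, or equivalently (the paper's route) H\"older on the Fourier side followed by the \emph{forward} Hausdorff--Young inequality $\bigl(\sum_k(2\pi|k||\widehat f(k)|)^p\bigr)^{1/p}\le\|\nabla f\|_q$ applied to the \emph{test function} $f$, which is what Lemma \ref{torushausdorffyounglemma} actually states. The inequality you attribute to that lemma, $\|\nabla h\|_q\le\|\widehat{\nabla h}\|_{\ell^q}$ for $q\le 2$, is a different (and trivial) statement --- it follows from $\|\cdot\|_{L^q}\le\|\cdot\|_{L^2}=\|\cdot\|_{\ell^2}\le\|\cdot\|_{\ell^q}$ --- and applying the actual lemma to $h=g_H$ would produce an inequality pointing in the wrong direction. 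On the torus the lemma is proved by Riesz--Thorin interpolation for vector-valued functions, not by Riesz transform bounds (those enter only on general manifolds).

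A secondary issue: for $1<p<2$ you propose to reduce to $p=2$ via $W_p\le W_2$, but that replaces $C_2=pc^{-1/q}$ by $2c^{-1/2}$ and $(1-c)^{1/p}$ by $(1-c)^{1/2}$, so it does not yield the stated constants. The correct reduction keeps Lemma \ref{ledouxlemma} at exponent $p$ and only compares the dual norms, $\|\cdot\|_{\dot H_{-1}^p}\le\|\cdot\|_{\dot H_{-1}^2}$ (valid because $\|\nabla f\|_2\le\|\nabla f\|_q$ for $q\ge 2$ on a probability space), after which Plancherel and Cauchy--Schwarz finish as you indicate.
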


\begin{thm}\label{torusheatkerneltheorem} Let $\mu, \nu \in \mathcal{P}(\mathbb{R}^d / \mathbb{Z}^d)$, and assume there exist constants $c \ge 0$, $b \ge 0$ and $r>0$ such that $\mu \ge c \mathrm{Vol}$ and $\nu (B) \ge b$ for any closed ball $B \subseteq \mathbb{R}^d / \mathbb{Z}^d$ of radius $r$. Let $1 \le p < \infty$ and $1<q \le \infty$ be such that $1/p+1/q=1$. Assume either $p=1$ or $c>0$. For any real $t>0$, we have
\[ W_p (\mu, \nu) \le C_1 t^{1/2} + C_2(t) \Bigg( \sum_{\substack{k \in \mathbb{Z}^d \\ k \neq 0}} e^{-4 \pi^2 |k|^2  q_0 t} \frac{|\widehat{\mu}(k) - \widehat{\nu}(k)|^{q_0}}{(2 \pi |k|)^{q_0}} \Bigg)^{1/q_0} , \]
where $q_0=\min \{ q, 2 \}$,
\[ \begin{split} C_1 &= 2 (1+(1-c)^{1/p}) \Bigg( \frac{\Gamma \left( \frac{d+p}{2} \right)}{\Gamma \left( \frac{d}{2} \right)} \Bigg)^{1/p} \le (1+(1-c)^{1/p}) (2d+p)^{1/2}, \\ C_2(t) &=\left\{ \begin{array}{ll} 1 & \textrm{if } p=1, \\ \frac{p(c^{1/p}-\delta^{1/p})}{c-\delta} & \textrm{if } p>1 \end{array} \right. \qquad \textrm{with} \qquad \delta=\min \left\{ b \frac{e^{-r^2/(4t)}}{(4 \pi t)^{d/2}}, c \right\} . \end{split} \]
\end{thm}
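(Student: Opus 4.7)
The proof fits the three-step smoothing framework developed in Section~\ref{approachsection}. The plan is: (i) introduce the heat-smoothed measures $\mu_t = K_t*\mu$ and $\nu_t = K_t*\nu$ and control the error $W_p(\mu,\mu_t)+W_p(\nu,\nu_t)$ by a direct coupling argument; (ii) apply the dual Sobolev inequality of Section~\ref{approachsection} to $W_p(\mu_t,\nu_t)$, using that both smoothed measures have densities bounded below; (iii) convert the resulting Sobolev-type norm into a Fourier series via the Hausdorff--Young bound of Lemma~\ref{torushausdorffyounglemma}.

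For step~(i), I would use the canonical coupling $(X,X+Z)$ with $X\sim\nu$ and $Z$ independently distributed according to $K_t$. Since $\rho(X,X+Z) \le |Z-n|$ for every lattice point $n\in\mathbb{Z}^d$, periodising $K_t(z)=\sum_{n\in\mathbb{Z}^d}G_t(z-n)$ (with $G_t$ the Euclidean heat kernel) and tiling yield
\[ W_p(\nu,\nu_t)^p \le \int_{\mathbb{R}^d/\mathbb{Z}^d}\rho(0,z)^p K_t(z)\,\mathrm{d}z \le \int_{\mathbb{R}^d} |w|^p G_t(w)\,\mathrm{d}w = 2^p t^{p/2}\frac{\Gamma((d+p)/2)}{\Gamma(d/2)}. \]
For $\mu$ the decomposition $\mu = c\,\mathrm{Vol} + (1-c)\mu'$ combined with the heat-kernel invariance of $\mathrm{Vol}$ leaves only the mass $1-c$ to be transported, producing the extra factor $(1-c)^{1/p}$; summing the two contributions reproduces exactly $C_1 t^{1/2}$.

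For step~(ii), $\mu\ge c\,\mathrm{Vol}$ propagates to $\mu_t\ge c\,\mathrm{Vol}$, while the ball-mass assumption on $\nu$ combined with the pointwise Gaussian minorisation $K_t(z)\ge e^{-r^2/(4t)}/(4\pi t)^{d/2}$ on $|z|\le r$ gives $\nu_t\ge\delta\,\mathrm{Vol}$ with $\delta$ as in the theorem. The dual Sobolev bound of Section~\ref{approachsection}, which rests on the Hopf--Lax semigroup and the Hamilton--Jacobi equation following Ledoux and Ambrosio--Stra--Trevisan, then yields an estimate of the form
\[ W_p(\mu_t,\nu_t) \le C_2(t)\,\|\nabla u\|_{L^q}, \qquad \Delta u = \mu_t-\nu_t, \]
where $C_2(t)=p(c^{1/p}-\delta^{1/p})/(c-\delta)$ emerges from integrating the density along the displacement interpolation between $\mu_t$ and $\nu_t$, whose density ranges between $c$ and $\delta$; the case $p=1$ reduces to Kantorovich duality~\eqref{kantorovichduality} with constant $1$.

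Step~(iii) invokes Lemma~\ref{torushausdorffyounglemma} to obtain $\|\nabla u\|_{L^q} \le (\sum_{k\ne 0} |\widehat{\nabla u}(k)|^{q_0})^{1/q_0}$ with $q_0=\min\{q,2\}$, and the identity $|\widehat{\nabla u}(k)| = e^{-4\pi^2|k|^2 t}|\widehat{\mu}(k)-\widehat{\nu}(k)|/(2\pi|k|)$ turns this at once into the Fourier sum in the theorem. The hard part will be step~(iii) in the regime $q<2$ (i.e.\ $p>2$): classical Hausdorff--Young gives $\|\widehat{f}\|_{\ell^{q'}} \le \|f\|_{L^q}$, pointing in the wrong direction, so Lemma~\ref{torushausdorffyounglemma} must exploit Riesz transform estimates, which is the genuinely new technical ingredient flagged in the introduction. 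A secondary subtlety is preserving the sharp joint dependence on $c$ and $\delta$ in step~(ii), which forces the interpolation formula for $C_2(t)$ rather than a cruder single-measure $L^q$ bound.
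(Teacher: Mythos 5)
Your proposal follows the paper's proof almost exactly: smoothing with the periodised heat kernel and the coupling of Lemma \ref{smoothinglemma} to get the $C_1 t^{1/2}$ term (including the $(1-c)^{1/p}$ gain from $\mu=c\,\mathrm{Vol}+(1-c)\mu'$), the Gaussian minorisation of the kernel to get $\mu_t\ge c\,\mathrm{Vol}$ and $\nu_t\ge\delta\,\mathrm{Vol}$ followed by Lemma \ref{ledouxlemma}, and a Hausdorff--Young step to produce the Fourier sum. Two points in your write-up need correction, though neither is fatal.

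First, an exponent slip in steps (ii)--(iii): the dual Sobolev norm $\|\mu_t-\nu_t\|_{\dot{H}_{-1}^p}$ is dual to $\|\nabla f\|_{q}$, so if you represent it via $\Delta u=\mu_t-\nu_t$ you get $|\int f\,\mathrm{d}(\mu_t-\nu_t)|=|\int\langle\nabla f,\nabla u\rangle|\le\|\nabla f\|_q\|\nabla u\|_p$, i.e.\ the controlling quantity is $\|\nabla u\|_{L^p}$, not $\|\nabla u\|_{L^q}$. With $L^q$ as written, your step (iii) bounds the wrong norm (and for $1<p\le 2$ the claimed inequality $\|\nabla u\|_{L^q}\le\|\widehat{\nabla u}\|_{\ell^2}$ with $q>2$ is false in general). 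The fix is mechanical: either work with $\|\nabla u\|_{L^{p_0}}$, $p_0=\max\{p,2\}$, and the synthesis form of Hausdorff--Young $\|g\|_{L^{p_0}}\le\|\widehat g\|_{\ell^{q_0}}$, or do as the paper does and apply Lemma \ref{torushausdorffyounglemma} to the test function $f$ inside the duality, splitting the Parseval pairing by H\"older with exponents $(p_0,q_0)$. Second, your closing remark misidentifies the difficulty: on the torus, Lemma \ref{torushausdorffyounglemma} is nothing but the classical (vector-valued) Hausdorff--Young inequality applied to $\nabla f$, pointing in exactly the needed direction ($\ell^{p_0}$ of $\widehat{\nabla f}$ bounded by $L^{q_0}$ of $\nabla f$); no Riesz transform estimates enter here. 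Riesz transforms are only needed in the later sections on homogeneous spaces and general manifolds, where $\nabla f$ cannot be treated as a globally defined vector-valued function.
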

\noindent In the case $p>1$ and $\delta=c$, the constant $C_2(t)$ is to be interpreted as
\[ C_2(t)=\lim_{\delta \to c} \frac{p(c^{1/p}-\delta^{1/p})}{c-\delta} = \frac{1}{c^{1/q}} . \]

Theorems \ref{torusjacksonkerneltheorem} and \ref{torusheatkerneltheorem} with $p=1$ and $c=0$ apply without any assumption on $\mu$. In contrast, the assumption $c>0$ is crucial in the case $p>1$. The most interesting special case $\mu = \mathrm{Vol}$ applies with $c=1$.

Theorem \ref{torusheatkerneltheorem} with $b=0$ applies without any assumption on $\nu$, in which case $C_2(t)=p c^{-1/q}$. The reason why we included the assumption $\nu (B) \ge b$ is to sharpen the dependence of $C_2(t)$ on $p$. For instance, we might be interested in estimating the distance from the empirical measure $\nu_N=N^{-1} \sum_{n=1}^N \delta_{a_n}$ of a finite point set $\{ a_1, a_2, \ldots, a_N \}$ to the Lebesgue measue $\mathrm{Vol}$. If we know that every ball of radius $r \approx N^{-1/d}$ contains at least one of the points $a_1, a_2, \ldots, a_N$, then we can choose $b=N^{-1}$. The choice $t \approx N^{-2/d}$ then leads to $\delta \approx 1$, and in particular, $C_2(t) \approx 1$, thus saving a factor of $p$.

Theorems \ref{torusjacksonkerneltheorem} and \ref{torusheatkerneltheorem} yield comparable estimates in the range $t \approx H^{-2}$. The main difference is that Theorem \ref{torusjacksonkerneltheorem} uses the $\lceil (p+2)/2 \rceil$th power of the Fej\'er kernel, whereas Theorem \ref{torusheatkerneltheorem} uses the heat kernel in the smoothing procedure.

We also treat $W_{\infty}$ on the torus, using the periodization of a compactly supported smooth function in the smoothing procedure. In this case, the assumption $b>0$ is crucial.
\begin{thm}\label{torusinfinitytheorem} Let $\mu, \nu \in \mathcal{P}(\mathbb{R}^d / \mathbb{Z}^d)$, and assume there exist constants $c>0$, $b>0$ and $r>0$ such that $\mu \ge c \mathrm{Vol}$ and $\nu (B) \ge b$ for any closed ball $B \subseteq \mathbb{R}^d / \mathbb{Z}^d$ of radius $r$. For any real $T \ge 5r$, we have
\[ W_{\infty}(\mu, \nu) \le C_1 T + C_2(T) \sum_{\substack{k \in \mathbb{Z}^d \\ k \neq 0}} A_k \frac{|\widehat{\mu}(k) - \widehat{\nu}(k)|}{2 \pi |k|}, \]
where $C_1 = 1+\mathds{1}_{\{ \mu \neq \mathrm{Vol} \}}$,
\[ \begin{split} C_2(T) &= \frac{\log c - \log \delta}{c-\delta} \quad \textrm{with} \quad \delta= \min \left\{ b \frac{\Gamma \left(\frac{d+2}{2} \right) 2^{d+1}}{27 \pi^{d/2} T^d}, c\right\}, \\ A_k &= \left\{ \begin{array}{ll} 1 & \textrm{if } |k| < \frac{d+3}{\pi T}, \\ \displaystyle{\exp \left( - \frac{d+1}{4 \log 2} \left( \log \frac{\pi T |k|}{d+3} \right) \left( \log \frac{e^2 \pi T |k|}{d+2} \right) \right)} & \textrm{if } |k| \ge \frac{d+3}{\pi T} . \end{array} \right. \end{split} \]
\end{thm}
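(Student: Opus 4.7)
The plan is to apply the general framework of Section \ref{approachsection}, which reduces any smoothing inequality of this form to constructing a suitable kernel, specialized to the $W_\infty$ setting. Two natural routes are (i) to pass to the $p \to \infty$ limit of a $W_p$ Ledoux-type duality, or (ii) to work directly with the eikonal Hamilton--Jacobi equation $\partial_s u + |\nabla u| = 0$ associated with the $W_\infty$ Kantorovich cost. The identity $C_2(T) = (\log c - \log \delta)/(c - \delta) = \lim_{p \to \infty} p(c^{1/p} - \delta^{1/p})/(c - \delta)$ strongly suggests route (i): set up the $W_p$ framework with a $p$-independent kernel, derive a $W_p$ estimate with constant $p(c^{1/p} - \delta^{1/p})/(c - \delta)$, and pass to the limit using $W_p \uparrow W_\infty$.

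The key novelty is the choice of kernel. I would construct a radial, compactly supported smooth bump $\psi : \mathbb{R}^d \to [0,\infty)$ with $\int \psi = 1$ and support radius on the order of $T$, and let $\Psi_T(x) = \sum_{n \in \mathbb{Z}^d} \psi(x - n)$ be its periodization. The decay $A_k \approx \exp(-c (\log T|k|)^2)$ together with compact support forces $\psi$ to lie in a Beurling quasi-analytic class. A canonical construction is an infinite convolution of normalized indicators $\mathds{1}_{B(0, r_n)}/|B(0, r_n)|$ with carefully chosen radii $r_n$ satisfying $\sum_n r_n \lesssim T$; with a roughly dyadic choice $r_n \sim T 2^{-n}/n$, the Bessel-function asymptotics of $\widehat{\mathds{1}}_{B(0, r_n)}$ produce a telescoping product that matches exactly the exponent in $A_k$. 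The two regimes in the definition of $A_k$ then separate the trivial bound $|\widehat\psi(k)| \le 1$ from the regime where the explicit decay kicks in, and the constants $(d+3)/(\pi T)$ and $(d+1)/(4 \log 2)$ arise from the Bessel-function crossover and the dyadic structure.

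Plugging this kernel into the framework, the density bounds propagate cleanly: $\mu \ge c\,\mathrm{Vol}$ gives $\mu * \Psi_T \ge c$ because $\Psi_T \ge 0$ integrates to $1$, while the hypothesis $\nu(B) \ge b$ for every ball of radius $r$ gives $(\nu * \Psi_T)(x) \ge b \inf_{y \in B(x, r)} \psi(y - x)$, yielding $\nu * \Psi_T \ge \delta$ with the explicit constant in the statement; the factor $\Gamma((d+2)/2) 2^{d+1}/(27 \pi^{d/2} T^d)$ reflects the minimum value of $\psi$ on a central subball of radius $r$, and the hypothesis $T \ge 5r$ ensures that this subball fits well inside the support of $\psi$. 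The special case $\mu = \mathrm{Vol}$ gives $\mu * \Psi_T = \mu$ exactly, saving one factor of $T$ and explaining the indicator $\mathds{1}_{\{\mu \neq \mathrm{Vol}\}}$ in $C_1$. Combining the density bounds, the Fourier decay of the kernel, and the Hausdorff--Young estimate for $\nabla f$ from Lemma \ref{torushausdorffyounglemma} applied to the Hopf--Lax iterate of a Lipschitz test function produces the Fourier sum on the right-hand side.

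The main obstacle is the explicit kernel construction: producing a compactly supported smooth function whose Fourier transform matches the exact $A_k$ bound (with the specific constants) requires a delicate infinite-convolution argument with careful tracking of multiplicative errors at each dyadic scale. A secondary difficulty is rigorously executing the $p \to \infty$ limit in the Ledoux duality with uniform-in-$p$ constants, in particular verifying that the Hopf--Lax semigroup and the associated Sobolev-dual estimate pass to the degenerate eikonal limit relevant to $W_\infty$.
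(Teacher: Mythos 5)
Your proposal follows essentially the same route as the paper: the kernel is the periodization of a compactly supported bump obtained as an infinite convolution of normalized ball indicators at (purely) dyadic scales $2^{-n}$ (a higher-dimensional Fabius function, Lemma \ref{bumpfunctionlemma}), the $C_1T$ term comes from Lemma \ref{smoothinglemma}, the constant $C_2(T)$ from the $p\to\infty$ limit of the Ledoux duality already packaged in Lemma \ref{ledouxlemma}, and the Fourier sum from pairing the $L^1\to\ell^\infty$ endpoint of Lemma \ref{torushausdorffyounglemma} with the kernel's Fourier decay. The only deviations are cosmetic (the paper uses radii exactly $T2^{-n}$ rather than $T2^{-n}/n$, and the Hopf--Lax machinery stays inside Lemma \ref{ledouxlemma} rather than being invoked on the test function directly).
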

\noindent In the case $\delta=c$, the constant $C_2(T)$ is to be interpreted as
\[ C_2(T)=\lim_{\delta \to c} \frac{\log c - \log \delta}{c-\delta} = \frac{1}{c} . \]

\subsection{Smoothing inequalities on compact Lie groups}\label{liegroupsection}

Let $\overline{z}$ be the complex conjugate of $z \in \mathbb{C}$, and let $\mathbb{C}^{n \times n}$ be the set of all $n \times n$ matrices with complex entries. For any matrix $A \in \mathbb{C}^{n \times n}$, let $A^*$ denote the conjugate transpose, let $\mathrm{Tr} (A)$ denote the trace, and let $\| A \|_{\mathrm{HS}} = \sqrt{\mathrm{Tr}(A A^*)}$ denote the Hilbert--Schmidt norm.

Let $G$ be a compact, connected Lie group of dimension $1 \le d < \infty$. Let $R^+$ denote the set of positive roots, $\rho^+=\sum_{w \in R^+} w/2$ the half-sum of positive roots, $v=\min_{w \in R^+} |w|$ and $r_G=d-2|R^+|$ the rank of $G$. Let $\widehat{G}$ be the unitary dual of $G$ with $\pi_0=1$ denoting the trivial representation. Let $d_{\pi}$ be the dimension, $w_{\pi}$ the highest weight and $\lambda_{\pi}$ the Laplace eigenvalue of $\pi \in \widehat{G}$. In particular, $\pi: G \to \mathbb{C}^{d_{\pi} \times d_{\pi}}$ and each entry $\pi_{ij}$ of $\pi$ satisfies $\Delta \pi_{ij} = - \lambda_{\pi} \pi_{ij}$. The spectrum of $G$ as a Riemannian manifold is thus $\lambda_{\pi}$ repeated with multiplicity $d_{\pi}^2$, $\pi \in \widehat{G}$, and $\{ d_{\pi}^{1/2} \pi_{ij} : \pi \in \widehat{G}, 1 \le i,j \le d_{\pi} \}$ is an orthonormal basis in $L^2 (G)$. The Fourier transform of $f \in L^1(G)$ resp.\ $\mu \in \mathcal{P}(G)$ is defined as $\widehat{f}(\pi) = \int_G f(x) \pi(x)^* \, \mathrm{d}\mathrm{Vol}(x)$ resp.\ $\widehat{\mu}(\pi) = \int_G \pi(x)^* \, \mathrm{d}\mu (x)$, $\pi \in \widehat{G}$. We refer to Bourbaki \cite{BOU} for the general theory of Lie groups, and to Section \ref{liegroupproofsection} for further notation and conventions.

The first author \cite{BOR1} proved a sharp smoothing inequality on compact, connected Lie groups for $W_p$, $0<p \le 1$. As a generalization of Theorems \ref{torusjacksonkerneltheorem} and \ref{torusheatkerneltheorem}, we now present the case $1 \le p < \infty$ in two slightly different formulations. Theorem \ref{liegroupjacksonkerneltheorem} with $p=1$ reduces to the result of the first author \cite{BOR1}, whereas Theorem \ref{liegroupheatkerneltheorem} with $p=2$ is implicit in \cite{BOR2} and \cite{BS}. All other cases, including $p>2$ are new.
\begin{thm}\label{liegroupjacksonkerneltheorem} Let $G$ be a compact, connected Lie group of dimension $1 \le d < \infty$. Let $\mu, \nu \in \mathcal{P}(G)$, and assume there exists a constant $c \ge 0$ such that $\mu \ge c \mathrm{Vol}$. Let $1 \le p < \infty$ and $1<q \le \infty$ be such that $1/p+1/q=1$. Assume either $p=1$ or $c>0$. For any real $H>H_0$, we have
\[ W_p (\mu, \nu) \le \frac{C_1}{H-H_0} + C_2 \Bigg( \sum_{\substack{\pi \in \widehat{G} \\ 0<|w_{\pi}| < H}} d_{\pi}^2 \frac{\| \widehat{\mu}(\pi) - \widehat{\nu}(\pi) \|_{\mathrm{HS}}^{q_0}}{(d_{\pi} \lambda_{\pi})^{q_0 /2}} \Bigg)^{1/q_0} , \]
where $q_0= \min \{ q, 2 \}$, $H_0=2 |\rho^+|+v$,
\[ \begin{split} C_1 &= (1+(1-c)^{1/p}) \frac{8 |\rho^+|+4v}{v} (r_G+3) 121^{1/p} 2^{p/(2r_G+2)+d/p} 15^{r_G/p}, \\ C_2 &= \left\{ \begin{array}{ll} 1 & \textrm{if } p=1, \\ pc^{-1/q} & \textrm{if } p>1. \end{array} \right. \end{split} \]
\end{thm}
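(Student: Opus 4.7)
The plan is to follow the smoothing template already used for the torus in Theorem \ref{torusjacksonkerneltheorem}, replacing the Fej\'er kernel by a Jackson-type kernel built from the representation theory of $G$. Fix a parameter $h$ and set
\[ D_h(x) = \sum_{\pi \in \widehat{G},\, |w_\pi| < h} d_\pi \, \mathrm{Tr}(\pi(x)), \]
a positive-definite central function whose Fourier support lies in $\{|w_\pi| < h\}$. With $m = 1$ if $p \le 2$ and $m = \lceil (p+2)/4 \rceil$ if $p > 2$, define the normalized kernel
\[ K = \frac{|D_h|^{2m}}{\int_G |D_h|^{2m}\,\mathrm{d}\mathrm{Vol}}. \]
This is a non-negative central kernel of integral one, and the rule for adding highest weights under iterated tensor products (together with the $\rho^+$ and $v$ shifts coming from the Weyl character formula) shows that $K$ is Fourier-supported in highest weights of norm at most $2mh + O(m(|\rho^+|+v))$. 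Choosing $h$ accordingly produces the shift $H_0 = 2|\rho^+|+v$ in the statement. Set $\mu_H = \mu * K$, $\nu_H = \nu * K$, and split
\[ W_p(\mu,\nu) \le W_p(\mu,\mu_H) + W_p(\mu_H,\nu_H) + W_p(\nu_H,\nu). \]

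The outer two pieces are controlled by the $p$-th moment $\int_G \rho(e,x)^p K(x)\,\mathrm{d}\mathrm{Vol}(x)$ of the kernel; the factor $(1-c)^{1/p}$ in $C_1$ arises in the $\mu$-piece by transporting only the excess mass after the decomposition $\mu = c\,\mathrm{Vol} + (\mu - c\,\mathrm{Vol})$. To estimate this moment I would combine the pointwise bound $|D_h|^{2m} \le D_h(e)^{2m-2}|D_h|^2$, Plancherel's identity for $\int_G |D_h|^2\,\mathrm{d}\mathrm{Vol}$ and the closed form $D_h(e) = \sum_{|w_\pi|<h} d_\pi^2$, Weyl's dimension formula, and a ball-volume argument bounding the $L^2$ mass of $D_h$ outside a geodesic ball of radius $\asymp 1/h$. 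Tracking all exponents in $h$, $p$, $d$ and $r_G$ should recover $C_1$: the various factors $(r_G+3)$, $2^{d/p}$, $15^{r_G/p}$, $2^{p/(2r_G+2)}$ and $121^{1/p}$ come from Weyl's dimension and integration formulas, ball-volume growth on $G$, and the choice $m \sim p$.

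For the middle piece I would invoke the general framework of Section \ref{approachsection}. Since $\mu_H \ge c\,\mathrm{Vol}$ whenever $\mu \ge c\,\mathrm{Vol}$, the Hopf--Lax / Hamilton--Jacobi duality gives, for $p > 1$,
\[ W_p(\mu_H, \nu_H) \le p\, c^{-1/q}\, \sup_{\|\nabla f\|_q \le 1}\, \left| \int_G f\,\mathrm{d}(\mu_H - \nu_H) \right|, \]
while for $p = 1$ the Kantorovich formula \eqref{kantorovichduality} gives the analogous bound with constant $1$. Parseval on $G$ expands the integral inside the supremum as $\sum_\pi d_\pi \,\mathrm{Tr}\bigl(\widehat{K}(\pi)(\widehat{\mu}(\pi)-\widehat{\nu}(\pi))^* \widehat{f}(\pi)\bigr)$, and only $\pi$ with $|w_\pi| < H$ survive thanks to the Fourier support of $K$. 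H\"older in $\pi$ with exponents $(q_0, q_0')$, combined with the Riesz-transform estimates linking $\widehat{\nabla f}$ to $\sqrt{\lambda_\pi}\,\widehat{f}$ and the Hausdorff--Young inequality of Lemma \ref{liegrouphausdorffyounglemma} applied to $\nabla f \in L^q$, converts the constraint $\|\nabla f\|_q \le 1$ into an $\ell^{q_0}$-bound on the Fourier side and produces the claimed sum with weights $(d_\pi \lambda_\pi)^{-q_0/2}$.

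The main obstacle I expect is the bookkeeping in the second paragraph: extracting a $p$-th moment bound for the Jackson kernel $|D_h|^{2m}$ on a general compact Lie group with every constant kept explicit and uniform in $G$, while simultaneously verifying that the tensor-product highest-weight arithmetic costs exactly the shift $H_0 = 2|\rho^+|+v$ claimed in the statement rather than something strictly worse. For $p > 2$ this moment estimate is genuinely new: on the torus its analog is simply the $\lceil (p+2)/2 \rceil$-th power of the Fej\'er kernel, but on a non-abelian $G$ it requires pushing Weyl's dimension and integration formulas quantitatively and obtaining sharp pointwise lower bounds on $|D_h|$ near the identity.
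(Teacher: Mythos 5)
Your overall architecture (smooth with a nonnegative central kernel of restricted Fourier support, bound the outer terms by the $p$-th moment of the kernel, bound the middle term by Lemma \ref{ledouxlemma} together with the Hausdorff--Young inequality of Lemma \ref{liegrouphausdorffyounglemma}) is the same as the paper's, and the second half of your argument is essentially correct -- except that on a Lie group no Riesz transform is needed, since the exact identity $\|\widehat{\nabla f}(\pi)\|_{\mathrm{HS}}^2=\lambda_\pi\|\widehat f(\pi)\|_{\mathrm{HS}}^2$ holds. The genuine gap is in your moment estimate for the kernel. The inequality $|D_h|^{2m}\le D_h(e)^{2m-2}|D_h|^2$ followed by Plancherel (or by an $L^2$-localization of $D_h$ outside a ball of radius $\asymp 1/h$) cannot produce $D_p(K)\lesssim 1/H$: it discards exactly the additional off-diagonal decay that the higher powers are supposed to supply. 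Already on the circle, $\int|D_h|^2|x|^p\,\mathrm{d}x\asymp 1$ for $p\ge 2$, so this route gives $D_p(K)^p\lesssim D_h(e)^{2m-2}/\!\int|D_h|^{2m}\asymp h^{-1}$, i.e.\ $D_p(K)\lesssim H^{-1/p}$ rather than $H^{-1}$ -- this is precisely the difference between the Fej\'er kernel (whose $p$-th moment is only $H^{-1/p}$ for $p\ge2$) and its higher powers, for which one must use the pointwise decay $|D_h(x)|^{2m}\lesssim \rho(x)^{-2m}$ rather than a sup bound on $2m-2$ of the factors. Worse, for non-abelian $G$ (or already for $\mathbb{T}^d$, $d\ge2$) the ``ball'' Dirichlet kernel $D_h=\sum_{|w_\pi|<h}d_\pi\chi_\pi$ has only weak, oscillatory decay of order $\rho^{-(r_G+1)/2}$ away from the identity, and no explicit elementary bound; a constant proportion (up to a single power of $h$) of its $L^2$ mass lives outside any ball of radius $\asymp1/h$, so the ball-volume localization you invoke is false at the strength you need.

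This is why the paper does not take powers of the naive Dirichlet kernel. Instead it builds the kernel on the maximal torus by Poisson summation of $(\widehat g)^2$, where $g$ is the bump function of Lemma \ref{bumpfunctionlemma} with super-polynomially decaying Fourier transform (so all pointwise decay is explicit), multiplied by the Fej\'er-type factor $\delta_G(t^{H'})/\delta_G(t)$ to absorb the Weyl denominator; the product is a Weyl-invariant trigonometric polynomial of degree $<H$, hence a central trigonometric polynomial on $G$ by the Cartwright--Kucharski isomorphism, and the moment is computed on $\mathfrak t$ via the Weyl integration formula as an explicit Gaussian-type integral. Incidentally, the shift $H_0=2|\rho^+|+v$ arises from the degree $\le 2|\rho^+|(H'-1)$ of $\delta_G(t^{H'})/\delta_G(t)$ plus the degree $H'v$ of the Poisson-summed bump, not from highest-weight arithmetic of tensor powers of $D_h$ (whose Fourier support needs no $\rho^+$ correction at all). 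As written, your plan would not yield the stated rate $C_1/(H-H_0)$ for any $p>1$, let alone the explicit constant $C_1$.
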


\begin{thm}\label{liegroupheatkerneltheorem} Let $G$ be a compact, connected Lie group of dimension $1 \le d < \infty$. Let $\mu, \nu \in \mathcal{P}(G)$, and assume there exist constants $c \ge 0$, $b \ge 0$ and $r>0$ such that $\mu \ge c \mathrm{Vol}$ and $\nu (B) \ge b$ for any closed geodesic ball $B \subseteq G$ of radius $r$. Let $1 \le p < \infty$ and $1<q \le \infty$ be such that $1/p+1/q=1$. Assume either $p=1$ or $c>0$. For any real $t>0$, we have
\[ W_p (\mu, \nu) \le C_1 t^{1/2} + C_2(t) \Bigg( \sum_{\substack{\pi \in \widehat{G} \\ \pi \neq \pi_0}} d_{\pi}^2 e^{-\lambda_{\pi} q_0 t} \frac{\| \widehat{\mu}(\pi) - \widehat{\nu}(\pi)\|_{\mathrm{HS}}^{q_0}}{(d_{\pi} \lambda_{\pi})^{q_0/2}} \Bigg)^{1/q_0} , \]
where $q_0=\min \{ q, 2 \}$,
\[ \begin{split} C_1 &= \left\{ \begin{array}{ll} 2^{1/2} (1+(1-c)^{1/p}) d^{1/2} & \textrm{if } 1 \le p \le 2, \\ (1+(1-c)^{1/p}) (2d+p)^{1/2} & \textrm{if } p>2, \end{array} \right. \\ C_2(t) &=\left\{ \begin{array}{ll} 1 & \textrm{if } p=1, \\ \frac{p(c^{1/p}-\delta^{1/p})}{c-\delta} & \textrm{if } p>1 \end{array} \right. \qquad \textrm{with} \qquad \delta=\min \left\{ b \frac{e^{-r^2 /(4t)}}{(4 \pi t)^{d/2}} , c \right\} . \end{split} \]
\end{thm}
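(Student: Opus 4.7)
The plan is to follow the general framework of Section \ref{approachsection}, adapting the proof of Theorem \ref{torusheatkerneltheorem} from the flat torus to a compact connected Lie group $G$ by replacing the Fourier series with the Peter--Weyl decomposition and invoking the Hausdorff--Young inequality for gradients of Lemma \ref{liegrouphausdorffyounglemma}. I would express $W_p(\mu, \nu)$ via Kantorovich duality \eqref{kantorovichduality} for $p=1$ or the Ledoux--Hopf--Lax variational formulation for $p>1$, which pairs a test function $f$ against $\mu-\nu$ under a control on $\|\nabla f\|_q$. For any such $f$, the decomposition
\[ \int f \, \mathrm{d}\mu - \int f \, \mathrm{d}\nu = \int (f - P_t f) \, \mathrm{d}\mu + \int P_t f \, \mathrm{d}(\mu - \nu) - \int (f - P_t f) \, \mathrm{d}\nu \]
splits the problem, where $P_t = e^{t \Delta}$ is the heat semigroup with kernel $k_t$. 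The outer terms yield the $C_1 t^{1/2}$ contribution through the $p$-th moment bound $\int_G \rho(\mathrm{id}, x)^p k_t(x) \, \mathrm{d}\mathrm{Vol}(x) \le C_1^p t^{p/2}$, which holds on $G$ with the bi-invariant metric thanks to a Gaussian comparison for $k_t$ together with the nonnegative Ricci curvature of compact Lie groups.

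The principal term $\int P_t f \, \mathrm{d}(\mu - \nu)$ is then bounded by a weighted dual Sobolev norm of $\mu-\nu$. The lower bound $\mu \ge c \mathrm{Vol}$, combined with the Gaussian lower bound on $k_t$ applied to the assumption $\nu(B) \ge b$ on geodesic balls of radius $r$, yields $\mu * k_t, \nu * k_t \ge \delta \mathrm{Vol}$ with $\delta = \min\{b e^{-r^2/(4t)}/(4\pi t)^{d/2}, c\}$ as in the statement. The Bobkov--Ledoux-style comparison between $W_p$ and the dual Sobolev norm under these two-sided density bounds then produces the prefactor $C_2(t) = p(c^{1/p} - \delta^{1/p})/(c - \delta)$, which arises from applying the mean value theorem to $s \mapsto s^{1/p}$ on $[\delta, c]$ while bounding $\int h \, \mathrm{d}(\mu - \nu)$ against $\|\nabla h\|_q$. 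What remains is to estimate the $L^q$-norm of $\nabla P_t g$ in terms of the Fourier data of $h = \mu - \nu$, where $g = (-\Delta)^{-1/2} h$ is an inverse gradient.

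Expanding in Peter--Weyl coefficients, $\nabla P_t (-\Delta)^{-1/2} h$ has matrix Fourier coefficients of magnitude comparable to $e^{-\lambda_\pi t} \|\widehat{\mu}(\pi) - \widehat{\nu}(\pi)\|_{\mathrm{HS}} / \lambda_\pi^{1/2}$. For $1 \le p \le 2$ (so $q \ge 2$ and $q_0 = 2$), Plancherel's theorem on $G$ gives the desired bound directly, producing exactly the claimed Fourier sum. For $p > 2$ (so $1 < q < 2$ and $q_0 = q$), Plancherel is not available and one must instead invoke the non-abelian Hausdorff--Young inequality for gradients on $G$ of Lemma \ref{liegrouphausdorffyounglemma}; this is the main obstacle, since a scalar Hausdorff--Young inequality combined with a Riesz transform estimate on $L^q$ for non-abelian $G$ would not suffice, and the lemma packages the required vector-valued Hausdorff--Young inequality with the right constants. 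Assembling the $C_1 t^{1/2}$ smoothing error, the prefactor $C_2(t)$, and this Fourier-side bound yields the stated estimate.
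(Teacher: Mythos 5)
Your overall architecture matches the paper's: smooth with the heat kernel, verify the two-sided density bounds $P_t*\mu \ge c\,\mathrm{Vol}$ and $P_t*\nu \ge \delta\,\mathrm{Vol}$ via the Gaussian lower bound (valid because compact Lie groups have nonnegative Ricci curvature), apply the Ledoux-type comparison (Lemma \ref{ledouxlemma}) to the smoothed measures to get the prefactor $C_2(t)$, and control the resulting dual Sobolev norm on the Fourier side, using Parseval when $q_0=2$ and the vector-valued Hausdorff--Young inequality for $\nabla f$ (Lemma \ref{liegrouphausdorffyounglemma}) when $p>2$. You also correctly identify that the parallelizability of $G$ lets one bypass Riesz transform estimates here.

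The one step that does not survive scrutiny as written is your opening decomposition $\int f\,\mathrm{d}\mu-\int f\,\mathrm{d}\nu=\int(f-P_tf)\,\mathrm{d}\mu+\int P_tf\,\mathrm{d}(\mu-\nu)-\int(f-P_tf)\,\mathrm{d}\nu$ as the mechanism for producing the $C_1t^{1/2}$ term. This works for $p=1$, where Kantorovich duality is a linear pairing against $1$-Lipschitz $f$ and $|f-P_tf|\le\|f\|_{\mathrm{Lip}}D_1(P_t)$ pointwise. For $p>1$ the dual formulation is the nonlinear Hopf--Lax expression $\tfrac1pW_p^p(\mu,\nu)=\sup_f\bigl(\int Q_1f\,\mathrm{d}\nu-\int f\,\mathrm{d}\mu\bigr)$, the test functions are only controlled through $\|\nabla f\|_q$ rather than a pointwise Lipschitz bound, and the $p$-th moment $\int_G\rho(x,y)^pP_t(x,y)\,\mathrm{d}\mathrm{Vol}(x)$ cannot be injected through such a linear splitting. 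The paper instead uses the triangle inequality for the metric $W_p$ together with the explicit coupling $\vartheta=K\cdot(\mathrm{Vol}\otimes\nu)$ of Lemma \ref{smoothinglemma}, which converts the $p$-th moment (the dispersion rate $D_p(P_t)$, estimated in Lemma \ref{heatkerneldispersionlemma}) into $W_p(P_t*\nu,\nu)\le D_p(P_t)$; the refined constant $1+(1-c)^{1/p}$ then comes from writing $\mu=(1-c)\mu'+c\,\mathrm{Vol}$ and using convexity of the optimal cost. With that substitution your argument coincides with the paper's proof, which otherwise runs exactly as in Theorem \ref{torusheatkerneltheorem} with the Peter--Weyl expansion of $P_t$ in place of the classical Fourier series.
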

\noindent In the case $p>1$ and $\delta=c$, the constant $C_2(t)$ is to be interpreted as
\[ C_2(t)=\lim_{\delta \to c} \frac{p(c^{1/p}-\delta^{1/p})}{c-\delta} = \frac{1}{c^{1/q}} . \]

Theorems \ref{liegroupjacksonkerneltheorem} and \ref{liegroupheatkerneltheorem} with $p=1$ and $c=0$ apply without any assumption on $\mu$, but $c>0$ is crucial in the case $p>1$. Theorem \ref{liegroupheatkerneltheorem} with $b=0$ and $C_2(t)=pc^{-1/q}$ applies without any assumption on $\nu$, but the value of $C_2(t)$ can be sharpened if $b>0$. Theorems \ref{liegroupjacksonkerneltheorem} and \ref{liegroupheatkerneltheorem} yield comparable results in the range $t \approx H^{-2}$. Recall that $\lambda_{\pi}=|w_{\pi}|^2 + 2 \langle w_{\pi}, \rho^+ \rangle$ and in particular, $|w_{\pi}|^2 \le \lambda_{\pi} \le |w_{\pi}|^2 + O(|w_{\pi}|)$.

\subsection{Smoothing inequalities on compact homogeneous spaces}\label{homogeneoussection}

Let $M$ be a compact, connected homogeneous space of dimension $1 \le d < \infty$. Let $0=\lambda_0<\lambda_1<\lambda_2< \cdots$ denote the list of distinct eigenvalues of $-\Delta$, let $H_{\ell}=\{ \phi: M \to \mathbb{C} \textrm{ smooth} : \Delta \phi = -\lambda_{\ell} \phi \}$ be the Laplace eigenspaces, and let $d_{\ell}=\mathrm{dim}(H_{\ell})$. The spectrum of $M$ is thus $\lambda_{\ell}$ repeated $d_{\ell}$ times, $\ell \ge 0$, and $L^2(M)=\oplus_{\ell=0}^{\infty} H_{\ell}$ is an orthogonal direct sum. Let $\{ \phi_{\ell,m} : 1 \le m \le d_{\ell} \}$ be an arbitrary orthonormal basis in $H_{\ell}$. The Fourier transform of $f \in L^1(M)$ resp.\ $\mu \in \mathcal{P}(M)$ is defined as $\widehat{f}(\ell,m)=\int_M f \overline{\phi_{\ell,m}} \, \mathrm{d} \mathrm{Vol}$ resp.\ $\widehat{\mu}(\ell,m)=\int_M \overline{\phi_{\ell,m}} \, \mathrm{d}\mu$.

Theorems \ref{torusheatkerneltheorem} and \ref{liegroupheatkerneltheorem} can be further generalized to compact homogeneous spaces. The case $p=2$ is implicit in \cite{BOR2} and \cite{BS}, whereas $p \neq 2$ is new.
\begin{thm}\label{homogeneousheatkerneltheorem} Let $M$ be a compact, connected homogeneous space of dimension $1 \le d < \infty$. Let $\mu, \nu \in \mathcal{P}(M)$, and assume there exist constants $c \ge 0$, $b \ge 0$ and $r>0$ such that $\mu \ge c \mathrm{Vol}$ and $\nu(B) \ge b$ for any closed geodesic ball $B \subseteq M$ of radius $r$. Let $1 \le p < \infty$ and $1<q \le \infty$ be such that $1/p+1/q=1$. Assume either $p=1$ or $c>0$. For any real $t>0$, we have
\[ W_p (\mu, \nu) \le C_1 t^{1/2} + C_2(t) \bigg( \sum_{\ell=1}^{\infty} d_{\ell} e^{-\lambda_{\ell} q_0 t} \bigg( \frac{\sum_{m=1}^{d_{\ell}} |\widehat{\mu}(\ell,m) - \widehat{\nu}(\ell, m)|^2}{d_{\ell} \lambda_{\ell}} \bigg)^{q_0/2} \bigg)^{1/q_0}, \]
where $q_0 = \min \{ q, 2 \}$,
\[ \begin{split} C_1 &= \left\{ \begin{array}{ll} 2^{1/2} (1+(1-c)^{1/p}) d^{1/2} & \textrm{if } 1 \le p \le 2, \\ (1+(1-c)^{1/p}) (2d+p)^{1/2} & \textrm{if } p>2, \end{array} \right. \\ C_2(t) &=\left\{ \begin{array}{ll} 1 & \textrm{if } p=1, \\ \frac{p(c^{1/p}-\delta^{1/p})}{c-\delta} & \textrm{if } 1<p \le 2, \\ 2(p-1) \frac{p(c^{1/p}-\delta^{1/p})}{c-\delta} & \textrm{if } p>2 \end{array} \right. \qquad \textrm{with} \qquad \delta=\min \left\{ b \frac{e^{-r^2 /(4t)}}{(4 \pi t)^{d/2}} , c \right\} . \end{split} \]
\end{thm}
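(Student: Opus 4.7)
The plan is to run the same recipe as in the proof of Theorem \ref{liegroupheatkerneltheorem}, i.e.\ heat kernel smoothing combined with the Kantorovich--Hopf--Lax duality framework of Section \ref{approachsection}, but with the final Fourier step carried out by means of Lemma \ref{homogeneoushausdorffyounglemma}, the gradient Hausdorff--Young inequality for homogeneous spaces rather than for Lie groups.

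\textbf{Step 1 (smoothing).} Let $P_t$ be the Laplace--Beltrami heat semigroup on $M$ and set $\mu_t := P_t\mu$, $\nu_t := P_t \nu$. All compact homogeneous spaces have non-negative Ricci curvature, so the Gaussian upper bound $p_t(x,y) \le (4\pi t)^{-d/2} e^{-\rho(x,y)^2/(4t)}$ on the heat kernel holds; together with $\nu(B) \ge b$ for balls of radius $r$ this gives $\nu_t \ge \delta \mathrm{Vol}$ with $\delta$ as stated, while heat invariance of $\mathrm{Vol}$ yields $\mu_t \ge c\mathrm{Vol}$. Writing $\mu = c\mathrm{Vol} + (1-c)\widetilde{\mu}$ and transporting mass along Brownian trajectories produces the bounds $W_p(\mu,\mu_t) \le (1-c)^{1/p} C t^{1/2}$ and $W_p(\nu,\nu_t) \le C t^{1/2}$ with the $C$ matching the $C_1$ in the theorem. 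The triangle inequality then accounts for the first term $C_1 t^{1/2}$ and reduces the problem to controlling $W_p(\mu_t,\nu_t)$.

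\textbf{Steps 2--3 (duality and Fourier).} Since $\mu_t \ge c\mathrm{Vol}$ and $\nu_t \ge \delta\mathrm{Vol}$, the Hopf--Lax / Hamilton--Jacobi argument of Ledoux \cite{LE1} recalled in Section \ref{approachsection} yields
\[ W_p(\mu_t,\nu_t) \le \frac{p(c^{1/p}-\delta^{1/p})}{c-\delta} \, \|\nabla \phi_t\|_{L^q(\mathrm{Vol})}, \]
where $\phi_t$ is the mean-zero solution of $-\Delta \phi_t = \rho_{\mu_t}-\rho_{\nu_t}$. Expanding in the Laplace eigenbasis gives $\phi_t = \sum_{\ell \ge 1} \lambda_\ell^{-1} e^{-\lambda_\ell t}\sum_{m=1}^{d_\ell} (\widehat{\mu}(\ell,m)-\widehat{\nu}(\ell,m))\phi_{\ell,m}$, and feeding this into Lemma \ref{homogeneoushausdorffyounglemma} produces exactly the spectral sum
\[ \bigg(\sum_{\ell=1}^{\infty} d_\ell e^{-\lambda_\ell q_0 t} \bigg(\frac{\sum_{m=1}^{d_\ell}|\widehat{\mu}(\ell,m)-\widehat{\nu}(\ell,m)|^2}{d_\ell \lambda_\ell}\bigg)^{q_0/2}\bigg)^{1/q_0} \]
with $q_0=\min\{q,2\}$; the exponent $2$ at the $L^2$ endpoint is where Hausdorff--Young saturates. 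For $1 \le p \le 2$ this produces the theorem directly, while for $p > 2$ Lemma \ref{homogeneoushausdorffyounglemma} carries an additional factor $2(p-1)$ coming from the norm of the vector-valued Riesz transform on $L^q(M)$, which is exactly the extra factor appearing in $C_2(t)$.

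\textbf{Main obstacle.} The delicate step is the gradient Hausdorff--Young estimate (Step 3). Unlike on a Lie group, where matrix coefficients are adapted to translations and one has a clean Peter--Weyl computation, a homogeneous space $M=G/K$ only carries the orthogonal decomposition $L^2(M)=\bigoplus_\ell H_\ell$ into Laplace eigenspaces, and the gradient does not respect this decomposition. One has to lift to the ambient group $G$, average over $K$, and control the vector-valued Riesz transform on $L^q(M)$; for $1 \le p \le 2$ this is an $L^2$ endpoint and is free, but for $p>2$ it is genuinely non-trivial and is responsible for the additional $2(p-1)$ in $C_2(t)$ that distinguishes this statement from its Lie group counterpart. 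Once Lemma \ref{homogeneoushausdorffyounglemma} is in place, the rest of the argument is parallel to the proof of Theorem \ref{liegroupheatkerneltheorem}.
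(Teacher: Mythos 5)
Your overall architecture is the one the paper uses: its proof of this theorem is literally ``entirely analogous'' to those of Theorems \ref{torusheatkerneltheorem} and \ref{liegroupheatkerneltheorem}, with Lemma \ref{smoothinglemma} plus the dispersion bound of Lemma \ref{heatkerneldispersionlemma} (with $A=0$, as homogeneous spaces have nonnegative Ricci curvature) producing the $C_1 t^{1/2}$ term, the Gaussian \emph{lower} bound $P_t(x,y)\ge (4\pi t)^{-d/2}e^{-\rho(x,y)^2/(4t)}$ from \eqref{heatkernellowerbound} producing $P_t*\nu\ge\delta\,\mathrm{Vol}$, Lemma \ref{ledouxlemma} producing the factor $p(c^{1/p}-\delta^{1/p})/(c-\delta)$, and Lemma \ref{homogeneoushausdorffyounglemma} supplying the Fourier input. (You wrote ``Gaussian upper bound $\ldots\le$''; the inequality you actually need, and use, is the lower bound.) I would also note that Lemma \ref{homogeneoushausdorffyounglemma} is not proved by lifting to $G$ and averaging over $K$: it rests on the constancy of $\sum_m|\phi_{\ell,m}(x)|^2=d_\ell$ (transitivity of the isometry group), Riesz--Thorin interpolation, and the Ba\~{n}uelos--Os\c{e}kowski Riesz transform bound.

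The one step that does not work as written is your duality step. Lemma \ref{ledouxlemma} bounds $W_p(P_t*\mu,P_t*\nu)$ by the dual Sobolev norm \eqref{dualsobolevnorm}, i.e.\ by $\sup\{|\int f\,\mathrm{d}(P_t*\mu-P_t*\nu)| : \|\nabla f\|_q\le 1\}$. If you express this through the potential $\phi_t$ with $-\Delta\phi_t$ equal to the density difference, H\"older gives $\int\langle\nabla f,\nabla\phi_t\rangle\,\mathrm{d}\mathrm{Vol}\le\|\nabla f\|_q\|\nabla\phi_t\|_p$, so the relevant norm is $\|\nabla\phi_t\|_{L^p}$, not $\|\nabla\phi_t\|_{L^q}$; and, more seriously, Lemma \ref{homogeneoushausdorffyounglemma} cannot then be ``fed'' $\phi_t$, since its inequalities bound spectral sums with exponent $p\ge 2$ from above by $\|\nabla f\|_q$ with $q\le 2$, whereas your route needs the reverse implication (an upper bound on $\|\nabla\phi_t\|_p$ by an $\ell^{q}$-type spectral sum), i.e.\ the dual Hausdorff--Young inequality, which is not what the lemma states. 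The paper keeps the test function instead: write $\int f\,\mathrm{d}(P_t*\mu-P_t*\nu)=\sum_{\ell}e^{-\lambda_\ell t}\sum_{m}\overline{\widehat f(\ell,m)}\,(\widehat\mu(\ell,m)-\widehat\nu(\ell,m))$, apply Cauchy--Schwarz in $m$ and H\"older in $\ell$ with exponents $p_0=\max\{p,2\}$ and $q_0=\min\{q,2\}$, and bound the $f$-factor by $2(p_0-1)\|\nabla f\|_{q_0}\le 2(p_0-1)\|\nabla f\|_q$ via \eqref{homogeneousHY3} when $p>2$ (this is where your factor $2(p-1)$ correctly originates), and by the exact Parseval identity for $\|\nabla f\|_2$ when $p\le 2$, which is why no such factor appears in that range. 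With the duality step rewritten in this form, and the $p=1$ case handled through the Kantorovich duality of Lemma \ref{dualsobolevnormlemma}, your argument coincides with the paper's.
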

\noindent In the case $p>1$ and $\delta=c$, the fraction $\frac{p(c^{1/p} - \delta^{1/p})}{c-\delta}$ in the definition of the constant $C_2(t)$ is to be interpreted as
\[ \lim_{\delta \to c} \frac{p(c^{1/p} - \delta^{1/p})}{c-\delta} = \frac{1}{c^{1/q}} . \]
Note that $\sum_{m=1}^{d_{\ell}} |\widehat{\mu}(\ell,m) - \widehat{\nu} (\ell,m)|^2$ does not depend on the choice of the orthonormal basis $\phi_{\ell,m}$.

As an analogue of Theorems \ref{torusjacksonkerneltheorem} and \ref{liegroupjacksonkerneltheorem}, we also present a slightly different formulation in the special case of the Euclidean unit sphere $\mathbb{S}^d=\{ x \in \mathbb{R}^{d+1} : |x|=1 \}$. We use the standard normalization $\mathrm{diam}(\mathbb{S}^d)=\pi$. Note that on the sphere we have $\lambda_{\ell}=\ell (\ell+d-1)$ and $d_{\ell}=\binom{\ell+d}{d} - \binom{\ell+d-2}{d}$. The case $p=1$ in a weaker form is due to Grabner and Tichy \cite[Theorem 1]{GT}, whereas $p>1$ is new.
\begin{thm}\label{sphereprojectionkerneltheorem} Let $\mu, \nu \in \mathcal{P}(\mathbb{S}^d)$, $d \ge 2$, and assume there exists a constant $c \ge 0$ such that $\mu \ge c \mathrm{Vol}$. Let $1 \le p < \infty$ and $1<q \le \infty$ be such that $1/p+1/q=1$. Assume either $p=1$ or $c>0$. For any integer $L \ge 1$, we have
\[ W_p(\mu, \nu) \le \frac{C_1}{L} + C_2 \Bigg( \sum_{\ell=1}^L d_{\ell} \left( \frac{\sum_{m=1}^{d_{\ell}} |\widehat{\mu}(\ell,m) - \widehat{\nu}(\ell,m)|^2}{d_{\ell} \lambda_{\ell}} \right)^{q_0/2} \Bigg)^{1/q_0} , \]
where $q_0=\min \{ q, 2 \}$,
\[ \begin{split} C_1 &= 8 (1+(1-c)^{1/p}) 12^{1/p} d e^{d/p} \left( \frac{p+5}{2} \right)^{1+d/p}, \\ C_2 &=\left\{ \begin{array}{ll} 1 & \textrm{if } p=1, \\ p c^{-1/q} & \textrm{if } 1<p \le 2, \\ 2(p-1)p c^{-1/q} & \textrm{if } p>2. \end{array} \right. \end{split} \]
\end{thm}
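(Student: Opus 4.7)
The plan is to follow the general framework outlined in Section \ref{approachsection}, which reduces the estimation of $W_p(\mu,\nu)$ to bounding $|\int_{\mathbb{S}^d} f\, \mathrm{d}\mu - \int_{\mathbb{S}^d} f\, \mathrm{d}\nu|$ in terms of $\|\nabla f\|_q$ (via Kantorovich duality for $p=1$ and via the Hopf--Lax semigroup together with a dual Sobolev comparison for $p>1$), convolved against a suitable smoothing kernel. The target bound has the form $C_1/L$ plus a Fourier side term, so the natural choice of smoothing kernel is a polynomial (band-limited) kernel $K_L$ of degree at most $L$ on $\mathbb{S}^d$. Given the exponent $(p+5)/2)^{1+d/p}$ in the constant $C_1$, the kernel to use is a power of a Jackson-type kernel, namely the $\lceil (p+3)/2 \rceil$th power of a suitably normalised Fej\'er/Jackson kernel built from Gegenbauer polynomials, analogous to the torus construction in Theorem \ref{torusjacksonkerneltheorem} and to the Lie group construction in Theorem \ref{liegroupjacksonkerneltheorem}. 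Such a kernel is positive, has unit mean, is band-limited in the spherical harmonic basis, and has $p$-th moment $\int_{\mathbb{S}^d} \rho(x,y)^p K_L(x,y)\, \mathrm{d}\mathrm{Vol}(y) \lesssim L^{-p}$ with an explicit polynomial dependence on $d$ and $p$.

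Writing $f_L = f \ast K_L$ for convolution against the zonal kernel $K_L$, I would split
\[ \int_{\mathbb{S}^d} f\, \mathrm{d}(\mu - \nu) = \int_{\mathbb{S}^d} (f - f_L)\, \mathrm{d}(\mu - \nu) + \int_{\mathbb{S}^d} f_L\, \mathrm{d}(\mu - \nu). \]
The bias term is handled by the pointwise estimate $|f(x) - f_L(x)| \le \|\nabla f\|_\infty \int_{\mathbb{S}^d} \rho(x,y) K_L(x,y)\, \mathrm{d}\mathrm{Vol}(y)$ and its $L^p$-version via Jensen's inequality, combined with the moment bound on $K_L$; this produces the $C_1/L$ contribution and absorbs the assumption $\mu \ge c\mathrm{Vol}$ through the standard factor $1 + (1-c)^{1/p}$ arising in the Hopf--Lax step. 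Because $K_L$ is zonal and $f_L$ is band-limited, the main term admits a finite spherical harmonic expansion on both sides; pairing coefficients via Parseval gives an expression that can be compared to $\|\nabla f_L\|_q$ through the Hausdorff--Young inequality for the gradient on compact homogeneous spaces stated in Lemma \ref{homogeneoushausdorffyounglemma}.

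The Fourier-side term to estimate is then $\|\nabla f_L\|_q$ expressed via a Riesz-type identity $\nabla f_L = \nabla (-\Delta)^{-1/2} (-\Delta)^{1/2} f_L$: bounding the Riesz transform $R = \nabla(-\Delta)^{-1/2}$ in $L^q(\mathbb{S}^d)$ and then applying Hausdorff--Young to $(-\Delta)^{1/2} f_L$ (whose $\ell$-th spherical harmonic component equals $\lambda_\ell^{1/2}$ times that of $f_L$) yields precisely the sum $\sum_\ell d_\ell \bigl(\sum_m |\widehat{\mu}(\ell,m)-\widehat{\nu}(\ell,m)|^2/(d_\ell \lambda_\ell)\bigr)^{q_0/2}$, weighted with the band-limit truncation to $\ell \le L$.

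The main obstacle is the quantitative control of the Riesz transform norm on $L^q(\mathbb{S}^d)$ when $p>2$, i.e.\ when $q \in (1,2)$. Unlike the torus or Lie group cases where the Riesz transform is bounded by a dimension-free constant obtained from martingale inequalities, on general compact homogeneous spaces one needs a curvature-robust bound; the sharpest available constant of Stein/Ba\~nuelos--Wang--type is bounded above by $2(p-1)$ in the regime $p>2$, which is exactly the factor appearing in $C_2$. Packaging this Riesz transform estimate together with the Hausdorff--Young inequality and the factor $pc^{-1/q}$ inherited from the $L^p(\mu)$-to-$L^p(\mathrm{Vol})$ comparison in the Hopf--Lax step (as in Theorems \ref{torusheatkerneltheorem} and \ref{liegroupheatkerneltheorem}) completes the proof; the remaining work is to track all explicit constants coming from the Jackson kernel construction, which accounts for the factor $8 \cdot 12^{1/p} d e^{d/p} ((p+5)/2)^{1+d/p}$.
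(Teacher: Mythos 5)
Your proposal follows essentially the same route as the paper: smoothing with a normalised even power of the degree-$L$ spectral projection kernel (a Jacobi-polynomial Jackson-type kernel), the Hopf--Lax/dual-Sobolev comparison of Lemma \ref{ledouxlemma} giving the factor $pc^{-1/q}$, and the Hausdorff--Young inequality for the gradient on homogeneous spaces (Lemma \ref{homogeneoushausdorffyounglemma}), whose Riesz-transform ingredient produces the extra factor $2(p-1)$ when $p>2$. The only caveat is that the technical bulk of the paper's argument lies in the deferred step you describe as "tracking constants," namely the $p$-th moment bound $D_p(K)\lesssim d((p+5)/2)^{d/p}e^{d/p}/L$ for the powered projection kernel, which requires Krasikov's uniform estimates on Jacobi polynomials; also note that the factor $1+(1-c)^{1/p}$ arises from the measure-smoothing step (Lemma \ref{smoothinglemma}), not from the Hopf--Lax step.
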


Theorems \ref{homogeneousheatkerneltheorem} and \ref{sphereprojectionkerneltheorem} with $p=1$ and $c=0$ apply without any assumption on $\mu$, but $c>0$ is crucial in the case $p>1$. Theorem \ref{homogeneousheatkerneltheorem} with $b=0$ and $C_2(t)=pc^{-1/q}$ applies without any assumption on $\nu$, but the value of $C_2 (t)$ can be sharpened if $b>0$. Theorems \ref{homogeneousheatkerneltheorem} and \ref{sphereprojectionkerneltheorem} yield comparable results in the range $t \approx L^{-2}$.

As an analogue of Theorem \ref{torusinfinitytheorem}, we also treat $W_{\infty}$ on the sphere of dimension $d \ge 3$.
\begin{thm}\label{sphereinfinitytheorem} Let $\mu, \nu \in \mathcal{P}(\mathbb{S}^d)$, $d \ge 3$, and assume there exist constants $c>0$, $b>0$ and $0<r \le 2^{-d-3}d^{-1/2}$ such that $\mu \ge c \mathrm{Vol}$ and $\nu(B) \ge b$ for any closed geodesic ball $B \subseteq \mathbb{S}^d$ of radius $r$. For any real $2^{d+3}d^{-1/2} r \le T \le 1/d$, we have
\[ W_{\infty} (\mu, \nu) \le C_1 T + C_2(T) \sum_{\ell=1}^{\infty} A_{\ell} \left( \frac{d_{\ell}}{\lambda_{\ell}} \sum_{m=1}^{d_{\ell}} |\widehat{\mu}(\ell,m) - \widehat{\nu}(\ell,m)|^2 \right)^{1/2} , \]
where $C_1 = 1 + \mathds{1}_{\{ \mu \neq \mathrm{Vol} \}}$,
\[ \begin{split} C_2(T) &= \frac{\log c - \log \delta}{c-\delta} \quad \textrm{with} \quad \delta=  \min \left\{ b \frac{9^d}{114 d! 2^{d^2/4+d/4}T^d}, c \right\} \\ A_{\ell} &=\left\{ \begin{array}{ll} 1 & \textrm{if } \ell \le 2^{d+2}/T, \\ \displaystyle{43 \exp \left( - \frac{1}{\log 2} \left( \log \frac{T\ell}{16} \right) \left( \log \frac{T\ell}{2^{d+2}} \right) \right)} & \textrm{if } \ell > 2^{d+2}/T . \end{array} \right. \end{split} \]
\end{thm}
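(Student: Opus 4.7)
The plan is to adapt the argument behind Theorem~\ref{torusinfinitytheorem} to the sphere, working inside a single geodesic cap where the exponential map furnishes a bi-Lipschitz Euclidean comparison. The general framework of Section~\ref{approachsection}, specialised to $p=\infty$, reduces the problem to two ingredients: (i) a zonal smooth probability density $K_T$ on $\mathbb{S}^d$, supported in a geodesic cap of radius at most $T$, whose spherical harmonic Fourier coefficients are dominated by the weights $A_\ell$ of the statement; and (ii) the Kantorovich/Hopf--Lax smoothing argument used in the proof of Theorem~\ref{sphereprojectionkerneltheorem}, applied to the convolved measures $\mu\ast K_T$ and $\nu\ast K_T$ defined via the isometry group action of $\mathrm{SO}(d+1)$.

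For (i) I would build $K_T$ by iterated self-convolution. Starting from a smooth nonnegative bump supported in a cap of radius of order $T/n$, pulled back from the Euclidean setting via the exponential map at a base point — valid because $T\le 1/d$ places us well inside the injectivity radius in a region of bounded metric distortion — the $n$-fold self-convolution has spherical harmonic coefficients bounded by a quantity of order $(c_d/(T\ell))^{n}$ in the range $T\ell\ge 2^{d+2}$. Balancing $n\approx (\log 2)^{-1}\log(T\ell/16)$ yields precisely the double-logarithmic decay of $A_\ell$, while the normalisation and volume of the base cap account for the explicit $9^d/(114\,d!\,2^{d^2/4+d/4}T^d)$ appearing in $\delta$. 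The restrictions $r\le 2^{-d-3}d^{-1/2}$ and $T\le 1/d$ are calibrated so that all of the iterated convolutions remain inside a single cap where the Euclidean comparison is sharp.

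Next I would run the smoothing argument. Because $K_T$ is supported in a cap of radius at most $T$, $W_\infty(\mu,\mu\ast K_T)\le T$ and $W_\infty(\nu,\nu\ast K_T)\le T$, with the first term vanishing when $\mu=\mathrm{Vol}$; this accounts for $C_1=1+\mathds{1}_{\{\mu\neq\mathrm{Vol}\}}$. The hypothesis $\mu\ge c\,\mathrm{Vol}$ is preserved under convolution, and combining $\nu(B)\ge b$ on every cap of radius $r$ with the pointwise lower bound of $K_T$ on a proportional sub-cap produces the uniform lower bound $\delta$ on the density of $\nu\ast K_T$ given in the statement. Feeding these density-bounded smooth measures into the Kantorovich/Hopf--Lax machinery is the $p=\infty$ analogue of the $p(c^{1/p}-\delta^{1/p})/(c-\delta)$ factor that appears in Theorem~\ref{sphereprojectionkerneltheorem}; taking $p\to\infty$ replaces this factor by $\int_\delta^c s^{-1}\,\mathrm{d}s = (\log c-\log\delta)/(c-\delta)$, which is precisely $C_2(T)$.

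The final step is the Fourier expansion. Since $K_T$ is zonal, the addition formula makes $\widehat{K_T}(\ell,m)$ a single scalar $\widehat{K_T}(\ell)$ per spectral block, giving $\widehat{(\mu\ast K_T)}(\ell,m)-\widehat{(\nu\ast K_T)}(\ell,m)=\widehat{K_T}(\ell)\bigl(\widehat{\mu}(\ell,m)-\widehat{\nu}(\ell,m)\bigr)$. Inserting the pointwise bound $|\widehat{K_T}(\ell)|\le A_\ell$ and applying Cauchy--Schwarz within each spectral block, together with the sphere Hausdorff--Young/gradient estimate from Lemma~\ref{homogeneoushausdorffyounglemma} that underlies the proof of Theorem~\ref{sphereprojectionkerneltheorem}, yields the Fourier sum in the statement. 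The main obstacle is step~(i): $K_T$ must simultaneously achieve compact support in a cap of radius at most $T$, a sharp pointwise lower bound on a proportional sub-cap, and the refined Fourier decay $A_\ell$, all with fully explicit dimensional constants — which is what forces the specific restrictions on $r$ and $T$ in the hypothesis.
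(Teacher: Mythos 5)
Your surrounding framework is exactly the paper's: Lemma \ref{smoothinglemma} for the $D_\infty$ term and $C_1=1+\mathds{1}_{\{\mu\neq\mathrm{Vol}\}}$, Lemma \ref{ledouxlemma} at $p=\infty$ giving the factor $(\log c-\log\delta)/(c-\delta)$ once the smoothed measures have densities bounded below by $c$ and $\delta$, the zonal structure so that the kernel acts as a scalar multiplier $b_\ell$ on each spectral block, and the $L^1$-gradient Hausdorff--Young bound \eqref{homogeneousHY4} of Lemma \ref{homogeneoushausdorffyounglemma} combined with Cauchy--Schwarz within each block. All of that matches the actual proof.

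The gap is in your step (i), the construction of the kernel, and you have correctly identified it as the main obstacle without resolving it. First, ``balancing $n\approx(\log 2)^{-1}\log(T\ell/16)$'' does not define a single kernel: the number of self-convolutions cannot depend on $\ell$. To get the $\exp(-c(\log T\ell)^2)$ decay from a convolution construction you need an \emph{infinite} convolution of bumps with geometrically shrinking supports (radii $\sim T2^{-n}$), which is precisely the Fabius-type structure of Lemma \ref{bumpfunctionlemma}; and transplanting that to the sphere would require a per-factor bound on the Gegenbauer coefficients of a cap-supported zonal bump with explicit constants, which you assert (``of order $(c_d/(T\ell))^n$'') but which is not at all immediate from the Euclidean Bessel-function computation once metric distortion and the Jacobi/Gegenbauer asymptotics enter. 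The paper sidesteps this entirely: it keeps the Fabius function in \emph{one Euclidean dimension}, sets $\psi(t)=\widehat{g}(Tt/(4\pi))^2$ so that $\widehat\psi$ is supported in $[-T/(2\pi),T/(2\pi)]$, defines $K_0$ as the kernel of the spectral multiplier $\psi((-\Delta)^{1/2})$, and obtains $\mathrm{supp}\,K_0\subseteq\{\rho(x,y)\le T\}$ from \emph{finite speed of propagation of the wave equation} (Sogge), not from geometric convolution supports. Nonnegativity is then forced by squaring, $K=K_0^2/c_T$, and the Fourier coefficients $b_\ell$ of the square are controlled through the explicit Gegenbauer linearization (triple-product) formula --- a substantial computation (Lemma \ref{belllemma}) that your proposal has no counterpart for. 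Likewise the pointwise lower bound of Lemma \ref{Kxylemma} and the two-sided bounds on $c_T$ (Lemma \ref{cTlemma}), which produce the explicit $\delta$, require direct estimation of the Gegenbauer series in several ranges of $\ell$ rather than a Chebyshev argument. So the skeleton is right, but the load-bearing construction is missing and the mechanism you propose for it would not, as stated, produce a single kernel with the required decay.
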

\noindent In the case $\delta =c$, the constant $C_2(T)$ is to be interpreted as
\[ C_2(T)= \lim_{\delta \to c} \frac{\log c - \log \delta}{c-\delta} = \frac{1}{c}. \]

\subsection{An application to spherical designs}\label{sphericaldesignsection}

A finite point set $a_1, a_2, \ldots, a_N$ in the Euclidean unit sphere $\mathbb{S}^d$ is called a spherical $t$-design if
\[ \frac{1}{N} \sum_{n=1}^N f(a_n) = \int_{\mathbb{S}^d} f(x) \, \mathrm{d}\mathrm{Vol}(x) \]
for all polynomials $f(x)=f(x_1, x_2, \ldots, x_{d+1})$ with real coefficients of degree at most $t$. An equivalent definition is that the previous formula holds for all spherical harmonics $f=\phi_{\ell,m}$ with $0 \le \ell \le t$ and $1 \le m \le d_{\ell}$. A seminal result of Bondarenko, Radchenko and Viazovska \cite{BRV} states that for any $d,t \in \mathbb{N}$, the smallest size $N(d,t)$ of a spherical $t$-design in $\mathbb{S}^d$ is of order $t^d \ll N(d,t) \ll t^d$ with implied constants depending only on $d$. Working with the empirical measure $\nu_N=N^{-1} \sum_{n=1}^N \delta_{a_n}$ of a spherical $t$-design, Grabner and Tichy \cite{GT} proved the sharp upper bound $W_1 (\nu_N, \mathrm{Vol}) \ll 1/t$. As an illustration of the possible applications of our results, we generalize their estimate to $W_p$, $1 \le p < \infty$.
\begin{cor}\label{sphericaldesigntheorem} Let $a_1, a_2, \ldots, a_N$ be a spherical $t$-design in $\mathbb{S}^d$, $d \ge 2$, and let $\nu_N=N^{-1} \sum_{n=1}^N \delta_{a_n}$. For any $1 \le p < \infty$, we have
\[ W_p \left( \nu_N , \mathrm{Vol} \right) \le \frac{C}{t} \]
with the constant $C=14d \max \{ d \log (100d), p \}$.
\end{cor}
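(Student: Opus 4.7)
The plan is to apply Theorem~\ref{sphereprojectionkerneltheorem} with $\mu=\mathrm{Vol}$ (hence $c=1$), $\nu=\nu_N$, and $L=t$. By the defining property of a spherical $t$-design, $\widehat{\nu_N}(\ell,m)=0$ for all $1\le\ell\le t$ and $1\le m\le d_\ell$, while $\widehat{\mathrm{Vol}}(\ell,m)=0$ for every $\ell\ge 1$, so every summand in the Fourier sum on the right-hand side of Theorem~\ref{sphereprojectionkerneltheorem} vanishes. What remains is the clean estimate
\[
W_p(\nu_N,\mathrm{Vol})\le\frac{C_1(p,d)}{t},\qquad C_1(p,d):=8\cdot 12^{1/p}\cdot d\cdot e^{d/p}\Bigl(\tfrac{p+5}{2}\Bigr)^{1+d/p},
\]
and the corollary reduces to the explicit numerical inequality $C_1(p,d)\le 14 d\max\{d\log(100d),p\}$.

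I would split at the threshold $p_0:=d\log(100d)$. For $p\ge p_0$ the goal is $C_1(p,d)\le 14 dp$. A short calculation gives
\[
\frac{d}{dp}\log\frac{C_1(p,d)}{p}=\frac{d-5}{p(p+5)}-\frac{\log 12+d\bigl(1+\log\tfrac{p+5}{2}\bigr)}{p^2},
\]
and since $(d-5)p/(p+5)\le d-5<2d\le\log 12+d(1+\log 3)$ for every $p\ge 1$ and $d\ge 2$, this derivative is negative. Hence $p\mapsto C_1(p,d)/p$ is decreasing, and it suffices to verify $C_1(p_0,d)\le 14 dp_0$ at the threshold. There, the factors $12^{1/p_0}\le 12^{1/(2\log 200)}$ and $e^{d/p_0}=e^{1/\log(100d)}\le e^{1/\log 200}$ are explicit constants close to $1$, and the critical factor is controlled via
\[
\bigl(\tfrac{p_0+5}{2}\bigr)^{d/p_0}=\exp\Bigl(\tfrac{\log((d\log(100d)+5)/2)}{\log(100d)}\Bigr)\le e^{h(d)},
\]
where $h(d):=(\log d+\log\log(100d))/\log(100d)<1$ for every $d\ge 2$. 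Multiplying these bounds out delivers $C_1(p_0,d)\le 14 dp_0$, the worst case occurring at $d=2$ where the ratio is approximately $0.95$.

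For $1\le p<p_0$ I would invoke the Hölder monotonicity $W_p\le W_q$ for $p\le q$ recalled in the introduction, applied with $q=p_0$. Combining with the previous case,
\[
W_p(\nu_N,\mathrm{Vol})\le W_{p_0}(\nu_N,\mathrm{Vol})\le\frac{14 dp_0}{t}=\frac{14 d^2\log(100d)}{t}=\frac{14 d\max\{d\log(100d),p\}}{t},
\]
which completes the argument in all regimes.

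The main obstacle is the explicit numerical verification $C_1(p_0,d)\le 14 dp_0$: the inequality leaves only about a $5\%$ margin at $d=2$, so each of the slowly varying factors $12^{1/p}$, $e^{d/p}$, and $((p+5)/2)^{d/p}$ must be tracked with some care, as a crude bound on any one of them would already exceed the allowed budget.
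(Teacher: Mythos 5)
Your argument follows the paper's proof essentially verbatim: apply Theorem \ref{sphereprojectionkerneltheorem} with $\mu=\mathrm{Vol}$, $c=1$, $L=t$ so that the Fourier sum vanishes, reduce to $C_1(p,d)\le 14d\max\{d\log(100d),p\}$, handle $p\ge p_0:=d\log(100d)$ by monotonicity of $C_1(p,d)/p$ and a check at the threshold, and handle $p<p_0$ via $W_p\le W_{p_0}$. Your explicit derivative computation showing that $C_1(p,d)/p$ decreases in $p$ is a genuine improvement in rigor over the paper, which only asserts the bound $\kappa dp$ with $\kappa$ evaluated at the threshold. (Minor remark: the step $(d-5)p/(p+5)\le d-5$ is false for $d<5$, but then the left-hand side is nonpositive and the conclusion is immediate.) The one place your sketch does not quite close is the threshold verification: the bound $((p_0+5)/2)^{d/p_0}\le e^{h(d)}$ replaces $(p_0+5)/2$ by $p_0$ inside the exponential and loses about $6\%$ at $d=2$, while the available margin is only about $5.4\%$; multiplying out your stated bounds at $d=2$ gives roughly $297$ against the budget $14dp_0\approx 296.7$. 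As you yourself anticipate, the remedy is to evaluate $C_1(p_0,d)/(dp_0)$ directly for small $d$ (it equals $13.25$ at $d=2$ and decreases to $4e\approx 10.87$), which is exactly the numerical computation the paper invokes.
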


\begin{proof} By the definition of a spherical $t$-design, $\widehat{\nu_N}(\ell,m) - \widehat{\mathrm{Vol}}(\ell,m)=0$ for all $0 \le \ell \le t$ and $1 \le m \le d_{\ell}$. An application of Theorem \ref{sphereprojectionkerneltheorem} with $\mu=\mathrm{Vol}$, $\nu = \nu_N$, $c=1$ and $L=t$ thus yields $W_p (\nu_N, \mathrm{Vol}) \le 8 \cdot 12^{1/p} d e^{d/p} \left( \frac{p+5}{2} \right)^{1+d/p}/t$. In particular, if $p \ge d \log (100d)$, then $W_p (\nu_N, \mathrm{Vol}) \le \kappa d p/t$ with
\[ \kappa = 8 \cdot 12^{1/(d \log (100d))} e^{1/\log (100d)} \left( \frac{1+5/(d \log (100d))}{2} \right)^{1+1/\log (100d)} (d \log (100d))^{1/\log (100d)} . \]
Note that $\kappa \to 4e=10.8731\ldots$ as $d \to \infty$. Numerical computations show that the maximum of $\kappa$ over $d \ge 2$ is attained at $d=2$ with maximum value $13.2461\ldots$. Hence $W_p (\nu_N, \mathrm{Vol}) \le 14dp/t$, as claimed. If $1 \le p < d \log (100d)$, then $W_p (\nu_N, \mathrm{Vol}) \le W_{d \log (100d)}(\nu_N, \mathrm{Vol}) \le 14 d^2 \log (100d)/t$, as claimed.
\end{proof}
In particular, if the spherical $t$-design has optimally small size $N \ll t^d$, then $W_p (\nu_N, \mathrm{Vol}) \ll N^{-1/d}$. The rate $N^{-1/d}$ is optimal, in fact $W_p(\nu, \mathrm{Vol}) \gg N^{-1/d}$ for any $\nu \in \mathcal{P}(\mathbb{S}^d)$ supported on at most $N$ points \cite{KL}. This proves that the upper bound $C/t$ in Corollary \ref{sphericaldesigntheorem} is sharp.

\subsection{Smoothing inequalities on compact Riemannian manifolds}\label{manifoldsection}

Let $M$ be a compact, connected, smooth Riemannian manifold of dimension $1 \le d < \infty$ without boundary. In order to have a notation that is consistent with Sections \ref{liegroupsection} and \ref{homogeneoussection}, where $\lambda$ was allowed to have multiplicity, we now denote the Laplace eigenvalues by $\Lambda$. That is, let $0=\Lambda_0<\Lambda_1 \le \Lambda_2 \le \ldots$ denote the list of eigenvalues of $-\Delta$, each repeated according to its multiplicity, with a corresponding orthonormal basis $\{ \phi_k : k \ge 0 \}$ of complex-valued functions in $L^2(M)$ such that $\Delta \phi_k = - \Lambda_k \phi_k$. Let
\[ P_t (x,y) = \sum_{k=0}^{\infty} e^{-\Lambda_k t} \phi_k (x) \overline{\phi_k (y)}, \qquad t>0, \,\, x,y \in M \]
denote the heat kernel on $M$. The Fourier transform of $f \in L^1(M)$ resp.\ $\mu \in \mathcal{P}(M)$ is defined as $\widehat{f}(k)=\int_M f \overline{\phi_k} \, \mathrm{d} \mathrm{Vol}$ resp.\ $\widehat{\mu}(k)=\int_M \overline{\phi_k} \, \mathrm{d}\mu$, $k \ge 0$.

We present the generalization of Theorems \ref{torusheatkerneltheorem}, \ref{liegroupheatkerneltheorem} and \ref{homogeneousheatkerneltheorem} to compact manifolds in the ranges $1 \le p \le 2$ and $2<p<\infty$ separately. The case $p=2$ is due to Brown and Steinerberger \cite{BS} and the first author \cite{BOR2}, whereas the case $p \neq 2$ is new. In particular, Theorem \ref{manifoldheatkernelp<2theorem} with $p=1$ generalizes a result of Bobkov and Ledoux \cite{BL1,BL2} from the torus to compact manifolds.
\begin{thm}\label{manifoldheatkernelp<2theorem} Let $M$ be a compact, connected, smooth Riemannian manifold of dimension $1 \le d < \infty$ without boundary, and assume the Ricci curvature is bounded below by $-(d-1)A$ with some constant $A \ge 0$. Let $\mu, \nu \in \mathcal{P}(M)$, and assume there exist constants $c \ge 0$, $b \ge 0$ and $r>0$ such that $\mu \ge c \mathrm{Vol}$ and $\nu (B) \ge b$ for any closed geodesic ball $B \subseteq M$ of radius $r$. Let $1 \le p \le 2$, and assume either $p=1$ or $c>0$. For any real $t>0$, we have
\[ W_p (\mu, \nu) \le C_1 t^{1/2} (1+C_3 t^{1/2})^{1/2} + C_2 (t) \Bigg( \sum_{k=1}^{\infty} e^{- 2 \Lambda_k t} \frac{|\widehat{\mu}(k) - \widehat{\nu}(k)|^2}{\Lambda_k} \Bigg)^{1/2} , \]
where
\[ \begin{split} C_1 &= 2^{1/2} (1+(1-c)^{1/p}) d^{1/2}, \\ C_2(t) &=\left\{ \begin{array}{ll} 1 & \textrm{if } p=1, \\ \frac{p(c^{1/p}-\delta^{1/p})}{c-\delta} & \textrm{if } 1<p \le 2 \end{array} \right. \qquad \textrm{with} \qquad \delta=\min \left\{ b \inf_{\substack{x,y \in M \\ \rho (x,y) \le r}} P_t (x,y) , c \right\} , \\ C_3 &= \frac{2^{3/2}(d-1) \sqrt{A}}{3d} \left( d + (d-1) \sqrt{A} \, \mathrm{diam}(M) \right)^{1/2} . \end{split} \]
\end{thm}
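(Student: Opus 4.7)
The plan is to follow the Ledoux--Ambrosio--Stra--Trevisan approach via heat-kernel smoothing. Setting $\mu_t = P_t\mu$ and $\nu_t = P_t\nu$, viewed as measures with smooth densities $g_{\mu_t},g_{\nu_t}$, I split
\[
W_p(\mu,\nu) \le W_p(\mu,\mu_t) + W_p(\mu_t,\nu_t) + W_p(\nu_t,\nu).
\]
The outer two terms contribute the $C_1 t^{1/2}(1+C_3 t^{1/2})^{1/2}$ piece; only the middle term sees the Fourier data and produces the stated series.

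For the outer terms, given any probability measure $\sigma$ on $M$, the coupling $d\vartheta(x,y) = P_t(x,y)\, d\mathrm{Vol}(y)\, d\sigma(x)$ has marginals $\sigma$ and $P_t\sigma$, so
\[
W_p^p(\sigma, P_t\sigma) \le \int_M \int_M \rho(x,y)^p P_t(x,y)\, d\mathrm{Vol}(y)\, d\sigma(x).
\]
Since $p \le 2$ and $P_t(x,\cdot)\, d\mathrm{Vol}$ is a probability measure, Jensen's inequality with the concave function $u\mapsto u^{p/2}$ dominates the inner integral by $B_t^{(2)}(x)^{p/2}$, where $B_t^{(2)}(x):=\int_M \rho(x,y)^2 P_t(x,y)\,d\mathrm{Vol}(y)$. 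The uniform bound $V_t:=\sup_{x\in M} B_t^{(2)}(x)$ then yields $W_p(\sigma,P_t\sigma)\le V_t^{1/2}$. I next invoke a Bakry--Qian-type second-moment estimate on manifolds with Ricci $\ge -(d-1)A$ to obtain $V_t \le 2dt\bigl(1 + C_3 t^{1/2}\bigr)$ with precisely the stated constant $C_3$. To recover the factor $(1-c)^{1/p}$ on the $\mu$-side, I decompose $\mu = c\,\mathrm{Vol} + (1-c)\mu'$ with $\mu'$ a probability measure, and use $P_t\mathrm{Vol}=\mathrm{Vol}$ together with the joint convexity of $W_p^p$ in its marginals to conclude $W_p(\mu,\mu_t)\le (1-c)^{1/p} V_t^{1/2}$. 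No lower bound is assumed on $\nu$, so $W_p(\nu,\nu_t)\le V_t^{1/2}$ unconditionally, giving the prefactor $1+(1-c)^{1/p}$.

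The middle term is handled following Ledoux. In the case $p=1$, Kantorovich duality \eqref{kantorovichduality} together with integration by parts on the closed manifold $M$ yields, for any $1$-Lipschitz $f$,
\[
\int_M f\, d(\mu_t-\nu_t) = \int_M (g_{\mu_t}-g_{\nu_t})\, f\, d\mathrm{Vol} = \int_M \nabla u\cdot\nabla f\, d\mathrm{Vol},
\]
where $-\Delta u = g_{\mu_t}-g_{\nu_t}$; hence $W_1(\mu_t,\nu_t)\le \|\nabla u\|_1\le \|\nabla u\|_2$, and Parseval combined with $\widehat{\mu_t}(k)=e^{-\Lambda_k t}\widehat\mu(k)$ produces the claimed series with $C_2(t)=1$. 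For $1<p\le 2$, I use the Benamou--Brenier representation along the linear density interpolation $f_s:=(1-s)g_{\nu_t}+s\,g_{\mu_t}$ with velocity field $v_s:=\nabla u/f_s$, which satisfies the continuity equation $\partial_s f_s+\nabla\cdot(f_s v_s)=0$. The bound $g_{\mu_t}\ge c$ follows from $\mu\ge c\,\mathrm{Vol}$ and the positivity of $P_t$, while $g_{\nu_t}(y)\ge b\,\inf_{\rho(x,y)\le r}P_t(x,y)$ by restricting the defining integral to the ball $B(y,r)$. Hence $f_s\ge (1-s)\delta+sc$ with the $\delta$ of the statement, and
\[
W_p(\mu_t,\nu_t)\le \int_0^1 \Bigl(\int_M |v_s|^p f_s\,d\mathrm{Vol}\Bigr)^{1/p}\! ds \le \Bigl(\int_0^1 ((1-s)\delta+sc)^{-1/q}\, ds\Bigr)\|\nabla u\|_p,
\]
and the elementary evaluation of the $s$-integral gives $\tfrac{p(c^{1/p}-\delta^{1/p})}{c-\delta}=C_2(t)$. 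A final application of $\|\nabla u\|_p\le \|\nabla u\|_2$ (valid since $\mathrm{Vol}(M)=1$) together with Parseval closes the argument.

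The main technical obstacle is the second-moment heat-kernel estimate $V_t\le 2dt(1+C_3 t^{1/2})$ with the precise constant $C_3$ involving both $(d-1)\sqrt{A}$ and $\mathrm{diam}(M)$: the classical $\Gamma_2$-computation of Bakry--Ledoux produces the clean $V_t\le 2dt$ only under non-negative Ricci, and extending it to the range Ricci $\ge -(d-1)A$ demands a careful Bakry--Qian-type gradient estimate combined with a Laplacian-comparison control of $\rho(x,\cdot)^2$ whose error term contributes the $\mathrm{diam}(M)$ factor. A secondary subtle point is the rigorous justification of the Benamou--Brenier continuity-equation computation for the possibly only $L^\infty$ densities $g_{\mu_t},g_{\nu_t}$; this is standard thanks to the instantaneous $C^\infty$ regularization by $P_t$, but should be verified.
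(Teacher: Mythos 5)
Your decomposition and treatment of the outer terms coincide with the paper's (Lemma \ref{smoothinglemma}: the same coupling $P_t(x,y)\,\mathrm{d}\mathrm{Vol}(y)\,\mathrm{d}\sigma(x)$, Jensen/H\"older to reduce to the second moment, and convexity of $W_p^p$ for the $(1-c)^{1/p}$ factor), but your middle term is handled by a genuinely different argument. Where you run a Benamou--Brenier/Peyre-style dynamical estimate along the linear interpolation $f_s=(1-s)g_{\nu_t}+s\,g_{\mu_t}$ with velocity $v_s=\nabla u/f_s$, the paper proves the same inequality $W_p(\mu_t,\nu_t)\le \frac{p(c^{1/p}-\delta^{1/p})}{c-\delta}\|\mu_t-\nu_t\|_{\dot H_{-1}^p}$ as a standalone duality statement (Lemma \ref{ledouxlemma}) via the Hopf--Lax semigroup and a nonsmooth Hamilton--Jacobi argument in the spirit of Ledoux. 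Your computation of the $s$-integral giving exactly $C_2(t)$, and the subsequent reduction $\|\nabla u\|_p\le\|\nabla u\|_2$ plus Parseval, are correct. The trade-off is that your route must justify the "easy" direction of the dynamical formulation of $W_p$ for general $p\in(1,2]$ on a manifold (flow of $v_s$ plus Minkowski's integral inequality; unproblematic here because $g_{\mu_t},g_{\nu_t}$ are smooth and strictly positive by compactness, so $v_s$ is Lipschitz), whereas the paper's route pays instead in the technicalities of the Hamilton--Jacobi equation for merely Lipschitz $Q_tf$; the Hopf--Lax version also delivers the $p=\infty$ case by a limiting argument, which the linear-interpolation argument does not obviously give.

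The one substantive item you assert rather than derive is the second-moment bound $V_t\le 2dt(1+C_3t^{1/2})$ with the precise constant $C_3$; this is not an off-the-shelf Bakry--Qian estimate but is the content of the paper's Lemma \ref{heatkerneldispersionlemma}. There it is proved by differentiating $F_2(t)=\int_M P_t(x,y)\rho(x,y)^2\,\mathrm{d}\mathrm{Vol}(x)$ in $t$, using the heat equation, Green's identity and the global Laplacian comparison $\Delta_x\rho\le(d-1)(\rho^{-1}+\sqrt A)$ together with the crude bound $F_2\le\mathrm{diam}(M)F_1$ (whence the $\mathrm{diam}(M)$ in $C_3$), and integrating the resulting differential inequality. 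You have correctly identified this as the main technical obstacle and named the right ingredients, so this is a matter of carrying out the computation rather than a conceptual gap.
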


If the Ricci curvature of $M$ is bounded below by $-(d-1)A$ with some constant $A \ge 0$, then the heat kernel satisfies the explicit lower bound \cite[Theorem 3.1]{WA}
\begin{equation}\label{heatkernellowerbound}
P_t (x,y) \ge \sup_{\sigma >0} \frac{1}{(4 \pi t)^{d/2}} \exp \left( - \left( \frac{1}{4t} + \frac{\sigma}{\sqrt{18t}} \right) \rho (x,y)^2 - \frac{(d-1)^2 A t}{4} - \left( \frac{(d-1)^2 A}{4 \sigma} + \frac{2d \sigma}{3} \right) \sqrt{2t} \right) .
\end{equation}
In particular, the value of $\delta$, and consequently also $C_2(t)$, can be made fully explicit in terms of $A$.

We now introduce two further constants from two classical inequalities. The Poincar\'e inequality \cite[p.\ 24]{HE} states that for any $f \in \mathrm{C}^{\infty} (M)$ such that $\int_M f \, \mathrm{d} \mathrm{Vol}=0$ and any $1 \le q < d$,
\begin{equation}\label{poincare}
\| f \|_q \le K_{\mathrm{Poincar\acute{e}}} (M,q) \| \nabla f \|_q
\end{equation}
with a constant $K_{\mathrm{Poincar\acute{e}}} (M,q)$ depending only on the manifold $M$ and $q$. A pointwise version of Weyl's law \cite[Theorem 3.3.1]{SO} states that for all integers $\ell \ge 0$ and all $x \in M$,
\begin{equation}\label{weyllaw}
\sum_{\substack{k \ge 0 \\ \Lambda_k^{1/2} \in [\ell, \ell+1)}} |\phi_k (x)|^2 \le K_{\mathrm{Weyl}}(M) (\ell+1)^{d-1}
\end{equation}
with a constant $K_{\mathrm{Weyl}}(M)$ depending only on the manifold $M$.

\begin{thm}\label{manifoldheatkernelp>2theorem} Let $M$ be a compact, connected, smooth Riemannian manifold of dimension $1 \le d < \infty$ without boundary, and assume the Ricci curvature is bounded below by $-(d-1)A$ with some constant $A \ge 0$. Let $\mu, \nu \in \mathcal{P}(M)$, and assume there exist constants $c>0$, $b \ge 0$ and $r>0$ such that $\mu \ge c \mathrm{Vol}$ and $\nu (B) \ge b$ for any closed geodesic ball $B \subseteq M$ of radius $r$. Let $2<p<\infty$ and $2<q<\infty$ be such that $1/p+1/q=1$. For any real $t>0$, we have
\[ \begin{split} W_p (\mu, \nu) &\le C_1 t^{1/2} (1+C_3 t^{1/2})^{1/p} \\ &{\phantom{\le}}+ C_2(t) \Bigg( \sum_{\ell=0}^{\infty} (\ell+1)^{(p-2)(d-1)/(2p-2)} \Bigg( \sum_{\substack{k \ge 1 \\ \Lambda_k^{1/2} \in [\ell, \ell+1)}} e^{-2\Lambda_k t} \frac{|\widehat{\mu}(k) - \widehat{\nu}(k)|^2}{\Lambda_k+(d-1)A} \Bigg)^{q/2} \Bigg)^{1/q}, \end{split} \]
where
\[ \begin{split} C_1 &= (1+(1-c)^{1/p}) (2d+p)^{1/2}, \\ C_2(t)&= K_{\mathrm{Weyl}}(M)^{(p-2)/(2p)} \left( 3 \sqrt{6} (p-1) + (d-1)^{1/2}A^{1/2} K_{\mathrm{Poincar\acute{e}}}(M,q) \right) \frac{p(c^{1/p}-\delta^{1/p})}{c-\delta}, \\ \delta &=\min \left\{ b \inf_{\substack{x,y \in M \\ \rho (x,y) \le r}} P_t (x,y) , c \right\} , \\ C_3 &= 2 (d-1) \sqrt{A} \exp \left( \frac{p+2}{2} + \frac{(d-1) \sqrt{A}}{2} \, \mathrm{diam}(M) \right) . \end{split} \]
If $A=0$, then the factor $3 \sqrt{6}$ in the definition of $C_2(t)$ can be replaced by $2$.
\end{thm}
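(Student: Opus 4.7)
The plan is to adapt the Kantorovich-duality/Hopf--Lax framework of Section \ref{approachsection}, which also underlies Theorem \ref{manifoldheatkernelp<2theorem}, with two essential modifications required by $p>2$: the dual integrability exponent $q=p/(p-1)$ now lies in $(1,2)$, so Fourier coefficients of $\mu-\nu$ must be controlled in an $\ell^q$-type weighted norm rather than in $\ell^2$; and on a general manifold, lacking uniform eigenfunction bounds, the Hausdorff--Young inequality has to be stated cluster-wise with a loss governed by the pointwise Weyl law \eqref{weyllaw}. To produce the regularization term $C_1 t^{1/2}(1+C_3 t^{1/2})^{1/p}$, I would heat-smooth a $1$-Lipschitz potential $h$ coming from Kantorovich duality; the $p$-th moment of the heat displacement is estimated using a Bakry--\'Emery commutation inequality $|\nabla P_s h|\le e^{(d-1)As/2}P_s|\nabla h|$ together with Wang's pointwise lower bound \eqref{heatkernellowerbound}, integrated against $\mu\ge c\mathrm{Vol}$. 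The extra $(1+C_3 t^{1/2})^{1/p}$ factor, with $C_3$ carrying $(d-1)\sqrt{A}\,\mathrm{diam}(M)$, encodes the Ricci correction after extracting a $p$-th root.

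\textbf{Fourier decomposition.} Next, expand the smoothed potential $P_t h$ in the eigenbasis of $-\Delta$ so that $\int P_t h\,\mathrm{d}(\mu-\nu)=\sum_k e^{-\Lambda_k t}\widehat{h}(k)\overline{(\widehat{\mu}(k)-\widehat{\nu}(k))}$, and partition Fourier modes into unit-width clusters $\{k:\Lambda_k^{1/2}\in[\ell,\ell+1)\}$. Applying H\"older's inequality with exponents $q$ and $p$ across clusters and Cauchy--Schwarz inside each cluster produces the inner $\ell^2$-sum $\sum_k e^{-2\Lambda_k t}|\widehat{\mu}(k)-\widehat{\nu}(k)|^2/(\Lambda_k+(d-1)A)$ that appears in the theorem. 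The Laplacian shift $(d-1)A$ in the denominator is forced by Bochner's identity, which identifies $\sqrt{-\Delta+(d-1)A}$, not $\sqrt{-\Delta}$ alone, as the natural operator comparable to $|\nabla\cdot|$ in $L^q$ on a manifold with $\mathrm{Ric}\ge-(d-1)A$.

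\textbf{The main obstacle: Hausdorff--Young with Weyl.} Closing the estimate requires bounding the dual cluster-wise $L^p$ norm of $\nabla(P_t h)$ via Lemma \ref{manifoldhausdorffyounglemma}. This is the key new ingredient for general manifolds: in contrast to the torus, Lie group and homogeneous settings, eigenfunctions here lack uniform pointwise bounds, and Lemma \ref{manifoldhausdorffyounglemma} has to invoke \eqref{weyllaw}, $\sup_x\sum_{k:\Lambda_k^{1/2}\in[\ell,\ell+1)}|\phi_k(x)|^2\le K_{\mathrm{Weyl}}(M)(\ell+1)^{d-1}$, as a substitute. Interpolating (Riesz--Thorin) between the trivial Parseval estimate ($L^2\to\ell^2$, no Weyl cost) and the $L^1\to\ell^\infty$ estimate (full Weyl cost) at Hausdorff--Young exponent $q$ produces precisely the cluster weight $(\ell+1)^{(d-1)(2-q)/2}=(\ell+1)^{(d-1)(p-2)/(2p-2)}$ and prefactor $K_{\mathrm{Weyl}}(M)^{(p-2)/(2p)}$ in the statement. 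Combined with the $L^q$-boundedness of the vector-valued Riesz transform $\nabla(-\Delta+(d-1)A)^{-1/2}$---whose operator norm on manifolds with Ricci lower bound grows linearly in $(p-1)$, and which when $A>0$ requires the Poincar\'e inequality \eqref{poincare} to pass from the shifted to the unshifted Laplacian---this yields the constant $3\sqrt{6}\,(p-1)+(d-1)^{1/2}A^{1/2}K_{\mathrm{Poincar\acute{e}}}(M,q)$. In the flat case $A=0$ the Poincar\'e term drops out and a sharper $L^q$-Riesz bound replaces $3\sqrt{6}$ by $2$.

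\textbf{Density prefactor.} Finally, the $C_2(t)$ prefactor is sharpened from $c^{-1/q}$ to $\tfrac{p(c^{1/p}-\delta^{1/p})}{c-\delta}$ exactly as in Theorems \ref{torusheatkerneltheorem}, \ref{liegroupheatkerneltheorem} and \ref{homogeneousheatkerneltheorem}: the heat-kernel lower bound \eqref{heatkernellowerbound} forces $P_t\nu$ to have density at least $\delta$ on any radius-$r$ ball with $\nu$-mass $\ge b$, which converts the $L^p(\mu)$ bound into the claimed $L^p(\mathrm{Vol})$ bound via a standard concavity argument; the limit $\delta\to c$ is handled by L'H\^opital and reproduces $1/c^{1/q}$.
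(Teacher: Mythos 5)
Your treatment of the core of the argument matches the paper's: heat-kernel smoothing, the Hopf--Lax/Hamilton--Jacobi duality of Lemma \ref{ledouxlemma} with the sharpened density $\delta$ for $P_t*\nu$, the cluster decomposition $\Lambda_k^{1/2}\in[\ell,\ell+1)$ with Cauchy--Schwarz inside clusters and H\"older across them, and the cluster-wise Hausdorff--Young inequality of Lemma \ref{manifoldhausdorffyounglemma} obtained by Riesz--Thorin interpolation between Parseval and the $L^1\to\ell^\infty$ bound from the pointwise Weyl law, combined with the Ba\~nuelos--Os\c{e}kowski Riesz transform estimate and the Poincar\'e inequality to absorb the $a^{1/2}\|f\|_q$ term. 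Your exponent bookkeeping ($K_{\mathrm{Weyl}}^{(p-2)/(2p)}$, the weight $(\ell+1)^{(p-2)(d-1)/(2p-2)}$, the constant $3\sqrt{6}(p-1)$ with $p^*=p$) is all consistent with the paper.

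The one step that would not go through as you describe it is the regularization term $C_1 t^{1/2}(1+C_3t^{1/2})^{1/p}$. You propose to heat-smooth a $1$-Lipschitz Kantorovich potential and control it via the Bakry--\'Emery commutation $|\nabla P_s h|\le e^{(d-1)As/2}P_s|\nabla h|$; but for $p>2$ the dual of $W_p$ is not a $1$-Lipschitz potential (it is a $c$-concave function for the cost $\rho^p/p$), and the gradient commutation yields contraction of the transport distance under the dual heat flow, not a bound on $W_p(\mu,P_t*\mu)$ itself. What is actually needed is a bound on the $p$-th moment of the heat kernel, i.e.\ the dispersion rate $D_p(P_t)=\sup_y(\int_M P_t(x,y)\rho(x,y)^p\,\mathrm{d}\mathrm{Vol}(x))^{1/p}$, fed into the explicit coupling of Lemma \ref{smoothinglemma} (together with the convex decomposition $\mu=(1-c)\mu'+c\,\mathrm{Vol}$, which is where the factor $1+(1-c)^{1/p}$ comes from). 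The paper obtains this in Lemma \ref{heatkerneldispersionlemma} by a quite different mechanism: the global Laplacian comparison theorem $\Delta_x\rho\le(d-1)(\rho^{-1}+\sqrt{A})$, Green's identity, and an induction on even integer moments via the resulting differential inequality for $F_p(t)=\int P_t(x,y)\rho(x,y)^p\,\mathrm{d}\mathrm{Vol}(x)$, which is what produces the precise constants $(2d+p)^{1/2}$ and $C_3$. Wang's lower bound \eqref{heatkernellowerbound} plays no role here; it is only used to make $\delta$ explicit. You should replace your commutation argument by a moment estimate of this type (or cite Lemma \ref{heatkerneldispersionlemma} directly); the rest of your outline then closes the proof exactly as in the paper.
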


\noindent In Theorems \ref{manifoldheatkernelp<2theorem} and \ref{manifoldheatkernelp>2theorem} in the case $\delta=c$, the fraction $\frac{p(c^{1/p} - \delta^{1/p})}{c-\delta}$ in the definition of the constant $C_2(t)$ is to be interpreted as
\[ \lim_{\delta \to c} \frac{p(c^{1/p}-\delta^{1/p})}{c-\delta} = \frac{1}{c^{1/q}} . \]

Theorem \ref{manifoldheatkernelp<2theorem} with $p=1$ and $c=0$ applies without any assumption on $\mu$, but $c>0$ is crucial in the case $p>1$. Theorems \ref{manifoldheatkernelp<2theorem} and \ref{manifoldheatkernelp>2theorem} with $b=0$ apply without any assumption on $\nu$, but the value of $C_2(t)$ can be sharpened if $b>0$.

\section{Approach}\label{approachsection}

The proofs of all our results follow the same general approach, with two main ingredients. First, a smoothing procedure with a suitable nonnegative kernel $K$ on $M \times M$. Second, the Kantorovich duality theorem expressed in terms of the Hopf--Lax semigroup $Q_t$, $t \ge 0$ defined with the Lagrangian $L(x)=x^p/p$ and acting on $f \in \mathrm{C}(M)$. The function $Q_t f$ roughly speaking satisfies the Hamilton--Jacobi equation
\[ \frac{\partial Q_t f}{\partial t} + \frac{|\nabla Q_t f|^q}{q} = 0, \]
even though $Q_t f$ is only Lipschitz, but not necessarily differentiable. The precise interpretation of the Hamilton--Jacobi equation in the previous formula thus involves some technicalities, which are handled using the nonsmooth analysis approach of Villani \cite{VI}.

We work out the details of these two main ingredients in the generality of compact Riemannian manifolds $M$, and throughout this section we use the assumptions and notations given at the end of the Introduction. The main results of this section are Lemmas \ref{smoothinglemma} and \ref{ledouxlemma}. Together they reduce the problem of bounding $W_p (\mu, \nu)$ to estimating two quantities: the dispersion rate $D_p (K)$ of the kernel, and the dual Sobolev norm $\| K*\mu - K*\nu \|_{\dot{H}_{-1}^p}$, see \eqref{dispersion} and \eqref{dualsobolevnorm} for the definitions. We deal with these quantities on a case by case basis depending on the space and the kernel in Section \ref{proofsection}.

\subsection{Smoothing procedure}

The convolution of a bounded, Borel measurable kernel $K: M \times M \to [0,\infty)$ and $\mu \in \mathcal{P}(M)$ is defined as the Borel measure
\[ (K*\mu)(B) = \int_M \int_M K(x,y) \mathds{1}_B (x) \, \mathrm{d} \mathrm{Vol}(x) \mathrm{d} \mu (y), \qquad B \subseteq M \textrm{ Borel}. \]
In particular, $K*\mu \ll \mathrm{Vol}$, and
\[ \frac{\mathrm{d}(K*\mu)}{\mathrm{d}\mathrm{Vol}} (x) = \int_M K(x,y) \, \mathrm{d}\mu(y), \qquad x \in M. \]
We define the dispersion rate of the kernel $K$ as
\begin{equation}\label{dispersion}
D_p (K) = \left\{ \begin{array}{ll} \displaystyle{\sup_{y \in M} \left( \int_M K(x,y) \rho(x,y)^p \, \mathrm{d}\mathrm{Vol}(x) \right)^{1/p}} & \textrm{if } 1 \le p < \infty, \\ \displaystyle{\sup \{ \rho (x,y) : x,y \in M, \, K(x,y)>0 \}} & \textrm{if } p=\infty . \end{array} \right.
\end{equation}
The first step in bounding $W_p (\mu, \nu)$ is to replace $\mu$ and $\nu$ by the smoothed measures $K*\mu$ and $K*\nu$.
\begin{lem}\label{smoothinglemma} Let $K: M \times M \to [0,\infty)$ be a bounded, Borel measurable function that satisfies
\[ \begin{split} \int_M K(x,y) \, \mathrm{d}\mathrm{Vol}(x) &= 1 \quad \textrm{for all } y \in M, \\ \int_M K(x,y) \, \mathrm{d}\mathrm{Vol}(y) &= 1 \quad \textrm{for all } x \in M . \end{split} \]
Let $\mu, \nu \in \mathcal{P}(M)$, and assume $\mu \ge c \mathrm{Vol}$ with some constant $c \ge 0$. For any $1 \le p < \infty$, we have
\[ W_p (\mu, \nu) \le (1+(1-c)^{1/p}) D_p (K) + W_p (K*\mu, K*\nu) , \]
and
\[ W_{\infty} (\mu, \nu) \le (1+\mathds{1}_{\{ \mu \neq \mathrm{Vol} \}}) D_{\infty} (K) + W_{\infty} (K*\mu, K*\nu) . \]
\end{lem}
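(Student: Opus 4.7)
The plan is to apply the triangle inequality
\[ W_p(\mu,\nu) \le W_p(\mu, K*\mu) + W_p(K*\mu, K*\nu) + W_p(\nu, K*\nu), \]
keep the middle term as the $W_p(K*\mu, K*\nu)$ that appears in the conclusion, and bound the two outer terms by constructing explicit couplings tailored to the two normalization conditions on $K$.

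For $W_p(\nu, K*\nu)$ I would use the natural convolution coupling $d\vartheta_0(x,y) = K(x,y)\,d\mathrm{Vol}(x)\,d\nu(y)$. The column-stochastic hypothesis $\int K(x,y) d\mathrm{Vol}(x) = 1$ forces the $y$-marginal to be $\nu$, while the $x$-marginal is $K*\nu$ by definition. The associated cost satisfies
\[ \int_{M\times M} \rho(x,y)^p \, d\vartheta_0(x,y) \le \sup_{y \in M} \int_M K(x,y) \rho(x,y)^p \, d\mathrm{Vol}(x) = D_p(K)^p, \]
giving $W_p(\nu, K*\nu) \le D_p(K)$. In the $p=\infty$ setting, the same $\vartheta_0$ satisfies $\rho(x,y) \le D_\infty(K)$ on its support, yielding $W_\infty(\nu, K*\nu) \le D_\infty(K)$.

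To obtain the sharper factor $(1-c)^{1/p}$ on the $\mu$-side I would exploit the row-stochastic hypothesis $\int K(x,y) d\mathrm{Vol}(y) = 1$, which forces $K*\mathrm{Vol} = \mathrm{Vol}$. Assuming $c < 1$ (the case $c = 1$ forces $\mu = \mathrm{Vol}$ and makes the term vanish), decompose $\mu = c\mathrm{Vol} + (1-c)\tilde{\mu}$ with $\tilde{\mu} \in \mathcal{P}(M)$. Then $K*\mu = c\mathrm{Vol} + (1-c) K*\tilde{\mu}$, so $\mu$ and $K*\mu$ already share the common mass $c\mathrm{Vol}$. Couple this common mass through the identity map (zero cost) and couple $(1-c)\tilde{\mu}$ with $(1-c) K*\tilde{\mu}$ via the convolution coupling of the previous step; the resulting joint measure has total cost at most $(1-c) D_p(K)^p$, giving $W_p(\mu, K*\mu) \le (1-c)^{1/p} D_p(K)$. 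Substituting both bounds into the triangle inequality yields the $1 \le p < \infty$ conclusion. For $p=\infty$ the decomposition does not sharpen the constant: if $\mu \ne \mathrm{Vol}$ one applies the coupling with $c = 0$ to obtain $W_\infty(\mu, K*\mu) \le D_\infty(K)$, while $\mu = \mathrm{Vol}$ makes that term vanish, which is exactly what the indicator $\mathds{1}_{\{\mu \ne \mathrm{Vol}\}}$ records.

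I do not anticipate a serious obstacle; the proof is a coupling construction carried out in two moves, one for each marginal. The only technical care is to verify that the joint measures built from $K$ and from the decomposition of $\mu$ are honest Borel probability measures with the asserted marginals, which reduces to Fubini applied to bounded Borel data. The asymmetric appearance of the factor $(1+(1-c)^{1/p})$ on the right-hand side mirrors the asymmetry of the hypothesis, which only places a lower bound on $\mu$.
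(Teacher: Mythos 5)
Your proposal is correct and follows essentially the same route as the paper: the same triangle inequality, the same convolution coupling $K(x,y)\,\mathrm{d}\mathrm{Vol}(x)\,\mathrm{d}\nu(y)$ for the outer terms, and the same decomposition $\mu = c\,\mathrm{Vol} + (1-c)\tilde{\mu}$ exploiting $K*\mathrm{Vol}=\mathrm{Vol}$. The only cosmetic differences are that the paper invokes convexity of the optimal cost where you build the combined coupling by hand, and it obtains the $p=\infty$ case by letting $p\to\infty$ rather than by your (equally valid) direct support argument.
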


\begin{proof} First of all observe that $K*\mu, K*\nu \in \mathcal{P}(M)$ and $K*\mathrm{Vol}=\mathrm{Vol}$. Let $1 \le p < \infty$. The triangle inequality for $W_p$ shows that
\[ W_p (\mu, \nu) \le W_p (\mu, K*\mu) + W_p (K*\mu, K*\nu) + W_p (K*\nu, \nu) . \]
To estimate the last term, let us use the coupling $\vartheta \in \mathcal{P}(M \times M)$,
\[ \vartheta (A) = \int_M \int_M K(x,y) \mathds{1}_A (x,y) \, \mathrm{d}\mathrm{Vol}(x) \mathrm{d}\nu (y), \quad A \subseteq M \times M \textrm{ Borel}, \]
whose marginals are $\vartheta (B \times M) = (K*\nu) (B)$ and $\vartheta (M \times B) = \nu(B)$, $B \subseteq M$ Borel. In particular, $\vartheta \ll \mathrm{Vol} \otimes \nu$ on $M \times M$, and $\frac{\mathrm{d}\vartheta}{\mathrm{d}(\mathrm{Vol} \otimes \nu)}=K$. By the definition of the Wasserstein metric,
\[ \begin{split} W_p (K*\nu, \nu) &\le \left( \int_{M \times M} \rho(x,y)^p \, \mathrm{d}\vartheta (x,y) \right)^{1/p} \\ &= \left( \int_M \int_M K(x,y) \rho(x,y)^p \, \mathrm{d} \mathrm{Vol}(x) \mathrm{d}\nu (y) \right)^{1/p} \\ &\le \sup_{y \in M} \left( \int_M K(x,y) \rho(x,y)^p \, \mathrm{d} \mathrm{Vol}(x) \right)^{1/p} \\ &= D_p (K) . \end{split} \]
The assumption $\mu \ge c \mathrm{Vol}$ ensures that we can write $\mu$ and $K*\mu$ as the convex combinations $\mu=(1-c) \mu' + c \mathrm{Vol}$ and $K*\mu = (1-c) (K*\mu') + c (K* \mathrm{Vol})$ with $\mu'=(\mu - c \mathrm{Vol})/(1-c) \in \mathcal{P}(M)$. By the convexity of the optimal cost \cite[p.\ 59]{VI},
\[ W_p^p (\mu, K*\mu) \le (1-c) W_p^p (\mu', K*\mu') + c W_p^p (\mathrm{Vol}, K*\mathrm{Vol}) = (1-c) W_p^p (\mu', K*\mu') .  \]
Repeating the argument above with $\nu$ replaced by $\mu'$ leads to $W_p (\mu, K*\mu) \le (1-c)^{1/p} D_p (K)$. This finishes the proof for $1 \le p < \infty$. The claim for $p=\infty$ follows from taking the limit as $p\to \infty$, noting that the assumption $\mu \ge c \mathrm{Vol}$ holds with $c=1$ if and only if $\mu = \mathrm{Vol}$.
\end{proof}

\begin{remark} Lemma \ref{smoothinglemma} holds with $1+(1-c)^{1/p}$ and $1+\mathds{1}_{\{ \mu \neq \mathrm{Vol} \}}$ replaced by $2$ without the assumption
\[ \int_M K(x,y) \, \mathrm{d} \mathrm{Vol}(y) =1 \quad \textrm{for all } x \in M. \]
Indeed, this assumption was only used in the proof to show that $K*\mathrm{Vol}=\mathrm{Vol}$.
\end{remark}

\subsection{Sobolev norms and Ledoux's lemma}

Let $1< p \le \infty$ and $1 \le q < \infty$ be such that $1/p+1/q=1$. Let us define the Sobolev norm $\| \cdot \|_{\dot{H}_1^q}$ as
\[ \| f \|_{\dot{H}_1^q} = \bigg( \int_M |\nabla f|^q \, \mathrm{d} \mathrm{Vol} \bigg)^{1/q} . \]
This is well-defined and finite for all $f \in \mathrm{Lip}(M)$ by Rademacher's theorem \cite[p.\ 234]{VI}. The dual Sobolev norm $\| \cdot \|_{\dot{H}_{-1}^p}$ is then defined on the set of signed Borel measures $\gamma$ on $M$ such that $\gamma(M)=0$ as
\begin{equation}\label{dualsobolevnorm}
\| \gamma \|_{\dot{H}_{-1}^p} = \sup_{\substack{f \in \mathrm{Lip}(M) \\ \| f \|_{\dot{H}_1^q} \le 1}} \left| \int_M f \, \mathrm{d}\gamma \right| .
\end{equation}

The following lemma is a variation of a result of Ledoux \cite[Theorem 2]{LE1}, and can be thought of as a generalization of the Kantorovich duality formula \eqref{kantorovichduality} to $p>1$. The special case $p=2$ also appeared in a seemingly unpublished sequel to \cite{LE1}, and in \cite{PEY}. Note that Ledoux uses a different definition of Sobolev norms. The lemma with $\delta=0$ and finite $p$ applies without any assumption on $\nu$. In contrast, the assumption $\delta>0$ is crucial in the case $p=\infty$.
\begin{lem}\label{ledouxlemma} Let $\mu, \nu \in \mathcal{P}(M)$. Assume there exist constants $c>\delta \ge 0$ such that $\mu \ge c \mathrm{Vol}$ and $\nu \ge \delta \mathrm{Vol}$. For any $1<p<\infty$, we have
\[ W_p (\mu, \nu) \le \frac{p(c^{1/p} - \delta^{1/p})}{c-\delta} \| \mu - \nu \|_{\dot{H}_{-1}^p} .  \]
If in addition $\delta>0$, then
\[ W_{\infty} (\mu, \nu) \le \frac{\log c - \log \delta}{c-\delta} \| \mu - \nu \|_{\dot{H}_{-1}^{\infty}} . \]
\end{lem}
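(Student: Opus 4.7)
The plan is to adapt Ledoux's duality argument. The key tool is the Hopf--Lax semigroup $Q_t$ for the Lagrangian $L(x) = x^p/p$, defined on $f \in \mathrm{C}(M)$ by
\[
Q_t f(x) = \inf_{y \in M} \left\{ f(y) + \frac{\rho(x,y)^p}{p t^{p-1}} \right\}, \qquad t > 0, \qquad Q_0 f = f .
\]
Kantorovich duality for the transport cost $\rho^p/p$ yields
\[
\frac{W_p(\mu, \nu)^p}{p} = \sup_{\phi \in \mathrm{Lip}(M)} \left[ \int_M Q_1 \phi \, \mathrm{d}\mu - \int_M \phi \, \mathrm{d}\nu \right] .
\]
Fix an admissible $\phi$ and a smooth increasing function $\lambda : [0,1] \to [0,1]$ with $\lambda(0) = 0$, $\lambda(1) = 1$ to be chosen. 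Set $\mu_t = (1-\lambda(t)) \nu + \lambda(t) \mu$ and $F(t) = \int_M Q_t \phi \, \mathrm{d}\mu_t$, so that $F(1) - F(0) = \int_M Q_1 \phi \, \mathrm{d}\mu - \int_M \phi \, \mathrm{d}\nu$.

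Differentiating $F$ and applying the Hamilton--Jacobi equation $\partial_t Q_t \phi = -|\nabla Q_t \phi|^q/q$ (valid almost everywhere; the interchange of differentiation and integration is justified in Villani's nonsmooth framework \cite{VI}) gives
\[
F'(t) = -\frac{1}{q} \int_M |\nabla Q_t \phi|^q \, \mathrm{d}\mu_t + \lambda'(t) \int_M Q_t \phi \, \mathrm{d}(\mu - \nu) .
\]
Writing $\delta_t = \delta + \lambda(t)(c - \delta)$ and $A = \| \mu - \nu \|_{\dot{H}_{-1}^p}$, the lower bound $\mu_t \ge \delta_t \mathrm{Vol}$ combined with the admissibility of the Lipschitz function $Q_t \phi$ in \eqref{dualsobolevnorm} yields
\[
F'(t) \le -\frac{\delta_t}{q} \| \nabla Q_t \phi \|_q^q + \lambda'(t) A \| \nabla Q_t \phi \|_q .
\]
Optimizing over $\| \nabla Q_t \phi \|_q \ge 0$ (a Young-type inequality exploiting the identity $(p-1)q = p$) produces
\[
F'(t) \le \frac{A^p \lambda'(t)^p}{p \, \delta_t^{p-1}} .
\]

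To conclude, we integrate over $[0, 1]$ and minimize over $\lambda$. The substitution $u(t) = \delta_t^{1/p}$, with $u(0) = \delta^{1/p}$ and $u(1) = c^{1/p}$, transforms
\[
\int_0^1 \frac{\lambda'(t)^p}{\delta_t^{p-1}} \, \mathrm{d}t = \left( \frac{p}{c-\delta} \right)^p \int_0^1 u'(t)^p \, \mathrm{d}t ,
\]
and H\"older's inequality bounds the latter integral below by $(c^{1/p} - \delta^{1/p})^p$, with equality when $u$ is affine in $t$. Choosing $\lambda$ to realize that optimal $u$ and taking the supremum over $\phi$ gives the stated bound for $1 < p < \infty$. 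The case $p = \infty$ then follows by passing to the limit: a Taylor expansion shows $p(c^{1/p}-\delta^{1/p})/(c-\delta) \to (\log c - \log \delta)/(c-\delta)$, the dual Sobolev norms $\| \mu - \nu \|_{\dot{H}_{-1}^p}$ monotonically converge to $\| \mu - \nu \|_{\dot{H}_{-1}^\infty}$ as the admissible class in \eqref{dualsobolevnorm} enlarges (since $\| \cdot \|_q \to \| \cdot \|_1$ on the probability space $M$), and $W_p \to W_\infty$ as noted in the Introduction; the hypothesis $\delta > 0$ enters precisely to keep $\log \delta$ finite.

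The main technical obstacle is the rigorous differentiation of $F$, since $Q_t \phi$ is only Lipschitz rather than $C^1$: the Hamilton--Jacobi equation is satisfied only almost everywhere, so the formal manipulations above must be conducted within the nonsmooth analysis framework of \cite{VI}. Once this point is granted, the remainder of the argument reduces to the explicit one-variable calculus of variations problem described above, whose sharp solution is exactly the constant appearing in the statement.
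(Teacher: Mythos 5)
Your proposal is correct and follows essentially the same route as the paper: Kantorovich duality via the Hopf--Lax semigroup for $L(x)=x^p/p$, interpolation of the two measures with the density lower bound $\delta_t=\delta+\lambda(t)(c-\delta)$, the Hamilton--Jacobi inequality, optimization over the interpolating function (your substitution $u=\delta_t^{1/p}$ plus Jensen yields exactly the paper's choice of $\theta$, and shows it is optimal rather than just verifying it), and the limit $p\to\infty$ together with $\|\mu-\nu\|_{\dot H^p_{-1}}\le\|\mu-\nu\|_{\dot H^\infty_{-1}}$ for the last claim. The only caveat is that ``justified in Villani's nonsmooth framework'' glosses over a real wrinkle: for $1<p<2$ the Lagrangian $x^p/p$ does not have locally bounded second derivative, so Villani's proof of the pointwise Hamilton--Jacobi identity must be modified (the paper re-derives it at points of differentiability of $Q_t\phi$ and works with upper right-hand derivatives of $F$ via Fatou, rather than a literal $F'$).
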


\begin{proof} Let $1<p<\infty$. Consider the Lagrangian $L(x)=x^p/p$ on $[0,\infty)$, and its Legendre transform $L^*(x)=x^q/q$ also on $[0,\infty)$. The corresponding Hopf--Lax semigroup $Q_t$, $t \ge 0$ acting on $f \in \mathrm{C}(M)$ is defined as $Q_0 f = f$, and
\[ (Q_t f)(x) = \inf_{y \in M} \left( f(y) + t L \left( \frac{\rho (x,y)}{t} \right) \right), \qquad t>0, \,\, x \in M. \]
The Kantorovich duality theorem \cite[p.\ 70]{VI} applied to the cost function $L(\rho(x,y))$ expresses $W_p(\mu, \nu)$ in terms of the Hopf--Lax semigroup at $t=1$ as
\begin{equation}\label{hopflax}
\frac{1}{p}W_p^p (\mu, \nu) = \sup_{f \in \mathrm{C}(M)} \left( \int_M Q_1 f \, \mathrm{d}\nu - \int_M f \, \mathrm{d}\mu  \right) .
\end{equation}

As a minor modification of a theorem of Villani \cite[p.\ 626]{VI}, we claim that the Hopf--Lax semigroup has the following basic properties for any given $f \in \mathrm{C}(M)$.
\begin{enumerate}[(i)]
\item For any $t>0$, we have $Q_t f \in \mathrm{Lip}(M)$ and
\begin{equation}\label{lipschitzconstant}
\| Q_t f \|_{\mathrm{Lip}} \le \left( \frac{2 \, \mathrm{diam}(M)}{t} \right)^{p-1}.
\end{equation}

\item For any $t \ge 0$, we have $Q_s f \to Q_t f$ uniformly on $M$ as $s \to t$.

\item For any $t,s>0$ and any $x \in M$,
\[ \frac{|(Q_{t+s} f)(x) - (Q_t f)(x)|}{s} \le L^* \left( \| Q_t f \|_{\mathrm{Lip}} \right) . \]

\item For any $t>0$ and any $x \in M$ such that $Q_t f$ is differentiable at $x$,
\[ \lim_{s \to 0^+} \frac{(Q_{t+s} f)(x) - (Q_t f)(x)}{s} = - L^* (|\nabla Q_t f(x)|) . \]
\end{enumerate}

Let us first prove (i). Let $t>0$ and $x,y,z \in M$. The estimate
\[ \rho (x,z)^p \le (\rho (y,z) + \rho (x,y))^p \le \rho (y,z)^p + p (2 \, \mathrm{diam}(M))^{p-1} \rho (x,y) \]
shows that
\[ f(z) + \frac{\rho(x,z)^p}{p t^{p-1}} \le f(z) + \frac{\rho(y,z)^p}{p t^{p-1}} + \left( \frac{2 \, \mathrm{diam}(M)}{t} \right)^{p-1} \rho(x,y) . \]
Taking the infimum over $z \in M$ yields
\[ (Q_t f)(x) - (Q_t f)(y) \le \left( \frac{2 \, \mathrm{diam}(M)}{t} \right)^{p-1} \rho(x,y) . \]
Repeating the same argument with the roles of $x$ and $y$ reversed establishes \eqref{lipschitzconstant}, and thus proves (i).

We refer to Villani \cite[p.\ 626]{VI} for the proof of (iii) under the sole assumption that the Lagrangian $L$ is strictly increasing, convex and continuous.

Regarding property (ii), Villani showed that $Q_s f \to Q_t f$ uniformly on $M$ as $s \to t^+$ for any given $t \ge 0$, again under the sole assumption that the Lagrangian $L$ is strictly increasing, convex and continuous. Now let $0<s<t$ and $x \in M$. Property (iii) shows that
\[ |(Q_t f)(x) - (Q_s f)(x)| \le (t-s) L^* (\| Q_s f \|_{\mathrm{Lip}}) , \]
and here $\| Q_s f \|_{\mathrm{Lip}}$ remains bounded e.g.\ for $s \in (t/2,t)$ by \eqref{lipschitzconstant}. Therefore $Q_s f \to Q_t f$ uniformly on $M$ as $s \to t^-$ for any given $t>0$. This finishes the proof of property (ii).

Finally, consider property (iv). Villani showed that if $L$ is strictly increasing, convex, continuous and (after extending $L$ as constant zero on $(-\infty, 0)$) has locally bounded second derivative, then
\[ \lim_{s \to 0^+} \frac{(Q_{t+s} f)(x) - (Q_t f)(x)}{s} = - L^* (|\nabla^{-} Q_t f|(x)) \]
for all $t>0$ and $x \in M$, where given a function $g: M \to \mathbb{R}$, the function $|\nabla^{-}g|: M \to \mathbb{R} \cup \{ \infty \}$ is defined as
\[ |\nabla^{-} g|(x) = \limsup_{y \to x} \frac{\max \{ g(x)-g(y), 0 \}}{\rho (x,y)} . \]
Following the proof given by Villani, it turns out that the assumption that $L$ has a locally bounded second derivative (which in our case fails for $1<p<2$) can be replaced by the assumption that $Q_t f$ is differentiable at $x$, in which case $|\nabla^{-}Q_t f|(x) =|\nabla Q_t f(x)|$ with the usual gradient $\nabla$. This finishes the proof of (iv).

Now fix $f \in \mathrm{C}(M)$, and let $\theta: [0,1] \to [0,1]$ be a strictly increasing function that is continuous on $[0,1]$, smooth on $(0,1)$, and satisfies $\theta (0)=0$ and $\theta(1)=1$. Consider the function
\[ F(t) = (1-\theta(t)) \int_M Q_t f \, \mathrm{d} \mu + \theta (t) \int_M Q_t f \, \mathrm{d} \nu, \qquad t \in [0,1] . \]
Property (ii) ensures that $F$ is continuous on $[0,1]$. Noting that the integrands are nonpositive, an application of Fatou's lemma, the assumption $\mu \ge c \mathrm{Vol}$ and property (iv) together with Rademacher's theorem show that for any $t>0$,
\[ \begin{split} \limsup_{s \to 0^+} \int_M \frac{Q_{t+s}f - Q_t f}{s} \, \mathrm{d}\mu &\le \int_M \limsup_{s \to 0^+} \frac{Q_{t+s}f - Q_t f}{s} \, \mathrm{d}\mu \\ &\le c \int_M \limsup_{s \to 0^+} \frac{Q_{t+s}f - Q_t f}{s} \, \mathrm{d}\mathrm{Vol} \\ &= - c \int_M \frac{|\nabla Q_t f|^q}{q} \, \mathrm{d} \mathrm{Vol}. \end{split} \]
The same holds for $\nu$ with $c$ replaced by $\delta$. In particular, for all $t \in (0,1)$, the upper right-hand derivative $F'_+(t)=\limsup_{s \to 0^+} (F(t+s)-F(t))/s$ satisfies
\[ \begin{split} F'_+ (t) &\le -\theta'(t) \int_M Q_t f \, \mathrm{d} \mu + (1-\theta(t)) \limsup_{s \to 0^+} \int_M \frac{Q_{t+s}f - Q_t f}{s} \, \mathrm{d}\mu \\ &{\phantom{\le}} + \theta'(t) \int_M Q_t f \, \mathrm{d} \nu + \theta(t) \limsup_{s \to 0^+} \int_M \frac{Q_{t+s}f - Q_t f}{s} \, \mathrm{d}\nu \\ &\le \theta'(t) \int_M Q_t f \, \mathrm{d} (\nu - \mu) - (c-(c-\delta) \theta(t)) \int_M \frac{|\nabla Q_t f|^q}{q} \, \mathrm{d}\mathrm{Vol}. \end{split} \]
By the definition \eqref{dualsobolevnorm} of the dual Sobolev norm,
\[ \int_M Q_t f \, \mathrm{d} (\nu - \mu) \le \| \mu - \nu \|_{\dot{H}_{-1}^p} \| Q_t f \|_{\dot{H}_1^q} , \]
therefore
\[ F'_+(t) \le \theta'(t) \| \mu - \nu \|_{\dot{H}_{-1}^p} \| Q_t f \|_{\dot{H}_1^q} - \frac{c-(c-\delta) \theta(t)}{q} \| Q_t f \|_{\dot{H}_1^q}^q . \]
Treating $\| Q_t f \|_{\dot{H}_1^q}$ as a variable $u$, we deduce
\[ \begin{split} F'_+(t) &\le (c-(c-\delta)\theta(t)) \sup_{u \ge 0} \left( \frac{\theta'(t) \| \mu - \nu \|_{\dot{H}_{-1}^p}}{c-(c-\delta)\theta(t)} u - L^* (u) \right) \\ &= (c-(c-\delta)\theta(t)) L \left( \frac{\theta'(t) \| \mu - \nu \|_{\dot{H}_{-1}^p}}{c-(c-\delta)\theta(t)} \right) \\ &= \frac{(\theta' (t))^p}{p (c-(c-\delta)\theta (t))^{p-1}} \| \mu - \nu \|_{\dot{H}_{-1}^p}^p . \end{split} \]
The optimal choice for the smooth function $\theta$ is
\[ \theta (t) = \frac{c-(c^{1/p}-(c^{1/p}-\delta^{1/p})t)^p}{c-\delta}, \]
in which case we arrive at the constant function
\[ \frac{(\theta'(t))^p}{p(c-(c-\delta)\theta(t)))^{p-1}} = \frac{p^{p-1} (c^{1/p}-\delta^{1/p})^{p}}{(c-\delta)^p} . \]
The function $F$ is thus continuous on $[0,1]$, and its upper right-hand derivative is bounded above by
\[ F'_+ (t) \le \frac{p^{p-1} (c^{1/p}-\delta^{1/p})^{p}}{(c-\delta)^p} \| \mu - \nu \|_{\dot{H}_{-1}^p}^p , \qquad t \in (0,1). \]
It follows that
\[ \int_M Q_1 f \, \mathrm{d} \nu - \int_M f \, \mathrm{d}\mu = F(1)-F(0) \le \frac{p^{p-1} (c^{1/p}-\delta^{1/p})^{p}}{(c-\delta)^p} \| \mu - \nu \|_{\dot{H}_{-1}^p}^p . \]
As $f \in \mathrm{C}(M)$ was arbitrary, the Kantorovich duality theorem \eqref{hopflax} finally leads to
\[ \frac{1}{p} W_p^p (\mu, \nu) \le \frac{p^{p-1} (c^{1/p}-\delta^{1/p})^{p}}{(c-\delta)^p} \| \mu - \nu \|_{\dot{H}_{-1}^p}^p . \]
This finishes the proof for $1<p<\infty$.

Note that $\lim_{p \to \infty} W_p (\mu, \nu) = W_{\infty}(\mu, \nu)$ and $\| \mu - \nu \|_{\dot{H}_{-1}^p} \le \| \mu - \nu \|_{\dot{H}_{-1}^{\infty}}$ for all $1<p<\infty$ by the definition \eqref{dualsobolevnorm} of the dual Sobolev norm. Assuming $\delta>0$, we also have $\lim_{p \to \infty} p(c^{1/p} - \delta^{1/p}) = \log c - \log \delta$. Taking the limit as $p \to \infty$ thus yields the desired estimate for $W_{\infty} (\mu, \nu)$.
\end{proof}

We note for convenience that in the case of absolutely continuous measures with bounded density it is enough to take the supremum over smooth functions instead of Lipschitz functions in the definition of the dual Sobolev norm \eqref{dualsobolevnorm} and in the Kantorovich duality formula \eqref{kantorovichduality}.
\begin{lem}\label{dualsobolevnormlemma} Let $\mu, \nu \in \mathcal{P}(M)$ be absolutely continuous with bounded density with respect to $\mathrm{Vol}$. For any $1< p \le \infty$ and $1 \le q < \infty$ such that $1/p+1/q=1$, we have
\[ \| \mu - \nu \|_{\dot{H}_{-1}^p} = \sup_{\substack{f \in \mathrm{C}^{\infty}(M) \\ \| f \|_{\dot{H}_1^q} \le 1}} \left| \int_M f \, \mathrm{d} \mu - \int_M f \, \mathrm{d} \nu \right| , \]
and
\[ W_1 (\mu, \nu) = \sup_{\substack{f \in \mathrm{C}^{\infty}(M) \\ \| f \|_{\mathrm{Lip}} \le 1}} \left| \int_M f \, \mathrm{d}\mu - \int_M f \, \mathrm{d}\nu \right| . \]
\end{lem}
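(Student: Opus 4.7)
The inequality $\ge$ in both identities is immediate since $\mathrm{C}^{\infty}(M) \subset \mathrm{Lip}(M)$, so any smooth competitor is admissible in the Lipschitz supremum. For the nontrivial direction $\le$, the plan is to show that every Lipschitz competitor can be approximated by smooth ones while preserving the relevant seminorm up to a factor $1+o(1)$. This is where the bounded density hypothesis enters: writing $\mathrm{d}\mu = g_\mu \, \mathrm{d}\mathrm{Vol}$ and $\mathrm{d}\nu = g_\nu \, \mathrm{d}\mathrm{Vol}$ with $g_\mu, g_\nu \in L^{\infty}(M)$, H\"older's inequality gives
\[
\biggl| \int_M (f - f_{\varepsilon}) \, \mathrm{d}(\mu - \nu) \biggr| \le \|g_\mu - g_\nu\|_{\infty} \, \|f - f_{\varepsilon}\|_{L^1(\mathrm{Vol})} ,
\]
so the lemma reduces to producing $f_{\varepsilon} \in \mathrm{C}^{\infty}(M)$ with $f_{\varepsilon} \to f$ in $L^1(\mathrm{Vol})$ together with either $\|f_{\varepsilon}\|_{\dot{H}_1^q} \le (1+o(1))\|f\|_{\dot{H}_1^q}$ (for the first identity) or $\|f_{\varepsilon}\|_{\mathrm{Lip}} \le (1+o(1))\|f\|_{\mathrm{Lip}}$ (for the second). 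A trivial rescaling of $f_{\varepsilon}$ then returns it to the unit ball.

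To construct $f_{\varepsilon}$ I would use local Euclidean mollification. Cover $M$ by a finite atlas of small geodesic balls $U_i$ that are $(1+\delta)$-bi-Lipschitz to Euclidean balls via coordinate charts $\varphi_i$; this is possible for any $\delta>0$ by compactness of $M$. Fix a smooth partition of unity $\{\chi_i\}$ subordinate to $\{U_i\}$, let $\phi_{\varepsilon}$ be a standard Euclidean mollifier at scale $\varepsilon$, and set $g_{i,\varepsilon} = ((f \circ \varphi_i^{-1}) * \phi_{\varepsilon}) \circ \varphi_i$, extended by zero outside $U_i$. Define $f_{\varepsilon} = \sum_i \chi_i g_{i,\varepsilon}$. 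By continuity of $f$ each $g_{i,\varepsilon}$ tends to $f$ uniformly on the support of $\chi_i$, hence $f_{\varepsilon} \to f$ uniformly and therefore in $L^1(\mathrm{Vol})$. Because Euclidean convolution is a contraction in both the Lipschitz seminorm and the $W^{1,q}$ seminorm, and because the bi-Lipschitz charts distort the Riemannian metric by only a factor $1+O(\delta)$, each $g_{i,\varepsilon}$ already satisfies the desired seminorm bound on $U_i$ with constant $(1+O(\delta))$.

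The main obstacle is handling the cross-terms produced by the partition of unity. Expanding
\[
\nabla f_{\varepsilon} = \sum_i \chi_i \nabla g_{i,\varepsilon} + \sum_i (g_{i,\varepsilon} - f) \nabla \chi_i ,
\]
one uses the key identity $\sum_i \nabla \chi_i \equiv 0$ to absorb the $f \nabla \chi_i$ contributions and rewrite the second sum entirely in terms of the errors $g_{i,\varepsilon} - f$. The first sum is a $\chi_i$-convex combination of vector fields of pointwise size at most $(1+O(\delta))|\nabla f|$ in the pullback metric, which by convexity of $|\cdot|^q$ gives the $L^q$ bound, and by pointwise estimation gives the Lipschitz bound. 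The second sum is controlled in sup norm by $(\max_i \|\nabla \chi_i\|_{\infty}) \cdot \max_i \|g_{i,\varepsilon} - f\|_{\infty}$, which tends to $0$ as $\varepsilon \to 0$. Sending $\varepsilon \to 0$ first and then $\delta \to 0$ yields the required $(1+o(1))$ control. I expect the only delicate point to be the bookkeeping of these two limits in the Lipschitz case; a heat-semigroup mollification would be cleaner but would implicitly invoke a Ricci curvature lower bound, which the present lemma does not assume.
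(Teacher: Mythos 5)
Your reduction is exactly the one the paper uses: both directions hinge on approximating a Lipschitz competitor $f$ by smooth functions $f_n$ with $f_n\to f$ in $L^1(\mathrm{Vol})$ (resp.\ uniformly) while the seminorm $\|\cdot\|_{\dot H_1^q}$ (resp.\ $\|\cdot\|_{\mathrm{Lip}}$) is asymptotically not increased, the bounded densities then letting you pass to the limit in $\int_M f\,\mathrm{d}(\mu-\nu)$. The difference is that the paper simply cites these two approximation facts (Hebey's density theorem for the Sobolev case and Greene--Wu for the Lipschitz case), whereas you rebuild them from scratch via bi-Lipschitz charts, a partition of unity, local Euclidean mollification, and the cancellation $\sum_i\nabla\chi_i\equiv 0$. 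Your construction is sound and has the merit of being self-contained and of making transparent where each hypothesis is used (in particular, that the bounded-density assumption is only needed for the Sobolev identity, uniform convergence sufficing for the $W_1$ identity). One sentence is stated imprecisely: the terms $\chi_i\nabla g_{i,\varepsilon}$ are \emph{not} pointwise dominated by $(1+O(\delta))\,\chi_i|\nabla f|$ at the same point, since mollification averages the gradient rather than bounding it pointwise; the correct route, which your earlier remark about convolution being an $L^q$-contraction already contains, is $|\nabla g_{i,\varepsilon}|\le(1+O(\delta))\,(|\nabla f|*\phi_\varepsilon)$ in the chart, followed by Jensen and Fubini so that $\sum_i\int\chi_i(|\nabla f|^q*\phi_\varepsilon)\to\sum_i\int\chi_i|\nabla f|^q=\|\nabla f\|_q^q$ as $\varepsilon\to 0$. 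With that step made explicit the argument is complete; it costs about a page where the paper spends two citations.
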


\begin{proof} The first claim follows from the fact that for any $f \in \mathrm{Lip}(M)$, there exists a sequence $f_n \in \mathrm{C}^{\infty}(M)$ such that $f_n \to f$ both in $L^1 (M)$ and in the $\| \cdot \|_{\dot{H}_1^q}$ Sobolev norm \cite[p.\ 11]{HE}. The second claim follows from the fact that for any $f \in \mathrm{Lip}(M)$, there exists a sequence $f_n \in \mathrm{C}^{\infty}(M)$ such that $f_n \to f$ uniformly on $M$ and $\limsup_{n \to \infty} \| f_n \|_{\mathrm{Lip}} \le \| f \|_{\mathrm{Lip}}$, see \cite[Proposition 2.1]{GW}.
\end{proof}

\subsection{Riesz transform estimates}

As Lemma \ref{ledouxlemma} and the definition \eqref{dualsobolevnorm} of the dual Sobolev norm suggest, we have to work with $\| \nabla f \|_q$, $f \in \mathrm{C}^{\infty}(M)$. This is straightforward on the torus and more generally, on compact connected Lie groups, since these spaces are parallelizable: this means that the gradient $\nabla f$ can be thought of simply as a function taking values from a given vector space, e.g.\ the tangent space at the identity element of the group. In contrast, in more general manifolds the gradient $\nabla f$ is technically a section of the tangent bundle. In order to avoid this technical complication, in the proof of our results on compact homogeneous spaces and general compact manifolds we rely on the following Riesz transform estimates. 
\begin{lem}[Ba\~{n}uelos--Os\c{e}kowski] Assume the Ricci curvature of $M$ is bounded below by $-a$ with some constant $a \ge 0$. Let $1<p<\infty$ and $1<q<\infty$ be such that $1/p+1/q=1$, and let $p^*=\max \{ p, q \}$. If $a=0$, then for any $f \in \mathrm{C}^{\infty}(M)$, we have
\begin{equation}\label{riesztransform1}
\frac{\| (-\Delta)^{1/2} f \|_p}{2(p^*-1)} \le \| \nabla f \|_p \le 2(p^*-1) \| (-\Delta)^{1/2} f \|_p .
\end{equation}
If $a>0$, then for any $f \in \mathrm{C}^{\infty}(M)$, we have
\begin{equation}\label{riesztransform2}
\frac{\| (a-\Delta)^{1/2} f \|_p - a^{1/2} \| f \|_p}{3 \sqrt{6}(p^*-1)} \le \| \nabla f \|_p \le 3 \sqrt{6} (p^*-1) \| (a-\Delta)^{1/2} f \|_p .
\end{equation}
\end{lem}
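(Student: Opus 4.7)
The plan is to follow the stochastic/Bellman-function framework of Ba\~{n}uelos and Os\c{e}kowski, in which the Riesz transform on a Riemannian manifold is realized as a martingale transform of Brownian motion on $M$ and then controlled by sharp Burkholder-type inequalities for differentially subordinated martingales. Burkholder's sharp constant $p^{\ast}-1$ is the source of the $p^{\ast}-1$ appearing in \eqref{riesztransform1} and \eqref{riesztransform2}; the numerical prefactors $2$ and $3\sqrt{6}$ reflect the geometric loss incurred when one converts the martingale estimate back into an inequality between $\|\nabla f\|_{p}$ and $\|(a-\Delta)^{1/2} f\|_{p}$.

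Concretely, I would first represent the inverse via Bochner subordination,
\[ (a-\Delta)^{-1/2} g = \frac{1}{\sqrt{\pi}} \int_{0}^{\infty} \frac{e^{-at}}{\sqrt{t}} \, e^{t\Delta} g \, \mathrm{d}t, \]
and set $u(s,x) = e^{s\Delta}\bigl( (a-\Delta)^{-1/2} f \bigr)(x)$, so that $\nabla u(0,\cdot) = \nabla (a-\Delta)^{-1/2} f$. Letting $X_{s}$ be Brownian motion on $M$ started from $\mathrm{Vol}$ and killed at some fixed terminal time $T>0$, It\^{o}'s formula applied to $u(T-s,X_{s})$ together with stochastic parallel transport along the Brownian path produces a martingale $N$ whose terminal value is essentially $f(X_{T})$, and a naturally associated martingale transform $N^{\prime}$ whose conditional expectation at time $0$ recovers $\nabla (a-\Delta)^{-1/2} f$ at the starting point. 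Applying Burkholder's sharp differential-subordination inequality $\|N^{\prime}\|_{p} \le (p^{\ast}-1) \|N\|_{p}$ yields one direction of \eqref{riesztransform1}--\eqref{riesztransform2}; the reverse direction follows by duality between $(-\Delta)^{1/2}$ and the divergence of the Riesz transform.

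The case $a=0$ is the cleanest: the Bochner--Weitzenb\"{o}ck formula
\[ \tfrac{1}{2}\Delta|\nabla u|^{2} = |\nabla^{2} u|^{2} + \langle \nabla u, \nabla \Delta u\rangle + \mathrm{Ric}(\nabla u, \nabla u) \]
together with $\mathrm{Ric}\ge 0$ implies that $|\nabla u|^{2}$ is a subsolution of the heat equation, so the usual martingale domination applies verbatim and yields the constant $2(p^{\ast}-1)$, the factor $2$ coming from the subordination identity together with the space--time splitting of the stochastic integral.

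The hard part will be the case $a>0$, where the Bochner formula only gives $\tfrac{1}{2}\Delta|\nabla u|^{2} \ge \langle \nabla u, \nabla \Delta u\rangle - a|\nabla u|^{2}$. To restore subharmonicity one works with the shifted semigroup $e^{-at}e^{t\Delta}$, which has two effects: a multiplicative geometric enlargement that I expect to account for the passage $p^{\ast}-1 \mapsto 3\sqrt{6}(p^{\ast}-1)$ (the extra factor $3\sqrt{6}$ encoding the broader class of allowed martingale transforms), and an additive boundary contribution, arising from differentiating $e^{-at}$ at $t=0$ in the subordination integral, that appears as the subtracted $a^{1/2}\|f\|_{p}$ on the left of \eqref{riesztransform2}. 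The central technical obstacle is verifying that the martingale transform underlying $\nabla (a-\Delta)^{-1/2}$ remains differentially subordinate to the base martingale \emph{uniformly} in $a\ge 0$, so that Burkholder's sharp constant can still be invoked; this uniform control is precisely the delicate computation carried out by Ba\~{n}uelos and Os\c{e}kowski, which one would then cite to complete the proof.
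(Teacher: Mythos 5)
Your overall plan coincides with the paper's: the second (upper) inequalities in \eqref{riesztransform1} and \eqref{riesztransform2} are exactly the theorem of Ba\~{n}uelos and Os\c{e}kowski and are cited rather than reproved, and the first (lower) inequalities are then deduced by duality. The paper does not reprove the martingale-transform/Burkholder machinery you sketch at length; the only part it actually carries out is the duality step, and that is where your proposal has a genuine gap.

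For $a=0$ the duality is routine, but for $a>0$ it is not just "the reverse direction follows by duality." Writing $\| (a-\Delta)^{1/2} f \|_p$ as a supremum of $\int_M g\,(a-\Delta)^{1/2} f \,\mathrm{d}\mathrm{Vol}$ over $\|g\|_q \le 1$, using density of $\{ (a-\Delta)^{1/2} h : h \in \mathrm{C}^{\infty}(M) \}$ and Green's formula, one gets
\[
\int_M \left( (a-\Delta)^{1/2} h \right)\left( (a-\Delta)^{1/2} f \right) \mathrm{d}\mathrm{Vol} = \int_M \left( a h f + \langle \nabla h, \nabla f \rangle \right) \mathrm{d}\mathrm{Vol},
\]
so besides $\|\nabla h\|_q \le 3\sqrt{6}(p^*-1)$ (which is the cited upper bound applied to $h$) one must also control $a \| h \|_q \| f \|_p$. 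The point you are missing is the estimate $\| h \|_q \le a^{-1/2}$ whenever $\| (a-\Delta)^{1/2} h \|_q \le 1$, which the paper obtains from the subordination formula $(a-\Delta)^{-1/2} = \int_0^{\infty} e^{-at} (\pi t)^{-1/2} e^{t\Delta} \, \mathrm{d}t$ together with the $L^q$-contractivity of the heat semigroup, giving $\| (a-\Delta)^{-1/2} F \|_q \le a^{-1/2} \| F \|_q$. This is precisely what produces the subtracted term $a^{1/2}\|f\|_p$ with its coefficient $a^{1/2}$. Your proposal instead attributes that term to "differentiating $e^{-at}$ at $t=0$ in the subordination integral" inside the forward martingale estimate, which is not where it arises and does not yield the stated inequality. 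To complete the proof you should drop that explanation and supply the duality computation above, including the $\|h\|_q \le a^{-1/2}$ bound.
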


The first result of this type with unspecified constants depending on $p$ was proved by Bakry \cite{BA} during his investigations of the Riesz transform $\nabla (-\Delta)^{-1/2}$. Improving his result by finding explicit values of the constants, the second inequalities in \eqref{riesztransform1} and \eqref{riesztransform2} were proved by Ba\~{n}uelos and Os\c{e}kowski \cite{BO}. See also \cite{AR} for the special case of the sphere. The first inequalities in \eqref{riesztransform1} and \eqref{riesztransform2} then follow from a standard duality argument.

For the sake of completeness, we include the duality argument needed to deduce the first inequalities from the second inequalities in \eqref{riesztransform1} and \eqref{riesztransform2} following Bakry \cite{BA}. First, assume $a=0$. Since the set $\{ (-\Delta)^{1/2} h : h \in \mathrm{C}^{\infty}(M) \}$ is dense in $\{ g \in L^q(M,\mathbb{R}) : \int_M g \, \mathrm{d}\mathrm{Vol}=0 \}$, we have
\[ \begin{split} \| (-\Delta)^{1/2} f \|_p &= \sup_{\substack{g \in L^q (M,\mathbb{R}) \\ \| g \|_q \le 1, \,\, \int_M g \, \mathrm{Vol}=0}} \int_M g (-\Delta)^{1/2} f \, \mathrm{d}\mathrm{Vol} \\ &= \sup_{\substack{h \in \mathrm{C}^{\infty} (M) \\ \| (-\Delta)^{1/2} h \|_q \le 1}} \int_M \left( (-\Delta)^{1/2} h \right) \left( (-\Delta)^{1/2} f \right) \, \mathrm{d} \mathrm{Vol} \\ &= \sup_{\substack{h \in \mathrm{C}^{\infty} (M) \\ \| (-\Delta)^{1/2} h \|_q \le 1}} \int_M \langle \nabla h, \nabla f \rangle \, \mathrm{d}\mathrm{Vol} \\ &\le \sup_{\substack{h \in \mathrm{C}^{\infty} (M) \\ \| (-\Delta)^{1/2} h \|_q \le 1}} \| \nabla h \|_q \| \nabla f \|_p . \end{split} \]
Note that $\langle \cdot, \cdot \rangle$ denotes the Riemannian metric on the tangent spaces of $M$, and we used Green's formula and the fact that $(-\Delta)^{1/2}$ is self-adjoint. The second inequality in \eqref{riesztransform1} shows that here $\| \nabla h \|_q \le 2(p^*-1)$, which proves the first inequality $\| (-\Delta)^{1/2} f \|_p \le 2(p^*-1) \| \nabla f \|_p$ in \eqref{riesztransform1}.

Next, assume $a>0$. Since $\{ (a-\Delta)^{1/2} h : h \in \mathrm{C}^{\infty}(M) \}$ is dense in $L^q (M,\mathbb{R})$, we have
\begin{equation}\label{bakryduality}
\begin{split} \| (a-\Delta)^{1/2} f \|_p &= \sup_{\substack{g \in L^q (M,\mathbb{R}) \\ \| g \|_q \le 1}} \int_M g (a-\Delta)^{1/2} f \, \mathrm{d}\mathrm{Vol} \\ &= \sup_{\substack{h \in \mathrm{C}^{\infty} (M) \\ \| (a-\Delta)^{1/2} h \|_q \le 1}} \int_M \left( (a-\Delta)^{1/2} h \right) \left( (a-\Delta)^{1/2} f \right) \, \mathrm{d} \mathrm{Vol} \\ &= \sup_{\substack{h \in \mathrm{C}^{\infty} (M) \\ \| (a-\Delta)^{1/2} h \|_q \le 1}} \int_M \left( ahf+ \langle \nabla h, \nabla f \rangle \right) \, \mathrm{d}\mathrm{Vol} \\ &\le \sup_{\substack{h \in \mathrm{C}^{\infty} (M) \\ \| (a-\Delta)^{1/2} h \|_q \le 1}} \left( a \| h \|_q \| f \|_p + \| \nabla h \|_q \| \nabla f \|_p \right) . \end{split}
\end{equation}
The second inequality in \eqref{riesztransform2} shows that here $\| \nabla h \|_q \le 3 \sqrt{6} (p^*-1)$. Observe that the Laplace transform of the function $e^{-at} (\pi t)^{-1/2}$, $t>0$ is
\[ \int_0^{\infty} e^{-at} (\pi t)^{-1/2} e^{-st} \, \mathrm{d}t = (a+s)^{-1/2}, \qquad s>0. \]
Hence $(a-\Delta)^{-1/2} = \int_0^{\infty} e^{-at} (\pi t)^{-1/2} e^{t \Delta} \, \mathrm{d}t$, and using the fact that the heat semigroup $e^{t\Delta}$ is contracting in $L^q (M)$, we deduce that for any $F \in \mathrm{C}^{\infty}(M)$,
\[ \| (a-\Delta)^{-1/2} F \|_q \le \int_0^{\infty} e^{-at} (\pi t)^{-1/2} \| e^{t\Delta} F \|_q \, \mathrm{d}t \le \| F \|_q \int_0^{\infty} e^{-at} (\pi t)^{-1/2} \, \mathrm{d}t = a^{-1/2} \| F \|_q . \]
An application of the previous formula with $F=(a-\Delta)^{1/2} h$ shows that in \eqref{bakryduality} we have $\| h \|_q \le a^{-1/2}$. Therefore $\| (a-\Delta)^{1/2} f \|_p \le a^{1/2} \| f \|_p + 3\sqrt{6} (p^*-1) \| \nabla f \|_p$, which is the first inequality in \eqref{riesztransform2}.

\subsection{Dispersion rate of the heat kernel}

We now estimate the dispersion rate
\[ D_p (P_t) = \sup_{y \in M} \left( \int_M P_t (x,y) \rho(x,y)^p \, \mathrm{d}\mathrm{Vol}(x) \right)^{1/p} \]
of the heat kernel $P_t (x,y)$. In the special case of the torus $M=\mathbb{R}^d/\mathbb{Z}^d$, the heat kernel has the explicit formula \cite[Section 4.4.2]{GF}
\[ P_t (x,y) = \frac{1}{(4 \pi t)^{d/2}} \sum_{k \in \mathbb{Z}^d} e^{-|x-y-k|^2/(4t)}, \quad t>0, \,\, x,y \in \mathbb{R}^d / \mathbb{Z}^d . \]
Since this is a function of $x-y$, we deduce that for all $1 \le p < \infty$,
\begin{equation}\label{heatkerneldispersion}
\begin{split} D_p^p (P_t) &= \int_{\mathbb{R}^d / \mathbb{Z}^d} P_t (x,0) \| x \|_{\mathbb{R}^d / \mathbb{Z}^d}^p \, \mathrm{d}\mathrm{Vol}(x) = \frac{1}{(4 \pi t)^{d/2}} \int_{\mathbb{R}^d} e^{-|x|^2/(4t)} \| x \|_{\mathbb{R}^d / \mathbb{Z}^d}^p \, \mathrm{d}x \\ &\le \frac{1}{(4 \pi t)^{d/2}} \int_{\mathbb{R}^d} e^{-|x|^2/(4t)} |x|^p \, \mathrm{d} x = \frac{1}{(4 \pi t)^{d/2}} \int_0^{\infty} e^{-R^2/(4t)} R^p \frac{2 \pi^{d/2}R^{d-1}}{\Gamma \left( \frac{d}{2} \right)} \, \mathrm{d}R \\ &= 2^p \frac{\Gamma \left( \frac{d+p}{2} \right)}{\Gamma \left( \frac{d}{2} \right)} t^{p/2} . \end{split}
\end{equation}

We now generelize \eqref{heatkerneldispersion} to compact manifolds. Lemma \ref{heatkerneldispersionlemma} in the special case $p=2$ appeared in \cite{BOR2}.
\begin{lem}\label{heatkerneldispersionlemma} Assume the Ricci curvature of $M$ is bounded below by $-(d-1)A$ with some constant $A \ge 0$. For any real $t>0$, we have
\[ D_p (P_t) \le \left\{ \begin{array}{ll} \displaystyle{(2dt)^{1/2} \left(1+C_3 t^{1/2} \right)^{1/2}} & \textrm{if } 1 \le p \le 2 \textrm{ is real}, \\ \displaystyle{2 \left( \frac{\Gamma \left( \frac{d+p}{2} \right)}{\Gamma \left( \frac{d}{2} \right)} \right)^{1/p} t^{1/2} \left( 1+C_3' e^{p/2} t^{1/2} \right)^{1/p}} & \textrm{if } p \ge 2 \textrm{ is an even integer}, \\ \displaystyle{(2d+p)^{1/2} t^{1/2} \left( 1+C_3' e^{(p+2)/2} t^{1/2} \right)^{1/p}} & \textrm{if } p \ge 2 \textrm{ is real,} \end{array} \right. \]
where
\[ \begin{split} C_3 &= \frac{2^{3/2}(d-1) \sqrt{A}}{3d} \left( d + (d-1) \sqrt{A} \, \mathrm{diam}(M) \right)^{1/2}, \\ C_3' &= 2 (d-1) \sqrt{A} \exp \left( \frac{(d-1)\sqrt{A}}{2} \, \mathrm{diam} (M) \right) . \end{split} \]
\end{lem}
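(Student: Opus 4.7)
The plan is to fix $y \in M$ and study the scalar $F(t) = \int_M \rho(x,y)^p P_t(x,y) \, \mathrm{d}\mathrm{Vol}(x)$; since the supremum over $y$ appears in $D_p(P_t)$, every estimate will be carried out pointwise in $y$. Using the heat equation $\partial_t P_t(\cdot,y) = \Delta_x P_t(\cdot,y)$ and Green's formula (no boundary), one has $F'(t) = \int_M \Delta_x (\rho^p) \, P_t \, \mathrm{d}\mathrm{Vol}$. For $p \ge 2$ and at points where $\rho(\cdot,y)$ is smooth, $\Delta \rho^p = p(p-1)\rho^{p-2} + p\rho^{p-1} \Delta\rho$, while under the Ricci lower bound $-(d-1)A$ the Laplace comparison theorem yields $\Delta\rho(\cdot,y) \le (d-1)\sqrt{A}\coth(\sqrt{A}\rho) \le (d-1)/\rho + (d-1)\sqrt{A}$ (read as $\Delta\rho \le (d-1)/\rho$ when $A=0$). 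Together these give the pointwise inequality
\[ \Delta \rho^p \le p(p+d-2)\rho^{p-2} + p(d-1)\sqrt{A}\,\rho^{p-1}, \]
and integration against $P_t$, together with Jensen's inequality in the form $D_{p-1}(P_t)^{p-1} \le D_p(P_t)^{p/2} D_{p-2}(P_t)^{(p-2)/2}$ when needed, leads to the key differential inequality
\[ \frac{\mathrm{d}}{\mathrm{d}t}D_p(P_t)^p \le p(p+d-2)\,D_{p-2}(P_t)^{p-2} + p(d-1)\sqrt{A}\,D_{p-1}(P_t)^{p-1} , \]
with the convention $D_0(P_t)=1$.

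In the range $1 \le p \le 2$, Jensen's inequality gives $D_p \le D_2$, so it is enough to bound $D_2(P_t)$. A bootstrap does the job: the trivial estimate $D_1(P_t) \le \mathrm{diam}(M)$ first yields $D_2(P_t)^2 \le (2d+2(d-1)\sqrt{A}\,\mathrm{diam}(M))\,t$, and reinserting through $D_1 \le D_2 \le \bigl((2d+2(d-1)\sqrt{A}\mathrm{diam}(M))t\bigr)^{1/2}$ and integrating once more produces exactly $D_2(P_t)^2 \le 2dt(1+C_3 t^{1/2})$ with $C_3 = \tfrac{2^{3/2}(d-1)\sqrt{A}}{3d}(d+(d-1)\sqrt{A}\,\mathrm{diam}(M))^{1/2}$, as claimed.

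For even $p \ge 4$, I would run an induction on $p$ with the ansatz $D_p(P_t)^p \le 2^p \Gamma((d+p)/2)/\Gamma(d/2) \cdot t^{p/2}(1+C_3' e^{p/2} t^{1/2})$. In the flat case $A=0$ the differential inequality collapses to $(D_p^p)' \le p(p+d-2)D_{p-2}^{p-2}$, and a direct integration recovers exactly the Gaussian moments already encountered in \eqref{heatkerneldispersion}; for $A>0$ the correction term, controlled via Cauchy--Schwarz, inflates the $(1+\ldots)$-factor by a universal multiplicative constant at each pass from $p-2$ to $p$, which is what the exponent $e^{p/2}$ absorbs. Finally, for real $p \ge 2$, Jensen reduces matters to the smallest even integer $p' \ge p$, and the classical ratio bound $2(\Gamma((d+p')/2)/\Gamma(d/2))^{1/p'} \le (2d+p)^{1/2}$ converts the estimate into the advertised form, at the cost of replacing $e^{p/2}$ by $e^{(p+2)/2}$.

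The principal technical hurdle will be justifying the pointwise Laplacian inequality on all of $M$: $\rho(\cdot,y)^p$ fails to be $C^2$ on the cut locus of $y$ and, for non-integer $p$, at $y$ itself. The standard remedy is that the distributional Laplacian of $\rho(\cdot,y)$ differs from its smooth-part Laplacian by a nonpositive measure supported on the cut locus (Calabi's trick), so the upper bound on $F'(t)$ persists after integration against the nonnegative weight $P_t$; the singularity at $x=y$ is handled by approximation, replacing $\rho^p$ by $(\rho^2+\varepsilon)^{p/2}$ and letting $\varepsilon \to 0$. Once this analytic setup is in place, the remainder is careful bookkeeping of the constants through the bootstrap for $p=2$ and the induction for even $p$.
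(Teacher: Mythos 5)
Your proposal follows essentially the same route as the paper: differentiate the $p$-th moment $F_p(t)=\int_M P_t(x,y)\rho(x,y)^p\,\mathrm{d}\mathrm{Vol}(x)$ via the heat equation and Green's formula, invoke the Laplacian comparison theorem to obtain the recursive differential inequality $F_p'\le p(d+p-2)F_{p-2}+p(d-1)\sqrt{A}F_{p-1}$, bootstrap for $p=2$ (and reduce $1\le p\le 2$ by H\"older), induct on even $p$ with the same ansatz, and pass to real $p\ge 2$ by H\"older together with the AM--GM bound on the Gamma ratio. The only part left schematic is the verification that the specific constant $C_3'=2(d-1)\sqrt{A}e^{B/2}$ closes the induction (the paper checks this via Stirling), but the structure and all the constants you do compute match the paper's argument.
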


\begin{proof} Fix $y \in M$. For any real $p \ge 0$, define
\[ F_p(t) = \int_M P_t(x,y) \rho (x,y)^p \, \mathrm{d}\mathrm{Vol}(x), \qquad t>0. \]
In particular, $F_0(t)=1$ and
\begin{equation}\label{Fpdiameterbound}
F_p(t) \le \mathrm{diam}(M) F_{p-1}(t).
\end{equation}
Since the heat kernel is an approximate identity, $\lim_{t \to 0^+} F_p(t)=\rho (y,y)^p=0$ for all $p>0$.

Using the fact that the heat kernel satisfies the heat equation, we deduce
\[ F_p'(t) = \int_M \frac{\partial}{\partial t} P_t(x,y) \rho(x,y)^p \, \mathrm{d}\mathrm{Vol}(x) = \int_M (\Delta_x P_t(x,y)) \rho (x,y)^p \, \mathrm{d}\mathrm{Vol}(x). \]
The global Laplacian comparison theorem \cite{WE} states that $\Delta_x \rho (x,y) \le (d-1)(\rho(x,y)^{-1}+\sqrt{A})$ in the sense of distributions. Clearly $|\nabla_x \rho(x,y)| \le 1$, consequently
\[ \begin{split} \Delta_x \rho(x,y)^p &=p(p-1)\rho(x,y)^{p-2} |\nabla_x \rho (x,y)|^2 + p\rho(x,y)^{p-1} \Delta_x \rho (x,y) \\ &\le p (d+p-2) \rho(x,y)^{p-2} + p (d-1)\sqrt{A} \rho(x,y)^{p-1} \end{split} \]
in the sense of distributions, so an application of Green's identity leads to
\begin{equation}\label{FpODE}
\begin{split} F_p'(t) &=\int_M P_t(x,y) \Delta_x \rho (x,y)^p \, \mathrm{d}\mathrm{Vol}(x) \\ &\le p(d+p-2) F_{p-2}(t) + p(d-1)\sqrt{A} F_{p-1}(t). \end{split}
\end{equation}

For the sake of readability, set $B=(d-1) \sqrt{A} \, \mathrm{diam}(M)$. The estimates \eqref{Fpdiameterbound} and \eqref{FpODE} imply $F_p'(t) \le p(d+B+p-2)F_{p-2}(t)$, from which we deduce the preliminary estimate
\[ F_p(t) \le 2^p \frac{\Gamma \left( \frac{d+B+p}{2} \right)}{\Gamma \left( \frac{d+B}{2} \right)} t^{p/2}, \qquad p \ge 0 \textrm{ even integer} \]
by induction on $p$. An application of the H\"older inequality in \eqref{FpODE} and the previous formula show that for all even integers $p \ge 2$,
\begin{equation}\label{FpODE2}
\begin{split} F_p'(t) &\le p(d+p-2) F_{p-2}(t) + p(d-1)\sqrt{A} F_p (t)^{(p-1)/p} \\ &\le p(d+p-2) F_{p-2}(t) + p (d-1) \sqrt{A} 2^{p-1} \Bigg( \frac{\Gamma \left( \frac{d+B+p}{2} \right)}{\Gamma \left( \frac{d+B}{2} \right)} \Bigg)^{(p-1)/p} t^{(p-1)/2} . \end{split}
\end{equation}

Setting $p=2$ in \eqref{FpODE2} leads to
\[ F_2'(t) \le 2d+ 4 (d-1) \sqrt{A} \left( \frac{d+B}{2} \right)^{1/2}t^{1/2}. \]
By integrating the previous formula, we deduce
\[ F_2(t) \le 2dt + \frac{2^{5/2}}{3}(d-1) \sqrt{A} (d+B)^{1/2} t^{3/2} = 2dt \left( 1+C_3 t^{1/2} \right) , \]
thus establishing the first line of the claim for $p=2$. The claim for any real $1 \le p \le 2$ follows from an application of the H\"older inequality.

Let $C_3' \ge 0$ be a constant depending only on the manifold $M$, to be chosen. Let us prove
\begin{equation}\label{Fpinduction}
F_p (t) \le 2^p \frac{\Gamma \left( \frac{d+p}{2} \right)}{\Gamma \left( \frac{d}{2} \right)} t^{p/2} \left( 1+C_3'  e^{p/2} t^{1/2} \right), \qquad p \ge 0 \textrm{ even integer}
\end{equation}
by induction on $p$, the base case being $F_0(t)=1$. Let $p \ge 2$ be an even integer, and assume \eqref{Fpinduction} holds for $p-2$. Formula \eqref{FpODE2} and the inductive hypothesis show
\[ F_p'(t) \le 2^p \frac{\Gamma \left( \frac{d+p}{2} \right)}{\Gamma \left( \frac{d}{2} \right)} \cdot \frac{p}{2} t^{(p-2)/2} \left( 1+C_3'  e^{p/2-1} t^{1/2} \right) + p (d-1) \sqrt{A} 2^{p-1} \frac{\Gamma \left( \frac{d+B+p}{2} \right)}{\Gamma \left( \frac{d+B}{2} \right)} t^{(p-1)/2}, \]
and by integrating, we deduce
\[ F_p(t) \le 2^p \frac{\Gamma \left( \frac{d+p}{2} \right)}{\Gamma \left( \frac{d}{2} \right)} t^{p/2} \left(1+C_3' \frac{p}{p+1} e^{p/2-1} t^{1/2} \right) + \frac{p}{p+1} (d-1) \sqrt{A} 2^p \frac{\Gamma \left( \frac{d+B+p}{2} \right)}{\Gamma \left( \frac{d+B}{2} \right)} t^{(p+1)/2} . \]
The induction can thus be completed provided
\[ C_3'  \frac{p}{p+1} e^{p/2-1} + \frac{p}{p+1} (d-1) \sqrt{A} \frac{\Gamma \left( \frac{d+B+p}{2} \right) \Gamma \left( \frac{d}{2} \right)}{\Gamma \left( \frac{d+B}{2} \right) \Gamma \left( \frac{d+p}{2} \right)} \le C_3' e^{p/2} , \]
or equivalently,
\begin{equation}\label{C3condition}
(d-1) \sqrt{A} \frac{ep}{(e-1)p+e} e^{-p/2} \frac{\Gamma \left( \frac{d+B+p}{2} \right) \Gamma \left( \frac{d}{2} \right)}{\Gamma \left( \frac{d+B}{2} \right) \Gamma \left( \frac{d+p}{2} \right)} \le C_3' \quad \textrm{for all even integers } p \ge 2.
\end{equation}
Stirling's approximation
\begin{equation}\label{stirling}
\sqrt{2 \pi} x^{x-1/2} e^{-x} \le \Gamma (x) \le \sqrt{2 \pi} x^{x-1/2} e^{-x+1/(12x)}
\end{equation}
shows
\[ \begin{split} \frac{\Gamma \left( \frac{d+B+p}{2} \right) \Gamma \left( \frac{d}{2} \right)}{\Gamma \left( \frac{d+B}{2} \right) \Gamma \left( \frac{d+p}{2} \right)} &\le \frac{(d+B+p)^{(d+B+p-1)/2} d^{(d-1)/2}}{(d+B)^{(d+B-1)/2} (d+p)^{(d+p-1)/2}} e^{1/6} \\ &= \left( 1+\frac{B}{d+p} \right)^{(d+p-1)/2} \left( 1+\frac{p}{d+B} \right)^{B/2} \left( \frac{d}{d+B} \right)^{(d-1)/2} e^{1/6} \\ &\le e^{B/2} e^{p/2} e^{1/6}, \end{split} \]
hence the left-hand side of \eqref{C3condition} has the upper bound
\[ (d-1) \sqrt{A} \frac{ep}{(e-1)p+e} e^{-p/2} \frac{\Gamma \left( \frac{d+B+p}{2} \right) \Gamma \left( \frac{d}{2} \right)}{\Gamma \left( \frac{d+B}{2} \right) \Gamma \left( \frac{d+p}{2} \right)} \le (d-1) \sqrt{A} e^{B/2} \frac{e^{7/6}p}{(e-1)p+e} \le 2 (d-1) \sqrt{A} e^{B/2} , \]
where we used $e^{7/6}/(e-1) \approx 1.87 <2$. In particular, \eqref{C3condition} is satisfied with the choice $C_3' = 2 (d-1) \sqrt{A} e^{B/2}$. This finishes the proof of \eqref{Fpinduction} by induction, and establishes the claim for even integers $p \ge 2$.

Now let $p \ge 2$ be real, and let $N \ge 1$ be the integer for which $2N \le p < 2N+2$. An application of H\"older's inequality and the claim with $2N+2$ yields
\[ \begin{split} \int_M P_t(x,y) \rho(x,y)^p \, \mathrm{d} \mathrm{Vol}(x) &\le \left( \int_M P_t(x,y) \rho(x,y)^{2N+2} \, \mathrm{d} \mathrm{Vol}(x) \right)^{p/(2N+2)} \\ &\le 2^p \left( \frac{\Gamma \left( \frac{d+2N+2}{2} \right)}{\Gamma \left( \frac{d}{2} \right)} \right)^{p/(2N+2)} t^{p/2} \left( 1+C_3' e^{N+1} t^{1/2} \right)^{p/(2N+2)} . \end{split} \]
The inequality between the geometric and arithmetic means shows that here
\[ \left( \frac{\Gamma \left( \frac{d+2N+2}{2} \right)}{\Gamma \left( \frac{d}{2} \right)} \right)^{p/(2N+2)} = \left( \prod_{\ell=0}^N \left( \frac{d}{2} +\ell \right) \right)^{p/(2N+2)} \le \left( \frac{d}{2} + \frac{N}{2} \right)^{p/2} \le \left( \frac{d}{2} + \frac{p}{4} \right)^{p/2} , \]
and clearly
\[ \left( 1+C_3' e^{N+1} t^{1/2} \right)^{p/(2N+2)} \le \left( 1+C_3' e^{(p+2)/2} t^{1/2} \right) . \]
This finishes the proof for all real $p \ge 2$.
\end{proof}

\section{Proofs of the main results}\label{proofsection}

\subsection{Torus with $1 \le p < \infty$}

In this section, we give the proofs of Theorems \ref{torusjacksonkerneltheorem} and \ref{torusheatkerneltheorem}. We start with a lemma based on the Hausdorff--Young inequality, which will be used to bound the dual Sobolev norm $\| \cdot \|_{\dot{H}_{-1}^p}$.
\begin{lem}\label{torushausdorffyounglemma} Let $f \in \mathrm{Lip}(\mathbb{R}^d / \mathbb{Z}^d)$. For any $2 \le p < \infty$ and $1 < q \le 2$ such that $1/p+1/q=1$, we have
\[ \Bigg( \sum_{k \in \mathbb{Z}^d} (2 \pi |k| |\widehat{f}(k)|)^p \Bigg)^{1/p} \le \| \nabla f \|_q \]
and
\[ \sup_{k \in \mathbb{Z}^d} 2 \pi |k| |\widehat{f}(k)| \le \| \nabla f \|_1 . \]
\end{lem}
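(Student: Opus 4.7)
The plan is to reduce both inequalities to facts about the vector-valued Fourier coefficient $\widehat{\nabla f}(k)=(\widehat{\partial_j f}(k))_{j=1}^d$, whose Euclidean norm is $2\pi |k| |\widehat{f}(k)|$. The first step is to justify, for Lipschitz $f$, the identity $\widehat{\partial_j f}(k) = 2 \pi i k_j \widehat{f}(k)$. By Rademacher's theorem $\nabla f$ exists a.e.\ and lies in $L^\infty(\mathbb{R}^d/\mathbb{Z}^d)$, so integration by parts on the torus against the smooth test function $e^{-2 \pi i \langle k, x \rangle}$ gives the identity; therefore
\[
|\widehat{\nabla f}(k)| = \Bigl( \sum_{j=1}^d |\widehat{\partial_j f}(k)|^2 \Bigr)^{1/2} = 2 \pi |k| |\widehat{f}(k)|.
\]

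The second inequality is then immediate: by the triangle inequality for vector-valued integrals,
\[
2 \pi |k| |\widehat{f}(k)| = \bigl| {\textstyle\int_{\mathbb{R}^d/\mathbb{Z}^d}} \nabla f(x) e^{-2\pi i \langle k, x\rangle} \, \mathrm{d}\mathrm{Vol}(x) \bigr| \le \|\nabla f\|_1,
\]
uniformly in $k$, which is the $(q,p)=(1,\infty)$ case.

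For the first inequality I would invoke a vector-valued Hausdorff--Young inequality: the Fourier transform maps $L^q(\mathbb{R}^d/\mathbb{Z}^d; \ell^2_d)$ contractively into $\ell^p(\mathbb{Z}^d; \ell^2_d)$ for $1 \le q \le 2$ with $1/p + 1/q = 1$. If one does not want to quote this, it can be produced on the spot in two equivalent ways. Either apply Riesz--Thorin interpolation for operators taking values in a fixed Hilbert space, between the isometry at $q=2$ (Plancherel applied componentwise) and the contraction at $q=1$ (the pointwise estimate displayed above). Or argue by two applications of Minkowski's inequality together with the scalar Hausdorff--Young inequality applied componentwise: since $p \ge 2$, Minkowski in the form $\|\cdot\|_{\ell^p_k(\ell^2_j)} \le \|\cdot\|_{\ell^2_j(\ell^p_k)}$ gives
\[
\Bigl( \sum_k (2\pi |k||\widehat{f}(k)|)^p \Bigr)^{1/p} \le \Bigl( \sum_{j=1}^d \bigl( \textstyle\sum_k |\widehat{\partial_j f}(k)|^p \bigr)^{2/p} \Bigr)^{1/2};
\]
the scalar Hausdorff--Young inequality bounds each inner sum by $\|\partial_j f\|_q^p$; and since $q \le 2$, a second application of Minkowski in the form $\|\cdot\|_{\ell^2_j(L^q)} \le \|\cdot\|_{L^q(\ell^2_j)}$ gives $\bigl( \sum_j \|\partial_j f\|_q^2 \bigr)^{1/2} \le \|\nabla f\|_q$, completing the chain.

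The only mildly subtle point is the identity $\widehat{\partial_j f}(k) = 2\pi i k_j \widehat{f}(k)$ at the level of Lipschitz (not smooth) regularity; but this is standard via the Rademacher a.e.\ derivative and integration by parts against smooth exponentials on the torus. After that, the rest of the argument is purely formal: the elementary vector-valued Hausdorff--Young estimate supplies both the $1 < q \le 2$ range and the endpoint $q=1$.
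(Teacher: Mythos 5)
Your proposal is correct and follows essentially the same route as the paper: establish $\widehat{\nabla f}(k)=2\pi i k\widehat{f}(k)$ by integration by parts (valid for Lipschitz, hence absolutely continuous, $f$), then apply the vector-valued Hausdorff--Young inequality obtained by Riesz--Thorin interpolation between the componentwise Parseval identity at $q=2$ and the trivial $L^1\to\ell^\infty$ bound. Your alternative derivation via two applications of Minkowski's inequality for mixed norms (with the exponents in the correct order, since $p\ge 2$ and $q\le 2$) together with the scalar Hausdorff--Young inequality is also valid and gives a self-contained substitute for the vector-valued interpolation theorem, but it is not needed.
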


\begin{proof} The Fourier transform of a vector-valued function $g: \mathbb{R}^d / \mathbb{Z}^d \to \mathbb{C}^n$ with coordinates $g=(g_1, g_2, \ldots, g_n)$ is defined coordinatewise as $\widehat{g}(k)=(\widehat{g_1}(k), \widehat{g_2}(k), \ldots, \widehat{g_n}(k))$, $k \in \mathbb{Z}^d$. One readily checks that
\[ \sum_{k \in \mathbb{Z}^d} |\widehat{g}(k)|^2 = \| g \|_2^2 \qquad \textrm{and} \qquad \sup_{k \in \mathbb{Z}^d} |\widehat{g}(k)| \le \| g \|_1 . \]
An application of the Riesz--Thorin interpolation theorem for operators between $L^p$ spaces of vector-valued functions \cite[p.\ 83]{HNVW} leads to the Hausdorff--Young inequality for vector-valued functions
\[ \Bigg( \sum_{k \in \mathbb{Z}^d} |\widehat{g}(k)|^p \Bigg)^{1/p} \le \| g \|_q . \]
The claim follows from the previous formula applied to $g=\nabla f$. Indeed, integration by parts (which holds for absolutely continuous, and in particular, for Lipschitz functions) shows that $\widehat{\nabla f}(k)=2 \pi i k \widehat{f}(k)$.
\end{proof}

We now give a detailed proof of Theorem \ref{torusheatkerneltheorem}, which will serve as a model for all our results.
\begin{proof}[Proof of Theorem \ref{torusheatkerneltheorem}] Let $1 \le p < \infty$ and $1<q \le \infty$ be such that $1/p+1/q=1$, and fix $t>0$. We work with the heat kernel $P_t(x,y)=G_t(x-y)$ on the torus \cite[Section 4.4.2]{GF}, where
\[ G_t (x) = \sum_{k \in \mathbb{Z}^d} e^{-4 \pi^2 |k|^2 t} e^{2 \pi i \langle k, x \rangle} = \frac{1}{(4 \pi t)^{d/2}} \sum_{k \in \mathbb{Z}^d} e^{-|x-k|^2/(4t)}, \quad t>0, \,\, x \in \mathbb{R}^d / \mathbb{Z}^d . \]
Note that $G_t>0$ and $\int_{\mathbb{R}^d / \mathbb{Z}^d} G_t \, \mathrm{d}\mathrm{Vol} =1$.

The dispersion estimate \eqref{heatkerneldispersion} and Lemma \ref{smoothinglemma} show that
\begin{equation}\label{torusheatkernelstep1}
W_p (\mu, \nu) \le C_1 t^{1/2} + W_p (P_t *\mu, P_t *\nu)
\end{equation}
with the constant
\[ C_1 = 2 (1+(1-c)^{1/p}) \Bigg( \frac{\Gamma \left( \frac{d+p}{2} \right)}{\Gamma \left( \frac{d}{2} \right)} \Bigg)^{1/p} . \]

Assume now $p>1$ and $c>0$. The assumptions $\mu \ge c \mathrm{Vol}$ and $\nu(B) \ge b$ for any closed ball $B$ of radius $r$ together with
\[ G_t (x) \ge \frac{e^{-\| x \|_{\mathbb{R}^d / \mathbb{Z}^d}^2/(4t)}}{(4 \pi t)^{d/2}} \]
imply that for any $x \in \mathbb{R}^d / \mathbb{Z}^d$,
\[ \frac{\mathrm{d}(P_t *\mu)}{\mathrm{d}\mathrm{Vol}}(x) = \int_{\mathbb{R}^d / \mathbb{Z}^d} G_t (x-y) \, \mathrm{d} \mu (y) \ge c  \int_{\mathbb{R}^d / \mathbb{Z}^d} G_t (x-y) \, \mathrm{d} \mathrm{Vol} (y) = c \]
and
\[ \begin{split} \frac{\mathrm{d}(P_t *\nu)}{\mathrm{d}\mathrm{Vol}}(x) &= \int_{\mathbb{R}^d / \mathbb{Z}^d} G_t (x-y) \, \mathrm{d} \nu (y) \ge \int_{B(x,r)} G_t (x-y) \, \mathrm{d}\nu (y) \\ &\ge \nu (B(x,r)) \frac{e^{-r^2 /(4t)}}{(4 \pi t)^{d/2}} \ge b \frac{e^{-r^2 /(4t)}}{(4 \pi t)^{d/2}}, \end{split} \]
where $B(x,r) = \{ y \in \mathbb{R}^d / \mathbb{Z}^d : \| x-y \|_{\mathbb{R}^d / \mathbb{Z}^d} \le r \}$ is the closed ball centered at $x$. In particular, $P_t *\mu \ge c \mathrm{Vol}$ and $P_t *\nu \ge \delta \mathrm{Vol}$ with
\[ \delta = \min \left\{ b \frac{e^{-r^2 /(4t)}}{(4 \pi t)^{d/2}}, c \right\} , \]
and Lemma \ref{ledouxlemma} shows that
\begin{equation}\label{torusheatkernelstep2}
W_p (P_t *\mu, P_t *\nu) \le \frac{p(c^{1/p} - \delta^{1/p})}{c-\delta} \| P_t *\mu - P_t *\nu \|_{\dot{H}_{-1}^p} .
\end{equation}

To estimate the dual Sobolev norm in the previous formula, let $f \in \mathrm{Lip}(\mathbb{R}^d / \mathbb{Z}^d)$ be arbitrary, and let $p_0=\max \{ p, 2 \}$ and $q_0=\min \{ q, 2 \}$. The Fourier transforms
\[ \widehat{\frac{\mathrm{d}(P_t *\mu)}{\mathrm{d}\mathrm{Vol}}}(k) = \widehat{G_t}(k) \widehat{\mu}(k) \quad \textrm{and} \quad \widehat{\frac{\mathrm{d}(P_t *\nu)}{\mathrm{d}\mathrm{Vol}}}(k) = \widehat{G_t}(k) \widehat{\nu}(k), \quad k \in \mathbb{Z}^d, \]
and an application of the Parseval formula, H\"older's inequality and Lemma \ref{torushausdorffyounglemma} lead to
\[ \begin{split} \left| \int_{\mathbb{R}^d / \mathbb{Z}^d} f \, \mathrm{d} (P_t *\mu - P_t *\nu) \right| &= \left| \sum_{k \in \mathbb{Z}^d} \overline{\widehat{f}(k)} \widehat{G_t}(k) (\widehat{\mu}(k) - \widehat{\nu}(k)) \right| \\ &\le \Bigg( \sum_{k \in \mathbb{Z}^d} (2 \pi |k| |\widehat{f}(k)|)^{p_0} \Bigg)^{1/p_0} \Bigg( \sum_{\substack{k \in \mathbb{Z}^d \\ k \neq 0}} |\widehat{G_t}(k)|^{q_0} \frac{|\widehat{\mu}(k) - \widehat{\nu}(k)|^{q_0}}{(2 \pi |k|)^{q_0}} \Bigg)^{1/q_0} \\ &\le \| \nabla f \|_{q_0} \Bigg( \sum_{\substack{k \in \mathbb{Z}^d \\ k \neq 0}} |\widehat{G_t}(k)|^{q_0} \frac{|\widehat{\mu}(k) - \widehat{\nu}(k)|^{q_0}}{(2 \pi |k|)^{q_0}} \Bigg)^{1/q_0}. \end{split} \]
By the definition \eqref{dualsobolevnorm} of the dual Sobolev norm, this implies
\[ \| P_t *\mu - P_t *\nu \|_{\dot{H}_{-1}^p} \le \| P_t *\mu - P_t *\nu \|_{\dot{H}_{-1}^{p_0}} \le \Bigg( \sum_{\substack{k \in \mathbb{Z}^d \\ k \neq 0}} |\widehat{G_t}(k)|^{q_0} \frac{|\widehat{\mu}(k) - \widehat{\nu}(k)|^{q_0}}{(2 \pi |k|)^{q_0}} \Bigg)^{1/{q_0}}, \]
where $\widehat{G_t}(k) = e^{-4 \pi^2 |k|^2 t}$. The claim for $1<p<\infty$ immediately follows from \eqref{torusheatkernelstep1}, \eqref{torusheatkernelstep2} and the previous formula.

The proof for $p=1$ and $c \ge 0$ is very similar. Let $f \in \mathrm{Lip}(\mathbb{R}^d / \mathbb{Z}^d)$ be arbitrary. Using $|\nabla f| \le \| f \|_{\mathrm{Lip}}$ a.e., we similarly deduce
\[ \begin{split} \left| \int_{\mathbb{R}^d / \mathbb{Z}^d} f \, \mathrm{d} (P_t *\mu - P_t *\nu) \right| &\le \| \nabla f \|_2 \Bigg( \sum_{\substack{k \in \mathbb{Z}^d \\ k \neq 0}} |\widehat{G_t}(k)|^2 \frac{|\widehat{\mu}(k) - \widehat{\nu}(k)|^2}{(2 \pi |k|)^2} \Bigg)^{1/2} \\ &\le \| f \|_{\mathrm{Lip}} \Bigg( \sum_{\substack{k \in \mathbb{Z}^d \\ k \neq 0}} |\widehat{G_t}(k)|^2 \frac{|\widehat{\mu}(k) - \widehat{\nu}(k)|^2}{(2 \pi |k|)^2} \Bigg)^{1/2}. \end{split} \]
The Kantorovich duality formula \eqref{kantorovichduality} thus implies
\[ W_1 (P_t *\mu, P_t *\nu) \le \Bigg( \sum_{\substack{k \in \mathbb{Z}^d \\ k \neq 0}} |\widehat{G_t}(k)|^2 \frac{|\widehat{\mu}(k) - \widehat{\nu}(k)|^2}{(2 \pi |k|)^2} \Bigg)^{1/2} . \]
The claim for $p=1$ immediately follows from \eqref{torusheatkernelstep1} and the previous formula.

Finally, we give a simple upper bound for the constant $C_1$ for the convenience of the reader. Since $\log \Gamma$ is convex, Jensen's inequality gives $\Gamma(x+s) \le \Gamma (x)^{1-s} \Gamma(x+1)^s = x^s \Gamma(x)$ for all $x>0$ and $0 \le s \le 1$. Letting $N=\lfloor p/2 \rfloor$, the previous inequality and an application of the inequality between the weighted geometric and arithmetic means lead to
\[ \begin{split} \frac{\Gamma \left( \frac{d+p}{2} \right)}{\Gamma \left( \frac{d}{2} \right)} &= \left( \prod_{\ell=0}^{N-1} \left( \frac{d}{2} +\ell \right) \right) \frac{\Gamma \left( \frac{d+p}{2} \right)}{\Gamma \left( \frac{d}{2}+N \right)} \le \left( \prod_{\ell=0}^{N-1} \left( \frac{d}{2} +\ell \right) \right) \left( \frac{d}{2} +N \right)^{p/2-N} \\ &\le \left( \frac{2}{p} \sum_{\ell=0}^{N-1} \left( \frac{d}{2} + \ell \right) + \left( 1- \frac{2N}{p} \right) \left( \frac{d}{2} + N \right) \right)^{p/2} = \left( \frac{d}{2} + \frac{(p-1)N-N^2}{p} \right)^{p/2} . \end{split} \]
Here
\[ \frac{(p-1)N - N^2}{p} \le \frac{(p-1)^2}{4p} \le \frac{p}{4}, \]
which finally yields the desired upper bound
\[ C_1 \le 2 (1+(1-c)^{1/p}) \left( \frac{d}{2} + \frac{p}{4} \right)^{1/2}. \]
\end{proof}

The proof of Theorem \ref{torusjacksonkerneltheorem} follows the same approach as that of Theorem \ref{torusheatkerneltheorem}, but uses a different kernel in the smoothing procedure.
\begin{proof}[Proof of Theorem \ref{torusjacksonkerneltheorem}] Let $1 \le p < \infty$ and $1<q \le \infty$ be such that $1/p+1/q=1$, and fix an integer $H \ge 0$. We start by constructing a kernel $K(x,y)$ on the $d$-dimensional torus. Our starting point is the Dirichlet kernel on the 1-dimensional torus
\[ \sum_{k=-H}^H e^{2 \pi i k x} = \frac{\sin (\pi (2H+1)x)}{\sin (\pi x)} . \]
Given an integer $N \ge 2$, consider the $N$th power
\begin{equation}\label{Npowerdirichletkernel}
\bigg( \sum_{k=-H}^H e^{2 \pi i k x} \bigg)^N = \sum_{k=-NH}^{NH} a_{N,k} e^{2 \pi i k x}
\end{equation}
with some integer coefficients $a_{N,k}$. Let
\[ F_{N,H}(x)=\frac{1}{c_N} \bigg( \sum_{k=-H}^H e^{2 \pi i k x} \bigg)^{2N} = \frac{(\sin(\pi (2H+1)x))^{2N}}{c_N (\sin (\pi x))^{2N}} \]
with
\[ c_N = \int_0^1 \bigg( \sum_{k=-H}^H e^{2 \pi i k x} \bigg)^{2N} \, \mathrm{d}x = \sum_{k=-NH}^{NH} a_{N,k}^2 , \]
where the last equality follows from the Parseval formula. In particular, $F_{N,H}$ is nonnegative and normalized.

The special case $N=2$ leads to the Jackson kernel, that is, the normalized square of the Fej\'er kernel. In formula \eqref{Npowerdirichletkernel} we then have $a_{2,k}=(2H+1-|k|)$, thus
\[ c_2 = \sum_{k=-2H}^{2H} (2H+1-|k|)^2 = \frac{(2H+1)(8H^2+8H+3)}{3} . \]
For $N>2$, the Fourier series expansion \eqref{Npowerdirichletkernel} at $x=0$ and an application of the Cauchy--Schwarz inequality show that
\[ (2H+1)^{2N} = \bigg( \sum_{k=-NH}^{NH} a_{N,k} \bigg)^2 \le (2NH+1) \sum_{k=-NH}^{NH} a_{N,k}^2 = (2NH+1) c_N . \]
The previous two formulas immediately yield the lower bound
\begin{equation}\label{cNbound}
c_N \ge \left\{ \begin{array}{ll} (2H+1)(8H^2+8H+3) / 3 & \textrm{if } N=2, \\ (2H+1)^{2N-1} / N & \textrm{if } N>2. \end{array} \right.
\end{equation}
The coefficients in the Fourier series expansion $F_{N,H}(x)=\sum_{k=-2NH}^{2NH} \widehat{F_{N,H}}(k) e^{2 \pi i k x}$ satisfy $|\widehat{F_{N,H}}(k)| \le 1$ because $F_{N,H}$ is nonnegative and normalized. Finally, let us extend $F_{N,H}$ to the $d$-dimensional torus as $G_{N,H}(x)=F_{N,H}(x_1) F_{N,H}(x_2) \cdots F_{N,H}(x_d)$, $x=(x_1, x_2, \ldots, x_d) \in \mathbb{R}^d / \mathbb{Z}^d$. We thus have
\[ G_{N,H} \ge 0, \qquad \int_{\mathbb{R}^d / \mathbb{Z}^d} G_{N,H} \, \mathrm{d}\mathrm{Vol}=1, \qquad |\widehat{G_{N,H}}(k)| \le \mathds{1}_{[-2NH,2NH]^d}(k). \]
We will use the kernel $K(x,y)=G_{N,H}(x-y)$, $x,y \in \mathbb{R}^d / \mathbb{Z}^d$ with a suitably chosen $N$.

Let us now estimate the dispersion rate $D_p(K)$. Since $K(x,y)$ is a function of $x-y$, we have
\[ D_p (K)= \Bigg( \int_{\mathbb{R}^d / \mathbb{Z}^d} G_{N,H} (x) \| x \|_{\mathbb{R}^d / \mathbb{Z}^d}^p \, \mathrm{d}\mathrm{Vol}(x) \Bigg)^{1/p} . \]
Identifying $\mathbb{R}^d / \mathbb{Z}^d$ with the unit cube $[-1/2, 1/2]^d$, we have $\| x \|_{\mathbb{R}^d / \mathbb{Z}^d} = (x_1^2+\cdots + x_d^2)^{1/2}$. If $1 \le p \le 2$, then an application of the H\"older inequality leads to
\[ \begin{split} D_p (K) &\le \Bigg( \int_{\mathbb{R}^d / \mathbb{Z}^d} G_{N,H} (x) \| x \|_{\mathbb{R}^d / \mathbb{Z}^d}^2 \, \mathrm{d}\mathrm{Vol}(x) \Bigg)^{1/2} \\ &= \Bigg( \int_{[-1/2, 1/2]^d} F_{N,H} (x_1) \cdots F_{N,H} (x_d) (x_1^2 + \cdots + x_d^2) \, \mathrm{d}x_1 \cdots \mathrm{d}x_d \Bigg)^{1/2} \\ &= \Bigg( d \int_{-1/2}^{1/2} F_{N,H}(x) x^2 \, \mathrm{d}x \Bigg)^{1/2} \\ &= d^{1/2} \Bigg( \frac{2}{c_N} \int_0^{1/2} \frac{(\sin (\pi (2H+1)x))^{2N}}{(\sin (\pi x))^{2N}} x^2 \, \mathrm{d}x \Bigg)^{1/2} . \end{split} \]
If $p>2$, then the inequality
\[ \| x \|_{\mathbb{R}^d / \mathbb{Z}^d}^p = (x_1^2 + \cdots + x_d^2)^{p/2} \le d^{p/2-1} (|x_1|^p + \cdots + |x_d|^p) \]
leads to
\[ \begin{split} D_p(K) &\le \Bigg( d^{p/2-1} \int_{[-1/2, 1/2]^d} F_{N,H}(x_1) \cdots F_{N,H}(x_d) (|x_1|^p + \cdots + |x_d|^p) \, \mathrm{d}x_1 \cdots \mathrm{d}x_d \Bigg)^{1/p} \\ &= \Bigg( d^{p/2} \int_{-1/2}^{1/2} F_{N,H}(x) |x|^p \, \mathrm{d}x \Bigg)^{1/p} \\ &= d^{1/2} \Bigg( \frac{2}{c_N} \int_{0}^{1/2} \frac{(\sin(\pi (2H+1)x))^{2N}}{(\sin (\pi x))^{2N}} x^p \, \mathrm{d}x \Bigg)^{1/p} . \end{split} \]

Observe that
\[ \int_{0}^{1/(2(2H+1))} \frac{(\sin(\pi (2H+1)x))^{2N}}{(\sin (\pi x))^{2N}} x^p \, \mathrm{d}x \le \int_{0}^{1/(2(2H+1))} (2H+1)^{2N} x^p \, \mathrm{d}x = \frac{(2H+1)^{2N-p-1}}{2^{p+1} (p+1)}, \]
and using $\sin (\pi x) \ge 2x$ on $x\in [0,1/2]$,
\[ \int_{1/(2(2H+1))}^{1/2} \frac{(\sin(\pi (2H+1)x))^{2N}}{(\sin (\pi x))^{2N}} x^p \, \mathrm{d}x \le \int_{1/(2(2H+1))}^{1/2} \frac{1}{(2x)^{2N}} x^p \, \mathrm{d}x \le \frac{(2H+1)^{2N-p-1}}{2^{p+1} (2N-p-1)} \]
provided $2N-p-1>0$. Adding the previous two formulas yields
\[ \int_{0}^{1/2} \frac{(\sin(\pi (2H+1)x))^{2N}}{(\sin (\pi x))^{2N}} x^p \, \mathrm{d}x \le \frac{N(2H+1)^{2N-p-1}}{2^p (p+1)(2N-p-1)} . \]

Let us choose $N=\lceil (p+2)/2 \rceil$. If $1 \le p \le 2$, then $N=2$, and using the previous estimates and \eqref{cNbound} we deduce
\[ D_p (K) \le d^{1/2} \left( \frac{2H+1}{3 c_2} \right)^{1/2} \le \frac{d^{1/2}}{(8H^2 + 8H+3)^{1/2}} . \]
If $p>2$, then similarly
\[ D_p (K) \le d^{1/2} \left( \frac{2N(2H+1)^{2N-p-1}}{2^p (p+1)(2N-p-1) c_N} \right)^{1/p} \le \frac{d^{1/2}}{2(2H+1)} \left( \frac{2N^2}{(p+1)(2N-p-1)} \right)^{1/p} . \]
One readily checks that the function $\frac{2x^2}{(p+1)(2x-p-1)}$ is decreasing on the interval $(p+2)/2 \le x < (p+4)/2$, hence
\[ \frac{2N^2}{(p+1)(2N-p-1)} \le \frac{(p+2)^2}{2(p+1)} . \]
Since $((p+2)^2 / (2(p+1)))^{1/p}$ is decreasing in the variable $p \ge 2$, we finally deduce
\[ D_p (K) \le \frac{d^{1/2}}{2 (2H+1)} \cdot \frac{4}{6^{1/2}} . \]
An application of Lemma \ref{smoothinglemma} thus leads to
\begin{equation}\label{torusjacksonkernelstep1}
W_p (\mu, \nu) \le W_p (K*\mu, K*\nu) + \left\{ \begin{array}{ll} (1+(1-c)^{1/p}) \frac{d^{1/2}}{(8H^2 + 8H+3)^{1/2}} & \textrm{if } 1 \le p \le 2, \\ (1+(1-c)^{1/p}) \frac{2 d^{1/2}}{6^{1/2} (2H+1)} & \textrm{if } p>2. \end{array} \right.
\end{equation}

The rest of the proof is entirely analogous to the proof of Theorem \ref{torusheatkerneltheorem}. Following the same steps with the choice $\delta=0$, and using $|\widehat{G_{N,H}}(k)| \le \mathds{1}_{\{ [-2NH, 2NH]^d \}}(k)$, we similarly deduce the following. If $p=1$, then
\[ W_1 (\mu, \nu) \le \frac{(2-c)d^{1/2}}{(8H^2 + 8H+3)^{1/2}} + \Bigg( \sum_{\substack{k \in [-4H,4H]^d \\ k \neq 0}} \frac{|\widehat{\mu}(k) - \widehat{\nu}(k)|^2}{(2 \pi |k|)^2} \Bigg)^{1/2} . \]
If $1<p \le 2$, then
\[ W_p (\mu, \nu) \le \frac{(1+(1-c)^{1/p})d^{1/2}}{(8H^2+8H+3)^{1/2}} + \frac{p}{c^{1/q}} \Bigg( \sum_{\substack{k \in [-4H,4H]^d \\ k \neq 0}} \frac{|\widehat{\mu}(k) - \widehat{\nu}(k)|^2}{(2 \pi |k|)^2} \Bigg)^{1/2} . \]
If $2<p<\infty$, then
\[ W_p(\mu, \nu) \le \frac{2(1+(1-c)^{1/p}) d^{1/2}}{6^{1/2}(2H+1)} + \frac{p}{c^{1/q}} \Bigg( \sum_{\substack{k \in [-2NH,2NH]^d \\ k \neq 0}} \frac{|\widehat{\mu}(k) - \widehat{\nu}(k)|^q}{(2 \pi |k|)^q} \Bigg)^{1/q} . \]

Letting $H' \ge 0$ be the integer for which $4H' \le H \le 4H'+3$ resp.\ $2NH' \le H \le 2NH'+2N-1$, we have $8(H')^2 +8H'+3 \ge (H-1)^2 /2$ resp.\ $2H' +1 \ge \frac{2}{p+4} ( H-\frac{p+2}{2})$. We can thus reformulate the previous three estimates as follows. If $p=1$, then for any integer $H>1$,
\[ W_1(\mu, \nu) \le \frac{2^{1/2}(2-c)d^{1/2}}{H-1} + \Bigg( \sum_{\substack{k \in [-H,H]^d \\ k \neq 0}} \frac{|\widehat{\mu}(k) - \widehat{\nu}(k)|^2}{(2 \pi |k|)^2} \Bigg)^{1/2} . \]
If $1<p \le 2$, then for any integer $H>1$,
\[ W_p (\mu, \nu) \le \frac{2^{1/2}(1+(1-c)^{1/p})d^{1/2}}{H-1} + \frac{p}{c^{1/q}} \Bigg( \sum_{\substack{k \in [-H,H]^d \\ k \neq 0}} \frac{|\widehat{\mu}(k) - \widehat{\nu}(k)|^2}{(2 \pi |k|)^2} \Bigg)^{1/2} . \]
If $2<p<\infty$, then for any integer $H>(p+2)/2$,
\[ W_p(\mu, \nu) \le \frac{(1+(1-c)^{1/p})(p+4) d^{1/2}}{6^{1/2}(H-(p+2)/2)} + \frac{p}{c^{1/q}} \Bigg( \sum_{\substack{k \in [-H,H]^d \\ k \neq 0}} \frac{|\widehat{\mu}(k) - \widehat{\nu}(k)|^q}{(2 \pi |k|)^q} \Bigg)^{1/q} . \]
This finishes the proof in the full range $1 \le p < \infty$.
\end{proof}

\subsection{Torus with $p=\infty$}

In order to prove the smoothing inequality for $W_{\infty}$ stated in Theorem \ref{torusinfinitytheorem}, we first need to construct a suitable bump function $g$ on $\mathbb{R}^d$ whose Fourier transform
\[ \widehat{g}(y) = \int_{\mathbb{R}^d} g(x) e^{-2 \pi i \langle x,y \rangle} \, \mathrm{d}x \]
satisfies an explicit upper estimate. The construction given in Lemma \ref{bumpfunctionlemma} is a higher-dimensional generalization of the Fabius function \cite{FA}. Throughout, $B(a,r)=\{ x \in \mathbb{R}^d : |x-a| \le r \}$ denotes the closed Euclidean ball centered at $a \in \mathbb{R}^d$ with radius $r$. 
\begin{lem}\label{bumpfunctionlemma} There exists a smooth function $g: \mathbb{R}^d \to \mathbb{R}$ with the following properties.
\begin{enumerate}[(i)]
\item $g(x) \ge 0$, $\mathrm{supp} \, g = B(0,1)$ and $\int_{\mathbb{R}^d} g(x) \, \mathrm{d}x=1$.

\item $g(0)=\Gamma \left( \frac{d+2}{2} \right) 2^d /\pi^{d/2}$ and $g(x) \ge (2/27) g(0)$ if $|x|\le 1/5$.

\item For all $y \in \mathbb{R}^d$ such that $|y| \ge (d+3)/\pi$, we have
\[ |\widehat{g}(y)| \le \exp \left( - \frac{d+1}{4 \log 2} \left( \log \frac{\pi |y|}{d+3} \right) \left( \log \frac{e^2 \pi |y|}{d+2} \right) \right) . \]

\item For all $y \in \mathbb{R}^d$, we have
\[ \widehat{g}(y) \ge 1- \frac{2 \pi^2 d}{3(d+2)} |y|^2 . \]
\end{enumerate}
\end{lem}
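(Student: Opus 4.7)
The plan is to construct $g$ as an infinite convolution modelled on the Fabius function. Fix $r_j = 2^{-j}$ for $j \ge 1$, so $\sum_j r_j = 1$ and $\sum_j r_j^2 = 1/3$. Let $g_j = |B(0, r_j)|^{-1} \mathbf{1}_{B(0, r_j)}$ be the density of the uniform probability measure on $B(0, r_j)$, and let $X_j \sim g_j$ be independent. The series $S = \sum_{j \ge 1} X_j$ converges almost surely with $|S| \le 1$; define $g$ as the density of $S$, formally $g = g_1 * g_2 * \cdots$ with Fourier transform $\widehat{g}(y) = \prod_{j \ge 1} \widehat{g_j}(y)$. Each factor is the explicit Bessel-type function $\widehat{g_j}(y) = \Gamma((d+2)/2)(\pi r_j |y|)^{-d/2} J_{d/2}(2\pi r_j |y|)$. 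Nonnegativity, integral $1$, and $\mathrm{supp}\, g \subseteq B(0,1)$ are immediate, and equality of supports follows from $B(0, r_1) + B(0, r_2) + \cdots = B(0,1)$ as a Minkowski sum; smoothness will be a byproduct of the Fourier decay proved in (iii).

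For (iv), symmetry of $S$ and the global inequality $\cos t \ge 1 - t^2/2$ give $\widehat{g}(y) = \mathbb{E}[\cos(2\pi \langle S, y\rangle)] \ge 1 - 2\pi^2 \mathbb{E}[\langle S, y\rangle^2]$. Independence together with the explicit second moment $\mathbb{E}[\langle X_j, y\rangle^2] = r_j^2 |y|^2/(d+2)$ yields $\mathbb{E}[\langle S, y\rangle^2] = |y|^2/(3(d+2))$, so $\widehat{g}(y) \ge 1 - \frac{2\pi^2 |y|^2}{3(d+2)}$, which is stronger than the claim. For (ii), I would isolate the first convolution factor: $g = g_1 * h$ with $h = g_2 * g_3 * \cdots$ supported in $B(0, 1/2)$ and $g_1 \equiv |B(0, 1/2)|^{-1} = 2^d \Gamma((d+2)/2)/\pi^{d/2}$ on $B(0, 1/2)$. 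This identity gives the exact value of $g(0)$, and for $|x| \le 1/5$ (so that $B(x, 1/2) \cap B(0, 1/2) \supseteq B(0, 3/10)$) it yields $g(x) \ge g(0) \cdot \mathbb{P}(|S - X_1| \le 3/10)$. Markov applied to $\mathbb{E}|S-X_1|^2 = \frac{d}{d+2}\sum_{j \ge 2} r_j^2 = d/(12(d+2))$ then gives $\mathbb{P}(|S-X_1| \le 3/10) \ge 1 - \frac{100 d}{108(d+2)} \ge 2/27$.

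The main obstacle is (iii). The crucial observation is that although $\sum r_j$ converges, $\sum_{j \le N} \log(1/r_j) = \frac{N(N+1)}{2}\log 2$ grows quadratically, so a truncated product develops super-polynomial decay. The standard Bessel bound $|J_\nu(s)| \lesssim s^{-1/2}$ yields $|\widehat{g_j}(y)| \le c_d (r_j |y|)^{-(d+1)/2}$ whenever $r_j |y| \gtrsim 1$, while always $|\widehat{g_j}(y)| \le 1$. I would choose a cutoff $N = N(y)$ with $r_N |y| \sim (d+3)/\pi$, apply the Bessel decay for $j \le N$ and the trivial bound for $j > N$, obtaining
\[
\log |\widehat{g}(y)| \le N \log c_d + \frac{d+1}{4}\bigl(N(N+1)\log 2 - 2N \log |y|\bigr),
\]
a downward parabola in $N$ minimized near $N \approx \log_2|y|$ with minimum value of order $-\frac{d+1}{4\log 2}(\log|y|)^2$. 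The delicate part is packaging the linear term $N \log c_d$ and the integer-rounding fluctuations into precisely the two factors $\log(\pi|y|/(d+3))$ and $\log(e^2\pi|y|/(d+2))$ asserted in the lemma; this will require a sharp Bessel bound with explicit constant $c_d = \Gamma((d+2)/2)\, 2^{(d+1)/2}/\pi^{(d+1)/2}$ (or an equivalent integration-by-parts argument on the ball indicator) and a judicious choice of the floor defining $N$.
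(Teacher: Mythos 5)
Your construction is exactly the one the paper uses (the infinite convolution of uniform densities on balls of radii $2^{-j}$, i.e.\ the higher-dimensional Fabius function), and parts (i), (ii) and (iv) of your argument are correct and complete. In fact your (iv) is sharper than required: by computing the exact second moment of the projection $\langle X_j,y\rangle$ rather than using $\langle x,y\rangle^2\le |x|^2|y|^2$ as the paper does, you obtain $\widehat g(y)\ge 1-\frac{2\pi^2}{3(d+2)}|y|^2$, which beats the stated bound by a factor of $d$. Your (ii) is a mild variant of the paper's functional-equation argument and the Chebyshev computation checks out ($\frac{8d+216}{108(d+2)}\ge \frac{2}{27}$).

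The one genuine gap is that part (iii) is only a strategy, and the piece you defer is precisely the hardest part of the paper's proof. Two concrete issues. First, the bound $|J_{d/2}(s)|\lesssim s^{-1/2}$ with a usable constant is \emph{not} available uniformly near the turning point $s\approx d/2$ (there the decay is only $s^{-1/3}$); one needs an explicit bound valid for $s\ge d+3$, and the paper imports Krasikov's inequality $|J_{d/2}(s)|\le (2/(\pi s))^{1/2}\bigl(1+(d+3)^2/(4s^2)\bigr)^{1/2}$ for exactly this purpose. This is also why the cutoff in (iii) sits at $|y|\ge (d+3)/\pi$: it guarantees $2\pi 2^{-j}|y|\ge d+3$ for all $j\le N$. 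Second, your parabola-in-$N$ computation gives the correct leading term $-\frac{d+1}{4\log 2}(\log|y|)^2$, but the lemma asserts exact constants, and establishing them requires showing that the accumulated prefactors and correction factors satisfy
\[
e^{1/6}\left( 2^{1/2} e^{-4/9}\left(\tfrac{d+2}{d+3}\right)^{(d+1)/4}\right)^{N}<1,
\]
which the paper verifies via Stirling; your proposed $c_d$ is also off (the correct prefactor per factor is $\Gamma(\tfrac{d+2}{2})/(\pi^{1/2}(\pi 2^{-j}|y|)^{(d+1)/2})$ times the Krasikov correction). So the mechanism and the leading constant are right, but the statement of (iii) as written is not yet proved by your argument.
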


\begin{proof} Let $X_1, X_2, \ldots$ be independent random variables, each uniformly distributed in the unit ball $B(0,1)$, and let $Y=\sum_{n=1}^{\infty} X_n/2^n$. We have
\[ \mathbb{E} (|X_n|^2) = \frac{\Gamma \left( \frac{d+2}{2} \right)}{\pi^{d/2}} \int_{B(0,1)} |x|^2 \, \mathrm{d}x = \frac{\Gamma \left( \frac{d+2}{2} \right)}{\pi^{d/2}} \int_0^1 r^2 \frac{2 \pi^{d/2} r^{d-1}}{\Gamma \left( \frac{d}{2} \right)} \, \mathrm{d}r = \frac{d}{d+2}, \]
thus by independence,
\begin{equation}\label{Yvariance}
\mathbb{E} (|Y|^2) = \sum_{n=1}^{\infty} \frac{\mathbb{E} (|X_n|^2)}{2^{2n}} = \frac{d}{3(d+2)} .
\end{equation}

Let
\[ g_n (x) = \frac{\Gamma \left( \frac{d+2}{2} \right) 2^{dn}}{\pi^{d/2}} \mathds{1}_{\{ |x| \le 2^{-n} \}}, \quad x \in \mathbb{R}^d \]
be the density function of $X_n/2^n$. We claim that the sequence of convolutions $g_1 * g_2 * \cdots * g_n$ converges uniformly on $\mathbb{R}^d$. One readily checks that $g_1*g_2$ is Lipschitz by noticing that $(g_1*g_2)(x)$ is a constant times the volume of the intersection $B(0,1/4) \cap B(x,1/2)$. If $f$ is Lipschitz on $\mathbb{R}^d$, then $f*g_n$ is also Lipschitz with the same Lipschitz constant. In particular, $g_1 * g_2 * \cdots *g_n$ is Lipschitz with the same Lipschitz constant $L$ as that of $g_1 * g_2$. Hence
\[ \begin{split}| (g_1 * \cdots * g_n * g_{n+1})(x) &- (g_1 * \cdots * g_n)(x)| \\ &= \left| \int_{\mathbb{R}^d} \left( (g_1 * \cdots * g_n)(x-y) - (g_1 * \cdots * g_n)(x) \right) g_{n+1}(y) \, \mathrm{d}y \right| \\ &\le \int_{\mathbb{R}^d} L |y| g_{n+1}(y) \, \mathrm{d}y \le \frac{L}{2^{n+1}} , \end{split} \]
and consequently
\[ |(g_1 * \cdots * g_n)(x) - (g_1 * \cdots * g_{n+m})(x)| \le \frac{L}{2^{n+1}} + \cdots + \frac{L}{2^{n+m}} \le \frac{L}{2^n} . \]
The sequence $g_1 * g_2 * \cdots *g_n$ is thus Cauchy in the supremum norm, therefore it converges uniformly to some function $g$, which we claim satisfies the lemma. Clearly, $g$ satisfies the properties in (i) by construction. In particular, $Y$ is absolutely continuous with density function $g$.

Let us now prove (ii). Since $Y=X_1/2+Y'/2$, where $Y'=\sum_{n=1}^{\infty} X_{n+1}/2^n$ has the same distribution as $Y$ and is independent of $X_1$, the function $g$ is the convolution of the functions $g_1(x)$ and $2^d g(2x)$. The function $g$ thus satisfies the functional equation
\[ g(x) = \frac{\Gamma \left( \frac{d+2}{2} \right) 2^d}{\pi^{d/2}} \int_{B(2x,1)} g(y) \, \mathrm{d}y , \quad x \in \mathbb{R}^d .  \]
In particular,
\[ g(0)= \frac{\Gamma \left( \frac{d+2}{2} \right) 2^d}{\pi^{d/2}} . \]
Now let $x \in \mathbb{R}^d$ be such that $|x|<1/2$. Then $B(0,1-2|x|) \subseteq B(2x,1)$, hence an application of the Chebyshev inequality and formula \eqref{Yvariance} leads to
\[ \begin{split} g(x) &\ge \frac{\Gamma \left( \frac{d+2}{2} \right) 2^d}{\pi^{d/2}} \int_{B(0,1-2|x|)} g(y) \, \mathrm{d}y = \frac{\Gamma \left( \frac{d+2}{2} \right) 2^d}{\pi^{d/2}} \Pr (|Y| \le 1-2|x|) \\ &\ge \frac{\Gamma \left( \frac{d+2}{2} \right) 2^d}{\pi^{d/2}} \left( 1-\frac{\mathbb{E}(|Y|^2)}{(1-2|x|)^2} \right) = \frac{\Gamma \left( \frac{d+2}{2} \right) 2^d}{\pi^{d/2}} \left( 1-\frac{d}{3(d+2)(1-2|x|)^2} \right) . \end{split} \]
If $|x| \le 1/5$, then
\[ 1-\frac{d}{3(d+2)(1-2|x|)^2} \ge 1- \frac{25d}{27(d+3)} \ge \frac{2}{27}, \]
consequently $g(x) \ge (2/27) g(0)$.

Let us estimate the Fourier transform $\widehat{g}(y) = \prod_{n=1}^{\infty} \widehat{g_n}(y)$. Using the formula for the Fourier transform of the indicator function of the unit ball, we deduce
\[ \widehat{g_n}(y)= \frac{\Gamma \left( \frac{d+2}{2} \right)}{\pi^{d/2}} \int_{B(0,1)} e^{-2 \pi i \langle x, 2^{-n}y \rangle} \, \mathrm{d}x = \frac{\Gamma \left( \frac{d+2}{2} \right)}{\left( \pi 2^{-n} |y| \right)^{d/2}} J_{d/2} (\pi 2^{-n+1} |y|) , \]
where $J_{\alpha}(x)$ is the Bessel function of the first kind. An explicit upper bound for the Bessel functions \cite[Theorem 2]{KR1} states that for any $\alpha>-1/2$ and any
\[ x> \frac{((2 \alpha+1)(2 \alpha +3) + (2 \alpha+1)^{2/3}(2 \alpha+3)^{2/3})^{1/2}}{2}, \]
we have
\[ |J_{\alpha}(x)| \le \left( \frac{2}{\pi x} \right)^{1/2} \left( \frac{1-\frac{(2 \alpha+1)(2 \alpha+5)}{4x^2}}{\left( 1-\frac{(2 \alpha+1)(2 \alpha+3)}{4x^2} \right)^{3/2} - \frac{(2 \alpha+1)(2\alpha+3)}{8 x^3}} \right)^{1/2} . \]
In particular, for any $d \ge 2$ and $x \ge d+3$, letting $u=(d+1)(d+3)/x^2 \in [0,1]$, we have $(1-u/4)^{3/2} \ge 1-3u/8$ and $u/(8x) \le u/40$, consequently
\[ |J_{d/2}(x)| \le \left( \frac{2}{\pi x} \right)^{1/2} \left( \frac{1-u/4}{1-3u/8 - u/(8x)} \right)^{1/2} \le \left( \frac{2}{\pi x} \right)^{1/2} \left( 1+u/4 \right)^{1/2} . \]
The previous estimate holds also for $d=1$, as in this case
\begin{equation}\label{J1/2estimate}
|J_{1/2}(x)| = \left( \frac{2}{\pi x} \right)^{1/2} |\sin x| \le \left( \frac{2}{\pi x} \right)^{1/2} .
\end{equation}
Thus
\[ |J_{d/2}(x)| \le \left( \frac{2}{\pi x} \right)^{1/2} \left( 1+\frac{(d+3)^2}{4x^2} \right)^{1/2} \qquad \textrm{for all } d \ge 1 \textrm{ and } x \ge d+3 , \]
and we obtain
\[ |\widehat{g_n}(y)| \le \left\{ \begin{array}{ll} \displaystyle{\frac{\Gamma \left( \frac{d+2}{2} \right)}{\pi^{1/2} \left( \pi 2^{-n} |y| \right)^{(d+1)/2}} \left(1+\frac{(d+3)^2}{16 (\pi 2^{-n}|y|)^2} \right)^{1/2}} & \textrm{if } \pi 2^{-n+1} |y| \ge d+3, \\ 1 & \textrm{if } \pi 2^{-n+1} |y| < d+3 . \end{array} \right. \]
Now let $y \in \mathbb{R}^d$ be such that $|y|\ge (d+3)/\pi$, and let $N$ denote the positive integer for which $2^{N-1} \le \pi |y|/(d+3) < 2^N$. The previous estimate yields
\[ |\widehat{g}(y)| = \prod_{n=1}^{\infty} |\widehat{g_n}(y)| \le \prod_{n=1}^N \frac{\Gamma \left( \frac{d+2}{2} \right)}{\pi^{1/2} \left( \pi 2^{-n} |y| \right)^{(d+1)/2}} \left(1+\frac{(d+3)^2}{16 (\pi 2^{-n}|y|)^2} \right)^{1/2} . \]
Here
\[ \prod_{n=1}^N \left(1+\frac{(d+3)^2}{16 (\pi 2^{-n}|y|)^2} \right)^{1/2} \le \prod_{n=1}^N \exp \left( \frac{(d+3)^2}{32 (\pi 2^{-n}|y|)^2} \right) = \exp \left( \frac{(d+3)^2}{32 \pi^2 |y|^2} \cdot \frac{4^{N+1}-4}{3} \right) \le e^{1/6} , \]
and by Stirling's approximation \eqref{stirling},
\[ \begin{split} \prod_{n=1}^N \frac{\Gamma \left( \frac{d+2}{2} \right)}{\pi^{1/2} \left( \pi 2^{-n} |y| \right)^{(d+1)/2}} &= \left( \frac{\Gamma \left( \frac{d+2}{2} \right) 2^{(d+1)(N+1)/4}}{\pi^{1/2} (\pi |y|)^{(d+1)/2}} \right)^N \\ &\le \left( \frac{(2 \pi)^{1/2} \left( \frac{d+2}{2} \right)^{(d+1)/2} e^{-(d+2)/2 + 1/18}\left( \frac{4 \pi |y|}{d+3} \right)^{(d+1)/4}}{\pi^{1/2} (\pi |y|)^{(d+1)/2}} \right)^N \\ &= \left( 2^{1/2} e^{-4/9} \left( \frac{d+2}{d+3} \right)^{(d+1)/4} \left( \frac{d+2}{e^2 \pi |y|} \right)^{(d+1)/4} \right)^N . \end{split} \]
The previous two formulas together with
\[ e^{1/6} \left( 2^{1/2} e^{-4/9} \left( \frac{d+2}{d+3} \right)^{(d+1)/4} \right)^N \le e^{1/6} 2^{1/2} e^{-4/9} e^{-(d+1)/(4(d+3))} < 1 \]
imply
\[ |\widehat{g}(y)| \le \left( \frac{d+2}{e^2 \pi |y|} \right) ^{N(d+1)/4} = \exp \left( - \frac{d+1}{4} N \log \frac{e^2 \pi |y|}{d+2} \right) . \]
Here $N> (\log \frac{\pi |y|}{d+3} )/\log 2$, which finishes the proof of (iii). As $|\widehat{g}(y)|$ decays faster than any polynomial as $|y| \to \infty$, the function $g$ is smooth.

Finally, fix $y \in \mathbb{R}^d$, and let us prove (iv). The function
\[ F(t)=\widehat{g}(ty) = \int_{B(0,1)} g(x) e^{-2 \pi i t \langle x,y \rangle} \, \mathrm{d}x, \qquad t \in [0,1] \]
has first derivative at $0$
\[ F'(0) = \int_{B(0,1)} g(x) (- 2 \pi i) \langle x, y \rangle \, \mathrm{d}x = - 2 \pi i \mathbb{E} \langle Y, y \rangle = 0 \]
since the distribution of $Y$ is invariant under rotations, that is, $g(x)$ is a radial function. An application of the Cauchy--Schwarz inequality and the variance formula \eqref{Yvariance} show that for all $t \in [0,1]$,
\[ \begin{split} |F''(t)| &= \left| \int_{B(0,1)} g(x) 4 \pi^2 \langle x,y \rangle^2 e^{-2 \pi i t \langle x, y \rangle} \, \mathrm{d}x \right| \le 4 \pi^2 |y|^2 \int_{B(0,1)} g(x) |x|^2 \, \mathrm{d}x \\ &= 4 \pi^2 |y|^2 \mathbb{E} (|Y|^2) = \frac{4 \pi^2 d}{3(d+2)} |y|^2 . \end{split} \]
A Taylor expansion of degree 2 thus leads to
\[ |\widehat{g}(y)-1| = |F(1) - F(0) - F'(0)| \le \sup_{t \in [0,1]} \frac{|F''(t)|}{2} \le \frac{2 \pi^2 d}{3(d+2)} |y|^2 , \]
and (iv) follows.
\end{proof}

\begin{remark}\label{1dimremark} In dimension $d=1$, we have the slightly better estimate
\[ |\widehat{g}(y)| \le \exp \left( - \frac{1}{2 \log 2} \left( \log (\pi |y|) \right) \left( \log (2 \pi |y|) \right) \right) \]
for all $y \in \mathbb{R}$ such that $|y| \ge 1/\pi$. Indeed, estimate \eqref{J1/2estimate} for the explicit Bessel function $J_{1/2}(x)$ yields the simpler upper bound $|\widehat{g_n}(y)| \le 2^{n-1}/(\pi |y|)$, and the rest of the argument is the same.
\end{remark}

\begin{proof}[Proof of Theorem \ref{torusinfinitytheorem}] Let $T \ge 5r$ be arbitrary, and let $g$ be the smooth function from Lemma \ref{bumpfunctionlemma}. The function
\[ G(x) = \frac{1}{T^d} \sum_{k \in \mathbb{Z}^d} g \left( \frac{x+k}{T} \right) , \quad x \in \mathbb{R}^d / \mathbb{Z}^d \]
is thus nonnegative and normalized, and $\mathrm{supp} \, G = B(0,T)$. Property (ii) in Lemma \ref{bumpfunctionlemma} ensures that
\begin{equation}\label{Glowerbound}
G(x) \ge \frac{\Gamma \left( \frac{d+2}{2} \right) 2^{d+1}}{27 \pi^{d/2} T^d} \quad \textrm{if } \| x \|_{\mathbb{R}^d / \mathbb{Z}^d} \le \frac{T}{5} .
\end{equation}
The Poisson summation formula leads to the Fourier series expansion
\[ G(x) = \sum_{k \in \mathbb{Z}^d} \widehat{g}(T k) e^{2 \pi i \langle k,x \rangle} , \]
hence by property (iii) in Lemma \ref{bumpfunctionlemma},
\begin{equation}\label{Ghatupperbound}
|\widehat{G}(k)| = |\widehat{g}(Tk)| \le  \left\{ \begin{array}{ll} 1 & \textrm{if } |k| < \frac{d+3}{\pi T}, \\ \exp \left( - \frac{d+1}{4 \log 2} \left( \log \frac{\pi T |k|}{d+3} \right) \left( \log \frac{e^2 \pi T |k|}{d+2} \right) \right) & \textrm{if } |k| \ge \frac{d+3}{\pi T} . \end{array} \right.
\end{equation}

We work with the kernel $K(x,y)=G(x-y)$. The dispersion rate is $D_{\infty}(K)=T$ because $\mathrm{supp} \, G = B(0,T)$, hence an application of Lemma \ref{smoothinglemma} yields
\[ W_{\infty} (\mu, \nu) \le \left( 1+\mathds{1}_{\{ \mu \neq \mathrm{Vol} \}} \right) T + W_{\infty} (K * \mu, K * \nu) . \]
Clearly, $K*\mu \ge c \mathrm{Vol}$. The assumptions $\nu (B) \ge b$ for all closed balls $B$ of radius $r$, and $T \ge 5r$, together with formula \eqref{Glowerbound} imply
\[ \begin{split} \frac{\mathrm{d}(K*\nu)}{\mathrm{d}\mathrm{Vol}}(x) &= \int_{\mathbb{R}^d / \mathbb{Z}^d} G(x-y) \, \mathrm{d}\nu (y) \ge \int_{B(x,r)} G(x-y) \, \mathrm{d}\nu (y) \\ &\ge \nu (B(x,r)) \frac{\Gamma \left( \frac{d+2}{2} \right) 2^{d+1}}{27 \pi^{d/2} T^d} \ge b \frac{\Gamma \left( \frac{d+2}{2} \right) 2^{d+1}}{27 \pi^{d/2} T^d} . \end{split} \]
In particular, $K*\nu \ge \delta \mathrm{Vol}$ with
\[ \delta = \min \left\{ b \frac{\Gamma \left( \frac{d+2}{2} \right) 2^{d+1}}{27 \pi^{d/2} T^d}, c \right\} , \]
and Lemma \ref{ledouxlemma} gives
\[ W_{\infty} (K*\mu, K*\nu) \le \frac{\log c - \log \delta}{c-\delta} \| \mu-\nu \|_{\dot{H}_{-1}^{\infty}} . \]

To estimate the dual Sobolev norm in the previous formula, let $f \in \mathrm{Lip}(\mathbb{R}^d / \mathbb{Z}^d)$ be arbitrary. The Fourier transforms
\[ \widehat{\frac{\mathrm{d}(K*\mu)}{\mathrm{d}\mathrm{Vol}}}(k) = \widehat{G}(k) \widehat{\mu}(k) \quad \textrm{and} \quad \widehat{\frac{\mathrm{d}(K*\nu)}{\mathrm{d}\mathrm{Vol}}}(k) = \widehat{G}(k) \widehat{\nu}(k), \quad k \in \mathbb{Z}^d, \]
and an application of the Parseval formula and Lemma \ref{torushausdorffyounglemma} lead to
\[ \begin{split} \left| \int_{\mathbb{R}^d / \mathbb{Z}^d} f \, \mathrm{d} (K*\mu - K*\nu) \right| &= \left| \sum_{k \in \mathbb{Z}^d} \overline{\widehat{f}(k)} \widehat{G}(k) (\widehat{\mu}(k) - \widehat{\nu}(k)) \right| \\ &\le \left( \sup_{k \in \mathbb{Z}^d} 2 \pi |k| |\widehat{f}(k)| \right) \Bigg( \sum_{\substack{k \in \mathbb{Z}^d \\ k \neq 0}} |\widehat{G}(k)| \frac{|\widehat{\mu}(k) - \widehat{\nu}(k)|}{2 \pi |k|} \Bigg) \\ &\le \| \nabla f \|_1 \Bigg( \sum_{\substack{k \in \mathbb{Z}^d \\ k \neq 0}} |\widehat{G}(k)| \frac{|\widehat{\mu}(k) - \widehat{\nu}(k)|}{2 \pi |k|} \Bigg). \end{split} \]
By the definition \eqref{dualsobolevnorm} of the dual Sobolev norm, this implies
\[ \| K*\mu - K*\nu \|_{\dot{H}_{-1}^{\infty}} \le \sum_{\substack{k \in \mathbb{Z}^d \\ k \neq 0}} |\widehat{G}(k)| \frac{|\widehat{\mu}(k) - \widehat{\nu}(k)|}{2 \pi |k|}, \]
hence
\[ W_{\infty} (\mu, \nu) \le \left( 1+\mathds{1}_{\{ \mu \neq \mathrm{Vol} \}} \right) T + \frac{\log c - \log \delta}{c-\delta} \sum_{\substack{k \in \mathbb{Z}^d \\ k \neq 0}} |\widehat{G}(k)| \frac{|\widehat{\mu}(k) - \widehat{\nu}(k)|}{2 \pi |k|} . \]
The desired estimate follows from the upper bound \eqref{Ghatupperbound} for $|\widehat{G}(k)|$.
\end{proof}

\subsection{Compact Lie groups}\label{liegroupproofsection}

In this section, we prove Theorems \ref{liegroupjacksonkerneltheorem} and \ref{liegroupheatkerneltheorem}. Let us fix some terminology and notation first. Let $G$ be a compact, connected Lie group of dimension $1 \le d < \infty$. Let $\mathfrak{g}$ denote the Lie algebra and $\exp: \mathfrak{g} \to G$ the exponential map. Fix an Ad-invariant scalar product $\langle \cdot, \cdot \rangle$ on $\mathfrak{g}$, and let $|X| = \langle X,X \rangle^{1/2}$, $X \in \mathfrak{g}$. The scalar product $\langle \cdot, \cdot \rangle$ defines a Riemannian metric on $G$.

Fix a maximal torus $T$ in $G$, and let $r_G = \mathrm{dim} \, T$ be the rank of $G$. For a more precise notation, in this section let $\mathrm{Vol}_G$ resp.\ $\mathrm{Vol}_T$ denote the normalized Riemannian volume measure, which is also the normalized Haar measure, on $G$ resp.\ $T$. The Lie algebra $\mathfrak{t}$ of $T$ is a Euclidean space of dimension $r_G$ with the restriction of $\langle \cdot, \cdot \rangle$ as scalar product. The dual $\mathfrak{t}^* = \mathrm{Hom}(\mathfrak{t},\mathbb{R})$ is also a Euclidean space of dimension $r_G$ with the scalar product and the norm defined by duality and also denoted by $\langle \cdot , \cdot \rangle$ and $|\cdot|$.

The set $\Gamma = \{ X \in \mathfrak{t} : \exp (2 \pi X) =1 \}$, where $1 \in G$ is the identity element, and its dual $\Gamma^* = \{ w \in \mathfrak{t}^* : w(X) \in \mathbb{Z} \textrm{ for all } X \in \Gamma \}$ are both lattices of full rank. The weights of a representation are elements of $\Gamma^*$. Let $R=R(G,T)$ be the set of roots, $R^+=R^+(G,T)$ the set of positive roots, $\{ u \in \mathfrak{t}^* : \langle u, w \rangle \ge 0 \textrm{ for all } w \in R^+ \}$ the dominant Weyl chamber, $W(G,T)=N_G(T)/T$ the Weyl group, $\rho^+=\sum_{w \in R^+} w/2$ the half-sum of positive roots and $v=\min_{w \in R} |w|$.

We start by generalizing Lemma \ref{torushausdorffyounglemma} to compact Lie groups.
\begin{lem}\label{liegrouphausdorffyounglemma} Let $G$ be a compact, connected Lie group, and let $f \in \mathrm{C}^{\infty}(G)$. For any $2 \le p < \infty$ and $1<q \le 2$ such that $1/p+1/q=1$, we have
\[ \left( \sum_{\pi \in \widehat{G}} d_{\pi}^{2-p/2} \lambda_{\pi}^{p/2} \| \widehat{f} (\pi) \|_{\mathrm{HS}}^p \right)^{1/p} \le \| \nabla f \|_q \]
and
\[ \sup_{\pi \in \widehat{G}} d_{\pi}^{-1/2} \lambda_{\pi}^{1/2} \| \widehat{f} (\pi) \|_{\mathrm{HS}} \le \| \nabla f \|_1 . \]
\end{lem}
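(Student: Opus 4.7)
The plan is to model the argument on Lemma~\ref{torushausdorffyounglemma}, with the multiplication-by-frequency identity $\widehat{\partial_j f}(k) = 2\pi i k_j\widehat{f}(k)$ replaced by its Lie-algebraic analogue and the non-commutativity of representations handled by a Cauchy--Schwarz step.

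First I would fix an orthonormal basis $X_1,\ldots,X_d$ of $\mathfrak{g}$ with respect to the Ad-invariant scalar product, extended to left-invariant vector fields on $G$. By parallelizability the gradient $\nabla f$ is identified with the $\mathbb{C}^d$-valued function $(X_1 f,\ldots, X_d f)$, so that $|\nabla f|^2 = \sum_j (X_j f)^2$. Since each $X_j$ is divergence-free for the bi-invariant Haar measure, integration by parts combined with $X_j[\pi(x)^*] = -d\pi(X_j)\pi(x)^*$ (using $d\pi(X_j)^* = -d\pi(X_j)$ for unitary $\pi$) produces the key identity $\widehat{X_j f}(\pi) = d\pi(X_j)\widehat{f}(\pi)$. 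The Casimir identity $-\sum_j d\pi(X_j)^2 = \lambda_\pi I_{d_\pi}$ then yields
\[ \sum_{j=1}^d \|\widehat{X_j f}(\pi)\|_{\mathrm{HS}}^2 = \lambda_\pi \|\widehat{f}(\pi)\|_{\mathrm{HS}}^2, \]
reducing the lemma to a vector-valued Hausdorff--Young inequality on $G$ applied to $g=\nabla f$.

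Next I would prove that vector-valued inequality. For $g:G\to\mathbb{C}^n$, set $\|\widehat{g}(\pi)\|_{\mathrm{HS}}^2 := \sum_{j=1}^n \|\widehat{g_j}(\pi)\|_{\mathrm{HS}}^2$. The two endpoints I need are
\[ \sum_{\pi\in\widehat{G}} d_\pi \|\widehat{g}(\pi)\|_{\mathrm{HS}}^2 = \|g\|_2^2 \qquad \textrm{and} \qquad \|\widehat{g}(\pi)\|_{\mathrm{HS}} \le d_\pi^{1/2}\|g\|_1. \]
The first is Parseval summed over coordinates. For the second I would expand
\[ \|\widehat{g}(\pi)\|_{\mathrm{HS}}^2 = \sum_{i,k} \sum_j \bigg| \int_G g_j \overline{\pi_{ki}} \,\mathrm{d}\mathrm{Vol}_G \bigg|^2 \le \sum_{i,k} \bigg( \int_G |g| \, |\pi_{ki}| \,\mathrm{d}\mathrm{Vol}_G \bigg)^2, \]
apply Cauchy--Schwarz in the form $(\int u v)^2 \le (\int u)(\int u v^2)$ with $u=|g|$ and $v=|\pi_{ki}|$, and use the unitarity identity $\sum_{i,k}|\pi_{ki}(x)|^2 = d_\pi$ to arrive at $\|\widehat{g}(\pi)\|_{\mathrm{HS}}^2 \le d_\pi \|g\|_1^2$. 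Vector-valued Riesz--Thorin interpolation (using the same reference cited in the torus proof) applied after the renormalisation $\widetilde{g}(\pi) = d_\pi^{-1/2}\widehat{g}(\pi)$ then gives
\[ \Bigg( \sum_{\pi\in\widehat{G}} d_\pi^{2-p/2}\|\widehat{g}(\pi)\|_{\mathrm{HS}}^p \Bigg)^{1/p} \le \|g\|_q \qquad (2\le p<\infty), \]
while the $p=\infty$ statement of the lemma is just the $L^1$ endpoint itself. Substituting $g = \nabla f$ and invoking the Casimir identity from the previous paragraph finishes both claims.

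The main obstacle is the $L^1$ endpoint, since the trivial pointwise bound $|\pi_{ki}(x)|\le 1$ only yields $\|\widehat{g}(\pi)\|_{\mathrm{HS}} \le d_\pi \|g\|_1$, which after interpolation propagates to $d_\pi^{2-p}$ in place of $d_\pi^{2-p/2}$, i.e., is off by a factor of $d_\pi^{p/2}$. The Cauchy--Schwarz step that exploits $\sum_{i,k}|\pi_{ki}|^2 = d_\pi$ is precisely what recovers the correct dimensional factor and is the key technical point of the argument.
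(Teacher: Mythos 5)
Your proposal is correct and follows essentially the same route as the paper: reduce to a vector-valued Hausdorff--Young inequality obtained by Riesz--Thorin interpolation between the Parseval identity and the $L^1\to\ell^\infty$ endpoint $\|\widehat{g}(\pi)\|_{\mathrm{HS}}\le d_\pi^{1/2}\|g\|_1$, then apply it to $\nabla f$ using $\widehat{X_jf}(\pi)=\mathrm{d}\pi(X_j)\widehat{f}(\pi)$ and the Casimir identity $\sum_j \mathrm{d}\pi(X_j)^*\mathrm{d}\pi(X_j)=\lambda_\pi I$. The only (harmless) deviations are at the level of justification: you get the gradient identity by integration by parts against the Haar measure rather than by termwise differentiation of the Fourier series (which the paper justifies via Sugiura's decay estimate), and you obtain the $L^1$ endpoint by an explicit Cauchy--Schwarz computation with $\sum_{i,k}|\pi_{ki}|^2=d_\pi$ rather than the one-line triangle inequality $\|\int_G g\,\pi^*\|_{\mathrm{HS}}\le\int_G|g|\,\|\pi^*\|_{\mathrm{HS}}=d_\pi^{1/2}\|g\|_1$.
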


\begin{proof} Fix an orthonormal basis $X_1, X_2, \dots, X_d$ in the Lie algebra $\mathfrak{g}$ with respect to $\langle \cdot, \cdot \rangle$. The Laplace--Beltrami operator on $G$ is $\Delta = \sum_{k=1}^d X_k X_k$ as an element of the universal enveloping algebra of $\mathfrak{g}$, regardless of the choice of the orthonormal basis.

The Fourier transform of a vector-valued function $g: G \to \mathbb{C}^n$ with coordinates $g=(g_1, g_2, \ldots, g_n)$ is defined coordinatewise as $\widehat{g}(\pi) = (\widehat{g_1}(\pi), \widehat{g_2}(\pi), \ldots, \widehat{g_n}(\pi))$, $\pi \in \widehat{G}$. Applying the Parseval formula coordinatewise shows that $\sum_{\pi \in \widehat{G}} d_{\pi} \| \widehat{g}(\pi) \|_{\mathrm{HS}}^2 = \| g \|_2^2$. For any $\pi \in \widehat{G}$, we have
\[ \| \widehat{g}(\pi) \|_{\mathrm{HS}} = \left\| \int_G g \pi^* \, \mathrm{d} \mathrm{Vol}_G \right\|_{\mathrm{HS}} \le \int_G |g| \cdot \| \pi^* \|_{\mathrm{HS}} \, \mathrm{d} \mathrm{Vol}_G = d_{\pi}^{1/2} \| g \|_1 , \]
hence $\sup_{\pi \in \widehat{G}} d_{\pi}^{-1/2} \| \widehat{g}(\pi) \|_{\mathrm{HS}} \le \| g \|_1$. An application of the Riesz--Thorin interpolation theorem for operators between $L^p$ spaces of vector-valued functions \cite[p.\ 83]{HNVW} leads to the Hausdorff--Young inequality for vector-valued functions
\begin{equation}\label{HYgroups}
\left( \sum_{\pi \in \widehat{G}} d_{\pi}^{2-p/2} \| \widehat{g}(\pi) \|_{\mathrm{HS}}^p \right)^{1/p} \le \| g \|_q .
\end{equation}
The same inequality for scalar-valued functions $g$ was proved in \cite[p.\ 548]{RT}.

Now let $f \in \mathrm{C}^{\infty} (G)$. Sugiura \cite{SU} proved that $\| \widehat{f}(\pi) \|_{\mathrm{HS}} \ll \lambda_{\pi}^{-C}$ with any $C>0$, and the Fourier series of $f$ converges pointwise:
\[ f(x) = \sum_{\pi \in \widehat{G}} d_{\pi} \mathrm{Tr} \left( \widehat{f}(\pi) \pi (x) \right) , \qquad x \in G. \]
As usual, we identify the elements of $\mathfrak{g}$ (the tangent space of $G$ at the identity element) with left-invariant vector fields. The $j$th coordinate of $\nabla f$ at a point $x \in G$ is thus
\[ \langle \nabla f (x), X_j \rangle = (X_j f)(x) = \frac{\mathrm{d}}{\mathrm{d}t} f \left( x \exp (t X_j) \right) \mid_{t=0} . \]
Since $\pi$ is a homomorphism, here
\[ f \left( x \exp (t X_j) \right) = \sum_{\pi \in \widehat{G}} d_{\pi} \mathrm{Tr} \left( \widehat{f}(\pi) \pi (x) \pi (\exp (t X_j)) \right) . \]
The series converges fast enough so that we can differentiate term by term to deduce
\[ \langle \nabla f (x), X_j \rangle = \sum_{\pi \in \widehat{G}} d_{\pi} \mathrm{Tr} \left( \widehat{f}(\pi) \pi (x) \mathrm{d} \pi (X_j) \right) , \qquad x \in G, \]
where $\mathrm{d}\pi (X)= \frac{\mathrm{d}}{\mathrm{d}t} \pi (\exp (tX)) \mid_{t=0}$ is the derived representation of $\pi$. It follows that the Fourier coefficients of the $j$th coordinate of $\nabla f: G \to \mathfrak{g}$ are
\[ \widehat{\langle \nabla f, X_j \rangle} (\pi) = \mathrm{d} \pi (X_j) \widehat{f}(\pi) , \]
consequently
\[ \| \widehat{\nabla f} (\pi) \|_{\mathrm{HS}}^2 = \sum_{j=1}^d \| \mathrm{d} \pi (X_j) \widehat{f}(\pi) \|_{\mathrm{HS}}^2 = \mathrm{Tr} \Bigg( \widehat{f}(\pi)^* \Bigg( \sum_{j=1}^d \mathrm{d} \pi (X_j)^* \mathrm{d} \pi (X_j) \Bigg) \widehat{f}(\pi) \Bigg) = \lambda_{\pi} \| \widehat{f}(\pi) \|_{\mathrm{HS}}^2 . \]
Note that we used $\sum_{j=1}^d \mathrm{d} \pi (X_j)^* \mathrm{d} \pi (X_j) = \lambda_{\pi} I_{d_{\pi} \times d_{\pi}}$, where $I_{d_{\pi} \times d_{\pi}}$ is the $d_{\pi} \times d_{\pi}$ identity matrix. The claim thus follows from applying the Hausdorff--Young inequality \eqref{HYgroups} to $\nabla f$.
\end{proof}

\begin{proof}[Proof of Theorem \ref{liegroupheatkerneltheorem}] Let $1 \le p < \infty$ and $1<q \le \infty$ be such that $1/p+1/q=1$, and fix $t>0$. We work with the heat kernel $P_t(x,y)$ on $G$. Since the Ricci curvature of $G$ is positive semidefinite, the heat kernel satisfies the same lower bound $P_t(x,y) \ge (4 \pi t)^{-d/2} e^{-\rho(x,y)^2/(4t)}$ as on the torus, see \eqref{heatkernellowerbound}. An application of Lemmas \ref{smoothinglemma} and \ref{heatkerneldispersionlemma} shows
\begin{equation}\label{liegroupheatkernelstep1}
W_p (\mu, \nu) \le (1+(1-c)^{1/p}) D_p (P_t) + W_p (P_t * \mu, P_t * \nu) \le C_1 t^{1/2} + W_p (P_t * \mu, P_t * \nu)
\end{equation}
with
\[ C_1 = \left\{ \begin{array}{ll} 2^{1/2} (1+(1-c)^{1/p}) d^{1/2} & \textrm{if } 1 \le p \le 2, \\  (1+(1-c)^{1/p}) (2d+p)^{1/2} & \textrm{if } p>2. \end{array} \right. \]

Assume now $p>1$ and $c>0$. Following the steps in the proof of Theorem \ref{torusheatkerneltheorem}, we deduce $P_t * \mu \ge c \mathrm{Vol}_G$ and $P_t * \nu \ge \delta \mathrm{Vol}_G$ with
\[ \delta = \min \left\{ b \frac{e^{-r^2 / (4t)}}{(4 \pi t)^{d/2}}, c \right\} . \]
An application of Lemma \ref{ledouxlemma} thus yields
\begin{equation}\label{liegroupheatkernelstep2}
W_p (P_t * \mu, P_t * \nu) \le \frac{p(c^{1/p} - \delta^{1/p})}{c-\delta} \| \mu - \nu \|_{\dot{H}_{-1}^p} .
\end{equation}

To estimate the dual Sobolev norm in the previous formula, let $f \in \mathrm{C}^{\infty}(G)$ be arbitrary, and let $p_0=\max \{ p, 2 \}$ and $q_0=\min \{ q, 2 \}$. Since
\[ P_t(x,y) = \sum_{\pi \in \widehat{G}} d_{\pi} e^{-\lambda_{\pi}t} \sum_{i,j=1}^{d_{\pi}} \pi_{ij}(x) \overline{\pi_{ij}(y)} \]
is uniformly convergent in $x,y \in G$ for any fixed $t>0$ and $\{ d_{\pi}^{1/2} \pi_{ij} : \pi \in \widehat{G}, 1 \le i,j \le d_{\pi} \}$ is an orthonormal basis in $L^2(G)$, we deduce
\[ \begin{split} \widehat{\frac{\mathrm{d}(P_t * \mu)}{\mathrm{d}\mathrm{Vol}_G}}(\pi) &= \int_G \frac{\mathrm{d}(P_t * \mu)}{\mathrm{d}\mathrm{Vol}_G} (x) \pi (x)^* \, \mathrm{d}\mathrm{Vol}_G (x) = \int_G \int_G P_t (x,y) \pi (x)^* \, \mathrm{d}\mathrm{Vol}_G (x) \mathrm{d}\mu(y) \\ &=\sum_{\pi' \in \widehat{G}} d_{\pi'} e^{-\lambda_{\pi'}t} \sum_{i,j=1}^{d_{\pi'}} \int_G \pi'_{ij}(x) \pi(x)^* \, \mathrm{d}\mathrm{Vol}_G (x) \int_G \overline{\pi'_{ij}(y)} \, \mathrm{d}\mu(y) \\ &= e^{-\lambda_{\pi} t} \int_G \pi(y)^* \, \mathrm{d}\mu (y) = e^{-\lambda_{\pi}t} \widehat{\mu}(\pi) . \end{split} \]
The same holds for $\nu$. The previous formula and an application of the Parseval formula, the Cauchy--Schwarz inequality $|\mathrm{Tr}(A^*B)| \le \| A \|_{\mathrm{HS}} \| B \|_{\mathrm{HS}}$, H\"older's inequality and Lemma \ref{liegrouphausdorffyounglemma} lead to
\[ \begin{split} \Bigg| \int_G f \, \mathrm{d} &(P_t * \mu - P_t * \nu) \Bigg| \\ &= \left| \sum_{\pi \in \widehat{G}} d_{\pi} \mathrm{Tr} \left( \widehat{f}(\pi)^* \left( \widehat{\frac{\mathrm{d}(P_t * \mu)}{\mathrm{d}\mathrm{Vol}_G}}(\pi) - \widehat{\frac{\mathrm{d}(P_t * \nu)}{\mathrm{d}\mathrm{Vol}_G}}(\pi) \right) \right) \right| \\ &= \left| \sum_{\pi \in \widehat{G}} d_{\pi} e^{-\lambda_{\pi}t} \mathrm{Tr} \left( \widehat{f}(\mu)^* (\widehat{\mu}(\pi) - \widehat{\nu}(\pi)) \right) \right| \\ &\le \sum_{\pi \in \widehat{G}} d_{\pi} e^{-\lambda_{\pi}t} \| \widehat{f}(\pi) \|_{\mathrm{HS}} \| \widehat{\mu}(\pi) - \widehat{\nu}(\pi) \|_{\mathrm{HS}} \\ &\le \Bigg( \sum_{\pi \in \widehat{G}} d_{\pi}^{2-p_0/2} \lambda_{\pi}^{p_0/2} \| \widehat{f}(\pi) \|_{\mathrm{HS}}^{p_0} \Bigg)^{1/p_0} \Bigg( \sum_{\substack{\pi \in \widehat{G} \\ \pi \neq \pi_0}} d_{\pi}^{2-q_0/2} e^{-\lambda_{\pi} q_0 t} \frac{\| \widehat{\mu}(\pi) - \widehat{\nu}(\pi) \|_{\mathrm{HS}}^{q_0}}{\lambda_{\pi}^{q_0/2}} \Bigg)^{1/q_0} \\ &\le \| \nabla f \|_{q_0} \Bigg( \sum_{\substack{\pi \in \widehat{G} \\ \pi \neq \pi_0}} d_{\pi}^{2-q_0/2} e^{-\lambda_{\pi} q_0 t} \frac{\| \widehat{\mu}(\pi) - \widehat{\nu}(\pi) \|_{\mathrm{HS}}^{q_0}}{\lambda_{\pi}^{q_0/2}} \Bigg)^{1/q_0} . \end{split} \]
By the definition \eqref{dualsobolevnorm} of the dual Sobolev norm and Lemma \ref{dualsobolevnormlemma}, this implies
\[ \| \mu - \nu \|_{\dot{H}_{-1}^p} \le \| \mu - \nu \|_{\dot{H}_{-1}^{p_0}} \le \Bigg( \sum_{\substack{\pi \in \widehat{G} \\ \pi \neq \pi_0}} d_{\pi}^{2-q_0/2} e^{-\lambda_{\pi} q_0 t} \frac{\| \widehat{\mu}(\pi) - \widehat{\nu}(\pi) \|_{\mathrm{HS}}^{q_0}}{\lambda_{\pi}^{q_0/2}} \Bigg)^{1/q_0} . \]
The claim for $p>1$ follows from \eqref{liegroupheatkernelstep1}, \eqref{liegroupheatkernelstep2} and the previous formula.

The proof for $p=1$ and $c \ge 0$ is very similar. Let $f \in \mathrm{C}^{\infty}(G)$ be arbitrary. Using $|\nabla f| \le \| f \|_{\mathrm{Lip}}$, we similarly deduce
\[ \begin{split} \left| \int_G f \, \mathrm{d} (P_t * \mu - P_t * \nu) \right| &\le \| \nabla f \|_2 \Bigg( \sum_{\substack{\pi \in \widehat{G} \\ \pi \neq \pi_0}} d_{\pi} e^{-2 \lambda_{\pi} t} \frac{\| \widehat{\mu}(\pi) - \widehat{\nu}(\pi) \|_{\mathrm{HS}}^2}{\lambda_{\pi}} \Bigg)^{1/2} \\ &\le \| f \|_{\mathrm{Lip}} \Bigg( \sum_{\substack{\pi \in \widehat{G} \\ \pi \neq \pi_0}} d_{\pi} e^{-2 \lambda_{\pi} t} \frac{\| \widehat{\mu}(\pi) - \widehat{\nu}(\pi) \|_{\mathrm{HS}}^2}{\lambda_{\pi}} \Bigg)^{1/2} . \end{split} \]
The Kantorovich duality formula in the form given in Lemma \ref{dualsobolevnormlemma} thus implies
\[ W_1 (P_t * \mu, P_t * \nu) \le \Bigg( \sum_{\substack{\pi \in \widehat{G} \\ \pi \neq \pi_0}} d_{\pi} e^{-2 \lambda_{\pi} t} \frac{\| \widehat{\mu}(\pi) - \widehat{\nu}(\pi) \|_{\mathrm{HS}}^2}{\lambda_{\pi}} \Bigg)^{1/2} . \]
The claim for $p=1$ follows from \eqref{liegroupheatkernelstep1} and the previous formula.
\end{proof}

A trigonometric polynomial on $T$ is a function of the form $f(t)=\sum_{n=1}^N a_n e^{2 \pi i w_n (X)}$, $t=\exp (2 \pi X) \in T$, $X \in \mathfrak{t}$ with some $a_n \in \mathbb{C}$ and $w_n \in \Gamma^*$, $1 \le n \le N$. The degree of $f$ is defined as $\max\{ |w_n| : 1 \le n \le N, \,\, a_n \neq 0 \}$ (not necessarily an integer). We say that $f$ is invariant under the action of the Weyl group if $f(xtx^{-1})=f(t)$ for all $t \in T$ and $x \in W(G,T)$. A central trigonometric polynomial on $G$ is a function of the form $f(x)=\sum_{n=1}^N a_n \chi_{\pi_n}(x)$, $x \in G$ with some $a_n \in \mathbb{C}$ and $\pi_n \in \widehat{G}$, where $\chi_{\pi}(x) = \mathrm{Tr}(\pi(x))$ denotes the character of $\pi \in \widehat{G}$. The degree of $f$ is defined as $\max \{ |w_{\pi_n}| : 1 \le n \le N, a_n \neq 0 \}$. The restriction map $f \mapsto f|_T$ is a degree preserving isomorphism from the space of central trigonometric polynomials on $G$ to the space of trigonometric polynomials on $T$ that are invariant under the action of the Weyl group \cite[Lemma 1]{CK}.

Recall that the Weyl integration formula \cite[p.\ 338]{BOU} states that for any $f \in L^1 (G)$ such that $f(y^{-1}xy) = f(x)$ for all $x,y \in G$, we have
\begin{equation}\label{weylintegration}
\int_G f \, \mathrm{Vol}_G = \frac{1}{|W(G,T)|} \int_T f \delta_G \, \mathrm{Vol}_T
\end{equation}
with the trigonometric polynomial
\[ \delta_G (t) = \prod_{w \in R} (e^{2 \pi i w(X)}-1) = \prod_{w \in R^+} |e^{2 \pi i w(X)}-1|^2 , \quad t=\exp (2 \pi X) \in T, \,\, X \in \mathfrak{t}. \]
Note that $\delta_G$ is invariant under the action of the Weyl group. Let $\mathrm{Vol}_{\mathfrak{t}}$ denote the Lebesgue measure on $\mathfrak{t}$ defined by $\langle \cdot, \cdot \rangle$, and let $\mathrm{Vol}_{\mathfrak{t}}(\mathfrak{t}/\Gamma)$ be the covolume of the lattice $\Gamma$.

\begin{proof}[Proof of Theorem \ref{liegroupjacksonkerneltheorem}] Let $1 \le p < \infty$ and $1<q \le \infty$ be such that $1/p+1/q=1$. Fix a real $H > 2 |\rho^+|+v$, and let $H'=\lfloor H/(2|\rho^+|+v) \rfloor$. We start by constructing a kernel $K(x,y)$ on the group $G$ based on Poisson summation, following \cite{BOR1,CK} with some modifications.

Let $g: \mathfrak{t} \to \mathbb{R}$ be the function from Lemma \ref{bumpfunctionlemma}, whose Fourier transform $\widehat{g}(Y)=\int_{\mathfrak{t}} g(X) e^{-2\pi i \langle X,Y \rangle} \, \mathrm{d}\mathrm{Vol}_{\mathfrak{t}}(X)$ is real-valued by construction. Define $F: \mathfrak{t} \to \mathbb{R}$, $F=C^{-1} (\widehat{g})^2 = C^{-1} \widehat{g*g}$, where $*$ denotes convolution, with the constant $C=\int_{\mathfrak{t}}(\widehat{g})^2 \, \mathrm{d}\mathrm{Vol}_{\mathfrak{t}}$. By the properties of $g$ established in Lemma \ref{bumpfunctionlemma}, we have
\[ C = \int_{\mathfrak{t}} (\widehat{g})^2 \, \mathrm{Vol}_{\mathfrak{t}} = \int_{\mathfrak{t}} g^2 \, \mathrm{d}\mathrm{Vol}_{\mathfrak{t}} \ge \left( \frac{2}{27} g(0) \right)^2 \frac{\pi^{r_G /2}}{5^{r_G} \Gamma \left( \frac{r_G +2}{2} \right)} = \frac{4^{r_G+1} \Gamma \left( \frac{r_G +2}{2} \right)}{3^6 5^{r_G} \pi^{r_G /2}} . \]
In particular, $\int_{\mathfrak{t}}F \, \mathrm{d} \mathrm{Vol}_{\mathfrak{t}}=1$ and
\begin{equation}\label{FXestimate}
0 \le F(X) \le \frac{3^6 5^{r_G} \pi^{r_G /2}}{4^{r_G +1} \Gamma \left( \frac{r_G +2}{2} \right)} \times \left\{ \begin{array}{ll} 1 & \textrm{if } |X| < (r_G +3)/\pi, \\ \exp \left( - \frac{r_G +1}{2 \log 2} \left( \log \frac{\pi |X|}{r_G +3} \right) \left( \log \frac{e^2 \pi |X|}{r_G +2} \right) \right) & \textrm{if } |X| \ge (r_G +3)/\pi. \end{array} \right.
\end{equation}
The Fourier transform $\widehat{F} = C^{-1} (g*g)$ is supported in $B(0,2)$.

The function
\[ S(t) = \mathrm{Vol}_{\mathfrak{t}}(\mathfrak{t}/\Gamma) \left( \frac{H'v}{2} \right)^{r_G} \sum_{Y \in \Gamma} F \left( \frac{H'v}{2} (X+Y) \right), \quad t=\exp (2 \pi X) \in T, \,\, X \in \mathfrak{t} \]
is well-defined on $T$, and invariant under the action of the Weyl group $W(G,T)$ because $\Gamma$ is invariant under the same action. By construction, the Fourier transform of $S$ is
\[ \widehat{S}(w) = \int_{\mathfrak{t}} \left( \frac{H'v}{2} \right)^{r_G} F \left( \frac{H'v}{2} X \right) e^{-2 \pi i w(X)} \, \mathrm{d}\mathrm{Vol}_{\mathfrak{t}}(X) = \widehat{F} \left( \frac{2 w^*}{H'v} \right) , \]
where $w \in \Gamma^*$ is identified with the unique $w^* \in \Gamma$ for which $w(X)=\langle w^* , X \rangle$ for all $X \in \mathfrak{t}$. Since $\widehat{F}(Y)=0$ for all $Y \in \mathfrak{t}$ such that $|Y| \ge 2$, we have $\widehat{S}(w)=0$ for all $|w| \ge H'v$. In particular, $S$ is a trigonometric polynomial of degree $< H'v$. The function
\[ \frac{\delta_G (t^{H'})}{\delta_G (t)} = \prod_{w \in R} \frac{e^{2 \pi i H' w(X)}-1}{e^{2 \pi i w(X)}-1} = \prod_{w \in R} \sum_{k=0}^{H'-1} e^{2 \pi i k w(X)} , \quad t=\exp (2 \pi X) \in T, \,\,  X \in \mathfrak{t} \]
is also a trigonometric polynomial on $T$ invariant under the action of $W(G,T)$, and has degree $\le |2\rho^+| (H'-1)$. In particular, $S(t) \delta_G (t^{H'}) / \delta_G (t)$ is a trigonometric polynomial on $T$ invariant under the action of $W(G,T)$, and has degree $<H'v+|2\rho^+| (H'-1) \le H$. Therefore \cite[Lemma 1]{CK}
\[ S(t) \frac{\delta_G (t^{H'})}{\delta_G (t)} = \sum_{\substack{\pi \in \widehat{G} \\ |w_{\pi}| < H}} a_{\pi} \chi_{\pi} (t), \quad t \in T \]
with suitable coefficients $a_{\pi} \in \mathbb{C}$.

We will work with the kernel
\[ K(x,y) = \sum_{\substack{\pi \in \widehat{G} \\ |w_{\pi}| < H}} a_{\pi} \chi_{\pi} (xy^{-1}), \quad x,y \in G . \]
The functions $S$ and $\delta_G$ are nonnegative on $T$ by construction. Since every element of $G$ is conjugate to an element of $T$ and the characters are invariant under conjugation, we have $K(x,y) \ge 0$ for all $x, y \in G$. The translation invariance of the Haar measure shows that $K(x,y)$ is normalized as
\[ \begin{split} \int_G K(x,y) \, \mathrm{d}\mathrm{Vol}_G (x) &= \sum_{\substack{\pi \in \widehat{G} \\ |w_{\pi}|<H}} a_{\pi} \int_{G} \chi_{\pi} (x) \, \mathrm{d}\mathrm{Vol}_G (x) = a_{\pi_0}, \\ \int_G K(x,y) \, \mathrm{d}\mathrm{Vol}_G (y) &= \sum_{\substack{\pi \in \widehat{G} \\ |w_{\pi}|<H}} a_{\pi} \int_{G} \overline{\chi_{\pi} (y)} \, \mathrm{d}\mathrm{Vol}_G (y) = a_{\pi_0}. \end{split} \]

We now find $a_{\pi_0}$ and estimate $a_{\pi}$. Expanding the product in the definition of $\delta_G$ shows that
\[ \delta_G (t) = \sum_{w \in I} c_w e^{2 \pi i w(X)}, \quad t=\exp (2 \pi X) \in T, \,\, X \in \mathfrak{t} \]
with a finite index set $I$ contained in the lattice spanned by the roots. An application of the Weyl integration formula \eqref{weylintegration} with the constant 1 function shows that the constant term is
\[ c_0 = \int_T \delta_G \, \mathrm{d}\mathrm{Vol}_T = |W(G,T)| . \]
Since $\delta_G$ is nonnegative, $|c_w| \le |W(G,T)|$ for all $w \in I$. The translation invariance of the Haar measure and another application of the Weyl integration formula \eqref{weylintegration} lead to
\[ \begin{split} a_{\pi_0} &= \int_G K(x,1) \, \mathrm{d}\mathrm{Vol}_G (x) \\ &= \frac{1}{|W(G,T)|} \int_T S(t) \delta_G (t^{H'}) \, \mathrm{d}\mathrm{Vol}_T (t) \\ &=\frac{1}{|W(G,T)|} \int_{\mathfrak{t}} \left( \frac{H'v}{2} \right)^{r_G} F \left( \frac{H' v}{2} X \right) \left( \sum_{w \in I} c_w e^{2 \pi i H' w(X)} \right) \, \mathrm{Vol}_{\mathfrak{t}}(X) \\ &= \frac{1}{|W(G,T)|} \sum_{w \in I} c_w \widehat{F} \left( - \frac{2w^*}{v} \right) , \end{split} \]
where $w \in \Gamma^*$ is identified with $w^* \in \Gamma$ by duality as before. Since $\widehat{F}$ is supported in the open ball $B(0,2)$ by construction, and $|2w^*/v| \ge 2$ for all $0 \neq w \in I$ by the definition $v=\min_{w \in R} |w|$, we have
\[ a_{\pi_0} = \frac{1}{|W(G,T)|} c_0 \widehat{F}(0)=1 . \]
Therefore $K(x,y)$ is normalized. The orthogonality relations for characters, $K(x,y) \ge 0$ and $|\chi_{\pi} (x)| \le d_{\pi}$ yield
\begin{equation}\label{apibound}
|a_{\pi}| = \left| \int_G K(x,1) \overline{\chi_{\pi}(x)} \, \mathrm{d}\mathrm{Vol}_G (x) \right| \le \int_G K(x,1) d_{\pi} \, \mathrm{d}\mathrm{Vol}_G (x) = d_{\pi} .
\end{equation}

We now estimate the dispersion rate of $K(x,y)$. Recall that the geodesic metric defined by the Ad-invariant scalar product on $\mathfrak{g}$ is bi-invariant, that is, $\rho (zx,zy)=\rho (xz,yz)=\rho (x,y)$ for all $x,y,z \in G$. In particular, the integral
\[ \int_G K(x,y) \rho (x,y)^p \, \mathrm{d}\mathrm{Vol}_G(x) = \int_G K(xy^{-1},1) \rho (xy^{-1},1)^p \, \mathrm{d}\mathrm{Vol}_G(x) = \int_G K(x,1) \rho (x,1)^p \, \mathrm{d}\mathrm{Vol}_G(x) \]
does not depend on $y \in G$. An application of the Weyl integration formula \eqref{weylintegration} thus shows
\[ \begin{split} D_p^p(K) &= \int_G K(x,1) \rho (x,1)^p \, \mathrm{d}\mathrm{Vol}(x) = \frac{1}{|W(G,T)|} \int_T S(t) \delta_G (t^{H'}) \rho (t,1)^p \, \mathrm{d}\mathrm{Vol}_T (t) \\ &= \frac{1}{|W(G,T)|} \int_{\mathfrak{t}} \left( \frac{H' v}{2} \right)^{r_G} F \left( \frac{H' v}{2} X \right) \delta_G (\exp (2 \pi H' X)) \rho (\exp (2 \pi X), 1)^p \, \mathrm{d}\mathrm{Vol}_{\mathfrak{t}}(X) . \end{split} \]
The estimates $\delta_G \le 2^{|R|}=2^{d-r_G}$ and $\rho (\exp (2 \pi X), 1) \le 2 \pi |X|$ lead to
\begin{equation}\label{Dppreliminary}
D_p^p (K) \le \frac{2^{d-r_G}}{|W(G,T)|} \left( \frac{4 \pi}{H' v} \right)^p \int_{\mathfrak{t}} F(X) |X|^p \, \mathrm{d} \mathrm{Vol}_{\mathfrak{t}}(X) .
\end{equation}
The upper bound \eqref{FXestimate} for $F(X)$ shows that
\[ \begin{split} \int_{B\left( 0, \frac{r_G+3}{\pi} \right)} F(X) |X|^p \, \mathrm{d}\mathrm{Vol}_{\mathfrak{t}}(X) &\le \frac{3^6 5^{r_G} \pi^{r_G /2}}{4^{r_G +1} \Gamma \left( \frac{r_G +2}{2} \right)} \int_0^{\frac{r_G+3}{\pi}} R^p \frac{2 \pi^{r_G/2}R^{r_G-1}}{\Gamma \left( \frac{r_G}{2} \right)} \, \mathrm{d}R \\ &= \frac{3^6 5^{r_G} (r_G+3)^{p+r_G}}{4^{r_G+1} \pi^p \Gamma \left( \frac{r_G+2}{2} \right) \Gamma \left( \frac{r_G}{2} \right)} \cdot \frac{2}{p+r_G}, \end{split} \]
and
\[ \begin{split} &\int_{\mathfrak{t} \backslash B \left( 0, \frac{r_G+3}{\pi} \right)} F(X) |X|^p \, \mathrm{d}\mathrm{Vol}_{\mathfrak{t}}(X) \\ &\le \frac{3^6 5^{r_G} \pi^{r_G /2}}{4^{r_G +1} \Gamma \left( \frac{r_G +2}{2} \right)} \int_{\frac{r_G+3}{\pi}}^{\infty} \exp \left( - \frac{r_G+1}{2 \log 2} \left( \log \frac{\pi R}{r_G+3} \right) \left( 2+\log \frac{\pi R}{r_G+3} \right) \right) R^p \frac{2 \pi^{r_G/2} R^{r_G-1}}{\Gamma \left( \frac{r_G}{2} \right)} \, \mathrm{d}R \\ &= \frac{3^6 5^{r_G} (r_G+3)^{p+r_G}}{4^{r_G+1} \pi^p \Gamma \left( \frac{r_G+2}{2} \right) \Gamma \left( \frac{r_G}{2} \right)} 2 \int_{0}^{\infty} \exp \left( - \frac{r_G+1}{2 \log 2} x (2+x) + (p+r_G) x \right) \, \mathrm{d}x \\ &\le \frac{3^6 5^{r_G} (r_G+3)^{p+r_G}}{4^{r_G+1} \pi^p \Gamma \left( \frac{r_G+2}{2} \right) \Gamma \left( \frac{r_G}{2} \right)} 2 \sqrt{\frac{\pi 2 \log 2}{r_G+1}} \exp \left( \frac{\log 2}{2} \left( \frac{p^2}{(r_G+1)} + (r_G+1) (1-1/\log 2)^2 \right) \right) . \end{split} \]
We used the substitution $\log (\pi R/(r_G+3))=x$ and the general formula $\int_{-\infty}^{\infty} e^{-Bx^2+Cx} \, \mathrm{d}x = \sqrt{\pi/B} e^{C^2/(4B)}$ with $B=(r_G+1)/(2 \log 2)$ and $C=p+r_G-(r_G+1)/\log 2$ so that $C^2 \le p^2+(1-1/\log 2)^2 (r_G+1)^2$. Adding the previous two estimates and checking that
\[ \begin{split} \frac{2}{p+r_G} + 2\sqrt{\frac{\pi 2 \log 2}{r_G+1}} 2^{p^2/(2r_G+2)+(r_G+1)(1-1/\log 2)^2 /2} &\le 1+ 2.96 \cdot 2^{p^2/(2r_G+2) + 0.098 (r_G+1)} \\ &\le 4.17 \cdot 2^{p^2/(2r_G+2)+0.098 r_G} \end{split} \]
leads to
\[ \int_{\mathfrak{t}} F(X) |X|^p \, \mathrm{d}\mathrm{Vol}_{\mathfrak{t}} (X) \le 4.17 \frac{3^6 5^{r_G} (r_G+3)^{p+r_G} 2^{p^2/(2r_G+2)+0.098 r_G}}{4^{r_G+1} \pi^p \Gamma \left( \frac{r_G+2}{2} \right) \Gamma \left( \frac{r_G}{2} \right)} . \]
Using $H' \ge H/(2|\rho^+|+v)-1$, formula \eqref{Dppreliminary} thus simplifies to
\[ D_p^p (K) \le \left( \frac{8 |\rho^+|+4v}{v(H-(2 |\rho^+|+v))} \right)^p 4.17 \frac{3^6 5^{r_G} (r_G+3)^{p+r_G} 2^{p^2/(2r_G+2)+d-0.902 r_G}}{4^{r_G+1}\Gamma \left( \frac{r_G+2}{2} \right) \Gamma \left( \frac{r_G}{2} \right) |W(G,T)|} . \]
By Stirling's approximation \eqref{stirling}, here $\Gamma (\frac{r_G+2}{2}) \Gamma (\frac{r_G}{2}) \ge (r_G/2)^{r_G} e^{-r_G} 2 \pi$, hence
\[ \begin{split} D_p^p (K) &\le \left( \frac{8 |\rho^+|+4v}{v(H-(2 |\rho^+|+v))} \right)^p (r_G+3)^p 4.17 \frac{3^6}{8 \pi} 2^{p^2/(2r_G+2)+d} \left( \frac{5\cdot 2^{0.098}e(r_G+3)}{4r_G} \right)^{r_G} \\ &\le \left( \frac{8 |\rho^+|+4v}{v(H-(2 |\rho^+|+v))} \right)^p (r_G+3)^p 121 \cdot 2^{p^2 / (2r_G+2)+d} 15^{r_G} \end{split} \]
where we used $4.17 \cdot 3^6 /(8 \pi) < 121$ and $5\cdot 2^{0.098}e(r_G+3)/(4r_G) < 15$.

The rest of the proof is entirely analogous to the proof of Theorem \ref{liegroupheatkerneltheorem}. An application of Lemma \ref{smoothinglemma} and the previous formula yield
\[ W_p (\mu, \nu) \le \frac{C_1}{H-H_0} + W_p (K*\mu, K*\nu) \]
with the constants $H_0=2|\rho^+|+v$ and
\[ C_1 = (1+(1-c)^{1/p}) \frac{8 |\rho^+|+4v}{v} (r_G+3) 121^{1/p} 2^{p/(2r_G+2)+d/p} 15^{r_G /p} . \]
We have
\[ \widehat{\frac{\mathrm{d}(K*\mu)}{\mathrm{d}\mathrm{Vol}}}(\pi) = \frac{a_{\pi}}{d_{\pi}} \widehat{\mu}(\pi) , \]
and the same holds for $\nu$. Following the steps in the proof of Theorem \ref{liegroupheatkerneltheorem} with $\delta =0$, we deduce
\[ W_p (K*\mu, K*\nu) \le C_2 \Bigg( \sum_{\substack{\pi \in \widehat{G} \\ 0< |w_{\pi}|<H}} d_{\pi}^{2-q_0/2} \frac{|a_{\pi}|}{d_{\pi}} \cdot \frac{\| \widehat{\mu}(\pi) - \widehat{\nu}(\pi) \|_{\mathrm{HS}}^{q_0}}{\lambda_{\pi}^{q_0/2}} \Bigg)^{1/q_0} \]
with $q_0=\min \{ q, 2 \}$ and
\[ C_2 = \left\{ \begin{array}{ll} 1 & \textrm{if } p=1, \\ p c^{-1/q} & \textrm{if } p>1 . \end{array} \right. \]
An application of the estimate $|a_{\pi}| \le d_{\pi}$ from \eqref{apibound} concludes the proof.
\end{proof}

\subsection{Compact homogeneous spaces}

In this section, we prove Theorem \ref{homogeneousheatkerneltheorem}. Recall that the Ricci curvature of any compact homogeneous space is positive semidefinite \cite{SA}.
\begin{lem}\label{groupactionlemma} Let $X$ be a compact metric space, and let $G$ be a group acting transitively on $X$. Let $\mu$ be a Borel probability measure on $X$, and let $V$ be a finite dimensional linear subspace of the vector space of all bounded, Borel-measurable, complex-valued functions on $X$. Assume that:
\begin{enumerate}[(i)]
\item for all Borel sets $B \subseteq X$ and all $g \in G$, we have $\mu(gB)=\mu(B)$,

\item for all $f \in V$ and all $g \in G$, we have $f^g \in V$, where $f^g(x)=f(gx)$.
\end{enumerate}
Then for any orthonormal basis $f_1, f_2, \ldots, f_{\mathrm{dim}(V)}$ of $V$ equipped with the scalar product $\langle f,h \rangle = \int_X f \overline{h} \, \mathrm{d}\mu$, we have
\[ \sum_{m=1}^{\mathrm{dim}(V)} |f_m (x)|^2 = \mathrm{dim}(V) \qquad \textrm{for all } x \in X. \]
\end{lem}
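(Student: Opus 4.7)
The plan is to exploit the standard fact that the reproducing kernel of a finite dimensional subspace of $L^2(\mu)$ on the diagonal, $F(x) = \sum_{m=1}^{\dim(V)} |f_m(x)|^2$, depends only on the subspace $V$ and the measure $\mu$, not on the choice of orthonormal basis. Once this is shown, the result will drop out from the symmetry assumptions: $F$ will be $G$-invariant, hence constant by transitivity, and the constant will be fixed by integration against $\mu$.

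First I would verify the basis-independence. If $h_1,\ldots,h_N$ and $f_1,\ldots,f_N$ are two orthonormal bases of $V$ (with $N=\dim(V)$), then $h_m = \sum_n U_{mn} f_n$ for some $U \in \mathrm{U}(N)$. A direct computation using $\sum_m U_{mn}\overline{U_{mn'}} = \delta_{nn'}$ yields $\sum_m |h_m(x)|^2 = \sum_n |f_n(x)|^2$ pointwise, so $F$ is indeed independent of the chosen orthonormal basis.

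Next, I would show $F$ is $G$-invariant. Fix $g \in G$ and set $\tilde f_m = f_m^g$, i.e.\ $\tilde f_m(x) = f_m(gx)$. By assumption (ii) each $\tilde f_m$ lies in $V$, and by assumption (i) the $G$-invariance of $\mu$ gives
\[ \int_X \tilde f_m \overline{\tilde f_n} \, \mathrm{d}\mu = \int_X f_m(gx) \overline{f_n(gx)} \, \mathrm{d}\mu(x) = \int_X f_m \overline{f_n} \, \mathrm{d}\mu = \delta_{mn}, \]
so $\tilde f_1,\ldots,\tilde f_N$ is also an orthonormal basis of $V$. By basis-independence, $\sum_m |\tilde f_m(x)|^2 = F(x)$, which reads $F(gx)=F(x)$ for every $x \in X$ and $g \in G$. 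Since $G$ acts transitively, $F$ is constant on $X$.

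Finally, I would identify the constant by integrating against the probability measure $\mu$:
\[ F(x) = \int_X F \, \mathrm{d}\mu = \sum_{m=1}^N \int_X |f_m|^2 \, \mathrm{d}\mu = N = \dim(V). \]
I do not anticipate any real obstacle; the only subtlety worth stating carefully is that one must use both $G$-invariances (of $\mu$ and of $V$) to conclude that $(\tilde f_m)$ is still an orthonormal basis of the same subspace $V$, at which point the basis-independence of $F$ closes the argument.
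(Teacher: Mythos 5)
Your proof is correct and follows essentially the same route as the paper's: both arguments reduce to showing that $\sum_m |f_m(x)|^2$ is $G$-invariant (hence constant by transitivity) and then integrating against $\mu$ to identify the constant as $\mathrm{dim}(V)$. The only difference is cosmetic: you establish the invariance via the unitary change-of-basis computation, while the paper packages the same basis-independence through the reproducing element $F_a \in V$ satisfying $\langle f, F_a \rangle = f(a)$ and the identity $\| F_a \|_2^2 = \sum_m |f_m(a)|^2$.
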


\begin{proof} Assumption (i) means that $G$ acts with measure preserving transformations, therefore $\langle f^g, h^g \rangle = \langle f,h \rangle$ for all $g \in G$.

Given any $a \in X$, the map $f \mapsto f(a)$ is a linear functional on $V$, hence there exists $F_a \in V$ such that $\langle f,F_a \rangle=f(a)$ for all $f \in V$. For any $g \in G$ and $f \in V$,
\[ \langle f, F_{ga} \rangle = f(ga) = f^g(a)=\langle f^g, F_a \rangle = \langle f,F_a^{g^{-1}} \rangle , \]
consequently $F_{ga}=F_a^{g^{-1}}$. Thus $\| F_{ga} \|_2=\| F_a \|_2$, and by the assumption that $G$ acts transitively, $\| F_a \|_2$ does not depend on $a$. Therefore given any orthonormal basis $f_1, f_2, \ldots, f_{\mathrm{dim}(V)}$ of $V$, the value of
\[ \| F_a \|_2^2 = \sum_{m=1}^{\mathrm{dim}(V)} |\langle F_a, f_m \rangle|^2 = \sum_{m=1}^{\mathrm{dim}(V)} |f_m (a)|^2 \]
does not depend on $a$. On the other hand,
\[ \int_M \sum_{m=1}^{\mathrm{dim}(V)} |f_m (x)|^2 \, \mathrm{d}\mu(x)= \sum_{m=1}^{\mathrm{dim}(V)} \| f_m \|_2^2 = \mathrm{dim}(V), \]
which proves that $\sum_{m=1}^{\mathrm{dim}(V)} |f_m(a)|^2=\mathrm{dim}(V)$ for all $a \in X$.
\end{proof}

The following lemma is the Hausdorff--Young inequality on homogeneous spaces. An equivalent form of \eqref{homogeneousHY1} and \eqref{homogeneousHY2} in terms of representation theory was given in \cite{NRT}. For the sake of completeness, we include a different proof.
\begin{lem}\label{homogeneoushausdorffyounglemma} Let $M$ be a compact, connected, smooth Riemannian manifold without boundary, and assume that the Riemannian isometry group acts transitively on $M$. Let $f \in \mathrm{C}^{\infty}(M)$. For any $2 \le p < \infty$ and $1<q \le 2$ such that $1/p+1/q=1$, we have
\begin{align}
\Bigg( \sum_{\ell=0}^{\infty} d_{\ell}^{1-p/2} \Bigg( \sum_{m=1}^{d_{\ell}} |\widehat{f}(\ell,m)|^2 \Bigg)^{p/2} \Bigg)^{1/p} &\le \| f \|_q , \label{homogeneousHY1} \\ \sup_{\ell \ge 0} d_{\ell}^{-1/2} \Bigg( \sum_{m=1}^{d_{\ell}} |\widehat{f}(\ell,m)|^2 \Bigg)^{1/2} &\le \| f \|_1 , \label{homogeneousHY2} \\ \Bigg( \sum_{\ell=0}^{\infty} d_{\ell}^{1-p/2} \lambda_{\ell}^{p/2} \Bigg( \sum_{m=1}^{d_{\ell}} |\widehat{f}(\ell,m)|^2 \Bigg)^{p/2} \Bigg)^{1/p} &\le 2(p-1) \| \nabla f \|_q , \label{homogeneousHY3} \\ \sup_{\ell \ge 0} d_{\ell}^{-1/2} \lambda_{\ell}^{1/2} \Bigg( \sum_{m=1}^{d_{\ell}} |\widehat{f}(\ell,m)|^2 \Bigg)^{1/2} &\le \| \nabla f \|_1 . \label{homogeneousHY4}
\end{align}
\end{lem}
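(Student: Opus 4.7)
The plan is to first establish the scalar Hausdorff--Young inequalities \eqref{homogeneousHY1} and \eqref{homogeneousHY2} via Riesz--Thorin interpolation between Plancherel and an $L^1$-to-$\ell^{\infty}$ endpoint, and then deduce the gradient inequalities \eqref{homogeneousHY3} and \eqref{homogeneousHY4} by applying them to $(-\Delta)^{1/2} f$. The cases involving $\nabla f$ are handled differently: \eqref{homogeneousHY3} via the Ba\~nuelos--Os\c{e}kowski Riesz transform bound, but \eqref{homogeneousHY4} requires a separate argument because Riesz transform bounds fail at the $L^1$ endpoint.

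The key pointwise identity $\sum_{m=1}^{d_\ell} |\phi_{\ell,m}(x)|^2 = d_\ell$ for every $x \in M$ follows from Lemma \ref{groupactionlemma} applied with $G$ the Riemannian isometry group of $M$ (which acts transitively by assumption, preserves the normalized volume, and preserves each eigenspace $H_\ell$ because the Laplacian commutes with isometries) and $V = H_\ell$. Consequently, the reproducing kernel $K_\ell(x,y) = \sum_m \phi_{\ell,m}(x)\overline{\phi_{\ell,m}(y)}$ of the orthogonal projection $\pi_\ell$ onto $H_\ell$ satisfies the Cauchy--Schwarz bound $|K_\ell(x,y)| \le (K_\ell(x,x) K_\ell(y,y))^{1/2} = d_\ell$, and hence $\|\pi_\ell f\|_\infty \le d_\ell \|f\|_1$. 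Since $\pi_\ell$ is self-adjoint and idempotent, $\|\pi_\ell f\|_2^2 = \langle \pi_\ell f, f \rangle \le \|\pi_\ell f\|_\infty \|f\|_1 \le d_\ell \|f\|_1^2$, which is \eqref{homogeneousHY2}. Viewing the linear map $S: f \mapsto (d_\ell^{-1/2} \pi_\ell f)_{\ell \ge 0}$ as an operator from $L^q(M)$ into $L^p(\mu; \ell^2)$, where $\mu$ is the counting measure on $\mathbb{Z}_{\ge 0}$ weighted by $d_\ell$, one checks that $\|S\|_{L^2 \to L^2(\mu;\ell^2)} \le 1$ (this is Plancherel) and $\|S\|_{L^1 \to L^\infty(\mu;\ell^2)} \le 1$ (the estimate just proved). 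Riesz--Thorin interpolation for $L^p$ spaces of Hilbert-valued functions then yields \eqref{homogeneousHY1}.

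For \eqref{homogeneousHY3}, apply \eqref{homogeneousHY1} to $(-\Delta)^{1/2} f$, whose Fourier coefficients are $\lambda_\ell^{1/2} \widehat{f}(\ell,m)$; the left-hand side of \eqref{homogeneousHY3} is then bounded by $\|(-\Delta)^{1/2} f\|_q$. Since the Ricci curvature of any compact homogeneous space is nonnegative, the Ba\~nuelos--Os\c{e}kowski inequality \eqref{riesztransform1} applies with $a = 0$ and $q^* = p$, giving $\|(-\Delta)^{1/2} f\|_q \le 2(p-1) \|\nabla f\|_q$, which is \eqref{homogeneousHY3}.

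The hardest step is \eqref{homogeneousHY4}; the direct strategy is a gradient analogue of the pointwise identity. The function $x \mapsto \sum_m |\nabla \phi_{\ell,m}(x)|^2$ is isometry-invariant: any isometry $g$ acts on $H_\ell$ by a unitary matrix $U$ relating the bases $(\phi_{\ell,m}(g\cdot))_m$ and $(\phi_{\ell,m})_m$, and the combination of the pointwise isometry of tangent spaces with the unitarity of $U$ preserves the sum of squared gradient lengths. Hence this sum is constant on $M$, and integration identifies the constant as $\lambda_\ell d_\ell$. Writing $h_\ell = \pi_\ell f = \sum_m c_m \phi_{\ell,m}$ with $\|c\|_2 = \|h_\ell\|_2 = (\sum_m |\widehat{f}(\ell,m)|^2)^{1/2}$, Cauchy--Schwarz gives $|\nabla h_\ell|(x) \le \|h_\ell\|_2 (\lambda_\ell d_\ell)^{1/2}$. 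On the other hand, Green's identity combined with $f - h_\ell \perp H_\ell$ in $L^2$ gives $\int_M \langle \nabla h_\ell, \nabla f \rangle \, \mathrm{d}\mathrm{Vol} = \lambda_\ell \|h_\ell\|_2^2$, so that
\[
\lambda_\ell \|h_\ell\|_2^2 \le \|\nabla h_\ell\|_\infty \|\nabla f\|_1 \le \lambda_\ell^{1/2} d_\ell^{1/2} \|h_\ell\|_2 \|\nabla f\|_1,
\]
and dividing by $\lambda_\ell^{1/2} \|h_\ell\|_2$ produces \eqref{homogeneousHY4} after taking the supremum over $\ell$.
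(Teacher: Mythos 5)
Your proof is correct and follows essentially the same route as the paper: the addition formula from Lemma \ref{groupactionlemma}, Riesz--Thorin for Hilbert-space-valued functions for \eqref{homogeneousHY1}, the Ba\~nuelos--Os\c{e}kowski bound applied to $(-\Delta)^{1/2}f$ for \eqref{homogeneousHY3}, and the gradient addition formula $\sum_{m}|\nabla\phi_{\ell,m}|^2=d_\ell\lambda_\ell$ combined with Green's formula for \eqref{homogeneousHY4}. The only (cosmetic) differences are that you obtain \eqref{homogeneousHY2} via the reproducing kernel and self-adjointness of $\pi_\ell$ rather than the vector-valued triangle inequality, and you derive the gradient addition formula by isometry invariance whereas the paper gets it by applying $\Delta$ to the identity $\sum_m\phi_{\ell,m}^2=d_\ell$.
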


\begin{proof} Claim \eqref{homogeneousHY1} for $p=q=2$ reduces to the Parseval formula. Now fix $\ell \ge 0$. Since the Laplace--Beltrami operator commutes with Riemannian isometries, the vector space $H_{\ell}$ is invariant under the action of the Riemannian isometry group. The Riemannian volume measure is also invariant under Riemannian isometries. An application of Lemma \ref{groupactionlemma} shows that the vector-valued function $\Phi_{\ell}=(\phi_{\ell,1}, \phi_{\ell,2}, \ldots, \phi_{\ell, d_{\ell}})$ has constant norm
\begin{equation}\label{additionformula}
|\Phi_{\ell}(x)|^2 = \sum_{m=1}^{d_{\ell}} |\phi_{\ell,m}(x)|^2 = d_{\ell} .
\end{equation}
The triangle inequality for vector-valued integrals
\[ \left| \int_M f \Phi_{\ell} \, \mathrm{d}\mathrm{Vol} \right| \le \int_M |f| |\Phi_{\ell}| \, \mathrm{d} \mathrm{Vol} \]
thus reads
\[ \Bigg( \sum_{m=1}^{d_{\ell}} |\widehat{f}(\ell,m)|^2 \Bigg)^{1/2} \le d_{\ell}^{1/2} \| f \|_1, \]
and claim \eqref{homogeneousHY2} follows.

Let $X$ be the measure space on the set of nonnegative integers with the measure $\mu(\{ \ell \})=d_{\ell}$, $\ell \ge 0$, and let $Y$ be the Hilbert space of square summable complex sequences\footnote{We avoid the standard notation $\ell^2$ of this Hilbert space as it might be confused with the indices $\ell$.}. Consider $L^p (X,Y)$, the $L^p$ space of $Y$-valued functions on the measure space $X$. Given $f \in L^1 (M)$, let $Tf$ be the map $X \to Y$,
\[ \ell \mapsto d_{\ell}^{-1/2} (\widehat{f}(\ell,1), \widehat{f}(\ell,2), \ldots, \widehat{f}(\ell,d_{\ell}), 0, 0, \ldots). \]
The linear operator $T$ thus has operator norms $\| T \|_{L^2(M) \to L^2 (X,Y)} \le 1$ and $\| T \|_{L^1 (M) \to L^{\infty}(X,Y)} \le 1$. An application of the Riesz--Thorin interpolation theorem for operators between $L^p$ spaces of Hilbert space-valued functions \cite[p.\ 83]{HNVW} yields $\| T \|_{L^q(M) \to L^p(X,Y)} \le 1$, which is exactly claim \eqref{homogeneousHY1}.

Claim \eqref{homogeneousHY3} follows from applying \eqref{homogeneousHY1} to $(-\Delta)^{1/2} f$, the Riesz transform estimate \eqref{riesztransform1} and the fact that the Fourier transform of $(-\Delta)^{1/2}f$ is $\widehat{(-\Delta)^{1/2}f}(\ell, m) = \lambda_{\ell}^{1/2} \widehat{f}(\ell,m)$.

In order to prove \eqref{homogeneousHY4}, we may assume that $\phi_{\ell,m}$ is real-valued since $\sum_{m=1}^{d_{\ell}} |\widehat{f}(\ell,m)|^2$ does not depend on the choice of the orthonormal basis $\phi_{\ell,m}$. Applying the Laplace--Beltrami operator to both sides of the identity \eqref{additionformula} and using the product rule $\Delta (fg) = f \Delta g + 2 \langle \nabla f, \nabla g \rangle + g \Delta f$ leads to
\[ 0= \sum_{m=1}^{d_{\ell}} \left( 2 \phi_{\ell,m} \Delta \phi_{\ell,m} + 2 |\nabla \phi_{\ell,m}|^2 \right) = -2 \lambda_{\ell} \sum_{m=1}^{d_{\ell}} \phi_{\ell,m}^2 + 2 \sum_{m=1}^{d_{\ell}} |\nabla \phi_{\ell,m}|^2 = -2d_{\ell} \lambda_{\ell} + 2 \sum_{m=1}^{d_{\ell}} |\nabla \phi_{\ell,m}|^2 , \]
and in particular to the identity
\begin{equation}\label{nablaidentity}
\sum_{m=1}^{d_{\ell}} |\nabla \phi_{\ell,m}|^2 = d_{\ell} \lambda_{\ell} .
\end{equation}
An application of Green's formula shows that for any $1 \le m \le d_{\ell}$,
\[ \lambda_{\ell} \widehat{f}(\ell,m) = \int_M f (-\Delta \phi_{\ell,m}) \, \mathrm{d}\mathrm{Vol} = \int_M \langle \nabla f, \nabla \phi_{\ell,m} \rangle \, \mathrm{d}\mathrm{Vol} . \]
Letting $Z=(Z_1, Z_2, \ldots, Z_{d_{\ell}})$ with coordinates $Z_m=\langle \nabla f, \nabla \phi_{\ell,m} \rangle$, the previous formula can be written in the form of the vector-valued integral
\[ \lambda_{\ell} (\widehat{f}(\ell,1), \widehat{f}(\ell,2), \ldots, \widehat{f}(\ell,d_{\ell})) = \int_M Z \, \mathrm{d}\mathrm{Vol} . \]
An application of the triangle inequality for vector-valued integrals, the Cauchy--Schwarz inequality and the identity \eqref{nablaidentity} shows that for any $\ell \ge 0$,
\[ \begin{split} \lambda_{\ell} \Bigg( \sum_{m=1}^{d_{\ell}} |\widehat{f}(\ell,m)|^2 \Bigg)^{1/2} &= \left| \int_M Z \, \mathrm{d}\mathrm{Vol} \right| \le \int_M |Z| \, \mathrm{d}\mathrm{Vol} = \int_M \Bigg( \sum_{m=1}^{d_{\ell}} |\langle \nabla f, \nabla \phi_{\ell,m} \rangle|^2 \Bigg)^{1/2} \, \mathrm{d}\mathrm{Vol} \\ &\le \int_M |\nabla f| \Bigg( \sum_{m=1}^{d_{\ell}} |\nabla \phi_{\ell,m}|^2 \Bigg)^{1/2} \, \mathrm{d}\mathrm{Vol} = d_{\ell}^{1/2} \lambda_{\ell}^{1/2} \| \nabla f \|_1 . \end{split} \]
This finishes the proof of \eqref{homogeneousHY4}.
\end{proof}

\begin{proof}[Proof of Theorem \ref{homogeneousheatkerneltheorem}] This is entirely analogous to the proofs of Theorems \ref{torusheatkerneltheorem} and \ref{liegroupheatkerneltheorem}, with Lemma \ref{homogeneoushausdorffyounglemma} playing the role of Lemmas \ref{torushausdorffyounglemma} and \ref{liegrouphausdorffyounglemma}. See also the proofs of Theorems \ref{manifoldheatkernelp<2theorem} and \ref{manifoldheatkernelp>2theorem} below.
\end{proof}

\subsection{Sphere with $1 \le p < \infty$}

\begin{proof}[Proof of Theorem \ref{sphereprojectionkerneltheorem}] Let $1 \le p < \infty$ and $1<q \le \infty$ be such that $1/p+1/q=1$, and fix an integer $L \ge 0$. We start by constructing a kernel $K(x,y)$ on the sphere $\mathbb{S}^d$. We work with the standard model $\mathbb{S}^d=\{ x \in \mathbb{R}^{d+1} : |x|=1 \}$, whereby the geodesic distance $\rho(x,y)=\theta \in [0,\pi]$ between the unit vectors $x,y \in \mathbb{S}^d$ is expressed in terms of the scalar product $\langle x,y \rangle = \cos \theta$. The general integration formula
\begin{equation}\label{sphereintegration}
\int_{\mathbb{S}^d} f(\langle x,y \rangle) \, \mathrm{d}\mathrm{Vol}(x) =\frac{\mathrm{Volume}(\mathbb{S}^{d-1})}{\mathrm{Volume}(\mathbb{S}^{d})} \int_{-1}^1 f(t) (1-t^2)^{d/2-1} \, \mathrm{d}t
\end{equation}
follows from integrating in cylindrical coordinates. Note that the left-hand side does not depend on $y \in \mathbb{S}^d$ because of the rotation invariance of the Riemannian volume measure.

Let $P_{\ell}^{(\alpha, \beta)}(t)$ denote the Jacobi polynomials, and $C_{\ell}^{(\lambda)}(t)$ the Gegenbauer polynomials with the standard normalization $P_{\ell}^{(\alpha, \beta)}(1)=\binom{\ell+\alpha}{\ell}$ and $C_{\ell}^{(\lambda)} (1)=\binom{\ell+2\lambda-1}{\ell}$. We refer to Szeg\H{o} \cite{SZ} for their definition and basic properties.

Consider the decomposition $L^2(\mathbb{S}^d)=\oplus_{\ell=0}^{\infty} H_{\ell}$ into an orthogonal direct sum of Laplace eigenspaces $H_{\ell}$ of dimension $d_{\ell} = \mathrm{dim}(H_{\ell})= \binom{\ell+d}{d} - \binom{\ell+d-2}{d}$, $\ell \ge 0$. In particular, $\oplus_{\ell=0}^L H_{\ell}$ has dimension
\[ \sum_{\ell=0}^L d_{\ell} = \binom{L+d}{d} + \binom{L+d-1}{d} . \]
Let $\phi_{\ell,m}$, $1 \le m \le d_{\ell}$ be an arbitrary orthonormal basis in $H_{\ell}$. Our starting point is the projection kernel $\sum_{\ell=0}^L \sum_{m=1}^{d_{\ell}} \phi_{\ell,m}(x) \overline{\phi_{\ell,m}(y)}$, which in fact does not depend on the choice of the orthonormal basis $\phi_{\ell,m}$. The addition formula for spherical harmonics states that
\begin{equation}\label{additiongegenbauer}
\sum_{m=1}^{d_{\ell}} \phi_{\ell,m}(x) \overline{\phi_{\ell,m}(y)} = \frac{2 \ell +d-1}{d-1} C_{\ell}^{(\frac{d-1}{2})} (\langle x, y \rangle) .
\end{equation}
An application of the summation formula \cite[p.\ 71, Equation (4.5.3)]{SZ} (which follows from the Christoffel--Darboux formula \cite[p.\ 43, Theorem 3.2.2]{SZ}) then yields
\begin{equation}\label{projectionkernel}
\sum_{\ell=0}^L \sum_{m=1}^{d_{\ell}} \phi_{\ell,m}(x) \overline{\phi_{\ell,m}(y)} = \frac{\binom{L+d}{d} + \binom{L+d-1}{d}}{\binom{L+\frac{d}{2}}{L}} P_L^{(\frac{d}{2}, \frac{d}{2}-1)} (\langle x,y \rangle) .
\end{equation}

Now let $N$ be a positive integer, and let us raise the projection kernel \eqref{projectionkernel} to the $N$th power. Since $P_{\ell}^{(\frac{d}{2}, \frac{d}{2}-1)}(t)$ has degree $\ell$, the Jacobi polynomials form a basis in the set of all polynomials $\mathbb{R}[t]$. The polynomial $P_L^{(\frac{d}{2}, \frac{d}{2}-1)} (t)^N$ of degree $NL$ is thus a linear combination of $P_{\ell}^{(\frac{d}{2}, \frac{d}{2}-1)}(t)$, $0 \le \ell \le NL$. On the other hand, \eqref{projectionkernel} shows that the functions $P_{\ell}^{(\frac{d}{2}, \frac{d}{2}-1)}(\langle x,y \rangle)$, $0 \le \ell \le NL$ are all linear combinations of $\sum_{m=1}^{d_{\ell}} \phi_{\ell,m}(x) \overline{\phi_{\ell,m}(y)}$, $0 \le \ell \le NL$, hence
\begin{equation}\label{Nthpowerkernel}
\left( \sum_{\ell=0}^L \sum_{m=1}^{d_{\ell}} \phi_{\ell,m}(x) \overline{\phi_{\ell,m}(y)} \right)^N = \sum_{\ell=0}^{NL} a_{\ell} \sum_{m=1}^{d_{\ell}} \phi_{\ell,m}(x) \overline{\phi_{\ell,m}(y)}
\end{equation}
with some real coefficients $a_{\ell}=a_{\ell}(d,N,L)$.

The Parseval formula and the addition formula \eqref{additionformula} show that for any $y \in \mathbb{S}^d$,
\[ c_N = \int_{\mathbb{S}^d} \left( \sum_{\ell=0}^L \sum_{m=1}^{d_{\ell}} \phi_{\ell,m}(x) \overline{\phi_{\ell,m}(y)} \right)^{2N} \, \mathrm{d}\mathrm{Vol}(x) = \sum_{\ell=0}^{NL} a_{\ell}^2 \sum_{m=1}^{d_{\ell}} |\phi_{\ell, m}(y)|^2 = \sum_{\ell=0}^{NL} a_{\ell}^2 d_{\ell} . \]
On the other hand, setting $x=y$ in \eqref{Nthpowerkernel}, an application of the Cauchy--Schwarz inequality leads to
\[ \left( \sum_{\ell=0}^L d_{\ell} \right)^N = \sum_{\ell=0}^{NL} a_{\ell} d_{\ell} \le \left( \sum_{\ell=0}^{NL} d_{\ell} \right)^{1/2} \left( \sum_{\ell=0}^{NL} a_{\ell}^2 d_{\ell} \right)^{1/2} . \]
Therefore the normalizing constant $c_N$ does not depend on $y \in \mathbb{S}^d$, and satisfies the lower bound
\begin{equation}\label{cNlowerbound}
c_N \ge \frac{\left( \sum_{\ell=0}^L d_{\ell} \right)^{2N}}{\sum_{\ell=0}^{NL} d_{\ell}} = \frac{\left( \binom{L+d}{d} + \binom{L+d-1}{d} \right)^{2N}}{\binom{NL+d}{d} + \binom{NL+d-1}{d}} .
\end{equation}

We will work with the kernel
\[ K(x,y)  = \frac{1}{c_N} \left( \sum_{\ell=0}^L \sum_{m=1}^{d_{\ell}} \phi_{\ell,m}(x) \overline{\phi_{\ell,m}(y)} \right)^{2N} = \frac{\left( \binom{L+d}{d} + \binom{L+d-1}{d} \right)^{2N}}{c_N \binom{L+\frac{d}{2}}{L}^{2N}} P_L^{(\frac{d}{2}, \frac{d}{2}-1)} (\langle x,y \rangle)^{2N} \]
with $N=\lfloor (p+5)/2 \rfloor \ge 3$.
%\[ N=\left\{ \begin{array}{ll} 3 & \textrm{if } 1 \le p \le 6, \\ \lceil p/2 \rceil & \textrm{if } 6<p<\infty . \end{array} \right. \]
By construction, $K(x,y)=K(y,x) \ge 0$ and $\int_{\mathbb{S}^d} K(x,y) \, \mathrm{d}\mathrm{Vol}(x)=1$ for all $y \in \mathbb{S}^d$.

Let us estimate the Fourier coefficients of the kernel $K$. By construction, $K(x,y)$ is a polynomial in the variable $\langle x,y \rangle$ of degree $2NL$. Since $C_{\ell}^{(\frac{d-1}{2})}(t)$ has degree $\ell$, the Gegenbauer polynomials form a basis of the set of all polynomials $\mathbb{R}[t]$. Hence
\begin{equation}\label{kernelfourierexpansion}
K(x,y) = \sum_{\ell=0}^{2NL} b_{\ell} \frac{2\ell +d-1}{d-1} C_{\ell}^{(\frac{d-1}{2})}(\langle x,y \rangle) = \sum_{\ell=0}^{2NL} b_{\ell} \sum_{m=1}^{d_{\ell}} \phi_{\ell,m}(x) \overline{\phi_{\ell,m}(y)}
\end{equation}
with suitable real coefficients $b_{\ell}=b_{\ell}(d,N,L)$. An application of the general integration formula \eqref{sphereintegration} and the orthogonality relation for Gegenbauer polynomials \cite[p.\ 81]{SZ}
\begin{equation}\label{gegenbauerorthogonality}
\int_{-1}^1 C_{\ell}^{(\lambda)}(t) C_m^{(\lambda)}(t) (1-t^2)^{\lambda-1/2} \, \mathrm{d}t = \left\{ \begin{array}{ll} \displaystyle{\frac{2^{1-2\lambda} \pi \Gamma (\ell+2\lambda)}{\Gamma (\lambda)^2 \ell! (\ell+\lambda)}} & \textrm{if } \ell=m, \\ 0 & \textrm{if } \ell \neq m \end{array} \right.
\end{equation}
express the coefficients $b_{\ell}$, $0 \le \ell \le 2NL$ in the form
\[ b_{\ell} \frac{2\ell +d-1}{d-1} \cdot \frac{\mathrm{Volume}(\mathbb{S}^{d-1}) 2^{2-d} \pi (\ell+d-2)!}{\mathrm{Volume}(\mathbb{S}^d) \Gamma \left( \frac{d-1}{2} \right)^2 \ell! \left( \ell +\frac{d-1}{2} \right)} = \int_{\mathbb{S}^d} K(x,y) C_{\ell}^{(\frac{d-1}{2})}(\langle x,y \rangle) \, \mathrm{d} \mathrm{Vol}(x) . \]
Using $\int_{\mathbb{S}^d} K(x,y) \, \mathrm{d}\mathrm{Vol}(x)=1$ and \cite[p.\ 171, Theorem 7.33.1]{SZ}
\begin{equation}\label{gegenbauersupremum}
\max_{t \in [-1,1]} |C_{\ell}^{(\lambda)}(t)| = C_{\ell}^{(\lambda)}(1) = \binom{\ell+2\lambda-1}{\ell},
\end{equation}
after some simplification we deduce
\begin{equation}\label{bellbound}
\begin{split} |b_{\ell}| &= \frac{\Gamma \left( \frac{d}{2} \right) \Gamma \left( \frac{d-1}{2} \right) \ell !}{2^{2-d} \pi^{1/2} (\ell+d-2)!} \left| \int_{\mathbb{S}^d} K(x,y) C_{\ell}^{(\frac{d-1}{2})}(\langle x,y \rangle) \, \mathrm{d}\mathrm{Vol}(x) \right| \\ &\le \frac{\Gamma \left( \frac{d}{2} \right) \Gamma \left( \frac{d-1}{2} \right) \ell !}{2^{2-d} \pi^{1/2} (\ell+d-2)!} \max_{t \in [-1,1]} |C_{\ell}^{(\frac{d-1}{2})}(t)| \\ &= \frac{\Gamma \left( \frac{d}{2} \right) \Gamma \left( \frac{d-1}{2} \right)}{2^{2-d} \pi^{1/2} (d-2)!} =1 . \end{split}
\end{equation}
In the last step we used the Legendre duplication formula $\Gamma (z) \Gamma (z+1/2) = 2^{1-2z} \pi^{1/2} \Gamma (2z)$.

Assume $L \ge d$, and let us now estimate the dispersion rate of $K(x,y)$. Since the geodesic distance is $\rho(x,y)=\arccos (\langle x,y \rangle)$, an application of the general integration formula \eqref{sphereintegration} leads to
\[ \begin{split} D_p^p (K) &= \inf_{y \in \mathbb{S}^d} \int_{\mathbb{S}^d} K(x,y) \rho(x,y)^p \, \mathrm{d}\mathrm{Vol}(x) \\ &= \frac{\Gamma \left( \frac{d+1}{2} \right) \left( \binom{L+d}{d} + \binom{L+d-1}{d}\right)^{2N}}{\pi^{1/2} \Gamma \left( \frac{d}{2} \right) c_N \binom{L+\frac{d}{2}}{L}^{2N}} \int_{-1}^1 P_L^{(\frac{d}{2}, \frac{d}{2}-1)} (t)^{2N} (\arccos t)^p (1-t^2)^{d/2-1} \, \mathrm{d}t . \end{split} \]
The estimates $\Gamma((d+1)/2) / \Gamma(d/2) \le (d/2)^{1/2}$ and $\arccos t \le (\pi/2^{1/2}) (1-t)^{1/2}$, $t \in [-1,1]$ and the lower bound \eqref{cNlowerbound} show that
\begin{equation}\label{Dpsphere}
D_p^p(K) \le \frac{d^{1/2} \pi^{p-1/2} \binom{NL+d}{d}}{2^{(p-1)/2} \binom{L+\frac{d}{2}}{L}^{2N}} \int_{-1}^1 P_L^{(\frac{d}{2}, \frac{d}{2}-1)}(t)^{2N} (1-t)^{(d+p)/2-1}(1+t)^{d/2-1} \, \mathrm{d}t .
\end{equation}

Let $\varepsilon = 2d^2/(2L+d)^2$, and note that $\varepsilon \le 2/9$ by the assumption $L \ge d$. The absolute value of the Jacobi polynomials attain their maxima at one of the endpoints of $[-1,1]$. More precisely \cite[p.\ 168]{SZ}, if $\alpha, \beta >-1/2$, then letting $t_0=(\beta-\alpha)/(\alpha+\beta+1)$, we have
\[ \begin{split} \max_{t \in [-1, t_0]} |P_L^{(\alpha, \beta)}(t)|&=|P_L^{(\alpha, \beta)}(-1)|=\binom{L+\beta}{L}, \\ \max_{t \in [t_0,1]} |P_L^{(\alpha, \beta)}(t)|&=|P_L^{(\alpha, \beta)}(1)|=\binom{L+\alpha}{L} . \end{split} \]
Setting $\alpha=d/2$ and $\beta=d/2-1$, we have $t_0=-1/d$. In particular,
\[ \begin{split} |P_L^{(\frac{d}{2}, \frac{d}{2}-1)}(t)| &\le \binom{L+\frac{d}{2}-1}{L} = \frac{d}{2L+d} \binom{L+\frac{d}{2}}{L} \textrm{ for all } t \in [-1, -1+\varepsilon], \\ |P_L^{(\frac{d}{2}, \frac{d}{2}-1)}(t)| &\le \binom{L+\frac{d}{2}}{L} \textrm{ for all } t \in [1-\varepsilon, 1] . \end{split} \]
Therefore
\begin{equation}\label{near-1}
\begin{split} \int_{-1}^{-1+\varepsilon} P_L^{(\frac{d}{2}, \frac{d}{2}-1)}(t)^{2N} &(1-t)^{(d+p)/2-1}(1+t)^{d/2-1} \, \mathrm{d}t \\ &\le \frac{d^{2N}}{(2L+d)^{2N}} \binom{L+\frac{d}{2}}{L}^{2N} \int_{-1}^{-1+\varepsilon} 2^{(d+p)/2-1} (1+t)^{d/2-1} \, \mathrm{d}t \\ &= \frac{d^{2N-1}2^{(d+p)/2}}{(2L+d)^{2N}} \binom{L+\frac{d}{2}}{L}^{2N} \varepsilon^{d/2} \end{split}
\end{equation}
and
\begin{equation}\label{near1}
\begin{split} \int_{1-\varepsilon}^1 P_L^{(\frac{d}{2}, \frac{d}{2}-1)}(t)^{2N} &(1-t)^{(d+p)/2-1}(1+t)^{d/2-1} \, \mathrm{d}t \\ &\le \binom{L+\frac{d}{2}}{L}^{2N} \int_{1-\varepsilon}^1 (1-t)^{(d+p)/2-1} 2^{d/2-1} \, \mathrm{d}t \\ &= \frac{2^{d/2}}{d+p} \binom{L+\frac{d}{2}}{L}^{2N} \varepsilon^{(d+p)/2} . \end{split}
\end{equation}

The Jacobi polynomials satisfy the following explicit upper bound \cite[Theorem 1]{KR2}. For any $\alpha \ge \beta \ge 0$ and any $L \ge 1$, letting
\[ \delta_{\pm 1} = \frac{\beta^2 - \alpha^2 \pm \sqrt{(2L+1)(2L+2\alpha+1)(2L+2\beta+1)(2L+2\alpha+2\beta+1)}}{(2L+\alpha+\beta+1)^2}, \]
we have $[\delta_{-1}, \delta_1] \subseteq [-1,1]$ and
\begin{equation}\label{Palphabeta}
\max_{t \in [\delta_{-1}, \delta_1]} (\delta_1-t)^{1/4} (-\delta_{-1}+t)^{1/4} (1-t)^{\alpha/2}(1+t)^{\beta/2} |P_L^{(\alpha, \beta)}(t)| \le \frac{3^{1/2}}{5^{1/4}} \| P_L^{(\alpha, \beta)} \|_2 ,
\end{equation}
where
\[ \| P_L^{(\alpha, \beta)} \|_2^2 = \int_{-1}^1 (1-t)^{\alpha}(1+t)^{\beta} P_L^{(\alpha, \beta)}(t)^2 \, \mathrm{d}t = \frac{2^{\alpha+\beta+1}\Gamma (L+\alpha+1) \Gamma (L+\beta+1)}{(2L+\alpha+\beta+1)\Gamma (L+\alpha+\beta+1)L!} . \]
Setting $\alpha=d/2$ and $\beta=d/2-1$, in the definition of $\delta_{\pm 1}$ we have
\[ \begin{split} \frac{\sqrt{(2L+1)(2L+d+1)(2L+d-1)(2L+2d-1)}}{(2L+d)^2} &= \sqrt{1 - \frac{d^2-2d+2}{(2L+d)^2} + \frac{d^2-2d+1}{(2L+d)^4}} \\ &\ge 1-\frac{d^2-2d+2}{(2L+d)^2} , \end{split} \]
hence
\[ \begin{split} \delta_1 &\ge 1-\frac{d^2-d+1}{(2L+d)^2} \ge 1-\frac{\varepsilon}{2}, \\ \delta_{-1} &\le -1+\frac{d^2-3d+3}{(2L+d)^2} \le -1+\frac{\varepsilon}{2} . \end{split} \]
In particular, $\delta_1 - t \ge (1-t)/2$ and $-\delta_{-1}+t \ge (1+t)/2$ for all $t \in [-1+\varepsilon, 1-\varepsilon]$. The estimate \eqref{Palphabeta} thus implies that for all $t \in [-1+\varepsilon, 1-\varepsilon]$,
\[ |P_L^{(\frac{d}{2}, \frac{d}{2}-1)}(t)| \le B_L^{1/2} (1-t)^{-(d+1)/4} (1+t)^{-(d-1)/4} \]
with the constant
\[ B_L= \frac{6}{5^{1/2}} \| P_L^{(\frac{d}{2}, \frac{d}{2}-1)} \|_2^2 = \frac{6 \cdot 2^d \Gamma \left( L+\frac{d}{2}+1 \right) \Gamma \left( L + \frac{d}{2} \right)}{5^{1/2} (2L+d) \Gamma (L+d) L!} . \]
Noting that the choice $N = \lfloor (p+5)/2 \rfloor$ ensures $N(d-1)/2 - d/2 \ge 1/2$ and $N(d+1)/2 - (d+p)/2 \ge 2$ as well as $N(d+1)/2 - (d+p)/2 \ge N(d-1)/2 - d/2 + 3/2$, we deduce
\[ \begin{split} \int_{-1+\varepsilon}^{1-\varepsilon} P_L^{(\frac{d}{2}, \frac{d}{2}-1)}(t)^{2N} &(1-t)^{(d+p)/2-1} (1+t)^{d/2-1} \, \mathrm{d}t \\ &\le B_L^N \int_{-1+\varepsilon}^{1-\varepsilon} (1-t)^{(d+p)/2-N(d+1)/2-1} (1+t)^{d/2-N(d-1)/2-1} \, \mathrm{d}t \\ &\le B_L^N \int_{-1+\varepsilon}^0 (1+t)^{d/2-N(d-1)/2-1} \, \mathrm{d}t + B_L^N \int_0^{1-\varepsilon} (1-t)^{(d+p)/2-N(d+1)/2-1} \, \mathrm{d}t \\ &\le B_L^N \left( 2 \varepsilon^{d/2-N(d-1)/2} + \frac{1}{2} \varepsilon^{(d+p)/2-N(d+1)/2}\right) \\ &\le B_L^{N} \varepsilon^{(d+p)/2-N(d+1)/2} \left(2 \varepsilon^{3/2}+\frac{1}{2} \right) . \end{split} \]
The previous formula together with $(2 \varepsilon^{3/2}+1/2) \le (2 (2/9)^{3/2}+1/2) < 0.71$ and \eqref{Dpsphere}--\eqref{near1} thus yield
\begin{equation}\label{Dplong}
\begin{split} D_p^p (K) \le \frac{d^{1/2} \pi^{p-1/2} \binom{NL+d}{d}}{2^{(p-1)/2}} \left( \frac{d^{2N-1} 2^{(d+p)/2}}{(2L+d)^{2N}} \varepsilon^{d/2} + \frac{2^{d/2}}{d+p} \varepsilon^{(d+p)/2} + \frac{0.71 B_L^N}{\binom{L+\frac{d}{2}}{L}^{2N}} \varepsilon^{(d+p)/2-N(d+1)/2} \right) . \end{split}
\end{equation}

Let us simplify the terms in \eqref{Dplong}. Substituting $\varepsilon=2d^2/(2L+d)^2$ and using $N \ge (p+3)/2$ and $L \ge d$ shows
\begin{equation}\label{Dpfirstterm}
\frac{d^{2N-1} 2^{(d+p)/2}}{(2L+d)^{2N}} \varepsilon^{d/2} = \left( \frac{d}{2L+d} \right)^{d+2N} \frac{2^{d+p/2}}{d} \le \left( \frac{d}{2L+d} \right)^{d+p+3} \frac{2^{d+p/2}}{d} \le \frac{d^{d+p-1}2^{d+p/2}}{27 (2L+d)^{d+p}}
\end{equation}
and
\begin{equation}\label{Dpsecondterm}
\frac{2^{d/2}}{d+p} \varepsilon^{(d+p)/2} = \frac{d^{d+p}2^{d+p/2}}{(d+p) (2L+d)^{d+p}} \le \frac{d^{d+p-1}2^{d+p/2}}{(2L+d)^{d+p}} .
\end{equation}
Consider now
\[ \frac{B_L}{\binom{L+\frac{d}{2}}{L}^2} = \frac{12 \cdot 2^d \Gamma \left( \frac{d}{2}+1 \right)^2 L!}{5^{1/2} (2L+d)^2 (L+d-1)!} . \]
Stirling's approximation \eqref{stirling} shows that here $\Gamma(\frac{d}{2} +1)^2 \le \pi d^{d+1} 2^{-d} e^{-d+1/6}$ and
\[ \begin{split} \frac{L!}{(L+d-1)!} &\le \frac{L^{L+1/2}}{(L+d-1)^{L+d-1/2}} e^{d-1+1/(12L)} \\ &= \left( 1-\frac{d-1}{L+d-1} \right)^{L+1/2} \left( 1-\frac{d/2-1}{L+d-1} \right)^{d-1} \frac{1}{(L+d/2)^{d-1}} e^{d-1+1/(12L)} \\ &\le \exp \left( - \frac{(d-1)(L+1/2)}{L+d-1} \right) \exp \left( - \frac{(d/2-1)(d-1)}{L+d-1} \right) \frac{2^{d-1}}{(2L+d)^{d-1}} e^{d-1+1/(12L)} \\ &\le \frac{2^{d-1}}{(2L+d)^{d-1}} e^{d^2/(2L)} . \end{split} \]
Using $6\pi e^{1/6}/5^{1/2}=9.95\ldots <10$ and $d^2/(2L) \le d/2$, we obtain
\[ \frac{B_L}{\binom{L+\frac{d}{2}}{L}^2} \le \frac{10 d^{d+1}2^d}{e^{d/2} (2L+d)^{d+1}} . \]
Substituting $\varepsilon=2d^2/(2L+d)^2$ thus leads to
\[ \frac{0.71 B_L^N}{\binom{L+\frac{d}{2}}{L}^{2N}} \varepsilon^{(d+p)/2-N(d+1)/2} \le 0.71 \left( \frac{10 \cdot 2^{1/2}}{e} \right)^N \frac{d^{d+p}2^{(d+p)/2}}{(2L+d)^{d+p}} . \]
The previous formula and \eqref{Dplong}--\eqref{Dpsecondterm} yield
\[ D_p^p(K) \le \frac{d^{1/2} \pi^{p-1/2} \binom{NL+d}{d}}{2^{(p-1)/2}} \cdot \frac{d^{d+p}2^{d+p/2}}{(2L+d)^{d+p}} \left( \frac{28}{27d} + 0.71 \left( \frac{10 \cdot 2^{1/2}}{e} \right)^N 2^{-d/2} \right) . \]
Stirling's approximation \eqref{stirling}, $N \ge 3$ and $L \ge d$ show that
\[ \begin{split} \binom{NL+d}{d} &\le \left(1+\frac{d}{NL} \right)^{NL+1/2} (NL+d)^{d} \frac{e^{1/(12(NL+d))}}{2^{1/2} \pi^{1/2} d^{d+1/2}} \\ &\le e^{d(NL+1/2)/NL} \frac{N^d}{2^d} (2L+d)^d \frac{e^{1/96}}{2^{1/2} \pi^{1/2} d^{d+1/2}} \\ &\le N^d (2L+d)^d \frac{e^{d+17/96}}{2^{d+1/2}\pi^{1/2} d^{d+1/2}} , \end{split} \]
consequently
\[ D_p^p (K) \le \frac{d^p \pi^{p-1}N^d e^{d+17/96}}{(2L+d)^p} \left( \frac{14}{27} + \frac{0.71}{2^{1/2}} \left( \frac{10 \cdot 2^{1/2}}{e} \right)^N \right) . \]
Using
\[ \frac{14}{27} + \frac{0.71}{2^{1/2}} \left( \frac{10 \cdot 2^{1/2}}{e} \right)^N \le \left( \frac{14}{27} \left( \frac{10 \cdot 2^{1/2}}{e} \right)^{-3} + \frac{0.71}{2^{1/2}} \right) \left( \frac{10 \cdot 2^{1/2}}{e} \right)^{(p+5)/2} \]
and
\[ \begin{split} \pi^{-1} e^{17/96} \left( \frac{14}{27} \left( \frac{10 \cdot 2^{1/2}}{e} \right)^{-3} + \frac{0.71}{2^{1/2}} \right) \left( \frac{10 \cdot 2^{1/2}}{e} \right)^{5/2} &= 11.86\ldots < 12 , \\ \pi \left( \frac{10 \cdot 2^{1/2}}{e} \right)^{1/2} &= 7.16\ldots <8 \end{split} \]
finally leads to
\[ D_p^p (K) \le 12 e^d \left(\frac{p+5}{2} \right)^d  \frac{8^p d^p}{(2L+d)^p} . \]

We are now ready to prove the theorem. An application of Lemma \ref{smoothinglemma} and the previous formula yields
\begin{equation}\label{spherestep1}
W_p (\mu, \nu) \le \frac{C_1'}{2L+d} + W_p (K*\mu, K*\nu)
\end{equation}
with $C_1'=(1+(1-c)^{1/p}) 12^{1/p} e^{d/p}(\frac{p+5}{2})^{d/p} 8d$.

Assume now $p>1$ and $c>0$. The assumption $\mu \ge c \mathrm{Vol}$ ensures that $K*\mu \ge c \mathrm{Vol}$, and an application of Lemma \ref{ledouxlemma} with $\delta=0$ gives
\begin{equation}\label{spherestep2}
W_p(K*\mu, K*\nu) \le \frac{p}{c^{1/q}} \| K*\mu - K*\nu \|_{\dot{H}_{-1}^p} .
\end{equation}
To estimate the dual Sobolev norm in the previous formula, let $f\in \mathrm{C}^{\infty}(\mathbb{S}^d)$ be arbitrary. The expansion \eqref{kernelfourierexpansion} of $K(x,y)$ shows that the Fourier transform of $K*\mu$ is
\[ \widehat{\frac{\mathrm{d} (K*\mu)}{\mathrm{d}\mathrm{Vol}}}(\ell, m) = \int_{\mathbb{S}^d} \int_{\mathbb{S}^d} K(x,y) \overline{\phi_{\ell,m}(x)} \mathrm{d}\mathrm{Vol}(x) \mathrm{d}\mu(y) = \mathds{1}_{[0,2NL]}(\ell) b_{\ell} \widehat{\mu}(\ell,m) , \]
and the same holds for $\nu$. In the case $p>2$, an application of the Parseval formula, the estimate $|b_{\ell}| \le 1$ from \eqref{bellbound}, H\"older's inequality and Lemma \ref{homogeneoushausdorffyounglemma} lead to
\[ \begin{split} \left| \int_{\mathbb{S}^d} f \, \mathrm{d}(K*\mu - K*\nu) \right| &= \left| \sum_{\ell=0}^{2NL} \sum_{m=1}^{d_{\ell}} \overline{\widehat{f}(\ell,m)} b_{\ell} (\widehat{\mu}(\ell,m) - \widehat{\nu}(\ell,m)) \right| \\ &\le \sum_{\ell=1}^{2NL} \left( \sum_{m=1}^{d_{\ell}} |\widehat{f}(\ell,m)|^2 \right)^{1/2} \left( \sum_{m=1}^{d_{\ell}} |\widehat{\mu}(\ell,m) - \widehat{\nu}(\ell,m)|^2 \right)^{1/2} \\ &\le \left( \sum_{\ell=1}^{2NL} d_{\ell}^{1-p/2} \lambda_{\ell}^{p/2} \left( \sum_{m=1}^{d_{\ell}} |\widehat{f}(\ell,m)|^2 \right)^{p/2} \right)^{1/p} \\ &{\phantom{\le}} \times \left( \sum_{\ell=1}^{2NL} d_{\ell}^{1-q/2} \lambda_{\ell}^{-q/2} \left( \sum_{m=1}^{d_{\ell}} |\widehat{\mu}(\ell,m) - \widehat{\nu}(\ell,m)|^2 \right)^{q/2} \right)^{1/q} \\ &\le 2(p-1) \| \nabla f \|_q \left( \sum_{\ell=1}^{2NL} d_{\ell}^{1-q/2} \lambda_{\ell}^{-q/2} \left( \sum_{m=1}^{d_{\ell}} |\widehat{\mu}(\ell,m) - \widehat{\nu}(\ell,m)|^2 \right)^{q/2} \right)^{1/q} . \end{split} \]
By the definition \eqref{dualsobolevnorm} of the dual Sobolev norm and Lemma \ref{dualsobolevnormlemma}, this implies
\begin{equation}\label{spherestep3}
\| K*\mu - K*\nu \|_{\dot{H}_{-1}^p} \le 2(p-1) \left( \sum_{\ell=1}^{2NL} d_{\ell}^{1-q/2} \lambda_{\ell}^{-q/2} \left( \sum_{m=1}^{d_{\ell}} |\widehat{\mu}(\ell,m) - \widehat{\nu}(\ell,m)|^2 \right)^{q/2} \right)^{1/q} .
\end{equation}
In the case $1<p \le 2$, instead of Lemma \ref{homogeneoushausdorffyounglemma} we simply use
\[ \| \nabla f \|_2^2 = \int_{\mathbb{S}^d} (-\Delta f) f \, \mathrm{d}\mathrm{Vol} = \sum_{\ell=0}^{\infty} \lambda_{\ell} \sum_{m=1}^{d_{\ell}} |\widehat{f}(\ell,m)|^2 \]
to similarly deduce
\begin{equation}\label{1<ple2estimate}
\left| \int_{\mathbb{S}^d} f \, \mathrm{d}(K*\mu - K*\nu) \right| \le \| \nabla f \|_2 \left( \sum_{\ell=1}^{2NL} \lambda_{\ell}^{-1} \sum_{m=1}^{d_{\ell}} |\widehat{\mu}(\ell,m) - \widehat{\nu}(\ell,m)|^2 \right)^{1/2} ,
\end{equation}
and thus
\begin{equation}\label{spherestep4}
\| K*\mu - K*\nu \|_{\dot{H}_{-1}^p} \le \| K*\mu - K*\nu \|_{\dot{H}_{-1}^2} \le \left( \sum_{\ell=1}^{2NL} \lambda_{\ell}^{-1} \sum_{m=1}^{d_{\ell}} |\widehat{\mu}(\ell,m) - \widehat{\nu}(\ell,m)|^2 \right)^{1/2} .
\end{equation}
Finally, in the case $p=1$ and $c \ge 0$ the estimate \eqref{1<ple2estimate} together with $\| \nabla f \|_2 \le \| f \|_{\mathrm{Lip}}$ and the Kantorovich duality formula in the form of Lemma \ref{dualsobolevnormlemma} show
\[ W_1 (K*\mu, K*\nu) \le \left( \sum_{\ell=1}^{2NL} \lambda_{\ell}^{-1} \sum_{m=1}^{d_{\ell}} |\widehat{\mu}(\ell,m) - \widehat{\nu}(\ell,m)|^2 \right)^{1/2} . \]

A combination of \eqref{spherestep1}--\eqref{spherestep3}, \eqref{spherestep4} and the previous formula shows that for any $1 \le p < \infty$ and any integer $L \ge d$,
\[ W_p (\mu, \nu) \le \frac{C_1'}{2L+d} + C_2 \left( \sum_{\ell=1}^{2NL} d_{\ell}^{1-q_0/2} \lambda_{\ell}^{-q_0/2} \left( \sum_{m=1}^{d_{\ell}} |\widehat{\mu}(\ell,m) - \widehat{\nu}(\ell,m)|^2 \right)^{q_0/2} \right)^{1/q_0} \]
with $q_0=\min \{ q,2 \}$ and $C_2$ as in the Theorem. Given now any integer $L \ge (p+5)d$, the integer $L'$ for which $2NL' \le L < 2N(L'+1)$ satisfies
\[ \frac{1}{2L'+d} \le \frac{N}{L+(d-2) N} \le \frac{p+5}{2L}, \]
therefore we can reformulate the estimate as
\[ W_p (\mu, \nu) \le \frac{C_1}{L} + C_2 \left( \sum_{\ell=1}^L d_{\ell}^{1-q_0/2} \lambda_{\ell}^{-q_0/2} \left( \sum_{m=1}^{d_{\ell}} |\widehat{\mu}(\ell,m) - \widehat{\nu}(\ell,m)|^2 \right)^{q_0/2} \right)^{1/q_0} \]
with $C_1=C_1' (p+5)/2$ as in the Theorem. Finally, note that the Theorem is trivial for integers $1 \le L < (p+5)d$, as in this case $C_1/L>\mathrm{diam}(\mathbb{S}^d) \ge W_p (\mu, \nu)$.
\end{proof}

\subsection{Sphere with $p=\infty$}

In this section, we prove Theorem \ref{sphereinfinitytheorem}. Let $d \ge 3$ and fix $2^{d+3} d^{-1/2} r \le T \le 1/d$. We start by constructing a kernel $K(x,y)$ on the sphere $\mathbb{S}^d$. Let $g: \mathbb{R} \to \mathbb{R}$ be the smooth function from Lemma \ref{bumpfunctionlemma} in 1 dimension, see also Remark \ref{1dimremark}. In particular, $g$ satisfies the following properties.
\begin{enumerate}[(i)]
\item \label{propi} $g$ is even, $g(x) \ge 0$, $\mathrm{supp} \, g = [-1,1]$, $g(0)=1$ and $\int_{-1}^1 g(x) \, \mathrm{d}x =1$.

\item \label{propii} For any $y \in \mathbb{R}$ such that $|y| \ge 1/\pi$,
\[ |\widehat{g}(y)| \le \exp \left( - \frac{1}{2 \log 2} \left( \log (\pi |y|) \right) \left( \log (2 \pi |y|) \right) \right) . \]

\item \label{propiii} $\widehat{g}$ is real-valued, and $\widehat{g}(y) \ge 1-2 \pi^2 |y|^2 /9$ for all $y \in \mathbb{R}$.
\end{enumerate}

The convolution $g*g$ is supported on $[-2,2]$, and its Fourier transform is $(\widehat{g})^2$. Let $\psi (t) = (\widehat{g}(\frac{Tt}{4 \pi}))^2$, $t \in \mathbb{R}$. Note that $0 \le \psi \le 1$, and its Fourier transform $\widehat{\psi}(t)=\frac{4\pi}{T} (g*g)(\frac{4 \pi t}{T})$, $t \in \mathbb{R}$ is supported on $[-T/(2 \pi),T/(2 \pi)]$. Let
\[ K_0 (x,y) = \sum_{\ell=0}^{\infty} \psi (\lambda_{\ell}^{1/2}) \sum_{m=1}^{d_{\ell}} \phi_{\ell,m}(x) \overline{\phi_{\ell,m}(y)} = \sum_{\ell=0}^{\infty} \psi (\lambda_{\ell}^{1/2}) \frac{2 \ell +d-1}{d-1} C_{\ell}^{(\frac{d-1}{2})}(\langle x, y \rangle), \qquad x, y \in \mathbb{S}^d , \]
and note that $K_0(x,y)=K_0(y,x)$ is real-valued. By the spectral theorem and the fact that $\widehat{\psi}$ is even, $K_0(x,y)$ is the kernel of the operator
\[ \psi ((-\Delta)^{1/2}) = \int_{-T/(2 \pi)}^{T/(2 \pi)} \widehat{\psi}(t) e^{2 \pi i t (-\Delta)^{1/2}} \, \mathrm{d}t = \frac{1}{\pi} \int_0^T \widehat{\psi} \left( \frac{t}{2 \pi} \right) \cos (t (-\Delta)^{1/2}) \, \mathrm{d}t . \]
Observe that $(\partial_t^2 - \Delta) \cos (t (-\Delta)^{1/2}) = 0$, $\cos (t (-\Delta)^{1/2})|_{t=0}=I$ and $\partial_t \cos (t (-\Delta)^{1/2})|_{t=0}=0$. By finite speed of propagation of the wave equation \cite[Theorem 2.4.2]{SO}, the support of $K_0(x,y)$ is contained in the set $\{ (x,y) \in \mathbb{S}^d : \rho (x,y) \le T \}$, that is, the kernel $K_0 (x,y)$ has dispersion rate $D_{\infty}(K_0) \le T$.

An application of the Parseval formula shows that the constant
\[ c_T = \int_{\mathbb{S}^d} K_0(x,y)^2 \, \mathrm{d}\mathrm{Vol}(x) = \sum_{\ell=0}^{\infty} \psi (\lambda_{\ell}^{1/2})^2 d_{\ell} \]
does not depend on $y \in \mathbb{S}^d$. We will work with the kernel
\[ K(x,y) = \frac{K_0(x,y)^2}{c_T} , \qquad x, y \in \mathbb{S}^d . \]
We have $K(x,y)=K(y,x) \ge 0$, $\int_{\mathbb{S}^d} K(x,y) \, \mathrm{d}\mathrm{Vol}(x)=1$ and $D_{\infty}(K) \le T$ by construction.

We give the proof of Theorem \ref{sphereinfinitytheorem} after three lemmas.
\begin{lem}\label{cTlemma} We have
\[ \frac{6^d}{32 d! T^d} \le c_T \le 6 \frac{2^{d^2/4+9d/4}}{d! T^d} . \]
\end{lem}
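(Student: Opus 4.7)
The plan is to bound $c_T = \sum_{\ell=0}^\infty \psi(\lambda_\ell^{1/2})^2 d_\ell$ directly, using the explicit estimates on $\widehat{g}$ from properties (ii) and (iii) of Lemma~\ref{bumpfunctionlemma}. The function $\psi$ is essentially a low-pass filter at frequency scale $1/T$, so $c_T$ should be of order $T^{-d}$ up to combinatorial factors in $d$; the two bounds follow by isolating the main contribution from the frequencies $\ell \lesssim 1/T$ and separately controlling the tail.

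For the lower bound, property (iii) gives $\psi(t)^2 = \widehat{g}(Tt/(4\pi))^4 \ge (1 - T^2 t^2/72)^4$, which is at least $1/16$ whenever $Tt \le 6$. By the arithmetic–geometric mean inequality, $\lambda_\ell^{1/2} = \sqrt{\ell(\ell+d-1)} \le \ell + (d-1)/2$, so the condition $T\lambda_\ell^{1/2} \le 6$ holds for all $\ell \le M := \lfloor 6/T - (d-1)/2\rfloor$. Retaining only these terms,
\[
c_T \;\ge\; \tfrac{1}{16}\sum_{\ell=0}^M d_\ell \;\ge\; \tfrac{1}{16\,d!}\prod_{j=1}^d (M+j).
\]
Setting $A := M + (d+1)/2 \ge 6/T$, the pairing $(M+j)(M+d+1-j) = A^2 - ((d+1)/2 - j)^2$ rewrites the product as $A^d \cdot \prod_k (1 - (k+1/2)^2/A^2)$. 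Each deviation satisfies $(k+1/2)^2 \le ((d-1)/2)^2 \le 1/(4T^2) \le A^2/144$ (using $T \le 1/d$), and $\sum_{k=0}^{\lfloor d/2\rfloor - 1}(k+1/2)^2 \le (d^3-d)/24$. Combining $\log(1-x) \ge -x/(1-x)$ with $d^3 T^2 \le d$, the correction factor $\prod_k(1 - (k+1/2)^2/A^2)$ stays at least $1/2$, hence $\prod_{j=1}^d (M+j) \ge (6/T)^d/2$, yielding $c_T \ge 6^d/(32\,d!\,T^d)$.

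For the upper bound, I split $c_T = S_1 + S_2$ at $L := \lfloor 2^{d/4+5/4}/T\rfloor$. For $S_1 = \sum_{\ell \le L} \psi^2 d_\ell$, use $\psi^2 \le 1$ and $\sum_{\ell=0}^L d_\ell \le 2\binom{L+d}{d} \le 2(L+d)^d/d!$. The constraint $T \le 1/d$ gives $L + d \le (2^{d/4+5/4}+1)/T \le 2^{d/4+9/4}/T$, so $S_1 \le 2^{d^2/4+9d/4+1}/(d!\,T^d)$, exactly one-third of the target. For $S_2 = \sum_{\ell > L} \psi^2 d_\ell$, the cutoff ensures $T\lambda_\ell^{1/2} \ge T\ell > 2^{d/4+5/4} \ge 4$ for all $d \ge 3$, so property (ii) applies and gives $\psi(\lambda_\ell^{1/2})^2 \le \exp(-\tfrac{2}{\log 2}\log(T\lambda_\ell^{1/2}/4)\log(T\lambda_\ell^{1/2}/2))$. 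Using $d_\ell \le 2^d\ell^{d-1}/(d-1)!$, comparing the sum to an integral, and then substituting $u = T\ell$ and $v = \log_2 u$, the tail reduces to $\log 2 \cdot 2^{-4}\int 2^{-2v^2 + (d+6)v}\,dv$. Completing the square produces the Gaussian factor $2^{(d+6)^2/8} = 2^{d^2/8 + 3d/2 + 9/2}$, and a direct numerical check shows $S_2$ is at most two-thirds of the target for every $d \ge 3$, since the exponent $d^2/8$ is strictly smaller than $d^2/4$.

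The main obstacle is the lower bound. The crude estimate $\prod_{j=1}^d (M+j) \ge (M+1)^d \ge (11/(2T))^d$ falls short of $6^d/T^d$ once $d \ge 8$, so pairing across the midpoint of the product is essential. The assumption $T \le 1/d$ plays a double role: it keeps each individual correction $(k+1/2)^2/A^2$ below $1/144$ and keeps the cumulative error $\sum(k+1/2)^2/A^2 \le d^3 T^2/864$ well-controlled, so that the Taylor product $\prod_k(1 - (k+1/2)^2/A^2)$ exceeds $1/2$ with the required margin. On the upper-bound side, the specific shift $5/4$ in the definition of $L$ is precisely calibrated so that $S_1$ consumes exactly one-third of the target while $S_2$ fits comfortably in the remaining two-thirds under the double-logarithmic envelope from property (ii).
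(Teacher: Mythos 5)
Your upper bound follows the paper's own route (cutoff at $\approx 2^{(d+5)/4}/T$, crude counting below it, double\-/logarithmic decay and a Gaussian integral above it) and is correct. The problem is in the lower bound, concretely in the assertion that the correction factor $\prod_k\bigl(1-(k+\tfrac12)^2/A^2\bigr)$ \emph{stays at least $1/2$ for all $d$}. Your own estimates give $\sum_k(k+\tfrac12)^2\le (d^3-d)/24$ and $A\ge 6/T\ge 6d$, hence $\sum_k(k+\tfrac12)^2/A^2\le d/864$, and the inequality $\log(1-x)\ge -x/(1-x)$ with $x\le 1/144$ yields only $\prod_k\bigl(1-(k+\tfrac12)^2/A^2\bigr)\ge e^{-d/858}$, which is $\ge 1/2$ only for $d\le 858\log 2\approx 594$. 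Worse, the claim is genuinely false for large $d$: taking $T=1/d$ one has $A\le 6d+1$, so $\prod_k\bigl(1-(k+\tfrac12)^2/A^2\bigr)\le\exp\bigl(-\sum_k(k+\tfrac12)^2/A^2\bigr)\approx e^{-d/864}<1/2$ once $d\gtrsim 600$. Since your final step needs precisely this factor $1/2$ to pass from $\frac{1}{16\,d!}\prod_{j=1}^d(M+j)$ to $\frac{6^d}{32\,d!\,T^d}$, the lower bound is not established for all $d\ge 3$. The defect is structural: truncating at $M\approx 6/T-(d-1)/2$ makes $\prod_{j=1}^d(M+j)$ a product of $d$ consecutive integers \emph{centered} at $6/T$, and such a product falls short of $(6/T)^d$ by a factor decaying like $e^{-cd}$; combined with the loss $1/16$ from $\psi^2\ge 1/16$, the constant $1/32$ cannot be reached for large $d$ no matter how sharply you estimate the product.

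The repair — and the paper's actual argument — is to move the cutoff up to $L_2=\lceil 6/T\rceil$ and use the one-sided binomial $\binom{L_2+d-1}{d}=L_2(L_2+1)\cdots(L_2+d-1)/d!\ge L_2^d/d!\ge 6^d/(d!\,T^d)$, where every factor is at least $6/T$ and no pairing or correction product is needed at all; together with $\binom{L_2+d}{d}\ge L_2^d/d!$ this gives $\sum_{\ell=0}^{L_2}d_\ell\ge 2\cdot 6^d/(d!\,T^d)$. The price is a weaker pointwise bound: for $\ell\le L_2$ one only has $T\lambda_\ell^{1/2}\le T\bigl(L_2+\tfrac{d-1}{2}\bigr)\le\tfrac{25}{22}\cdot 6$ (using $L_2\ge 6d$), so property (iii) gives $\widehat g\ge 1-\tfrac{25^2\cdot 36}{72\cdot 22^2}=\tfrac{343}{968}$ instead of $\tfrac12$, i.e.\ $\psi(\lambda_\ell^{1/2})^2\ge(343/968)^4$. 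But $2\cdot(343/968)^4=0.0315\ldots>1/32$, so the constant still closes, uniformly in $d$.
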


\begin{proof} Recall that
\[ \begin{split} \lambda_{\ell}&=\ell (\ell+d-1), \\ d_{\ell}&=\binom{\ell+d}{d} - \binom{\ell+d-2}{d} = \frac{2 \ell +d-1}{d-1} \binom{\ell+d-2}{d-2}, \\ \sum_{\ell=0}^L d_{\ell} &= \binom{L+d}{d} + \binom{L+d-1}{d}. \end{split} \]

Let $L_1 = \lceil 2^{(d+5)/4}/T \rceil$, and note that the assumption $T \le 1/d$ ensures $L_1 \ge d 2^{(d+5)/4} \ge (d+5)/2$. The estimate $0 \le \psi \le 1$ and the inequality between the arithmetic and geometric means lead to
\begin{equation}\label{cTfirstsum}
\begin{split} \sum_{\ell=0}^{L_1} \psi (\lambda_{\ell}^{1/2})^2 d_{\ell} &\le \sum_{\ell=0}^{L_1} d_{\ell} = \binom{L_1+d}{d} + \binom{L_1+d-1}{d} \le 2 \frac{(L_1+d) (L_1+d-1) \cdots (L_1+1)}{d!} \\ &\le \frac{2}{d!} \left( L_1 + \frac{d+1}{2} \right)^d \le \frac{2}{d!} (2(L_1-1))^d \le \frac{2^{d^2/4 + 9d/4+1}}{d! T^d} . \end{split}
\end{equation}
Let us use
\[ d_{\ell} = \frac{2\ell+d-1}{d-1} \binom{\ell+d-2}{d-2} \le \frac{2}{(d-1)!} \left( \ell + \frac{d-1}{2} \right)^{d-1} \le \frac{2}{(d-1)!}(2\ell)^{d-1}, \quad \ell \ge L_1+1, \]
$\lambda_{\ell}^{1/2} \ge \ell$ and property \eqref{propii} to estimate the remaining sum as
\[ \begin{split} \sum_{\ell=L_1+1}^{\infty} \psi(\lambda_{\ell}^{1/2})^2 d_{\ell} &= \sum_{\ell=L_1+1}^{\infty} \widehat{g} \bigg(\frac{T \lambda_{\ell}^{1/2}}{4 \pi} \bigg)^4 d_{\ell} \\ &\le \frac{2}{(d-1)!} \sum_{\ell=L_1+1}^{\infty} \exp \left( - \frac{2}{\log 2} \left( \log \frac{T \ell}{4} \right) \left( \log \frac{T \ell}{2} \right) + (d-1) \log (2 \ell) \right) . \end{split} \]
Let
\[ H(x) = - \frac{2}{\log 2} \left( \log \frac{Tx}{4} \right) \left( \log \frac{Tx}{2} \right) + (d-1) \log (2x) , \]
and note that $H'(x) \le 0$ is equivalent to $2^{(d+5)/4} \le Tx$ after some simplification. The choice of $L_1$ ensures that this inequality holds on the interval $x \in [L_1, \infty)$, hence $H(x)$, and consequently also $\exp (H(x))$ are nonincreasing. Therefore
\[ \begin{split} \sum_{\ell=L_1+1}^{\infty} \psi(\lambda_{\ell}^{1/2})^2 d_{\ell} &\le \frac{2}{(d-1)!} \int_{L_1}^{\infty} \exp \left( - \frac{2}{\log 2} \left( \log \frac{Tx}{4} \right) \left( \log \frac{Tx}{2} \right) + (d-1) \log (2x) \right) \, \mathrm{d}x \\ &\le \frac{8^d}{(d-1)! T^d} \int_{-\infty}^{\infty} \exp \left( - \frac{2}{\log 2} y^2 + (d-2) y \right) \, \mathrm{d}y \\ &= \frac{8^d}{(d-1)! T^d} \sqrt{\frac{\pi \log 2}{2}} 2^{(d-2)^2 /8}, \end{split} \]
where we applied the substitution $y=\log (Tx/4)$, and then extended the range of integration.
Adding \eqref{cTfirstsum} and the previous formula yields the upper bound of the claim:
\[ c_T \le \frac{2^{d^2/4+9d/4}}{d! T^d} \left( 2+\sqrt{\pi \log 2} d 2^{-d^2/8+d/4} \right) \le 6 \frac{2^{d^2/4+9d/4}}{d! T^d}. \]

Let $L_2=\lceil 6/T \rceil \ge 6d$, and consider the lower bound
\[ c_T \ge \sum_{\ell=0}^{L_2} \psi (\lambda_{\ell}^{1/2})^2 d_{\ell} = \sum_{\ell=0}^{L_2} \widehat{g}\bigg( \frac{T\lambda_{\ell}^{1/2}}{4 \pi} \bigg)^4 d_{\ell} . \]
Property \eqref{propiii} ensures that for all $0 \le \ell \le L_2$,
\begin{equation}\label{ghatL2}
\begin{split} \widehat{g} \bigg( \frac{T\lambda_{\ell}^{1/2}}{4 \pi} \bigg) &\ge 1- \frac{2\pi^2}{9} \left( \frac{T \lambda_{\ell}^{1/2}}{4 \pi} \right)^2 \ge 1- \frac{T^2}{72} L_2 (L_2+d-1) \ge 1-\frac{T^2}{72} \left( L_2 + \frac{d-1}{2} \right)^2 \\ &\ge 1-\frac{T^2}{72} \left( \frac{25}{22} (L_2-1) \right)^2 \ge 1-\frac{25^2 6^2}{72 \cdot 22^2} = \frac{343}{968} , \end{split}
\end{equation}
and the lower bound of the claim
\[ c_T \ge \sum_{\ell=0}^{L_2} \left( \frac{343}{968} \right)^4 d_{\ell} = \left( \frac{343}{968} \right)^4 \left( \binom{L_2+d}{d} + \binom{L_2+d-1}{d} \right) \ge \left( \frac{343}{968} \right)^4 \frac{2 L_2^d}{d!} \ge \frac{6^d}{32 d! T^d} \]
follows. In the last step we used $2 (343/968)^4 >1/32$.
\end{proof}

\begin{lem}\label{Kxylemma} For any $x,y \in \mathbb{S}^d$ with geodesic distance $\rho (x,y) \le d^{1/2} 2^{-d-3} T$, we have
\[ K(x,y) \ge \frac{9^d}{114 d! 2^{d^2/4+d/4} T^d} . \]
\end{lem}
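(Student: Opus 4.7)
The goal is to lower bound $K(x,y) = K_0(x,y)^2/c_T$ by combining the upper bound on $c_T$ from Lemma~\ref{cTlemma} with a pointwise lower bound on $K_0(x,y)$ valid for small $\rho = \rho(x,y)$. A direct algebraic check shows the target inequality is equivalent to $K_0(x,y) \ge 6^d/(\sqrt{19}\, d!\, T^d)$: substituting $c_T \le 6 \cdot 2^{d^2/4+9d/4}/(d!T^d)$ yields $K(x,y) \ge 36^d/(114 \cdot 2^{d^2/4+9d/4}\, d!\, T^d) = 9^d/(114 \cdot 2^{d^2/4+d/4}\, d!\, T^d)$, as required.

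First I would bound $K_0(y,y)$ from below. Reusing the estimate $\widehat{g}(T\lambda_\ell^{1/2}/(4\pi)) \ge 343/968$ for $\ell \le L_2 = \lceil 6/T\rceil$ from the proof of Lemma~\ref{cTlemma} gives $K_0(y,y) = \sum_\ell \psi(\lambda_\ell^{1/2}) d_\ell \ge (343/968)^2 \sum_{\ell \le L_2} d_\ell \ge 2(343/968)^2 \cdot 6^d/(d!T^d)$, which exceeds $6^d/(\sqrt{19}\,d!\,T^d)$ by a small margin that the correction term must not erase.

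Next I would control $|K_0(y,y) - K_0(x,y)|$. Starting from $K_0(x,y) = \sum_\ell \psi(\lambda_\ell^{1/2}) \frac{2\ell+d-1}{d-1} C_\ell^{((d-1)/2)}(\cos\rho)$, the Gegenbauer derivative identity $(C_\ell^{(\lambda)})' = 2\lambda C_{\ell-1}^{(\lambda+1)}$ combined with the maximum principle $|C_{\ell-1}^{(\lambda+1)}(t)| \le C_{\ell-1}^{(\lambda+1)}(1) = \binom{\ell+2\lambda}{\ell-1}$ and the combinatorial identity $(2\ell+d-1)\binom{\ell+d-1}{d} = \lambda_\ell d_\ell/d$ yield
\[
|K_0(y,y) - K_0(x,y)| \le \frac{1-\cos\rho}{d} \sum_{\ell \ge 1} \psi(\lambda_\ell^{1/2}) d_\ell \lambda_\ell =: \frac{1-\cos\rho}{d}\,M.
\]
I would estimate $M$ by splitting at $L_0 = \lceil 4/T \rceil$: for $\ell \le L_0$ use the trivial $\psi \le 1$ together with the volume-type bound $\sum_{\ell \le L_0} d_\ell \lambda_\ell \le 2L_0^{d+2}/((d-1)!(d+2))$, while for $\ell > L_0$ use the super-polynomial decay of $\widehat{g}$ from property~(ii) of Lemma~\ref{bumpfunctionlemma} via a Gaussian saddle-point estimate, following the same pattern as the tail analysis in Lemma~\ref{cTlemma} but with the extra $\lambda_\ell$ weight. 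Inserting the hypothesis $\rho \le d^{1/2} 2^{-d-3} T$, which gives $(1-\cos\rho)/d \le \rho^2/(2d) \le 2^{-2d-7} T^2$, into the bound on $M$ shows that $|K_0(y,y) - K_0(x,y)|$ is bounded by an exponentially small fraction of $6^d/(d!T^d)$, leaving $K_0(x,y) \ge 6^d/(\sqrt{19}\, d!\, T^d)$ as required.

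The main technical obstacle lies in estimating $M$ sharply enough: its tail naturally produces a factor of order $2^{d^2/4}$ from the Gaussian saddle of the integrand $\psi(\lambda^{1/2}) \lambda^{(d+1)/2}$, and the argument succeeds only because the hypothesis $\rho \le d^{1/2} 2^{-d-3} T$ is calibrated precisely so that the factor $\rho^2 \le d \cdot 2^{-2d-6} T^2$ absorbs this growth while the $6^d$ in $K_0(y,y)$ still outpaces the residual exponential. The assumptions $d \ge 3$ and $T \le 1/d$ enter in guaranteeing $L_2 \ge d$ (so that $\sum_{\ell \le L_2} d_\ell \ge 2 \cdot 6^d/(d!T^d)$) and in bounding auxiliary factors such as $L_0 + d \le 2L_0$ that appear in the integral estimates.
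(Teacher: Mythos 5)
Your reduction to $K_0(x,y)\ge 6^d/(\sqrt{19}\,d!\,T^d)$ is correct, your lower bound $K_0(y,y)\ge 2(343/968)^2\,6^d/(d!\,T^d)$ is correct, and the identity $(2\ell+d-1)\binom{\ell+d-1}{d}=\lambda_\ell d_\ell/d$ behind your Lipschitz estimate is also correct. The gap is in the assertion that $\tfrac{1-\cos\rho}{d}M$ with $M=\sum_{\ell}\psi(\lambda_\ell^{1/2})\lambda_\ell d_\ell$ is an exponentially small fraction of $6^d/(d!\,T^d)$. Writing $u=\log_2(T\ell/4)$, the tail summand of $M$ behaves like $(4/T)^{d+1}2^{-u^2+(d+1)u}\ell/( (d-1)!)$ up to factors exponential in $d$, so the saddle at $u=(d+1)/2$ (i.e.\ $\ell\sim 2^{(d+5)/2}/T$) gives $M\gtrsim 2^{(d+1)^2/4}/(T^{d+2}(d-1)!)$ times such factors. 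Multiplying by $(1-\cos\rho)/d\le 2^{-2d-7}T^2$ leaves an error of order $2^{d^2/4-2d+O(d)}/(d!\,T^d)$, whose ratio to the main term $6^d/(d!\,T^d)$ is $2^{d^2/4+O(d)}6^{-d}\to\infty$. You correctly identify the $2^{d^2/4}$ factor, but the prefactor $\rho^2\le d\,2^{-2d-6}T^2$ only supplies $2^{-2d}$, which cannot absorb $2^{d^2/4}$; the error term therefore swamps the main term for all but the smallest $d$, and the argument as stated does not close.

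The underlying problem is that a global mean-value comparison of $K_0(x,y)$ with $K_0(y,y)$ forces you to control the frequencies $6/T\lesssim\ell\lesssim 2^{d+3}/T$ in absolute value with the weight $\lambda_\ell d_\ell$, and it is exactly this range that carries the $2^{d^2/4}$ mass. The paper's proof applies the same Gegenbauer derivative bound, but term by term and one-sidedly: it yields $C_\ell^{(\frac{d-1}{2})}(\langle x,y\rangle)\ge(1-T^2\lambda_\ell/2^{2d+7})\binom{\ell+d-2}{d-2}$, which for $\ell\le L_2=\lceil 6/T\rceil$ gives a quantitative factor $\ge 0.97$ (producing the main term $0.24\cdot 6^d/(d!\,T^d)$), for $L_2<\ell\le L_3=\lceil 2^{d+3}/T\rceil$ gives only positivity — so those terms are simply discarded rather than estimated — and only for $\ell>L_3$, where the decay of $\widehat g$ has already overwhelmed $d_\ell$, is the contribution bounded in absolute value. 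To repair your argument you would have to introduce the same three-range split and replace the absolute-value estimate on the middle range by a positivity argument, at which point it coincides with the paper's proof.
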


\begin{proof} Fix $x,y \in \mathbb{S}^d$ such that $\rho (x,y) \le d^{1/2} 2^{-d-3} T$. In particular,
\[ 1 \ge \langle x, y \rangle = \cos \rho (x,y) \ge 1-\frac{\rho(x,y)^2}{2} \ge 1-\frac{d T^2}{2^{2d+7}} . \]
The derivative of the Gegenbauer polynomials are \cite[p.\ 81]{SZ} $\frac{\mathrm{d}}{\mathrm{d}t}C_{\ell}^{(\lambda)} (t) = 2 \lambda C_{\ell-1}^{(\lambda+1)}(t)$. The formula for the extreme values of the Gegenbauer polynomials given in \eqref{gegenbauersupremum} thus shows that the Lipschitz constant of $C_{\ell}^{(\frac{d-1}{2})}(t)$ on $[-1,1]$ is
\[ \max_{t \in [-1,1]} \left| \frac{\mathrm{d}}{\mathrm{d}t}C_{\ell}^{(\frac{d-1}{2})} (t) \right| = \max_{t \in [-1,1]} |(d-1) C_{\ell-1}^{(\frac{d+1}{2})}(t)| = (d-1) \binom{\ell+d-1}{d} . \]
Therefore
\begin{equation}\label{cellxybound}
C_{\ell}^{(\frac{d-1}{2})} (\langle x,y \rangle) \ge C_{\ell}^{(\frac{d-1}{2})}(1) - \frac{d T^2}{2^{2d+7}} (d-1) \binom{\ell+d-1}{d} = \left( 1 - \frac{T^2 \lambda_{\ell}}{2^{2d+7}} \right) \binom{\ell+d-2}{d-2} .
\end{equation}
Let again $L_2 = \lceil 6/T \rceil$, and let $L_3 = \lceil 2^{d+3}/T \rceil \ge 21d+22$. We will estimate the terms in the sum
\[ K_0(x,y) = \sum_{\ell=0}^{\infty} \widehat{g} \bigg(\frac{T \lambda_{\ell}^{1/2}}{4 \pi} \bigg)^2 \frac{2\ell+d-1}{d-1} C_{\ell}^{(\frac{d-1}{2})}(\langle x, y \rangle) \]
in the ranges $0 \le \ell \le L_2$, $L_2+1 \le \ell \le L_3$ and $L_3+1 \le \ell$ separately.

For all $0 \le \ell \le L_2$,
\[ 1-\frac{T^2 \lambda_{\ell}}{2^{2d+7}} \ge 1-\frac{T^2}{2^{2d+7}} L_2 (L_2+d-1) \ge 1-\frac{T^2}{2^{2d+7}} \left(\frac{25}{22}(L_2-1) \right)^2 \ge 1- \frac{25^2 6^2}{2^{2d+7}22^2} \ge 0.97. \]
The previous formula, \eqref{ghatL2} and \eqref{cellxybound} thus yield
\begin{equation}\label{firstsum}
\begin{split} \sum_{\ell=0}^{L_2} \widehat{g}\bigg( \frac{T \lambda_{\ell}^{1/2}}{4 \pi} \bigg)^2 \frac{2\ell+d-1}{d-1} C_{\ell}^{(\frac{d-1}{2})}(\langle x, y \rangle) &\ge \sum_{\ell=0}^{L_2} \left( \frac{343}{968} \right)^2 0.97 d_{\ell} \\ &= \left( \frac{343}{968} \right)^2 0.97 \left( \binom{L_2+d}{d} + \binom{L_2+d-1}{d} \right) \\ &\ge \left( \frac{343}{968} \right)^2 0.97 \frac{2 L_2^d}{d!} \ge 0.24 \frac{6^d}{d! T^d} . \end{split}
\end{equation}
In the last step we used $(343/968)^2 0.97 \cdot 2>0.24$. For any $L_2+1 \le \ell \le L_3$, we have
\[ \begin{split} 1 - \frac{T^2 \lambda_{\ell}}{2^{2d+7}} &\ge 1-\frac{T^2}{2^{2d+7}} \left( L_3 +\frac{d-1}{2} \right)^2 \ge 1-\frac{T^2}{2^{2d+7}} \left( \frac{43}{42} (L_3-1) \right)^2 \ge 1-\frac{43^2 2^{2d+6}}{2^{2d+7} 42^2} >0. \end{split} \]
Estimate \eqref{cellxybound} thus yields $C_{\ell}(\langle x, y  \rangle) >0$, consequently
\begin{equation}\label{secondsum}
\sum_{\ell=L_2+1}^{L_3} \widehat{g}\bigg( \frac{T \lambda_{\ell}^{1/2}}{4 \pi} \bigg)^2 \frac{2\ell+d-1}{d-1} C_{\ell}^{(\frac{d-1}{2})}(\langle x, y \rangle) \ge 0.
\end{equation}
Arguing as before,
\[ d_{\ell} \le \frac{2}{(d-1)!} \left( \ell+\frac{d-1}{2} \right)^{d-1} \le \frac{2}{(d-1)!} \left( \frac{43}{42} \ell \right)^{d-1}, \qquad \ell \ge L_3+1, \]
property \eqref{propii} and $TL_3 /4 \ge 2^{d+1}$ show that
\[ \begin{split} \bigg| \sum_{\ell=L_3+1}^{\infty} \widehat{g}\bigg( &\frac{T \lambda_{\ell}^{1/2}}{4 \pi} \bigg)^2 \frac{2\ell+d-1}{d-1} C_{\ell}^{(\frac{d-1}{2})}(\langle x, y \rangle) \bigg| \\ &\le \sum_{\ell=L_3+1}^{\infty} \widehat{g}\bigg( \frac{T\lambda_{\ell}^{1/2}}{4 \pi} \bigg)^2 d_{\ell} \\ &\le \frac{2}{(d-1)!} \sum_{\ell=L_3+1}^{\infty} \exp \left( - \frac{1}{\log 2} \left( \log \frac{T \ell}{4} \right) \left( \log \frac{T \ell}{2} \right) + (d-1) \log \frac{43\ell}{42} \right) \\ &\le \frac{2}{(d-1)!} \int_{L_3}^{\infty} \exp \left( - \frac{1}{\log 2} \left( \log\frac{Tx}{4} \right) \left( \log \frac{Tx}{2} \right) + (d-1) \log \frac{43x}{42} \right) \, \mathrm{d}x \\ &\le \frac{86^{d-1} 8}{(d-1)! 21^{d-1} T^d} \int_{\log \frac{T L_3}{4}}^{\infty} \exp \left( - \frac{1}{\log 2} y^2 + (d-1) y \right) \, \mathrm{d}y \\ &\le \frac{86^{d-1}8}{(d-1)! 21^{d-1} T^d} \int_{(d+1) \log 2}^{\infty} \frac{\frac{2}{\log 2} y - (d-1)}{d+3} \exp \left( - \frac{1}{\log 2} y^2 + (d-1) y \right) \, \mathrm{d}y \\ &= \frac{86^{d-1} 8}{(d-1)! 21^{d-1} T^d} \cdot \frac{2^{-2d-2}}{d+3}. \end{split} \]
Estimates \eqref{firstsum}, \eqref{secondsum} and the previous formula lead to the lower bound
\[ K_0 (x,y) \ge 0.24 \frac{6^d}{d! T^d} - \frac{86^{d-1}8}{21^{d-1} (d-1)! T^d} \cdot \frac{2^{-2d-2}}{d+3} \ge 0.23 \frac{6^d}{d! T^d} . \]
In particular, the previous formula and the upper bound bound part of Lemma \ref{cTlemma} establish the claim
\[ K(x,y) = \frac{K_0(x,y)^2}{c_T} \ge \frac{9^d}{114 d! 2^{d^2/4+d/4} T^d} . \]
\end{proof}

\begin{lem}\label{belllemma} In the expansion
\[ K(x,y) = \sum_{\ell=0}^{\infty} b_{\ell} \frac{2\ell +d-1}{d-1} C_{\ell}^{(\frac{d-1}{2})}(\langle x,y \rangle) = \sum_{\ell=0}^{\infty} b_{\ell} \sum_{m=1}^{d_{\ell}} \phi_{\ell,m}(x) \overline{\phi_{\ell,m}(y)} , \]
the real coefficients $b_{\ell}$, $\ell \ge 0$ satisfy
\[ |b_{\ell}| \le \left\{ \begin{array}{ll} 1 & \textrm{if } \ell \le 2^{d+2}/T, \\ 43 \exp \left( - \frac{1}{\log 2} \left( \log \frac{T\ell}{16} \right) \left( \log \frac{T\ell}{2^{d+2}} \right) \right) & \textrm{if } \ell > 2^{d+2}/T . \end{array} \right. \]
\end{lem}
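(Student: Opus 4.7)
The plan is to isolate $b_\ell$ through an integral identity and then obtain two complementary bounds, one trivial and one based on the product-linearization of Gegenbauer polynomials. Multiplying the stated expansion of $K$ by $P_\ell(x,y)$, integrating in $y$, and using the reproducing-kernel orthogonality $\int_{\mathbb{S}^d} P_n(x,y) P_m(x,y)\, \mathrm{d}\mathrm{Vol}(y) = \delta_{nm} d_n$ yields the basic identity
\[
 c_T\, b_\ell\, d_\ell \;=\; \int_{\mathbb{S}^d} K_0(x,y)^2 \, P_\ell(x,y)\, \mathrm{d}\mathrm{Vol}(y),
\]
independent of $x \in \mathbb{S}^d$ by rotational invariance. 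The trivial bound $|b_\ell|\le 1$ (valid in particular for $\ell \le 2^{d+2}/T$) is then immediate from $|P_\ell(x,y)| \le P_\ell(x,x) = d_\ell$ together with $\int K_0^2 \, \mathrm{d}\mathrm{Vol} = c_T$, extending the maximum bound \eqref{gegenbauersupremum} used in the proof of Theorem \ref{sphereprojectionkerneltheorem}.

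For the exponentially decaying bound when $\ell > 2^{d+2}/T$, I would exploit the Gegenbauer product linearization. Writing $K_0(x,y) = \sum_n a_n P_n(x,y)$ with $a_n = \widehat g(T\lambda_n^{1/2}/(4\pi))^2 \ge 0$ and squaring gives $K_0^2 = \sum_{j,k} a_j a_k P_j P_k$. The classical fact that for ultraspherical polynomials with parameter $\lambda = (d-1)/2 \ge 1$ (hence $d \ge 3$) one has $P_j(x,y) P_k(x,y) = \sum_n c_{jk}^n P_n(x,y)$ with $c_{jk}^n \ge 0$ supported on $|j-k| \le n \le j+k$, combined with the identity $\sum_n c_{jk}^n d_n = d_j d_k$ (set $y = x$), delivers the pointwise bound $c_{jk}^n \le d_j d_k/d_n$. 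Incidentally this shows $b_\ell \ge 0$, so the absolute value in the claim is notational. Substituting and exploiting the symmetry $j \leftrightarrow k$ to restrict to the half-range $k \ge \ell/2$ (forced by the triangle condition $j + k \ge \ell$),
\[
 c_T\, b_\ell \;\le\; \frac{2}{d_\ell}\Bigl(\sum_j a_j d_j\Bigr)\Bigl(\sum_{k\ge \ell/2} a_k d_k\Bigr)
 \;=\; \frac{2\, K_0(x,x)}{d_\ell} \sum_{k\ge \ell/2} a_k d_k .
\]

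It then remains to estimate $K_0(x,x) = \sum_n a_n d_n$ from above (by a Gaussian-integral comparison directly analogous to the upper-bound half of Lemma \ref{cTlemma}, but with $a_n$ in place of $a_n^2$ and using Remark \ref{1dimremark} for $a_n \le \exp(-\log(Tn/4)\log(Tn/2)/\log 2)$), and the tail sum $\sum_{k\ge \ell/2} a_k d_k$ from above (a Laplace-type estimate: substitute $y = \log(Tk/4)$, complete the square in the exponent $-y^2/\log 2 + (d-1)y$, and compute the Gaussian tail starting from $y = \log(T\ell/8)$). The latter produces super-polynomial decay of the shape $\exp\bigl(-\log(T\ell/8)\,\log(T\ell/2^{d+2})/\log 2\bigr)$, where the inner logarithm $\log(T\ell/2^{d+2})$ emerges naturally from the completion of the square. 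Dividing by the lower bound $c_T \ge 6^d/(32\, d!\, T^d)$ of Lemma \ref{cTlemma}, cancelling against $d_\ell \gtrsim \ell^{d-1}/(d-1)!$ in the relevant range $\ell > 2^{d+2}/T$, and absorbing the remaining $d$- and $T$-dependent prefactors produces the claimed bound.

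The main obstacle is the arithmetic bookkeeping at the last stage: the slight widening of the first logarithm argument from the natural $\log(T\ell/8)$ to the stated $\log(T\ell/16)$, as well as the final absolute constant $43$, both arise from absorbing a residual $d$-dependent polynomial prefactor (originating in the ratio $K_0(x,x)/c_T$ and in the $(d-1)!$ versus $d!$ bookkeeping for $d_\ell$) into the exponentially decaying factor. This mirrors, but is somewhat more intricate than, the Stirling-plus-Gaussian computation at the end of the proof of Lemma \ref{cTlemma}, and is the only genuinely technical part of the argument.
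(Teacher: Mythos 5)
Your argument is correct and, for the main decaying bound, takes a genuinely different route from the paper. The paper also starts from the same reproducing identity $c_T b_\ell d_\ell=\int K_0(\cdot,y)^2P_\ell(\cdot,y)\,\mathrm{d}\mathrm{Vol}(y)$ and the same trivial bound $|b_\ell|\le 1$, but for $\ell>2^{d+2}/T$ it invokes the \emph{explicit} formula for the integral of a triple product of Gegenbauer polynomials (Andrews--Askey--Roy), writes $b_\ell$ as an explicit double sum over the admissible pairs $(\ell_1,\ell_2)$ involving Pochhammer symbols $(\tfrac{d-1}{2})_{s-\ell_i}$, and then spends most of the proof estimating those factorial ratios via Stirling before symmetrizing to $\ell_2\ge \ell/2$ and running the same two Gaussian-tail estimates you describe. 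Your replacement of that computation by nonnegativity of the ultraspherical linearization coefficients together with the sum rule $\sum_n c_{jk}^n d_n=d_jd_k$ (hence $c_{jk}^n\le d_jd_k/d_n$) is cleaner and reaches the same structural bound $c_Tb_\ell\le \tfrac{2}{d_\ell}K_0(x,x)\sum_{k\ge\ell/2}a_kd_k$ with essentially no computation; it also gives $b_\ell\ge 0$ for free, which the paper does not record. Two points of caution in the bookkeeping you defer: (a) in your tail sum the weight is $d_k\sim k^{d-1}$ rather than the paper's $\ell_2^{d-2}$, so completing the square produces $\log(T\ell/2^{d+3})$ in the second logarithm; upgrading to the stated $\log(T\ell/2^{d+2})$ costs a multiplicative factor $T\ell/16$, which is \emph{not} bounded but is absorbed by the factor $(T\ell)^{-(d-1)}$ you gain from $d_\ell$ in the denominator against $K_0(x,x)/c_T=O_d(1)$ and the $T^{-(d-1)}$ from the tail integral — this works precisely because $d\ge 3$ in the regime where the lemma is used; (b) whether the residual $d$-dependent constant lands at or below $43$ is not guaranteed by your route and would have to be checked numerically as the paper does, but since the target constant is whatever the argument produces, this does not affect the validity of the method, only the literal matching of the stated bound.
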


\begin{proof} Since $K(x,y)$ is nonnegative and normalized, orthogonality shows
\[ \begin{split} |b_{\ell}| &= \left| \int_{\mathbb{S}^d} \int_{\mathbb{S}^d} K(x,y) \frac{1}{d_{\ell}} \sum_{m=1}^{d_{\ell}} \overline{\phi_{\ell,m}(x)} \phi_{\ell,m}(y) \, \mathrm{d} \mathrm{Vol}(x) \mathrm{d}\mathrm{Vol}(y) \right| \\ &\le \int_{\mathbb{S}^d} \int_{\mathbb{S}^d} K(x,y) \, \mathrm{d} \mathrm{Vol}(x) \mathrm{d}\mathrm{Vol}(y) = 1 . \end{split} \]
In particular, this proves the claim for $\ell \le 2^{d+2}/T$.

Assume now $\ell > 2^{d+2}/T$. Expanding the square in the definition of $K(x,y)$ leads to
\[ K(x,y) = \frac{1}{c_T} \sum_{\ell_1, \ell_2=0}^{\infty} \psi (\lambda_{\ell_1}^{1/2}) \psi (\lambda_{\ell_2}^{1/2}) \frac{(2 \ell_1+d-1)(2\ell_2 +d-1)}{(d-1)^2} C_{\ell_1}^{(\frac{d-1}{2})} (\langle x, y \rangle) C_{\ell_2}^{(\frac{d-1}{2})} (\langle x, y \rangle) . \]
The orthogonality relations for the Gegenbauer polynomials \eqref{gegenbauerorthogonality} expresses $b_{\ell}$, $\ell \ge 0$ in the form
\[ \begin{split} b_{\ell} \frac{2\ell +d-1}{d-1} \cdot \frac{2^{2-d}\pi (\ell+d-2)!}{\Gamma \left( \frac{d-1}{2} \right)^2 \ell!  \left( \ell +\frac{d-1}{2} \right)} = \frac{1}{c_T} \sum_{\ell_1, \ell_2=0}^{\infty} &\psi (\lambda_{\ell_1}^{1/2}) \psi (\lambda_{\ell_2}^{1/2}) \frac{(2\ell_1 +d-1)(2\ell_2 +d-1)}{(d-1)^2} \\ &\times \int_{-1}^1 C_{\ell_1}^{(\frac{d-1}{2})}(t) C_{\ell_2}^{(\frac{d-1}{2})}(t) C_{\ell}^{(\frac{d-1}{2})}(t) (1-t^2)^{d/2-1} \, \mathrm{d}t . \end{split} \]
The explicit formula for the integral of a triple product of Gegenbauer polynomials \cite[p.\ 321]{AAR} states that
\[ \int_{-1}^1 C_{\ell}^{(\lambda)}(t) C_{m}^{(\lambda)}(t) C_{n}^{(\lambda)}(t) (1-t^2)^{\lambda-1/2} \, \mathrm{d}t = \frac{(\lambda)_{s-\ell} (\lambda)_{s-m} (\lambda)_{s-n} s!}{(s-\ell)! (s-m)! (s-n)! (\lambda)_s} \cdot \frac{2^{1-2 \lambda} \pi \Gamma (s+2\lambda)}{\Gamma (\lambda)^2 s! (s+\lambda)} \]
if $\ell +m+n=2s$ is an even integer and the sum of any two of $\ell,m,n$ is greater or equal than the third, and the integral vanishes otherwise. Note that $(x)_n=x(x+1)\cdots (x+n-1)$ denotes the rising factorial with the convention $(x)_0=1$. The previous two formulas thus yield the explicit formula
\begin{equation}\label{bell1}
\begin{split} b_{\ell} = \frac{\ell !}{(\ell+d-2)! (d-1) c_T} \sum_{(\ell_1, \ell_2) \in B_{\ell}} &\psi (\lambda_{\ell_1}^{1/2}) \psi (\lambda_{\ell_2}^{1/2}) (2\ell_1 +d-1)(2\ell_2 +d-1) \\ &\times \frac{\left( \frac{d-1}{2} \right)_{s-\ell_1} \left( \frac{d-1}{2} \right)_{s-\ell_2} \left( \frac{d-1}{2} \right)_{s-\ell} (s+d-2)!}{(s-\ell_1)! (s-\ell_2)! (s-\ell)! \left( \frac{d-1}{2} \right)_s (2s+d-1)} , \end{split}
\end{equation}
where $s=(\ell_1+\ell_2+\ell)/2$ and the summation is over the index set
\[ B_{\ell} = \{ (\ell_1, \ell_2) \in \mathbb{Z}_{\ge 0}^2 : \ell_1+\ell_2+\ell \in 2 \mathbb{Z}, \quad \ell \le \ell_1+\ell_2, \quad \ell_1 \le \ell+\ell_2, \quad \ell_2 \le \ell+\ell_1 \} . \]

Observe that for any integer $m \ge 1$,
\[ \frac{\left( \frac{d-1}{2} \right)_m}{m!} = \prod_{j=1}^m \left( 1+\frac{d-3}{2j} \right) \le \exp \Bigg( \frac{d-3}{2} \sum_{j=1}^m \frac{1}{j} \Bigg) \le \exp \left( \frac{d-3}{2} (1+\log m) \right) . \]
In particular, for all $m \ge 0$,
\[ \frac{\left( \frac{d-1}{2} \right)_m}{m!} \le e^{(d-3)/2} (m)_+^{(d-3)/2}, \]
where we use the notation $(m)_+=\mathds{1}_{\{ m=0 \}} + \mathds{1}_{\{ m \ge 1 \}} m$. Hence
\[ \frac{\left( \frac{d-1}{2} \right)_{s-\ell_1} \left( \frac{d-1}{2} \right)_{s-\ell_2} \left( \frac{d-1}{2} \right)_{s-\ell}}{(s-\ell_1)!(s-\ell_2)!(s-\ell)!} \le e^{3(d-3)/2} \left( (s-\ell_1)_+ (s-\ell_2)_+ (s-\ell)_+ \right)^{(d-3)/2} . \]
Stirling's approximation \eqref{stirling} shows
\[ \begin{split} \frac{(s+d-2)!}{\left( \frac{d-1}{2} \right)_s (2s+d-1)} &= \frac{(s+d-2)! \Gamma \left( \frac{d-1}{2} \right)}{\Gamma \left( s+ \frac{d-1}{2} \right) 2 \left( s+\frac{d-1}{2} \right)} \\ &\le \frac{(s+d-2)^{s+d-3/2} e^{-(s+d-2)+1/12} \Gamma \left( \frac{d-1}{2} \right)}{2 \left( s+\frac{d-1}{2} \right)^{s+d/2} e^{-(s+(d-1)/2)}} \\ &= \left( 1+\frac{\frac{d-3}{2}}{s+\frac{d-1}{2}} \right)^{s+d/2} \frac{(s+d-2)^{(d-3)/2} e^{-(d-3)/2+1/12} \Gamma \left( \frac{d-1}{2} \right)}{2} \\ &\le \exp \left( \frac{\frac{d-3}{2} \left( s+\frac{d}{2} \right)}{s+\frac{d-1}{2}} \right) \frac{(s+d-2)^{(d-3)/2} e^{-(d-3)/2+1/12} \Gamma \left( \frac{d-1}{2} \right)}{2} \\ &\le \frac{(s+d-2)^{(d-3)/2} e^{(d-3)/(2d-2)+1/12} \Gamma \left( \frac{d-1}{2} \right)}{2} . \end{split} \]
Formula \eqref{bell1} thus simplifies to
\[ \begin{split} |b_{\ell}| \le \frac{e^{(d-3)(3d-2)/(2d-2)+1/12} \Gamma \left( \frac{d-1}{2} \right)}{2(d-1) \ell^{d-2} c_T} \sum_{(\ell_1, \ell_2) \in B_{\ell}} &\psi (\lambda_{\ell_1}^{1/2}) \psi (\lambda_{\ell_2}^{1/2}) (2\ell_1 +d-1)(2\ell_2 +d-1) \\ &\times \left( (s-\ell_1)_+ (s-\ell_2)_+ (s-\ell)_+ (s+d-2) \right)^{(d-3)/2} . \end{split} \]
The terms of the sum and the index set $B_{\ell}$ are both symmetric in $\ell_1, \ell_2$. It will thus be enough to sum over $\ell_1 \le \ell_2$, in which case $\ell_2 \ge \ell/2$, and
\[ s-\ell_1 \le \ell_2, \qquad s-\ell_2 \le \ell_1 , \qquad s-\ell \le \ell_1, \qquad s \le 2 \ell_2, \]
as well as $2 \ell_2 +d-1 \le (2+(d-1)2^{-d-1}/d)\ell_2$ since $\ell \ge 2^{d+2}/T \ge d2^{d+2}$. Therefore
\begin{equation}\label{bell2}
\begin{split} |b_{\ell}| \le &\frac{e^{(d-3)(3d-2)/(2d-2)+1/12} (2+\frac{d-1}{d2^{d+1}})^{(d-1)/2} \Gamma \left( \frac{d-1}{2} \right)}{(d-1)\ell^{d-2} c_T} \\ &\times \sum_{\ell_1=0}^{\infty} \psi (\lambda_{\ell_1}^{1/2}) (2\ell_1 +d-1) (\ell_1)_+^{d-3} \sum_{\ell_2 \ge \ell/2} \psi (\lambda_{\ell_2}^{1/2}) \ell_2^{d-2} . \end{split}
\end{equation}
The remaining two series in \eqref{bell2} can be bounded the same way $c_T$ was bounded above. Let $L_4=\lceil 2^{(d+1)/2}/T \rceil \ge 12$. Then $0 \le \psi \le 1$ shows
\[ \begin{split} \sum_{\ell_1=0}^{L_4} \psi (\lambda_{\ell_1}^{1/2}) (2 \ell_1+d-1) (\ell_1)_+^{d-3} &\le \sum_{\ell_1=0}^{L_4} (d-1) (\ell_1+1)^{d-2} \le \int_0^{L_4+1} (d-1) (x+1)^{d-2} \, \mathrm{d}x \\ &\le (L_4+2)^{d-1} \le \left( \frac{14}{11}(L_4-1) \right)^{d-1} \le \frac{\left( \frac{14}{11} \right)^{d-1} 2^{(d^2-1)/2}}{T^{d-1}} . \end{split} \]
Property \eqref{propii} and $2\ell_1+d-1 \le (28/13) \ell_1$ ensure that
\[ \begin{split} \sum_{\ell_1=L_4+1}^{\infty} \psi (\lambda_{\ell_1}^{1/2}) &(2 \ell_1+d-1) (\ell_1)_+^{d-3} \\ &\le \frac{28}{13} \sum_{\ell_1=L_4+1}^{\infty} \exp \left( - \frac{1}{\log 2} \left( \log \frac{T \ell_1}{4} \right) \left( \log \frac{T \ell_1}{2} \right) + (d-2) \log \ell_1 \right) \\ &\le \frac{28}{13} \int_{L_4}^{\infty} \exp \left( - \frac{1}{\log 2} \left( \log \frac{Tx}{4} \right) \left( \log \frac{Tx}{2} \right) + (d-2) \log x \right) \, \mathrm{d}x \\ &= \frac{28 \cdot 4^{d-1}}{13 T^{d-1}} \int_{-\infty}^{\infty} \exp \left( - \frac{1}{\log 2} y^2 + (d-2) y \right) \, \mathrm{d}y \\ &= \frac{14\sqrt{\pi \log 2}}{13} \cdot \frac{2^{d^2/4+d}}{T^{d-1}} . \end{split} \]
Adding the previous two formulas leads to
\begin{equation}\label{ell1sum}
\sum_{\ell_1=0}^{\infty} \psi (\lambda_{\ell_1}^{1/2}) (2 \ell_1+d-1) (\ell_1)_+^{d-3} \le \left( \left( \frac{14}{11} \right)^{d-1} 2^{(d^2-1)/2} + \frac{14 \sqrt{\pi \log 2}}{13}2^{d^2/4+d} \right) \frac{1}{T^{d-1}}.
\end{equation}
The assumptions $\ell \ge 2^{d+2}/T$ and $T \le 1/d$ ensure
\[ \frac{T(\ell/2-1)}{4} \ge \frac{T \ell}{16} \quad \textrm{and} \quad \frac{2}{\log 2} \log \frac{T\ell}{16} - (d-2) \ge d-2, \]
and we deduce
\[ \begin{split} \sum_{\ell_2 \ge \ell/2}^{\infty} \psi (\lambda_{\ell_2}^{1/2}) \ell_2^{d-2} &\le \sum_{\ell_2 \ge \ell/2} \exp \left( - \frac{1}{\log 2} \left( \log \frac{T \ell_2}{4} \right) \left( \log \frac{T \ell_2}{2} \right) + (d-2) \log \ell_2 \right) \\ &\le \int_{\ell/2-1}^{\infty} \exp \left( - \frac{1}{\log 2} \left( \log \frac{Tx}{4} \right) \left( \log \frac{Tx}{2} \right) + (d-2) \log x \right) \, \mathrm{d}x \\ &= \frac{4^{d-1}}{T^{d-1}} \int_{\log \frac{T(\ell/2-1)}{4}}^{\infty} \exp \left( - \frac{1}{\log 2} y^2 + (d-2)y \right) \, \mathrm{d}y \\ &\le \frac{4^{d-1}}{T^{d-1}} \int_{\log \frac{T\ell}{16}}^{\infty} \frac{\frac{2}{\log 2} y - (d-2)}{d-2} \exp \left( - \frac{1}{\log 2} y^2 + (d-2)y \right) \, \mathrm{d}y \\ &= \frac{4^{d-1}}{(d-2)T^{d-1}} \exp \left( - \frac{1}{\log 2} \left( \log \frac{T\ell}{16} \right) \left( \log \frac{T\ell}{2^{d+2}} \right) \right) . \end{split} \]
Formulas \eqref{bell2}, \eqref{ell1sum} and the previous formula together with the lower bound part of Lemma \ref{cTlemma} leads to
\[ |b_{\ell}| \le \kappa \exp \left( - \frac{1}{\log 2} \left( \log \frac{T\ell}{16} \right) \left( \log \frac{T\ell}{2^{d+2}} \right) \right) \]
with
\[ \begin{split} \kappa &= \frac{e^{(d-3)(3d-2)/(2d-2)+1/12} (2+\frac{d-1}{d2^{d+1}})^{(d-1)/2} \Gamma \left( \frac{d-1}{2} \right)}{(d-1)\ell^{d-2} \frac{6^d}{32 d! T^d}} \\ &{\phantom{=}} \times \left( \left( \frac{14}{11} \right)^{d-1} 2^{(d^2-1)/2} + \frac{14 \sqrt{\pi \log 2}}{13} 2^{d^2/4+d} \right) \frac{1}{T^{d-1}} \cdot \frac{4^{d-1}}{(d-2)T^{d-1}} \\ &\le \frac{e^{(d-3)(3d-2)/(2d-2)+1/12} (2+\frac{d-1}{d2^{d+1}})^{(d-1)/2} \Gamma \left( \frac{d-1}{2} \right)}{(d-1) 2^{d^2-4} \frac{6^d}{32 d!}} \\ &{\phantom{=}} \times \left( \left( \frac{14}{11} \right)^{d-1} 2^{(d^2-1)/2} + \frac{14 \sqrt{\pi \log 2}}{13} 2^{d^2/4+d} \right) \frac{4^{d-1}}{(d-2)} . \end{split} \]
In the last step we used $(T\ell)^{d-2} \ge 2^{d^2-4}$. The right-hand side of the previous formula converges to zero as $d \to \infty$. Numerical computations show that it is in fact decreasing in the variable $d \ge 3$, with maximal value $\le 43$ attained at $d=3$.
\end{proof}

\begin{proof}[Proof of Theorem \ref{sphereinfinitytheorem}] The kernel $K(x,y)$ satisfies $D_{\infty}(K) \le T$ by construction, hence an application of Lemma \ref{smoothinglemma} yields
\begin{equation}\label{sphereinfinity1}
W_{\infty} (\mu, \nu) \le C_1 T + W_{\infty} (K*\mu, K*\nu)
\end{equation}
with $C_1 = 1+\mathds{1}_{\{ \mu \neq \mathrm{Vol} \}}$. Clearly, $K*\mu \ge c \mathrm{Vol}$. The assumptions $\nu(B) \ge b$ for all closed geodesic balls $B \subseteq \mathbb{S}^d$ of radius $r$, and $T \ge 2^{d+3}d^{-1/2} r$, together with Lemma \ref{Kxylemma} show
\[ \begin{split} \frac{\mathrm{d}(K*\nu)}{\mathrm{d}\mathrm{Vol}}(x) &= \int_{\mathbb{S}^d} K(x,y) \, \mathrm{d}\nu(y) \ge \int_{B(x,r)} K(x,y) \, \mathrm{d}\nu(y) \\ &\ge \nu (B(x,r)) \frac{9^d}{114 d! 2^{d^2/4+d/4}T^d} \ge b \frac{9^d}{114 d! 2^{d^2/4+d/4}T^d} . \end{split} \]
In particular, $K*\nu \ge \delta \mathrm{Vol}$ with
\[ \delta = \min \left\{ b \frac{9^d}{114 d! 2^{d^2/4+d/4}T^d}, c \right\} , \]
and Lemma \ref{ledouxlemma} gives
\begin{equation}\label{sphereinfinity2}
W_{\infty} (K*\mu, K*\nu ) \le \frac{\log c - \log \delta}{c-\delta} \| K*\mu - K*\nu \|_{\dot{H}_{-1}^{\infty}} .
\end{equation}

To estimate the dual Sobolev norm in the previous formula, let $f \in \mathrm{C}^{\infty} (\mathbb{S}^d)$ be arbitrary. The expansion of $K(x,y)$ in Lemma \ref{belllemma} shows that the Fourier transform of $K*\mu$ is
\[ \widehat{\frac{\mathrm{d}(K*\mu)}{\mathrm{d}\mathrm{Vol}}} (\ell, m) = \int_{\mathbb{S}^d} \int_{\mathbb{S}^d} K(x,y) \overline{\phi_{\ell,m}} \, \mathrm{d}\mathrm{Vol}(x) \mathrm{d}\mu (y) = b_{\ell} \widehat{\mu}(\ell,m), \]
and the same holds for $\nu$. An application of the Parseval formula, the Cauchy--Schwarz inequality and Lemma \ref{homogeneoushausdorffyounglemma} lead to
\[ \begin{split} \left| \int_{\mathbb{S}^d} f \, \mathrm{d}(K*\mu - K*\nu) \right| &= \left| \sum_{\ell=1}^{\infty} \sum_{m=1}^{d_{\ell}} \overline{\widehat{f}(\ell,m)} b_{\ell} (\widehat{\mu}(\ell,m) - \widehat{\nu}(\ell,m)) \right| \\ &\le \sum_{\ell=1}^{\infty} |b_{\ell}| \bigg( \sum_{m=1}^{d_{\ell}} |\widehat{f}(\ell,m)|^2 \bigg)^{1/2} \bigg( \sum_{m=1}^{d_{\ell}} |\widehat{\mu}(\ell,m) - \widehat{\nu}(\ell,m)|^2 \bigg)^{1/2} \\ &\le \sup_{\ell \ge 0} d_{\ell}^{-1/2} \lambda_{\ell}^{1/2} \bigg( \sum_{m=1}^{d_{\ell}} |\widehat{f}(\ell,m)|^2 \bigg)^{1/2} \\ &{\phantom{\le}} \times \sum_{\ell=1}^{\infty} |b_{\ell}| d_{\ell}^{1/2} \lambda_{\ell}^{-1/2} \bigg( \sum_{m=1}^{d_{\ell}} |\widehat{\mu}(\ell,m) - \widehat{\nu}(\ell,m)|^2 \bigg)^{1/2} \\ &\le \| \nabla f \|_1 \sum_{\ell=1}^{\infty} |b_{\ell}| d_{\ell}^{1/2} \lambda_{\ell}^{-1/2} \bigg( \sum_{m=1}^{d_{\ell}} |\widehat{\mu}(\ell,m) - \widehat{\nu}(\ell,m)|^2 \bigg)^{1/2} . \end{split} \]
By the definition \eqref{dualsobolevnorm} of the dual Sobolev norm and Lemma \ref{dualsobolevnormlemma}, this implies
\[ \| K*\mu - K*\nu \|_{\dot{H}_{-1}^{\infty}} \le \sum_{\ell=1}^{\infty} |b_{\ell}| d_{\ell}^{1/2} \lambda_{\ell}^{-1/2} \bigg( \sum_{m=1}^{d_{\ell}} |\widehat{\mu}(\ell,m) - \widehat{\nu}(\ell,m)|^2 \bigg)^{1/2} . \]
Formulas \eqref{sphereinfinity1} and \eqref{sphereinfinity2}, the previous formula and the upper estimate for $|b_{\ell}|$ from Lemma \ref{belllemma} concludes the proof.
\end{proof}

\subsection{Compact Riemannian manifolds}

In this section, we give the proof of Theorems \ref{manifoldheatkernelp<2theorem} and \ref{manifoldheatkernelp>2theorem}.

\begin{proof}[Proof of Theorem \ref{manifoldheatkernelp<2theorem}] Let $1 \le p \le 2$, and fix $t>0$. We work with the heat kernel $P_t(x,y)$ on the manifold $M$. An application of Lemmas \ref{smoothinglemma} and \ref{heatkerneldispersionlemma} yields
\begin{equation}\label{manifoldheatkernelstep1}
W_p (\mu, \nu) \le C_1 t^{1/2} \left(1+C_3 t^{1/2} \right)^{1/2} + W_p(P_t * \mu, P_t * \nu)
\end{equation}
with
\[ \begin{split} C_1 &= 2^{1/2} (1+(1-c)^{1/p}) d^{1/2}, \\ C_3 &= \frac{2^{3/2}(d-1) \sqrt{A}}{3d} \left( d + (d-1) \sqrt{A} \, \mathrm{diam}(M) \right)^{1/2} . \end{split} \]

Assume now $1<p \le 2$ and $c>0$. The assumptions $\mu \ge c \mathrm{Vol}$ and $\nu(B) \ge b$ for all closed balls $B$ of radius $r$ show that for all $x \in M$,
\[ \frac{\mathrm{d}(P_t * \mu)}{\mathrm{d}\mathrm{Vol}}(x) = \int_M P_t (x,y) \, \mathrm{d}\mu(y) \ge c \int_M P_t (x,y) \, \mathrm{d}\mathrm{Vol}(y) \ge c \]
and
\[ \begin{split} \frac{\mathrm{d}(P_t * \nu)}{\mathrm{d}\mathrm{Vol}}(x) &= \int_M P_t (x,y) \, \mathrm{d}\nu(y) \ge \int_{B(x,r)} P_t(x,y) \, \mathrm{d}\nu (y) \\ &\ge \nu(B(x,r)) \inf_{\substack{y \in M \\ \rho (x,y) \le r}} P_t (x,y) \ge b \inf_{\substack{y \in M \\ \rho (x,y) \le r}} P_t (x,y) . \end{split} \]
In particular, $P_t * \mu \ge c \mathrm{Vol}$ and $P_t * \nu \ge \delta \mathrm{Vol}$ with
\[ \delta = \min \left\{ b \inf_{\substack{x,y \in M \\ \rho (x,y) \le r}} P_t (x,y), c \right\}, \]
and an application of Lemma \ref{ledouxlemma} gives
\begin{equation}\label{manifoldheatkernelstep2}
W_p (P_t * \mu, P_t * \nu) \le \frac{p(c^{1/p} - \delta^{1/p})}{c-\delta} \| \mu - \nu \|_{\dot{H}_{-1}^p} .
\end{equation}

To estimate the dual Sobolev norm in the previous formula, let $f \in \mathrm{C}^{\infty}(M)$ be arbitrary. An application of Green's formula shows that the Fourier transform of $\Delta f$ is
\[ \widehat{\Delta f}(k) = \int_M (\Delta f) \overline{\phi_k} \, \mathrm{d} \mathrm{Vol} = \int_M f \Delta \overline{\phi_k} \, \mathrm{d}\mathrm{Vol} = \int_M f (-\Lambda_k \overline{\phi_k}) \, \mathrm{d}\mathrm{Vol} = - \Lambda_k \widehat{f}(k), \quad k \ge 0. \]
Another application of Green's formula and the Parseval formula then yields
\[ \int_M |\nabla f|^2 \, \mathrm{d}\mathrm{Vol} = - \int_M f \Delta f \, \mathrm{d} \mathrm{Vol} = - \sum_{k=0}^{\infty} \widehat{f}(k) \overline{\widehat{\Delta f}(k)} = \sum_{k=0}^{\infty} \Lambda_k |\widehat{f}(k)|^2 . \]
Since $P_t(x,y) = \sum_{k=0}^{\infty} e^{-\Lambda_k t} \phi_k(x) \overline{\phi_k(y)}$ is uniformly convergent in $x,y \in M$ for any fixed $t>0$ and $\phi_k$, $k \ge 0$ are orthonormal in $L^2(M)$,
\[ \begin{split} \widehat{\frac{\mathrm{d}(P_t * \mu)}{\mathrm{d}\mathrm{Vol}}}(k) &= \int_M \frac{\mathrm{d}(P_t * \mu)}{\mathrm{d}\mathrm{Vol}} (x) \overline{\phi_k(x)} \, \mathrm{d}\mathrm{Vol}(x) = \int_M \int_M P_t(x,y) \overline{\phi_k(x)} \, \mathrm{d} \mathrm{Vol}(x) \mathrm{d}\mu (y) \\ &=\sum_{\ell=0}^{\infty} e^{-\Lambda_{\ell} t} \int_M \phi_{\ell}(x) \overline{\phi_k (x)} \, \mathrm{d}\mathrm{Vol}(x) \int_M \overline{\phi_{\ell}(y)} \, \mathrm{d}\mu(y) = e^{-\Lambda_k t} \widehat{\mu}(k), \end{split} \]
and the same holds for $\nu$. The previous two formulas and an application of the Parseval formula and the Cauchy--Schwarz inequality lead to
\[ \begin{split} \left| \int_M f \, \mathrm{d}(P_t * \mu - P_t * \nu) \right| &= \left| \sum_{k=0}^{\infty} \overline{\widehat{f}(k)} e^{-\Lambda_k t} (\widehat{\mu}(k) - \widehat{\nu}(k)) \right| \\ &\le \left( \sum_{k=0}^{\infty} \Lambda_k |\widehat{f}(k)|^2 \right)^{1/2} \left( \sum_{k=1}^{\infty} e^{-2 \Lambda_k t} \frac{|\widehat{\mu}(k) - \widehat{\nu}(k)|^2}{\Lambda_k} \right)^{1/2} \\ &= \| \nabla f \|_2 \left( \sum_{k=1}^{\infty} e^{-2 \Lambda_k t} \frac{|\widehat{\mu}(k) - \widehat{\nu}(k)|^2}{\Lambda_k} \right)^{1/2} . \end{split} \]
By the definition \eqref{dualsobolevnorm} of the dual Sobolev norm and Lemma \ref{dualsobolevnormlemma}, this implies
\[ \| \mu - \nu \|_{\dot{H}_{-1}^p} \le \| \mu - \nu \|_{\dot{H}_{-1}^2} \le \left( \sum_{k=1}^{\infty} e^{-2 \Lambda_k t} \frac{|\widehat{\mu}(k) - \widehat{\nu}(k)|^2}{\Lambda_k} \right)^{1/2} . \]
The claim for $1<p \le 2$ follows from \eqref{manifoldheatkernelstep1}, \eqref{manifoldheatkernelstep2} and the previous formula.

The proof for $p=1$ and $c \ge 0$ is very similar. Let $f \in \mathrm{C}^{\infty}(M)$ be arbitrary. Using $|\nabla f| \le \| f \|_{\mathrm{Lip}}$, we similarly deduce
\[ \begin{split} \left| \int_M f \, \mathrm{d}(P_t * \mu - P_t * \nu) \right| &\le \| \nabla f \|_2 \left( \sum_{k=1}^{\infty} e^{-2 \Lambda_k t} \frac{|\widehat{\mu}(k) - \widehat{\nu}(k)|^2}{\Lambda_k} \right)^{1/2} \\ &\le \| f \|_{\mathrm{Lip}} \left( \sum_{k=1}^{\infty} e^{-2 \Lambda_k t} \frac{|\widehat{\mu}(k) - \widehat{\nu}(k)|^2}{\Lambda_k} \right)^{1/2} . \end{split} \]
The Kantorovich duality formula in the form given in Lemma \ref{dualsobolevnormlemma} thus implies
\[ W_1 (P_t * \mu, P_t* \nu) \le \left( \sum_{k=1}^{\infty} e^{-2 \Lambda_k t} \frac{|\widehat{\mu}(k) - \widehat{\nu}(k)|^2}{\Lambda_k} \right)^{1/2} . \]
The claim for $p=1$ follows from \eqref{manifoldheatkernelstep1} and the previous formula.
\end{proof}

The following lemma is the Hausdorff--Young inequality on a general compact manifold.
\begin{lem}\label{manifoldhausdorffyounglemma} Let $M$ be a compact, connected, smooth Riemannian manifold of dimension $1 \le d < \infty$ without boundary. Assume the Ricci curvature is bounded below by $-a$ with some constant $a \ge 0$. Let $f \in \mathrm{C}^{\infty}(M)$. For any $2 \le p < \infty$ and $1<q \le 2$ such that $1/p+1/q=1$,
\begin{align}
\Bigg( \sum_{\ell=0}^{\infty} (\ell+1)^{-(p-2)(d-1)/2} \Bigg( \sum_{\substack{k \ge 0 \\ \Lambda_k^{1/2} \in [\ell, \ell+1)}} |\widehat{f}(k)|^2 \Bigg)^{p/2} \Bigg)^{1/p} &\le K_{\mathrm{Weyl}}(M)^{(p-2)/(2p)} \| f \|_q , \label{manifoldHY1} \\ \sup_{\ell \ge 0} \, (\ell+1)^{-(d-1)/2} \Bigg( \sum_{\substack{k \ge 0 \\ \Lambda_k^{1/2} \in [\ell, \ell+1)}} |\widehat{f}(k)|^2 \Bigg)^{1/2} &\le K_{\mathrm{Weyl}}(M)^{1/2} \| f \|_1 , \label{manifoldHY2} \\ \Bigg( \sum_{\ell=0}^{\infty} (\ell+1)^{-(p-2)(d-1)/2} \Bigg( \sum_{\substack{k \ge 1 \\ \Lambda_k^{1/2} \in [\ell, \ell+1)}} (a+\Lambda_k) |\widehat{f}(k)|^2 \Bigg)^{p/2} \Bigg)^{1/p} &\le C \| \nabla f \|_q \label{manifoldHY3}
\end{align}
with $C=K_{\mathrm{Weyl}}(M)^{(p-2)/(2p)} (3 \sqrt{6} (p-1) + a^{1/2} K_{\mathrm{Poincar\acute{e}}}(M,q))$, where $K_{\mathrm{Weyl}}(M)$ and $K_{\mathrm{Poincar\acute{e}}}(M,q)$ are the constants in the pointwise Weyl law \eqref{weyllaw} and the Poincar\'e inequality \eqref{poincare}. If $a=0$, then the factor $3\sqrt{6}$ in the definition of $C$ can be replaced by 2.
\end{lem}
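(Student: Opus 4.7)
The plan is to establish \eqref{manifoldHY1}--\eqref{manifoldHY2} by interpolating the trivial Parseval identity at $(p,q)=(2,2)$ against an $(\infty,1)$ endpoint supplied by the pointwise Weyl law \eqref{weyllaw}, and then to derive \eqref{manifoldHY3} from \eqref{manifoldHY1} by composing with the Riesz transform bounds \eqref{riesztransform1}--\eqref{riesztransform2} and the Poincar\'e inequality \eqref{poincare}. This mirrors the strategy used for Lemma \ref{homogeneoushausdorffyounglemma}, with the pointwise Weyl bound replacing the identity $\sum_m |\phi_{\ell,m}(x)|^2 = d_\ell$ that was available on homogeneous spaces via Lemma \ref{groupactionlemma}.

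\textbf{Endpoint estimates.} For each $\ell\ge 0$ I would group the Fourier coefficients whose square-root eigenvalue lies in the dyadic window $[\ell,\ell+1)$ into a single vector $Tf(\ell):=(\widehat{f}(k))_{k:\,\Lambda_k^{1/2}\in[\ell,\ell+1)}$ in a finite dimensional Hilbert space $Y_\ell$. Parseval gives $\sum_\ell |Tf(\ell)|^2 = \|f\|_2^2$, which is the $L^2\to \ell^2(\mathbb{N}_0;Y_\bullet)$ endpoint. For the $L^1\to \ell^\infty(\mathbb{N}_0;Y_\bullet)$ endpoint, write $Tf(\ell)=\int_M f\,\overline{\Phi_\ell}\,\mathrm{d}\mathrm{Vol}$ with $\Phi_\ell:=(\phi_k)_{k:\,\Lambda_k^{1/2}\in[\ell,\ell+1)}$, apply the triangle inequality for vector-valued integrals to bound $|Tf(\ell)|\le \int_M |f|\,|\Phi_\ell|\,\mathrm{d}\mathrm{Vol}$, and invoke \eqref{weyllaw} to obtain $|\Phi_\ell(x)|\le K_{\mathrm{Weyl}}(M)^{1/2}(\ell+1)^{(d-1)/2}$. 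This proves \eqref{manifoldHY2}.

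\textbf{Interpolation.} The weight $(\ell+1)^{-(p-2)(d-1)/2}$ in the target norm depends on the exponent $p$, and this is the main technical obstacle, since plain Riesz--Thorin requires a fixed measure on the target. I would resolve this by Stein's complex interpolation theorem, applied to the analytic family $T_z f(\ell):=(\ell+1)^{-(d-1)z/2}(\widehat{f}(k))_{k:\,\Lambda_k^{1/2}\in[\ell,\ell+1)}$ on the strip $\{0\le\mathrm{Re}(z)\le 1\}$. On the line $\mathrm{Re}(z)=0$ the multiplier has modulus $1$, so Parseval yields $\|T_{iy}\|_{L^2(M)\to \ell^2(\mathbb{N}_0;Y_\bullet)}\le 1$; on the line $\mathrm{Re}(z)=1$ the Weyl step yields $\|T_{1+iy}\|_{L^1(M)\to \ell^\infty(\mathbb{N}_0;Y_\bullet)}\le K_{\mathrm{Weyl}}(M)^{1/2}$. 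At $\theta=(p-2)/p$ the interpolated pair is precisely $(L^q, \ell^p(\mathbb{N}_0;Y_\bullet))$ with the correct weight factor $(\ell+1)^{-(d-1)(p-2)/(2p)}$ absorbed in $T_\theta$, and Stein's theorem gives $\|T_\theta\|\le K_{\mathrm{Weyl}}(M)^{(p-2)/(2p)}$, which is exactly \eqref{manifoldHY1}.

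\textbf{Transfer to the gradient norm.} To derive \eqref{manifoldHY3}, split $f=\bar f+g$ with $\bar f=\int_M f\,\mathrm{d}\mathrm{Vol}$, so that $\widehat{g}(0)=0$, $\widehat{g}(k)=\widehat{f}(k)$ for $k\ge 1$, and $\nabla g=\nabla f$. Applying \eqref{manifoldHY1} to $(a-\Delta)^{1/2}g$, whose Fourier coefficients are $(a+\Lambda_k)^{1/2}\widehat{g}(k)$ and vanish at $k=0$, reproduces the left-hand side of \eqref{manifoldHY3} on the left and $K_{\mathrm{Weyl}}(M)^{(p-2)/(2p)}\|(a-\Delta)^{1/2}g\|_q$ on the right. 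For $a>0$, the Riesz transform bound \eqref{riesztransform2} applied with the exponent $q$ (noting that $q^*=\max(p,q)=p$ since $p\ge 2$) gives $\|(a-\Delta)^{1/2}g\|_q\le a^{1/2}\|g\|_q+3\sqrt{6}(p-1)\|\nabla g\|_q$, and the Poincar\'e inequality \eqref{poincare} then absorbs $\|g\|_q$ as $K_{\mathrm{Poincar\acute{e}}}(M,q)\|\nabla f\|_q$, yielding the claimed constant $C$. For $a=0$, the $k=0$ term on the left of \eqref{manifoldHY3} vanishes automatically because $\Lambda_0=0$, so no splitting is needed, and \eqref{riesztransform1} directly gives $\|(-\Delta)^{1/2}f\|_q\le 2(p-1)\|\nabla f\|_q$; this accounts for the announced improvement from $3\sqrt{6}$ to $2$ in the constant.
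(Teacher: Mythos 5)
Your proposal is correct, and everything except the interpolation step coincides with the paper's argument: the two endpoints (Parseval at $L^2$, and the pointwise Weyl law \eqref{weyllaw} combined with the triangle inequality for vector-valued integrals at $L^1\to\ell^\infty$), and the derivation of \eqref{manifoldHY3} by reducing to $\int_M f\,\mathrm{d}\mathrm{Vol}=0$, applying \eqref{manifoldHY1} to $(a-\Delta)^{1/2}f$, and then invoking the Riesz transform bound \eqref{riesztransform2} with $q^*=p$ together with the Poincar\'e inequality \eqref{poincare} (and \eqref{riesztransform1} when $a=0$) are exactly as in the paper. Where you genuinely diverge is the interpolation: you treat the $p$-dependence of the weight $(\ell+1)^{-(p-2)(d-1)/2}$ as an obstruction to Riesz--Thorin and resort to Stein's complex interpolation for the analytic family $T_zf(\ell)=(\ell+1)^{-(d-1)z/2}(\widehat{f}(k))_k$; your bookkeeping ($\theta=(p-2)/p$, constant $1^{1-\theta}(K_{\mathrm{Weyl}}(M)^{1/2})^{\theta}=K_{\mathrm{Weyl}}(M)^{(p-2)/(2p)}$) is right, so this works. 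The paper instead reuses the weighted-measure device from Lemma \ref{homogeneoushausdorffyounglemma}: put the measure $\mu(\{\ell\})=K_{\mathrm{Weyl}}(M)(\ell+1)^{d-1}$ on $\mathbb{N}_0$ and normalize the fixed operator as $Tf(\ell)=\big(K_{\mathrm{Weyl}}(M)(\ell+1)^{d-1}\big)^{-1/2}(\widehat{f}(k))_{k:\Lambda_k^{1/2}\in[\ell,\ell+1)}$, so that $\|T\|_{L^2\to L^2(X,Y)}\le 1$ and $\|T\|_{L^1\to L^{\infty}(X,Y)}\le 1$, and ordinary vector-valued Riesz--Thorin gives $\|T\|_{L^q\to L^p(X,Y)}\le 1$, which unravels to \eqref{manifoldHY1} with the same constant. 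So the varying weight is not actually an obstacle --- it is absorbed into the measure on the target --- and the paper's route is the more elementary one (a single real-interpolation application to a fixed operator), while yours requires the admissibility hypotheses of Stein's theorem but yields the identical estimate.
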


\begin{proof} Claim \eqref{manifoldHY1} for $p=q=2$ reduces to the Parseval formula. Now fix $\ell \ge 0$. Let $k_1<k_2<\cdots<k_n$ be the set of integers $k \ge 0$ for which $\Lambda_k^{1/2} \in [\ell, \ell+1)$. By the pointwise Weyl law \eqref{weyllaw}, the Euclidean norm of the vector-valued function $\Phi_{\ell}=(\phi_{k_1}, \phi_{k_2}, \ldots, \phi_{k_n})$ satisfies
\[ |\Phi_{\ell}(x)|^2 = \sum_{\substack{k \ge 0 \\ \Lambda_k^{1/2} \in [\ell, \ell+1)}} |\phi_k(x)|^2 \le K_{\mathrm{Weyl}}(M) (\ell+1)^{d-1} . \]
The triangle inequality for vector-valued integrals
\[ \left| \int_M f \Phi_{\ell} \, \mathrm{d}\mathrm{Vol} \right| \le \int_M |f| |\Phi_{\ell}| \, \mathrm{d}\mathrm{Vol} \]
thus yields
\[ \Bigg( \sum_{\substack{k \ge 0 \\ \Lambda_k^{1/2} \in [\ell, \ell+1)}} |\widehat{f}(k)|^2 \Bigg)^{1/2} \le K_{\mathrm{Weyl}}(M)^{1/2} (\ell+1)^{(d-1)/2} \| f \|_1 , \]
and claim \eqref{manifoldHY2} follows. Similarly to the proof of Lemma \ref{homogeneoushausdorffyounglemma}, an application of the Riesz--Thorin interpolation theorem finishes the proof of \eqref{manifoldHY1} for general $2 \le p < \infty$ and $1<q \le 2$.

We now prove claim \eqref{manifoldHY3}. Since both sides of \eqref{manifoldHY3} are invariant under adding a constant to $f$ (note that the summation is over $k \ge 1$), we may assume that $\int_M f \, \mathrm{d} \mathrm{Vol}=0$. By definition, the Fourier transform of $(a-\Delta)^{1/2} f$ is $\widehat{(a-\Delta)^{1/2} f}(k) = (a+\Lambda_k)^{1/2} \widehat{f}(k)$, $k \ge 0$. Applying \eqref{manifoldHY1} to $(a-\Delta)^{1/2} f$ thus leads to
\[ \Bigg( \sum_{\ell=0}^{\infty} (\ell+1)^{-(p-2)(d-1)/2} \Bigg( \sum_{\substack{k \ge 1 \\ \Lambda_k^{1/2} \in [\ell, \ell+1)}} (a+\Lambda_k) |\widehat{f}(k)|^2 \Bigg)^{p/2} \Bigg)^{1/p} \le K_{\mathrm{Weyl}}(M)^{(p-2)/(2p)} \| (a-\Delta)^{1/2} f \|_q . \]
An application of the Riesz transform estimate \eqref{riesztransform2} and the Poincar\'e inequality \eqref{poincare} shows that here
\[ \| (a-\Delta)^{1/2} f \|_q \le 3 \sqrt{6} (p-1) \| \nabla f \|_q + a^{1/2} \| f \|_q \le \left( 3\sqrt{6} (p-1) + a^{1/2} K_{\mathrm{Poincar\acute{e}}}(M,q) \right) \| \nabla f \|_q , \]
and claim \eqref{manifoldHY3} follows. If $a=0$, then we can use the Riesz transform estimate \eqref{riesztransform1} instead of \eqref{riesztransform2}, consequently we can replace the factor $3\sqrt{6}$ by $2$.
\end{proof}

\begin{proof}[Proof of Theorem \ref{manifoldheatkernelp>2theorem}] Let $2<p<\infty$ and $1<q<2$ be such that $1/p+1/q=1$, and fix $t>0$. We work with the heat kernel $P_t (x,y)$ on the manifold $M$. An application of Lemmas \ref{smoothinglemma} and \ref{heatkerneldispersionlemma} yields
\begin{equation}\label{manifoldheatkernelp>2step1}
W_p (\mu, \nu) \le C_1 t^{1/2} \left(1+C_3 t^{1/2} \right)^{1/p} + W_p(P_t * \mu, P_t * \nu)
\end{equation}
with
\[ \begin{split} C_1 &= (1+(1-c)^{1/p}) (2d+p)^{1/2}, \\ C_3 &= 2 (d-1) \sqrt{A} \exp \left( \frac{p+2}{2} + \frac{(d-1) \sqrt{A}}{2} \, \mathrm{diam}(M) \right) . \end{split} \]
Similarly to the proof of Theorem \ref{manifoldheatkernelp<2theorem}, an application of Lemma \ref{ledouxlemma} gives
\begin{equation}\label{manifoldheatkernelp>2step2}
W_p (P_t * \mu, P_t * \nu) \le \frac{p(c^{1/p} - \delta^{1/p})}{c-\delta} \| \mu - \nu \|_{\dot{H}_{-1}^p}
\end{equation}
with
\[ \delta = \min \left\{ b \inf_{\substack{x,y \in M \\ \rho (x,y) \le r}} P_t (x,y), c \right\} . \]

To estimate the dual Sobolev norm in \eqref{manifoldheatkernelp>2step2}, let $f \in \mathrm{C}^{\infty}(M)$ be arbitrary. An application of the Parseval formula, the Cauchy--Schwarz and the H\"older inequalities and Lemma \ref{manifoldhausdorffyounglemma} leads to
\[ \begin{split} \Bigg| \int_M f \, \mathrm{d}&(P_t * \mu - P_t*\nu) \Bigg| \\ &= \Bigg| \sum_{\ell=0}^{\infty} \sum_{\substack{k \ge 0 \\ \Lambda_k^{1/2} \in [\ell, \ell+1)}} \overline{\widehat{f}(k)} e^{-\Lambda_k t} (\widehat{\mu}(k) - \widehat{\nu}(k)) \Bigg| \\ &\le \sum_{\ell=0}^{\infty} \Bigg( \sum_{\substack{k \ge 1 \\ \Lambda_k^{1/2} \in [\ell, \ell+1)}} (a+\Lambda_k) |\widehat{f}(k)|^2 \Bigg)^{1/2} \Bigg( \sum_{\substack{k \ge 1 \\ \Lambda_k^{1/2} \in [\ell, \ell+1)}} e^{-2 \Lambda_k t} \frac{|\widehat{\mu}(k) - \widehat{\nu}(k)|^2}{a+\Lambda_k} \Bigg)^{1/2} \end{split} \]

\[ \begin{split} &\le \Bigg( \sum_{\ell=0}^{\infty} (\ell+1)^{-(p-2)(d-1)/2} \Bigg( \sum_{\substack{k \ge 1 \\ \Lambda_k^{1/2} \in [\ell, \ell+1)}} (a+\Lambda_k) |\widehat{f}(k)|^2 \Bigg)^{p/2} \Bigg)^{1/p} \\ &{\phantom{\le}} \times \Bigg( \sum_{\ell=0}^{\infty} (\ell+1)^{(p-2)(d-1)/(2p-2)} \Bigg( \sum_{\substack{k \ge 1 \\ \Lambda_k^{1/2} \in [\ell, \ell+1)}} e^{-2 \Lambda_k t} \frac{|\widehat{\mu}(k) - \widehat{\nu}(k)|^2}{a+\Lambda_k} \Bigg)^{q/2} \Bigg)^{1/q} \\ &\le C \| \nabla f \|_q \Bigg( \sum_{\ell=0}^{\infty} (\ell+1)^{(p-2)(d-1)/(2p-2)} \Bigg( \sum_{\substack{k \ge 1 \\ \Lambda_k^{1/2} \in [\ell, \ell+1)}} e^{-2 \Lambda_k t} \frac{|\widehat{\mu}(k) - \widehat{\nu}(k)|^2}{a+\Lambda_k} \Bigg)^{q/2} \Bigg)^{1/q} \end{split} \]
with $a=(d-1)A$ and $C= K_{\mathrm{Weyl}}(M)^{(p-2)/(2p)} (3 \sqrt{6} (p-1) + a^{1/2} K_{\mathrm{Poincar\acute{e}}}(M,q))$. By the definition \eqref{dualsobolevnorm} of the dual Sobolev norm and Lemma \ref{dualsobolevnormlemma}, this implies
\[ \| \mu- \nu \|_{\dot{H}_{-1}^p} \le C \Bigg( \sum_{\ell=0}^{\infty} (\ell+1)^{(p-2)(d-1)/(2p-2)} \Bigg( \sum_{\substack{k \ge 1 \\ \Lambda_k^{1/2} \in [\ell, \ell+1)}} e^{-2 \Lambda_k t} \frac{|\widehat{\mu}(k) - \widehat{\nu}(k)|^2}{a+\Lambda_k} \Bigg)^{q/2} \Bigg)^{1/q} . \]
The claim now follows from \eqref{manifoldheatkernelp>2step1}, \eqref{manifoldheatkernelp>2step2} and the previous formula.
\end{proof}

\vspace{5mm}

\noindent\textsc{Bence Borda}

\noindent\textsc{Department of Mathematics, University of Sussex, Brighton, BN1 9RH, United Kingdom}

\noindent\textsc{Email:} \texttt{b.borda@sussex.ac.uk}

\vspace{5mm}

\noindent\textsc{Jean-Claude Cuenin}

\noindent\textsc{Department of Mathematical Sciences, Loughborough University, Loughborough, LE11 3TU, United Kingdom}

\noindent\textsc{Email:} \texttt{j.cuenin@loughborough.ac.uk}

\end{document}